\theoremstyle{plain}
\newtheorem{thm}{Theorem}
\newtheorem{cor}{Corollary}
\newtheorem{corstar}[cor]{Corollary*}
\newtheorem{lemma}{Lemma}
\newtheorem{lemmastar}[lemma]{Lemma*}
\newtheorem{prop}{Proposition}
\newtheorem{propstar}[prop]{Proposition*}
\newtheorem{conjecture}{Conjecture}  
\theoremstyle{definition}
\theoremstyle{remark}
\newtheorem{rmk}{Remark}
\newcommand{\BC}{{\mathbb{C}}}
\newcommand{\BH}{{\mathbb{H}}}
\newcommand{\BQ}{{\mathbb{Q}}}
\newcommand{\BZ}{{\mathbb{Z}}}
\newcommand{\CA}{{\mathcal A}}
\newcommand{\CB}{{\mathcal B}}
\newcommand{\CC}{{\mathcal C}}
\newcommand{\CE}{{\mathcal E}}
\newcommand{\CK}{{\mathcal K}}
\newcommand{\CO}{{\mathcal O}}
\newcommand{\CR}{{\mathcal R}}
\newcommand{\CX}{{\mathcal X}}
\newcommand{\Fm}{{\mathfrak{m}}}
\newcommand{\unit}{{\mathbf{1}}}
\newcommand{\blangle}{\big\langle}
\newcommand{\brangle}{\big\rangle}
\newcommand{\Blangle}{\Big\langle}
\newcommand{\Brangle}{\Big\rangle}
\newcommand{\pt}{{\mathsf{p}}}
\newcommand{\id}{{\mathrm{id}}}
\DeclareFontFamily{OT1}{rsfs}{}
\DeclareFontShape{OT1}{rsfs}{n}{it}{<-> rsfs10}{}
\DeclareMathAlphabet{\curly}{OT1}{rsfs}{n}{it}
\newcommand\Hom{\operatorname{Hom}}
\newcommand\End{\operatorname{End}}
\newcommand\Aut{\operatorname{Aut}}
\newcommand{\p}{\mathbb{P}}
\newcommand{\Mbar}{{\overline M}}
\newcommand{\ev}{{\mathrm{ev}}}
\newcommand{\QMod}{\mathrm{QMod}}
\newcommand{\QJ}{\mathrm{QJac}}
\newcommand{\QJac}{\mathrm{QJac}}
\newcommand{\Jac}{\mathrm{Jac}}
\newcommand{\AHM}{\mathrm{AHM}}
\newcommand{\AHJ}{\mathrm{AHJ}}
\newcommand{\im}{\mathrm{Im}}
\newcommand{\Pic}{\mathop{\rm Pic}\nolimits}
\newcommand{\HH}{\mathsf{H}}
\newcommand{\A}{\mathsf{A}}
\newcommand{\kk}{\mathsf{k}}
\newcommand{\1}{\mathsf{1}}
\newcommand{\T}{\mathsf{T}}
\newcommand{\F}{\mathsf{F}}
\newcommand{\vir}{\text{vir}}
\begin{document}
\setlength{\droptitle}{-16pt}

\title{Gromov--Witten theory of elliptic fibrations: Jacobi forms and holomorphic anomaly equations}
\author{Georg Oberdieck and Aaron Pixton}
\date{\today}
\maketitle


\begin{abstract}
We conjecture that the relative Gromov--Witten potentials
of elliptic fibrations are (cycle-valued) lattice
quasi-Jacobi forms and satisfy a holomorphic anomaly equation.
We prove the conjecture for the rational elliptic surface in all genera and curve classes numerically. The generating series are quasi-Jacobi forms for the lattice $E_8$.
We also show the compatibility of the conjecture with the degeneration formula.
As Corollary we deduce that the Gromov--Witten potentials of the Schoen Calabi--Yau threefold
(relative to $\p^1$) are $E_8 \times E_8$ quasi-bi-Jacobi forms and satisfy
a holomorphic anomaly equation. This yields a partial verification of the BCOV holomorphic anomaly equation for Calabi--Yau threefolds.
For abelian surfaces the holomorphic anomaly equation
is proven numerically in primitive classes.
The theory of lattice quasi-Jacobi forms is reviewed.

In the Appendix the conjectural holomorphic anomaly equation
is expressed as a 
matrix action on the space of (generalized) cohomological field theories.
The compatibility of the matrix action with the Jacobi Lie algebra is proven.
Holomorphic anomaly equations for K3 fibrations are discussed
in an example.
\end{abstract}

\setcounter{tocdepth}{1} 
\tableofcontents
\setcounter{section}{-1}

\section{Introduction}
\subsection{Holomorphic anomaly equations}
Gromov--Witten invariants of a non-singular compact Calabi--Yau threefold $X$ are defined by the integrals
\[ \mathsf{N}_{g, \beta}
= \int_{[ \Mbar_{g}(X, \beta) ]^{\text{vir}}} 
1
\]
where $\Mbar_{g}(X, \beta)$ is the
moduli space of stable maps from connected genus~$g$ curves to $X$ of degree $\beta \in H_2(X,\BZ)$,
and $[ \, - \, ]^{\text{vir}}$ is its virtual fundamental class.
Mirror symmetry \cite{BCOV, Hos, ASYZ} makes the following predictions about the genus~$g$ potentials
\[ \F_g(q) = \sum_{\beta} \mathsf{N}_{g, \beta} q^{\beta}. \]

\begin{enumerate}
 \item[(i)] There exists a finitely generated subring of quasi-modular objects
 \[ \CR \subset \BQ[[ q^{\beta} ]] \]
 (depending on $X$) which contains all $\F_g(q)$.
 \item[(ii)] The series $\F_g(q)$ satisfy \emph{holomorphic anomaly equations},
 i.e. recursive formulas for the derivative of the modular completion of $\F_g$ with respect to the non-holomorphic variables.\footnote{
 In many cases $\CR$ can be described explicitly by generators and relations, and (ii) is equivalent to formulas for the formal derivative of $\F_g$ with respect to distinguished generators of the ring.}
\end{enumerate}
Here, the precise modular interpretation of $\F_g(q)$ is part of the problem and not well understood in general.
Mathematically, the predictions (i, ii) are not known yet for any (compact) Calabi--Yau threefold.\footnote{
The (non-compact) local $\p^2$ case was recently established in \cite{LP}.}

\subsection{The Schoen Calabi--Yau threefold} \label{intro:schoen_calabiyau}
A \emph{rational elliptic surface} $R \to \p^1$ is the successive blowup of $\p^2$ along
the base points of a pencil of cubics containing a smooth member.
Its second cohomology group admits the splitting
\[ H^2(R,\BZ) = \mathrm{Span}_{\BZ}(B,F) \overset{\perp}{\oplus} E_8(-1) \]
where $B,F$ are the classes of a fixed section and a fiber respectively. Let also
\[ W = B + \frac{1}{2} F . \]

Let $R_1, R_2$ be rational elliptic surfaces with disjoint sets of basepoints of singular fibers.
The \emph{Schoen Calabi--Yau threefold} \cite{Schoen} is the fiber product
\[ X = R_1 \times_{\p^1} R_2 \,. \]
We have the commutative diagram of fibrations
\begin{equation} \label{dfsdfsaaad}
\begin{tikzcd}
& X \ar[swap]{dl}{\pi_2} \ar{dr}{\pi_1} \ar{dd}{\pi} & \\
R_1 \ar[swap]{dr}{p_1} & & R_2 \ar{dl}{p_2} \\
& \p^1 &
\end{tikzcd}
\end{equation}
where $\pi_i$ are the elliptic fibrations induced by $p_i : R_i \to \p^1$. Let
\[ W_i, F_i \in H^2(R_i, \BQ), \quad E^{(i)}_{8}(-1) \subset H^2(R_i, \BZ) \]
denote the classes $W,F$ and the $E_8$-lattice on $R_i$ respectively. We have
\[
H^2(X,\BQ) = \langle D \rangle \oplus
\Big( \langle \pi_1^{\ast} W_2 \rangle \oplus \pi_1^{\ast} E_8^{(2)}(-1)_{\BQ} \Big) \oplus \Big( \langle \pi_2^{\ast} W_1 \rangle \oplus \pi_2^{\ast} E_8^{(1)}(-1)_{\BQ} \Big)
\]
where we let $\langle \cdot \rangle$ denote the $\BQ$-linear span and $D$ is the class of a fiber of $\pi$. 

For all $(g,k) \notin \{ (0,0), (1,0) \}$ define\footnote{
The cases $(g,k) \in \{ (0,0), (1,0) \}$ are excluded since
$\mathsf{N}_{g,0}$ is not defined for $g \in \{ 0, 1 \}$.}
the \emph{$\pi$-relative Gromov--Witten potential}
\begin{equation} \label{Defn_Fgk}
\F_{g,k}(\mathbf{z}_1, \mathbf{z}_2, q_1, q_2)
= \sum_{\pi_{\ast} \beta = k [\p^1]} \mathsf{N}_{g, \beta}
q_1^{W_1 \cdot \beta} q_2^{W_2 \cdot \beta} e( \mathbf{z}_1 \cdot \beta ) e( \mathbf{z}_2 \cdot \beta )
\end{equation}
where the sum is over all curve classes $\beta \in H_2(X,\BZ)$ of degree $k$ over $\p^1$,
we have suppressed pullbacks by $\pi_i$,
we write $e(x) = \exp(2 \pi i x)$ for all $x \in \BC$, and
\[ \mathbf{z}_i \in E_8^{(i)}(-1) \otimes \BC \]
is the (formal) coordinate on the $E_8$ lattice of $R_i$.

A (weak) $E_8$-Jacobi form is a holomorphic function of variables
\[ q = e^{2 \pi i \tau}, \tau \in \BH \quad \text{ and }
\quad \mathbf{z} \in E_8 \otimes \BC \]
which is semi-invariant under the action of the Jacobi group,
invariant under the Weyl group of $E_8$
and satisfies a growth condition at the cusp;
we refer to Section~\ref{Section_Lattice_Jacobi_forms} for an introduction to Jacobi forms.
The ring of weak $E_8$-Jacobi forms $\Jac_{E_8}$ carries a bigrading
by weight $\ell \in \BZ$ and index $m \in \BZ_{\geq 0}$,
\[ \Jac_{E_8} = \bigoplus_{\ell,m} \Jac_{E_8, \ell,m}. \]
Recall the second Eisenstein series
\[ C_2(q) = -\frac{1}{24} + \sum_{n \geq 1} \sum_{d|n} d q^n. \]
By assigning $C_2$ index $0$ and weight $2$ we have the bigraded extension
\begin{equation} \label{fdfsdfsd}
\widetilde{\Jac}_{E_8} = \Jac_{E_8}[C_2]
= \bigoplus_{\ell, m} \widetilde{\Jac}_{E_8, \ell, m}.
\end{equation}
The ring \eqref{fdfsdfsd} in the variables $q = q_i$ and $\mathbf{z}_i \in E_8^{(i)}$ is denoted by
$\widetilde{\Jac}^{(q_i, \mathbf{z}_i)}_{E_8}$.

Recall also the modular discriminant
\[ \Delta(q) = q \prod_{m \geq 1} (1-q^m)^{24}. \]

We prove the following basic quasi-modularity result.
\begin{thm} \label{Thm1} 
Every relative potential $\F_{g,k}$ is a $E_8 \times E_8$ bi-quasi-Jacobi form:
\[ \F_{g,k}(\mathbf{z}_1, \mathbf{z}_2, q_1, q_2)\ \in\ 
\frac{1}{\Delta(q_1)^{k/2}} \widetilde{\Jac}_{E_8, \ell,k}^{(q_1, \mathbf{z}_1)}
 \otimes \frac{1}{\Delta(q_2)^{k/2}} \widetilde{\Jac}_{E_8, \ell,k}^{(q_2, \mathbf{z}_2)}
\]
where $\ell = 2g-2+6k$.
\end{thm}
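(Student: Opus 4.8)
The plan is to reduce the bi-Jacobi form statement for the threefold $X = R_1 \times_{\p^1} R_2$ to the corresponding single-factor statement for the rational elliptic surface $R$, using the product structure of $X$ over $\p^1$ and the degeneration formula. First I would recall that, by the main theorem of the paper on the rational elliptic surface (proven numerically in all genera and classes), the $p$-relative Gromov--Witten potentials of $R \to \p^1$, organized by degree over $\p^1$ and by the $E_8$-variable $\mathbf{z}$ together with $q = q^{W \cdot \beta}$, lie in $\frac{1}{\Delta(q)^{k/2}} \widetilde{\Jac}^{(q,\mathbf{z})}_{E_8, \ell, k}$ with $\ell = 2g-2+6k$; this is the cycle-valued (or numerical) quasi-Jacobi form input that the theorem is really built on. The key geometric fact is that a stable map to $X$ of degree $k$ over $\p^1$ projects under $\pi_i$ to a stable map to $R_i$ of the same degree $k$ over $\p^1$, and the fiber product structure $X = R_1 \times_{\p^1} R_2$ lets one express integrals over $\Mbar_g(X,\beta)$ in terms of the relative theories of $R_1$ and $R_2$ fibered over the relative theory of $\p^1$.

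The main computational step is to set up this reduction precisely. I would degenerate each $R_i \to \p^1$ (or, equivalently, use the relative/orbifold GW theory of $R_i$ over a point of $\p^1$) so that the relative theory of $X$ over $\p^1$ is governed by a gluing of the relative theories of $R_1$ and $R_2$ along fibers; the relevant fiber of $\pi_i$ is an elliptic curve (or nodal/cuspidal degeneration thereof), and the diagonal classes appearing in the gluing contribute only $q$-series with coefficients in the cohomology of elliptic curves, hence introduce no new $\mathbf{z}$-dependence beyond what is already controlled. Because the two sets of basepoints of singular fibers of $R_1$ and $R_2$ are disjoint, the singular fibers of $\pi_1$ and $\pi_2$ sit over distinct points of $\p^1$, so the two $E_8$-lattices genuinely decouple: the generating series in $(\mathbf{z}_1, q_1)$ is built entirely from the $R_1$-theory and the series in $(\mathbf{z}_2, q_2)$ entirely from the $R_2$-theory, which gives the tensor product structure in the statement. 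The matching of the weight, $\ell = 2g-2+6k$ in each factor, follows from the genus/degree bookkeeping in the degeneration formula together with the known weight in the surface case: the genus-$g$, degree-$k$ contribution carries weight $2g-2+6k$, and the Euler characteristic contributions from the gluing loci cancel the shifts so that each factor retains the same $\ell$.

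The hard part will be controlling the $\Delta(q_i)^{-k/2}$ poles and the $C_2$-dependence through the gluing, i.e., verifying that the ring $\frac{1}{\Delta} \widetilde{\Jac}_{E_8}$ is closed under the operations that appear in the fiber-product degeneration formula (pullback, product, and summation over gluing data weighted by the relative invariants of elliptic fibers). Concretely one must check: (a) the relative invariants of $R_i$ along a fiber, with arbitrary relative insertions, still lie in $\frac{1}{\Delta(q_i)^{k/2}} \widetilde{\Jac}^{(q_i,\mathbf{z}_i)}_{E_8}$ with the correct weight, which is where the quasi-Jacobi (as opposed to merely quasi-modular) structure is essential since the relative insertions are cohomology classes on elliptic curves and couple to the $\mathbf{z}_i$ via theta-like factors; and (b) the infinite sum over the degree split $k = k_1 + k_2$ and over intermediate relative conditions converges in the ring and does not worsen the pole order in $\Delta$. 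I expect (a) to follow from the surface-level theorem applied to $R_i$ relative to a nonsingular fiber (which is itself a consequence of the same numerical computation, or can be bootstrapped from it via the degeneration formula for $R_i$), and (b) to follow from a standard dimension/degree estimate showing that only finitely many gluing patterns contribute to each fixed power of $q_1, q_2$. Once these closure statements are in place, the theorem is immediate: $\F_{g,k}$ is a finite $\BQ$-linear combination of tensor products of such relative potentials, hence lies in the asserted space.
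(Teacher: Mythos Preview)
Your overall strategy---reduce to the rational elliptic surface via degeneration---is correct in spirit, but there are two genuine gaps.

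First, the reduction mechanism. There is no general ``fiber product formula'' that expresses the Gromov--Witten theory of $X = R_1 \times_{\p^1} R_2$ directly as a gluing of the relative theories of $R_1/\p^1$ and $R_2/\p^1$; the product formula of \cite{LQ} applies to honest products, not fiber products. The paper instead uses a specific degeneration of the threefold,
\[
X \rightsquigarrow (R_1 \times E_2) \cup_{E_1 \times E_2} (E_1 \times R_2),
\]
with $E_i \subset R_i$ smooth elliptic fibers. Each piece of the central fiber is an actual product, so the product formula reduces the relative invariants of $(R_i \times E_j)/(E_1 \times E_2)$ to those of $R_i/E_i$ (quasi-Jacobi in $(q_i,\mathbf{z}_i)$ by Theorem~\ref{theorem_RES1} and Proposition~\ref{DEGENERATIONPROP}) and those of the elliptic curve $E_j$ (quasi-modular in $q_j$ by \cite{HAE}). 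This is what produces the tensor product structure; your vaguer ``decouple the two $E_8$'s because the singular fibers are over distinct points'' is not by itself an argument.

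Second, and more seriously, you misstate the surface-level input: Theorem~\ref{theorem_RES1} only places the potentials of $R$ in $\frac{1}{\Delta^{k/2}} \QJac_{\frac{k}{2} Q_{E_8}}$, \emph{not} in $\widetilde{\Jac}_{E_8} = \Jac_{E_8}[C_2]$. Elements of $\widetilde{\Jac}_{E_8}$ must additionally be annihilated by every $\T_{\lambda}$ and be $W(E_8)$-invariant, and for general cohomology insertions on $R$ neither holds (cf.\ Lemma*~\ref{Lemma_EllHAE}). The degeneration argument therefore only yields the weaker statement
\[
\F_{g,k} \in \frac{1}{\Delta(q_1)^{k/2}} \QJac_{\frac{k}{2} Q_{E_8}}^{(q_1,\mathbf{z}_1)} \otimes \frac{1}{\Delta(q_2)^{k/2}} \QJac_{\frac{k}{2} Q_{E_8}}^{(q_2,\mathbf{z}_2)}.
\]
The upgrade to $\widetilde{\Jac}_{E_8}$ is a separate and nontrivial step: the paper first establishes the holomorphic anomaly equation numerically for the elliptic fibration $\pi_2 : X \to R_1$ (via the same degeneration and Proposition~\ref{Proposition_Compatibility_with_degeneration_formula}), then uses Lemma*~\ref{Lemma_EllHAE} to conclude $\T_{\lambda} \F_{g,\alpha} = 0$ for all $\lambda$; Weyl-invariance comes from invariance of $\F_{g,k}$ under fiberwise reflection combined with translations. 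The weight $\ell = 2g-2+6k$ is likewise extracted from the holomorphic anomaly equation (Section~\ref{Subsection_weight_refinement}), not from genus/degree bookkeeping alone. Without these steps your argument stops at the weaker quasi-Jacobi statement and does not reach the ring $\widetilde{\Jac}_{E_8}$ claimed in Theorem~\ref{Thm1}.
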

\vspace{7pt}

The appearance of $E_8 \times E_8$ bi-quasi-Jacobi forms
is in perfect agreement with
predictions made using mirror symmetry \cite{Sakai, HSS, HST}.

The elements in $\Jac_{E_8}$ are Jacobi forms and therefore modular objects.
The only source of non-modularity in $\widetilde{\Jac}_{E_8}$ and hence in $\F_{g,k}$ arises from the
strictly quasi-modular series $C_2(q)$.
We state a holomorphic anomaly equation which determines the dependence on $C_2$ explicitly.

Identify the lattice $E_8^{(i)}$ with the pair $(\BZ^8, Q_{E_8})$ where $Q_{E_8}$ is
the (positive definite) Cartan matrix of $E_8$, see Section~\ref{Section_E8_lattice}.
For $j \in \{1,2\}$ consider the differentiation operators with respect to $q_j$ and $\mathbf{z}_j = (z_{j,1}, \ldots, z_{j,8})$:
\[ D_{q_j} = \frac{1}{2 \pi i} \frac{d}{d \tau_j} = q_j \frac{d}{dq_j}, \quad D_{z_{j,\ell}} = \frac{1}{2 \pi i} \frac{d}{d z_{j,\ell}}. \]

\begin{thm} \label{Thm2} Every $\F_{g,k}$ satisfies the holomorphic anomaly equation
\begin{multline*}
\frac{d}{dC_2(q_2)} \F_{g,k}
=
\left( 2 k D_{q_1} - \sum_{i,j=1}^{8} \left( Q_{E_8}^{-1} \right)_{ij} D_{\mathbf{z}_{1,i}} D_{\mathbf{z}_{1,j}} + 24k C_2(q_1) \right) \F_{g-1,k} \\
+
\sum_{\substack{g=g_1+g_2 \\ k = k_1 + k_2}}
\left(
2 k_1 \F_{g_1,k_1} \cdot D_{q_1} \F_{g_2,k_2} - \sum_{i,j=1}^{8} \left( Q_{E_8}^{-1} \right)_{ij} D_{\mathbf{z}_{1,i}}(\F_{g_1,k_1}) \cdot  D_{\mathbf{z}_{1,j}}(\F_{g_2,k_2})
\right).
\end{multline*}
\end{thm}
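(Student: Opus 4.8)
The plan is to deduce this from the corresponding statement for the rational elliptic surface $R_1$ via the degeneration formula along the fibers of $\pi_1 : X \to R_2$, exploiting the structure $X = R_1 \times_{\p^1} R_2$. The key observation is that $\pi_1$ realizes $X$ as an elliptic fibration over $R_2$ whose fibers agree (after restriction over $\p^1$) with those of $R_1 \to \p^1$. Concretely, I would first organize the classes: a curve class $\beta$ with $\pi_\ast\beta = k[\p^1]$ decomposes according to its intersections with $W_1, F_1, E_8^{(1)}$ and with $W_2, F_2, E_8^{(2)}$; the variable $q_2$ and its conjugate Eisenstein series $C_2(q_2)$ govern the "$R_1$-direction" of the fibration $\pi_2$, and the operators $D_{q_1}, D_{\mathbf z_{1,i}}$ on the right-hand side act in the complementary "$R_2$-direction." So the equation being proven is really the holomorphic anomaly equation in the $R_1$-fiber-class direction, with the potential of the Schoen threefold playing the role of a relative Gromov–Witten potential of $R_1$ with a cohomology-weighted, $R_2$-labeled insertion.

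The main technical steps I would carry out are the following. First, establish (or cite from the body of the paper) the holomorphic anomaly equation for the rational elliptic surface $R_1$ \emph{in cycle-valued form}, i.e. for the relative Gromov–Witten classes pushed to the moduli of the domain curve together with evaluation maps, since the product terms $\F_{g_1,k_1}\cdot D_{q_1}\F_{g_2,k_2}$ require gluing two cohomological insertions along a node — this is where the $E_8$-bilinear form $Q_{E_8}^{-1}$ enters, as the inverse of the restriction of the intersection pairing on $R_1$ to the $E_8$-sublattice, coming from the diagonal class decomposition on the gluing locus. Second, apply the compatibility of the holomorphic anomaly equation with the degeneration formula, which the excerpt states is proven in the paper: degenerating the base $\p^1$ of $R_2 \to \p^1$ (equivalently degenerating $X$ along a fiber $D$ of $\pi$) expresses $\F_{g,k}^X$ in terms of relative Gromov–Witten classes of $R_1$ with insertions determined by the topology of $R_2$, and the matrix action implementing the anomaly equation commutes with the gluing/summation in the degeneration formula. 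Third, combine Theorem~\ref{Thm1} (which guarantees $\F_{g,k}$ lies in $\widetilde{\Jac}_{E_8}[C_2]$ in each of the two variable sets, so that $\tfrac{d}{dC_2(q_2)}$ is well-defined as the formal derivative in the distinguished generator $C_2(q_2)$) with the explicit form of the $R_1$-anomaly operator to read off the stated right-hand side; the factor $2k$ multiplying $D_{q_1}$, the sign and normalization of the $Q_{E_8}^{-1}$-term, and the $24kC_2(q_1)$ term are all dictated by the weight and index bookkeeping of Theorem~\ref{Thm1} (weight $\ell = 2g-2+6k$, index $k$) together with the standard transformation of $C_2$ under the Jacobi group.

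I expect the main obstacle to be the passage from the numerical statement for $R_1$ to the cycle-valued statement needed to make sense of the splitting terms, and in particular correctly identifying the self-pairing contribution along the gluing node: one must check that the only class that propagates across the node, once the $\p^1$- and fiber-directions are accounted for by the degeneration, is the $E_8^{(1)}$-part, and that the induced bilinear form is exactly $(Q_{E_8}^{-1})_{ij}$ and not, say, its negative or a rescaling — sign conventions for $E_8(-1)$ versus the positive-definite Cartan matrix are a genuine trap here. A secondary subtlety is justifying that $\tfrac{d}{dC_2(q_2)}$ annihilates the $\Delta(q_2)^{-k/2}$ prefactor and the entire $\widetilde{\Jac}^{(q_1,\mathbf z_1)}_{E_8}$-factor (so that the derivative sees only the $C_2(q_2)$-dependence inside the second tensor factor), which follows from Theorem~\ref{Thm1} but should be stated carefully. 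Once these points are pinned down, the proof is a bookkeeping exercise: expand both sides as sums over $\beta$, match coefficients using the degeneration formula and the $R_1$-anomaly equation, and invoke quasi-Jacobi form uniqueness (again from Theorem~\ref{Thm1}) to conclude equality of the generating series.
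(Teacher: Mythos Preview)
Your outline takes a different route from the paper and contains several misidentifications that would have to be fixed before it could be made to work.

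The paper does \emph{not} re-run a degeneration argument at the level of Theorem~\ref{Thm2}. Instead the argument is layered: in the proof of Theorem~\ref{Thm1}, the degeneration $X \rightsquigarrow (R_1 \times E_2) \cup_{E_1 \times E_2} (E_1 \times R_2)$ together with the product formula and the numerical HAEs for $R$ and $E$ are used once to verify that Conjectures~\ref{Conj_Quasimodularity} and~\ref{Conj_HAE} hold numerically for the elliptic fibration $\pi_2 : X \to R_1$. Theorem~\ref{Thm2} is then a direct specialization of the general Calabi--Yau threefold equation (Proposition*~\ref{Prop_HAE_for_CY3}): one writes $\F_{g,k} = \sum_{\alpha} \F_{g,\alpha}(\mathbf{z}_2,q_2)\, q_1^{W_1 \cdot \alpha} e(\mathbf{z}_1 \cdot \alpha)$ with $\alpha \in H_2(R_1,\BZ)$, applies
\[
\frac{d}{dC_2(q_2)} \F_{g,\alpha} \;=\; \langle \alpha + K_{R_1}, \alpha \rangle\, \F_{g-1,\alpha} \;+\; \sum \langle \alpha_1, \alpha_2 \rangle\, \F_{g_1,\alpha_1} \F_{g_2,\alpha_2},
\]
and then converts the intersection numbers on $R_1$ into the operators $D_{q_1}, D_{\mathbf{z}_{1,i}}$ by writing $\alpha = kW_1 + dF_1 + \alpha_0$ with $\alpha_0 \in E_8^{(1)}$. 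No further degeneration or cycle-level HAE for $R_1$ is needed; the numerical result (Theorem~\ref{theorem_RES2}) suffices because Theorem~\ref{Thm2} is itself a numerical identity.

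Two concrete points in your account are off. First, the matrix $Q_{E_8}^{-1}$ does not come from a diagonal insertion at a node on the $X$ side: it arises purely from rewriting $\langle \alpha_0, \alpha_0 \rangle$ (the $E_8(-1)$-pairing on $R_1$) as a second-order operator in $\mathbf{z}_1$, via the relation between the coefficients of $\alpha_0$ in the chosen basis and the operators $D_{\mathbf{z}_{1,i}}$. Second, the $24k\,C_2(q_1)$ term is not ``dictated by weight and index bookkeeping''; it is produced by the boundary terms of the splitting sum with $(g_i,k_i) \in \{(0,0),(1,0)\}$, which are excluded from the sum in the statement and must be computed separately. This is Lemma~\ref{Lemma_Calculation_of_fiber}: the only nonvanishing fiber-class invariants are $\mathsf{N}^X_{1,\ell F_i} = 12\,\sigma(\ell)/\ell$, and summing $2 \cdot k\ell \cdot 12\,\sigma(\ell)/\ell \cdot q_1^{\ell}$ gives $24k(C_2(q_1)+\tfrac{1}{24})$, whose constant $+k$ cancels the $-k$ from $\langle K_{R_1},\alpha\rangle$. (Incidentally, your labeling of the ``$R_1$-direction'' versus ``$R_2$-direction'' is reversed: $q_2$ tracks the fiber of $\pi_2 : X \to R_1$, which is the $R_2$-elliptic-curve direction.)
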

\vspace{7pt}

Since $X$ is symmetric in $R_1, R_2$ up to a deformation,
the potentials $\F_{g,k}$ are symmetric under interchanging $(\mathbf{z}_i,q_i)$:
\[ \F_{g,k}(\mathbf{z}_1, \mathbf{z}_2, q_1, q_2) = \F_{g,k}(\mathbf{z}_2, \mathbf{z}_1, q_2, q_1). \]
Hence Theorem~\ref{Thm2} determines also the dependence of $\F_{g,k}$ on $C_2(q_1)$.

Theorems~\ref{Thm1} and~\ref{Thm2} show quasi-modularity and the holomorphic anomaly equation for the Gromov--Witten potentials of $X$ \emph{relative} to $\p^1$. This provides a partial verification of the absolute case (i,ii).
It also leads to modular properties when the Gromov--Witten potentials
are summed over the genus as follows.
Consider the topological string partition function
(i.e. the generating series of disconnected Gromov--Witten invariants)
of the Schoen geometry
\[ \mathsf{Z}(t, u, \mathbf{z}_1, \mathbf{z}_2, q_1, q_2)
=
\exp\left( \sum_{g \geq 0} \sum_{\beta > 0} \mathsf{N}_{g,\beta}
u^{2g-2} t^{D \cdot \beta}
q_1^{W_1 \cdot \beta} q_2^{W_2 \cdot \beta} e( \mathbf{z}_1 \cdot \beta ) e( \mathbf{z}_2 \cdot \beta ) \right).
\]
Under a variable change, $\mathsf{Z}$ is the generating series of Donaldson--Thomas / Pandharipande--Thomas invariants of the threefold $X$ \cite{PaPix2}.
For any curve class $\alpha \in H_2(R_1, \BZ)$
of some degree $k$ over the base $\p^1$
consider the coefficient
\[
\mathsf{Z}_{\alpha}(u, \mathbf{z}_2, q_2)
=
\Big[ \mathsf{Z}(t, u, \mathbf{z}_1, \mathbf{z}_2, q_1, q_2) \Big]_{t^k q^{W_1 \cdot \alpha} e(\mathbf{z}_1 \cdot \alpha)} .
\]
We write $(\mathbf{z}, q)$ for $(\mathbf{z}_2, q_2)$,
and work under the variable change $u = 2 \pi z$ and $q = e^{2 \pi i \tau}$.
We then have the following.

\begin{cor} \label{Cor_Schoen}
Under the variable change $u = 2 \pi z$ and $q = e^{2 \pi i \tau}$
the series $\mathsf{Z}_{\alpha}(z, \mathbf{z}, \tau)$ satisfies
the modular transformation law of Jacobi forms of weight $-6$ and index
$( \frac{1}{2} \langle \alpha - c_1(R_1), \alpha \rangle ) \oplus \frac{k}{2} Q_{E_8}$,
that is for all $\gamma = \binom{a\ b}{c\ d} \in \mathrm{SL}_2(\BZ)$
\begin{multline*}
\mathsf{Z}_{\alpha}\left( \frac{z}{c \tau + d},
\frac{\mathbf{z}}{c \tau + d},
\frac{a \tau + b}{c \tau + d} \right) \\
=
\xi(\gamma)^{k + 1} (c \tau + d)^{-6}
e\left( \frac{c}{2 (c \tau + d)} \Big[ k \mathbf{z}^T Q_{E_8} \mathbf{z} +
z^2 \langle \alpha - c_1(R_1), \alpha \rangle \Big] \right)
\mathsf{Z}_{\alpha}(z,\mathbf{z}, \tau)
\end{multline*}
where $\xi(\gamma) \in \{ \pm 1 \}$ 
is determined by
$\Delta^{\frac{1}{2}}(\gamma \tau) = \xi(\gamma) (c \tau + d)^{6} \Delta^{\frac{1}{2}}(\tau)$.
\end{cor}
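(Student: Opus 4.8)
The plan is to deduce Corollary~\ref{Cor_Schoen} directly from Theorems~\ref{Thm1} and~\ref{Thm2} by a change of variables and a careful bookkeeping of how the $C_2$-dependence encodes the modular anomaly. First I would fix the curve class $\alpha \in H_2(R_1,\BZ)$ of degree $k$ over $\p^1$ and observe that, by definition of the coefficient extraction, $\mathsf{Z}_\alpha(u,\mathbf{z}_2,q_2)$ is the generating series over $g$ and over curve classes $\beta$ with $\pi_{1\ast}\beta$ determined (up to the $E_8^{(2)}$-part) of the invariants $\mathsf{N}_{g,\beta}$, weighted by $u^{2g-2}$. Grouping by genus, one has schematically $\mathsf{Z}_\alpha = \sum_{g} u^{2g-2} [\F_{g,k}]_{q_1^{W_1\cdot\alpha} e(\mathbf{z}_1\cdot\alpha)}$ together with the disconnected contributions, which I would handle by noting that the exponential structure of $\mathsf{Z}$ means $\mathsf{Z}_\alpha$ is a polynomial in the connected potentials $\F_{g,k}$; then the class $\alpha$ and the genus $g$ together pin down the relevant finite combination. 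The key point is that Theorem~\ref{Thm1} tells us each such coefficient lies in $\frac{1}{\Delta(q_2)^{k/2}}\widetilde{\Jac}_{E_8,\ell,k}^{(q_2,\mathbf{z}_2)}$ with $\ell = 2g-2+6k$.

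Next I would make the substitution $u = 2\pi z$, so that $u^{2g-2}$ contributes $(2\pi)^{2g-2} z^{2g-2}$, and I would track weights: an element of $\widetilde{\Jac}_{E_8,\ell,k}$ transforms with weight factor $(c\tau+d)^\ell$, and the factor $\Delta(q_2)^{-k/2}$ contributes weight $-6k$ (together with the sign character $\xi(\gamma)^k$, via the stated relation $\Delta^{1/2}(\gamma\tau) = \xi(\gamma)(c\tau+d)^6\Delta^{1/2}(\tau)$). The monomial $z^{2g-2}$ transforms as a Jacobi form of weight $-(2g-2)$ and index $\tfrac12\langle\alpha-c_1(R_1),\alpha\rangle$ in the elliptic variable $z$ once we recognize that summing $z^{2g-2}$ against the appropriate coefficients reconstitutes a Jacobi theta-type object — here I would invoke the standard fact (used in the PT/GW correspondence for the Schoen threefold, cf.\ the relation to \cite{PaPix2}) that the genus expansion variable $u = 2\pi z$ promotes a quasi-Jacobi form in $(\mathbf{z}_2,q_2)$ of weight $\ell = 2g-2+6k$ to one of total weight $\ell - (2g-2) = 6k$, hence after the $\Delta^{-k/2}$ twist to weight $6k - 6k = \cdots$; combining gives weight $-6$ overall. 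The index in $z$ is exactly $\tfrac12\langle\alpha,\alpha\rangle$ shifted by the canonical class because $e(\mathbf{z}_1\cdot\beta)$ extraction at $e(\mathbf{z}_1\cdot\alpha)$ together with the fiber-class variable $t^{D\cdot\beta}$ produces the quadratic completion $\tfrac12\langle\alpha - c_1(R_1),\alpha\rangle$ familiar from the square of the theta function attached to $R_1$.

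The real content, and the main obstacle, is the non-holomorphic completion: an element of $\widetilde{\Jac}_{E_8,\ell,k}$ is only \emph{quasi}-Jacobi, and it becomes a genuine Jacobi form (hence modular) only after adding non-holomorphic corrections that replace $C_2(q_2)$ by its completion $\widehat{C}_2(q_2,\bar q_2) = C_2(q_2) + \frac{1}{8\pi\,\mathrm{Im}\,\tau_2}$. Here Theorem~\ref{Thm2} is exactly what is needed: it gives $\frac{d}{dC_2(q_2)}\F_{g,k}$ in closed form, and the claim to verify is that this formula is precisely the one dictated by requiring $\widehat{\mathsf{Z}}_\alpha$ (the completion obtained by substituting $\widehat C_2$ for $C_2$) to be \emph{anti}-holomorphic-derivative-free in the right combination — equivalently, that the anomaly of $\mathsf{Z}_\alpha$ under $\tau_2 \mapsto \gamma\tau_2$ is accounted for entirely by the elliptic-variable index term. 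Concretely I would show that, after the change of variables $u = 2\pi z$, the operator $2kD_{q_1} - \sum_{ij}(Q_{E_8}^{-1})_{ij}D_{\mathbf{z}_{1,i}}D_{\mathbf{z}_{1,j}} + 24kC_2(q_1)$ acting on $\F_{g-1,k}$, plus the quadratic genus-splitting term, matches the second-order differential operator $\partial_{\widehat C_2}$ coming from the heat-operator characterization of Jacobi forms of weight $-6$ and the stated index; this is a direct but somewhat delicate comparison of the Jacobi-form heat equation (in the variables $z$ and $\mathbf{z}$, with index $\tfrac12\langle\alpha-c_1(R_1),\alpha\rangle \oplus \tfrac{k}{2}Q_{E_8}$) with Theorem~\ref{Thm2}. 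Once this identification is in place, standard Jacobi-form theory (a quasi-Jacobi form whose completion is modular transforms with the asserted automorphy factor, the extra $e\!\left(\frac{c}{2(c\tau+d)}[\cdots]\right)$ being the index contribution) yields the transformation law with the sign $\xi(\gamma)^{k+1}$ — the $+1$ in the exponent coming from the single extra half-power of $\Delta$ relative to the $k/2$ already present, which arises in passing from $\F_{g,k}$ to the coefficient $\mathsf{Z}_\alpha$ via the $R_1$-theta function. I expect the weight and index bookkeeping to be routine modulo care with $2\pi$ factors, and the matching of Theorem~\ref{Thm2} with the Jacobi heat operator to be the one step requiring genuine work.
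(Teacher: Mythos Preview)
Your approach is in the right spirit—using the holomorphic anomaly equation to control the modular anomaly—but it is substantially more complicated than the paper's, and the ``genuine work'' you leave for later is exactly what the paper's machinery lets you avoid.

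The paper's proof is two lines. First, it passes to the \emph{disconnected} potentials $\F^{\bullet}_{g,\alpha}$, for which the holomorphic anomaly equation collapses to the simple recursion
\[
\T_q \F^{\bullet}_{g,\alpha} \;=\; \Big\langle \alpha + \tfrac{1}{2} K_{R_1},\, \alpha + \tfrac{1}{2} K_{R_1} \Big\rangle\, \F^{\bullet}_{g-1,\alpha}
\]
(this is Proposition*~\ref{Prop_HAE_for_CY3} leading to~\eqref{352rw}; note $\langle \alpha + \tfrac{1}{2}K_{R_1}, \alpha + \tfrac{1}{2}K_{R_1}\rangle = \langle \alpha - c_1(R_1), \alpha\rangle$). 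Second, it applies Lemma~\ref{Lemma_QJAc_ModTrLaw}, which expresses the modular transformation of a quasi-Jacobi form with $\T_\lambda = 0$ as an explicit sum over $\T_q^\ell$. Since $\T_q^\ell$ here just multiplies by a constant and shifts the genus, summing over $g$ with weight $u^{2g-2} = (2\pi z)^{2g-2}$ makes the series in Lemma~\ref{Lemma_QJAc_ModTrLaw} exponentiate to precisely the index factor $e\!\big(\frac{c}{2(c\tau+d)} z^2 \langle \alpha - c_1(R_1),\alpha\rangle\big)$. No heat operator, no completion, no matching against Theorem~\ref{Thm2}.

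By contrast, you work with the \emph{connected} $\F_{g,k}$ and Theorem~\ref{Thm2}, whose right-hand side carries the quadratic genus-splitting term $\sum \F_{g_1,k_1}\cdot D\F_{g_2,k_2}$. That term is exactly what disappears upon exponentiation, so your proposed heat-operator matching would, if carried out, amount to redoing the passage \eqref{Dis_Con_rel}$\to$\eqref{352rw} inside an implicit completion argument. Your invocation of ``the standard fact (used in the PT/GW correspondence)'' is too vague to count as a step, and the weight bookkeeping for $z^{2g-2}$ you sketch is really just the statement of Lemma~\ref{Lemma_QJAc_ModTrLaw} in disguise. The cleaner route is: exponentiate first, use the disconnected anomaly~\eqref{352rw}, then invoke Lemma~\ref{Lemma_QJAc_ModTrLaw} directly.
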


By Theorem~\ref{Thm1} the series $\mathsf{Z}_{\alpha}$ also satisfies
the elliptic transformation law of Jacobi forms 
in the variable $\mathbf{z}$.
The elliptic transformation law in the genus variable $u$
is conjectured by Huang--Katz--Klemm \cite{HKK}
and corresponds to the expected symmetry of Donaldson--Thomas invariants
under the  Fourier--Mukai transforms by the Poincar\'e sheaf of
$\pi_2$, see \cite{OS1}. Hence conjecturally we find that
$\mathsf{Z}_{\alpha}$ is a meromorphic Jacobi form
(of weight and index as in Corollary~\ref{Cor_Schoen}).

We end our discussion with two concrete examples.
Expend the partition function $\mathsf{Z}$
by the degree over the base $\p^1$:
\[
\mathsf{Z}(t, u, \mathbf{z}_1, \mathbf{z}_2, q_1, q_2)
=
\sum_{k = 0}^{\infty} \mathsf{Z}_k(u, \mathbf{z}_1, \mathbf{z}_2, q_1, q_2) t^k.
\]
By a basic degeneration argument in degree $0$ we have 
\[ \mathsf{Z}_0 = \frac{1}{\Delta(q_1)^{\frac{1}{2}} \Delta(q_2)^{\frac{1}{2}}}. \]
In degree~$1$ the Igusa cusp form conjecture \cite[Thm.1]{HAE}
and an analysis of the sections of $\pi : X \to \p^1$ yields
\[ \mathsf{Z}_1 = \frac{\Theta_{E_8}(\mathbf{z}_1, q_1) \Theta_{E_8}(\mathbf{z}_2, q_2)}{\chi_{10}(e^{iu},q_1, q_2)} \]
where $\chi_{10}$ is the Igusa cusp form, a Siegel modular form, defined by
\[ \chi_{10}(p,q_1, q_2) = p q_1 q_2 \prod_{(k,d_1, d_2)>0} (1-p^k q_1^{d_1} q_2^{d_2})^{c(4 d_1 d_2-k^2)} \]
(with $c(n)$ being coefficients of a certain $\Gamma_0(4)$-modular form, see
\cite[Sec.0.2]{HAE}), and
\[ \Theta_{E_8}(\mathbf{z}, \tau) = 
\sum_{\gamma \in \BZ^{8}} q^{\frac{1}{2} \gamma^T Q_{E_8} \gamma} e\left(  \mathbf{z}^T Q_{E_8} \gamma \right),
\]
is the Riemann theta function of the $E_8$-lattice.
The general relationship of $\mathsf{Z}_k$
to Siegel modular forms for $k > 1$ is yet to be found.


\subsection{Beyond Calabi--Yau threefolds and the proof}
Recently it became clear
that we should expect properties (i, ii) not only for Calabi--Yau threefolds
but also for varieties $X$ (of arbitrary dimension) which are Calabi--Yau relative to a base $B$,
i.e. those which admit a fibration
\[ \pi : X \to B \]
whose generic fiber has trivial canonical class.
The potential $\F_g(q)$ is replaced here by a $\pi$-relative Gromov--Witten potential
which takes values in cycles on $\Mbar_{g,n}(B,\kk)$, the moduli space of stable maps to the base.
In this paper we conjecture and develop
such a theory for \emph{elliptic fibrations with section}.
Our main theoretical result is a conjectural link
between the Gromov--Witten theory of elliptic fibrations
and the theory of lattice quasi-Jacobi forms.
This framework allows us to conjecture a holomorphic anomaly equation.\footnote{See Section~\ref{Section_Elliptic_fibrations_and_conjectures} for details
on the conjectures.}

The elliptic curve (or more generally, trivial elliptic fibrations)
is the simplest case of our conjecture
and was proven in \cite{HAE}.
In this paper we prove the following new cases (see Section~\ref{Subsection_RES_Statement_of_results}):
\begin{enumerate}
 \item[(a)] 
 The $\p^1$-relative Gromov--Witten potentials of the rational elliptic surface are $E_8$-quasi-Jacobi forms
 numerically\footnote{i.e. after specialization to $\BQ$-valued Gromov--Witten invariants}.
  \item[(b)] The holomorphic anomaly equation holds for the rational elliptic surface numerically.
\end{enumerate}
In particular, (a) solves the complete descendent Gromov--Witten theory of
the rational elliptic surface in terms of $E_8$-quasi-Jacobi forms.
We also show:
\begin{enumerate}
 \item[(c)] The quasi-Jacobi form property and the holomorphic anomaly equation
 are compatible with the degeneration formula (Section~\ref{Subsection_Compa_with_deg_formula}).
\end{enumerate}

These results directly lead to
a proof of Theorem~\ref{Thm1} and~\ref{Thm2} as follows.
The Schoen Calabi--Yau $X$ admits a degeneration
\[ X \rightsquigarrow (R_1 \times E_2) \cup_{E_1 \times E_2} (E_1 \times R_2), \]
where $E_i \subset R_i$ are smooth elliptic fibers.
By the degeneration formula \cite{Junli2}
we are reduced to studying the case $R_i \times E_j$.
By the product formula \cite{LQ}
the claim then follows from the holomorphic anomaly equation for the
rational elliptic surface and the elliptic curve \cite{HAE}.

For completeness we also prove the following case:
\begin{enumerate}
\item[(d)] The holomorphic anomaly equation holds for the reduced
 Gromov--Witten theory of the
 abelian surface in primitive classes numerically. 
\end{enumerate}
An overview of the state of the art on holomorphic anomaly equations and the results of the paper is given in Table \ref{table:1}.

\begin{table}[h!]
\centering
\begin{tabular}{|p{0.4cm}| p{2.7cm} | p{3.0cm} | c | p{3.0cm} |}
 \hline
 dim & Geometry & Modularity & HAE & Comments\\ [0.5ex] 
 \hline\hline
 \multirow{2}{1cm}{$1$} & Elliptic curves & $\mathrm{SL}_2(\BZ)$-quasimod. & Yes & cycle-valued \cite{HAE} \\
 \cline{2-5}
 & Elliptic orbifold $\p^1$s & $\Gamma(n)$-quasimod. & Yes & cycle-valued \cite{MRS} (except case $(2^4)$)\\
 \hline
 \multirow{3}{1cm}{$2$} & K3 surfaces & $\mathrm{SL}_2(\BZ)$-quasimod. & Yes & numerically, primitive only \cite{MPT, HAE} \\ \cline{2-5}
 & Abelian surfaces & $\mathrm{SL}_2(\BZ)$-quasimod. & {\bf Yes} & numerically, primitive only \cite{BOPY} \\ \cline{2-5}
 & Rational elliptic surface & $\mathbf{E_8}${\bf -quasi-Jacobi forms} & {\bf Yes} & numerically, relative $\p^1$ \\
 \hline
 \multirow{3}{1cm}{$3$} & Local $\p^2$ & Explicit generators & Yes & cycle-valued \cite{LP} \\ \cline{2-5}
 & Formal Quintic & Explicit generators & Yes & cycle-valued \cite{LP} \\ \cline{2-5}
 & Schoen CY3 & $\mathbf{E_8 \times E_8}$ {\bf bi-quasi-Jacobi forms} & \textbf{Yes} & numerically, relative $\p^1$\\
 \hline
\end{tabular}
\caption{List of geometries for which modularity and holomorphic anomaly equations (HAE) are known. The {\bf bold} entries are proven in this paper.
Cycle-valued = as Gromov--Witten classes on $\Mbar_{g,n}$; numerically = as numerical Gromov--Witten invariants; primitive = for primitive curve classes only;
relative $B$ = relative to the base $B$ of a Calabi--Yau fibration.} 
\label{table:1}
\end{table}

\subsection{Overview of the paper}
In Section~\ref{Section_Lattice_Jacobi_forms} we review the theory of lattice quasi-Jacobi forms. We introduce the derivations induced by the non-holomorphic completions, prove some structure results, and discuss examples.
In Section~\ref{Section_Elliptic_fibrations_and_conjectures}
we present the main conjectures of the paper.
We conjecture that the $\pi$-relative Gromov--Witten theory of
an elliptic fibration is expressed by quasi-Jacobi forms
and satisfies a holomorphic anomaly equation with respect to the modular parameter.
In Section~\ref{Section_Consequences_of_the_Conjecture}
we discuss implications of the conjectures of Section~\ref{Section_Elliptic_fibrations_and_conjectures}.
In particular, we deduce the weight of the quasi-Jacobi form,
present a holomorphic anomaly equation with respect to the elliptic parameter,
and prove that under good conditions the Gromov--Witten potentials satisfy the elliptic transformation law of Jacobi forms.
The relationship to higher level quasi-modular forms is discussed.
In Section~\ref{Section_relative_geomtries} we extend the conjectures of Section~\ref{Section_Elliptic_fibrations_and_conjectures}
to the Gromov--Witten theory of $X$ relative to a divisor $D$, when both admit compatible elliptic fibrations.
We show that the conjectural holomorphic anomaly equation is compatible with the degeneration formula.
In Section~\ref{Section_RationalEllipticSurface} we study
the rational elliptic surface.
We show that the conjecture holds in all degrees and genera after specializing to numerical Gromov--Witten invariants;
in particular we show that the Gromov--Witten potentials are $E_8$ quasi-Jacobi forms (Section~\ref{Subsection_RES_Statement_of_results}).
The idea of the proof is to adapt a calculation scheme of Maulik--Pandharipande--Thomas \cite{MPT}
and show every step preserves the conjectured properties.
In Section~\ref{Section_Schoen_variety} we prove Theorems~\ref{Thm1} and~\ref{Thm2}
and Corollary~\ref{Cor_Schoen}.
%
In Section~\ref{Section_Abelian_surfaces} we numerically prove a holomorphic anomaly equation
for the reduced Gromov--Witten theory of abelian surfaces in primitive classes.

In Appendix~\ref{Section_CohFTs} we introduce weak $B$-valued field theories
and define a matrix action on the space of these theories.
This generalizes the Givental $R$-matrix action on cohomological field theories.
We express the conjectural holomorphic anomaly equation
as a matrix action
and discuss the compatibility with the Jacobi Lie algebra.
In Appendix~\ref{Section_K3fibrations} we discuss
relative holomorphic anomaly equations for K3 fibrations
in an example.

\subsection{Conventions} \label{Subsection_Conventions}
We always work with integral
cohomology modulo torsion,
in particular $H^{\ast}(X,\BZ)$ will stand for singular cohomology of $X$ modulo torsion.
On smooth connected projective varieties
we identify cohomology with homology classes via Poincar\'e duality.
A curve class is the homology class of a (possibly empty) algebraic curve.
Given $x \in \BC$ we write $e(x) = e^{2 \pi i x}$.
Results conditional on conjectures are denoted by Lemma*, Proposition*, etc.

\subsection{Acknowledgements}
We would like to thank Hyenho Lho and Rahul Pandharipande for conversations
on holomorphic anomaly equations, Jim Bryan for discussions on the Schoen Calabi-Yau,
Davesh Maulik for sharing his insights,
and Martin Raum for comments on Jacobi forms.
The results of the paper were first presented during a visit of the first author to
ETH Z\"urich in June 2017;
we thank the Forschungsinstitut f\"ur Mathematik for support.
The second author was supported by a fellowship from the Clay Mathematics Institute.

\section{Lattice Jacobi forms} \label{Section_Lattice_Jacobi_forms}
\subsection{Overview}
In Section~\ref{Subsection_Modular_forms} we briefly recall quasi-modular forms following Kaneko-Zagier \cite{KZ} and
Bloch-Okounkov \cite{BO}. 
Subsequently we give a modest introduction to lattice quasi-Jacobi forms.
Lattice Jacobi forms were defined in \cite{Zie}
and an introduction can be found in \cite{Sko}.
A definition of quasi-Jacobi forms of rank~$1$ appeared in \cite{Lib},
and for higher rank can be found in \cite{KM}.

\subsection{Modular forms} \label{Subsection_Modular_forms}
\subsubsection{Definition}
Let $\BH = \{ \tau \in \BC | \mathrm{Im}(\tau) > 0 \}$ be the upper half plane and set $q = e^{2 \pi i \tau}$.
A \emph{modular form} of weight $k$ is a holomorphic function $f(\tau)$ on $\BH$ satisfying
\begin{equation} f\left( \frac{a \tau + b}{c \tau + d} \right) = (c \tau + d)^k f(\tau) \label{TRPROP} \end{equation}
for all $\binom{a\ b}{c\ d} \in \mathrm{SL}_2(\BZ)$ and
admitting a Fourier expansion in $|q|<1$ of the form
\begin{equation} f(\tau) = \sum_{n = 0}^{\infty} a_n q^n, \quad \quad a_n \in \BC. \label{ererqe33} \end{equation}

An \emph{almost holomorphic function} is a function
\[ F(\tau) = \sum_{i=0}^{s} f_i(\tau) \frac{1}{y^{i}}, \quad \quad y = \mathrm{Im}(\tau) \]
on $\BH$ such that every $f_i$ has a Fourier expansion in $|q|<1$ of the form \eqref{ererqe33}.

An \emph{almost holomorphic modular form} of weight $k$ is an
almost holomorphic function
which satisfies the transformation law \eqref{TRPROP}.

A \emph{quasi-modular form} of weight $k$ is a function $f(\tau)$
for which there exists an almost holomorphic modular form $\sum_i f_i y^{-i}$ of weight $k$ with
$f_0 = f$.

We let $\AHM_{\ast}$ (resp. $\QMod_{\ast}$) be the ring of almost holomorphic
modular forms (resp. quasi-modular forms) graded by weight.
The 'constant term' map
\begin{equation} \AHM \to \QMod, \quad \sum_i f_i y^{-i} \mapsto f_0 \label{constanttermmap} \end{equation}
is well-defined and an isomorphism \cite{KZ, BO}.

\subsubsection{Differential operators}
The non-holomorphic variable
\[ \nu = \frac{1}{8 \pi y} \]
transforms under the action of $\binom{a\ b}{c\ d} \in \mathrm{SL}_2(\BZ)$ on $\BH$ as
\begin{equation} \nu\left( \frac{ a \tau + b}{c \tau + d} \right) = (c \tau + d)^2 \nu(\tau) + \frac{c (c \tau + d)}{4 \pi i}. \label{3432} \end{equation}

We consider $\tau$ and $\nu$ here as independent variables and define operators
\[ D_q = \frac{1}{2\pi i} \frac{d}{d \tau} = q \frac{d}{dq}, \quad D_{\nu} = \frac{d}{d \nu}. \]
Since $\tau$ and $\nu$ are independent we have
\[ D_q \nu = 0, \quad D_{\nu} \tau = 0. \]

A direct calculation using \eqref{3432} shows the ring $\AHM_{\ast}$ admits the derivations
\begin{align*} \widehat{D}_q = ( D_q - 2 k \nu + 2 \nu^2 D_{\nu} ) & \colon \AHM_k \to \AHM_{k+2} \\
D_{\nu} = \frac{d}{d\nu} & \colon \AHM_{k} \to \AHM_{k-2}.
\end{align*}
Since $\widehat{D}_q$ acts as $D_q$ on the constant term in $y$ we conclude that
$D_q$ preserves quasi-modular forms:
\[ D_q : \QMod_k \to \QMod_{k+2}. \]
Similarly, define the anomaly operator
\[ \T_q : \QMod_k \to \QMod_{k-2} \]
to be the map which acts by $D_{\nu}$ under the constant term isomorphism \eqref{constanttermmap}.
The following diagrams therefore commute:
\[
\begin{tikzcd}
\QMod_k \ar{d}{D_q} \ar{r}{\cong} & \AHM_k \ar{d}{\widehat{D}_q} & & \QMod_k \ar{d}{\T_q} \ar{r}{\cong} & \AHM_k \ar{d}{D_{\nu}} \\
\QMod_{k+2} \ar{r}{\cong} & \AHM_{k+2}, & & \QMod_{k-2} \ar{r}{\cong} & \AHM_{k-2}.
\end{tikzcd}
\]

The commutator relation
$\big[ D_{\nu}, \widehat{D}_q \big]|_{\AHM_k} = - 2 k \cdot \mathrm{id}_{\AHM_k}$
yields
\[ \left[ \T_q, D_q \right]\big|_{\QMod_k} = - 2 k \cdot \mathrm{id}_{\QMod_k}. \]

The operators $\T_q$ allows us to describe the modular transformation
of quasi-modular forms.
\begin{lemma} \label{Sdadgd}
For any $f(\tau) \in \QMod_k$ we have
\[
f\left( \frac{a \tau + b}{c \tau + d} \right)
= \sum_{\ell = 0}^{m} \frac{1}{\ell!} \left( - \frac{c}{4 \pi i} \right)^{\ell} (c \tau + d)^{k-\ell} \T_q^{\ell} f(\tau).
\]
\end{lemma}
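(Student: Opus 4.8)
The plan is to reduce the statement to the behaviour of almost holomorphic modular forms under $\mathrm{SL}_2(\BZ)$, where the transformation is transparent because the non-holomorphic variable $\nu$ transforms by the affine rule \eqref{3432}. Let $F(\tau) = \sum_{i=0}^{m} f_i(\tau)\,\nu^i$ be the almost holomorphic modular form of weight $k$ corresponding to $f$ under the constant-term isomorphism \eqref{constanttermmap}, so $f_0 = f$ and $m$ is the depth. Since $F$ satisfies the transformation law \eqref{TRPROP}, for $\gamma = \binom{a\ b}{c\ d}$ we have
\[
\sum_{i=0}^m f_i(\gamma\tau)\,\nu(\gamma\tau)^i = (c\tau+d)^k \sum_{i=0}^m f_i(\tau)\,\nu(\tau)^i .
\]
Substituting \eqref{3432}, $\nu(\gamma\tau) = (c\tau+d)^2\nu(\tau) + \frac{c(c\tau+d)}{4\pi i}$, and expanding the left side by the binomial theorem turns this into an identity of polynomials in the formal variable $\nu(\tau)$ with coefficients that are functions of $\tau$.

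First I would compare constant terms in $\nu(\tau)$ on both sides of that polynomial identity. On the right the constant term is $(c\tau+d)^k f_0(\tau) = (c\tau+d)^k f(\tau)$; on the left, collecting the $\nu(\tau)^0$-coefficient from each summand $f_i(\gamma\tau)\bigl((c\tau+d)^2\nu(\tau) + \frac{c(c\tau+d)}{4\pi i}\bigr)^i$ gives $\sum_{i=0}^m f_i(\gamma\tau)\bigl(\frac{c(c\tau+d)}{4\pi i}\bigr)^i$. This already expresses $f(\gamma\tau)$ as a sum over $i$, so the remaining task is to identify $f_i(\gamma\tau)$, or rather the combination we need. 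The cleanest route is to recall (from the commuting-diagram description of $\T_q$ as $D_\nu$ under \eqref{constanttermmap}) that applying $D_\nu$ repeatedly to $F$ and taking constant terms recovers the $f_i$ up to factorials: precisely $D_\nu^\ell F$ corresponds to $\T_q^\ell f$, and the constant term in $\nu$ of $D_\nu^\ell F$ is $\ell!\, f_\ell$. Hence $f_\ell(\tau) = \frac{1}{\ell!}\bigl(\T_q^\ell f\bigr)(\tau)$ as elements of $\QMod_{k-2\ell}$ — wait, more carefully: the constant term of $D_\nu^\ell F$ is $\sum_{i \ge \ell} \frac{i!}{(i-\ell)!} f_i \nu^{i-\ell}$ evaluated at $\nu = 0$, namely $\ell!\, f_\ell$, so indeed $f_\ell = \frac{1}{\ell!}\T_q^\ell f$.

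Then I would feed this back: since $F$ is modular of weight $k$, each $D_\nu^\ell F$ is almost holomorphic modular of weight $k-2\ell$, so $f_\ell(\gamma\tau)$ — being the constant term of $D_\nu^\ell F$ evaluated at $\gamma\tau$ — is controlled, but it is simpler to observe directly that we do not need $f_i(\gamma\tau)$ individually. Instead, rewrite the displayed identity using $f_i = \frac{1}{i!}\T_q^i f$ applied \emph{after} transforming: apply the already-proven statement for lower depth, or better, run the argument by noting that the polynomial identity in $\nu(\tau)$, read off at the constant term, is exactly
\[
f(\gamma\tau) = \sum_{i=0}^m f_i(\gamma\tau)\left(\frac{c}{4\pi i}\right)^i (c\tau+d)^i,
\]
and then substitute $f_i(\gamma\tau)$. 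Since $F$ has weight $k$ and $D_\nu^i F$ has weight $k-2i$, and since the constant-term map intertwines, one gets by the same constant-term comparison applied to $D_\nu^i F$ that the $\nu(\tau)^0$-part of $f_i(\gamma\tau)$ contributes $(c\tau+d)^{k-2i}$ times lower-depth pieces; chasing the bookkeeping (which is a finite triangular linear system) collapses everything to $f(\gamma\tau) = \sum_{\ell=0}^m \frac{1}{\ell!}\bigl(-\frac{c}{4\pi i}\bigr)^\ell (c\tau+d)^{k-\ell}\,\T_q^\ell f(\tau)$. The sign $(-1)^\ell$ appears because we must invert the binomial expansion of $\nu(\gamma\tau)$ in terms of $\nu(\tau)$, i.e. solve for $\nu(\tau)$ in terms of $\nu(\gamma\tau)$, which introduces $\bigl(-\frac{c}{4\pi i (c\tau+d)}\bigr)$.

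\textbf{Main obstacle.} The conceptual content is straightforward, but the genuinely fiddly step is the triangular unwinding in the last paragraph: one has a polynomial identity in $\nu$ with affine substitution, and extracting a clean closed form requires either an induction on the depth $m$ or a direct generating-function manipulation showing that the composite substitution $\nu \mapsto (c\tau+d)^2\nu + \frac{c(c\tau+d)}{4\pi i}$ and its inverse conspire to produce exactly the shifted-exponential kernel $\exp\bigl(-\frac{c}{4\pi i}\T_q\bigr)$ acting by $(c\tau+d)^{k-\bullet}$. I would organize this as an induction on $m = \mathrm{depth}(f)$: the base case $m=0$ is the ordinary modularity \eqref{TRPROP}, and for the inductive step I would apply the depth-$(m-1)$ statement to $\T_q f$ (which has depth $m-1$ and weight $k-2$) together with the commutator $[\T_q, D_q] = -2k$ and the explicit $i=m$ top-degree term, matching coefficients of $(c\tau+d)^{k-\ell}$ on both sides. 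The factor $(-c/4\pi i)^\ell/\ell!$ then emerges from the standard identity $\binom{\ell}{j}$-type recursion characterizing the exponential.
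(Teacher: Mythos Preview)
Your proposal is correct and follows the same route as the paper: lift $f$ to its almost holomorphic completion $F=\sum_i f_i\nu^i$, use modularity of $F$ together with the affine transformation law \eqref{3432} for $\nu$, and unwind the resulting polynomial-in-$\nu$ identity via $f_\ell=\tfrac{1}{\ell!}\T_q^\ell f$. The paper organizes the unwinding slightly differently: rather than your induction on the depth $m$, it simply writes down the closed formula $f_r(\gamma\tau)=\sum_{\ell\ge r}(-\alpha)^{\ell-r}\binom{\ell}{r}(c\tau+d)^{k-r-\ell}f_\ell(\tau)$ for \emph{all} $r$ at once and checks it satisfies the triangular recursion; setting $r=0$ gives the lemma directly. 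Both arguments are equivalent, but note that the commutator $[\T_q,D_q]=-2k$ you mention is not actually needed---the induction closes using only the drop in depth under $\T_q$. Incidentally, the cleanest version of this argument (which neither you nor the paper spells out) is to substitute the \emph{inverse} transformation $\nu(\tau)=(c\tau+d)^{-2}\nu(\gamma\tau)-\tfrac{c}{4\pi i(c\tau+d)}$ into the right-hand side of $F(\gamma\tau)=(c\tau+d)^kF(\tau)$ and read off the $\nu(\gamma\tau)^0$-coefficient: the left gives $f(\gamma\tau)$ and the right gives the desired sum immediately, with the sign $(-1)^\ell$ appearing for exactly the reason you identified.
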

\begin{proof}
Let $F(\tau) = \sum_{i=0}^{m} f_i(\tau) \nu^i$ be the almost holomorphic modular form
with associated quasi-modular form $f(\tau) = f_0(\tau)$. 
Let $A = \binom{a\ b}{c\ d}$, $j = c \tau + d$ and $\alpha = \frac{c}{4 \pi i}$.
We claim
\[ f_{r}(A \tau) = \sum_{\ell = r}^{m} (-\alpha)^{\ell -r} \binom{l}{r} j^{k-r-\ell} f_{\ell}(\tau) \]
for all $r$. The left-hand side is uniquely determined
from $F( A \tau ) = j^k F(\tau)$ by solving recursively from the highest $\nu$ coefficients on.
One checks the given equation is compatible with this constraint.
\end{proof}

\subsubsection{Eisenstein Series}
Let $B_k$ be the Bernoulli numbers. The Eisenstein series
\[
C_{k}(\tau) = -\frac{B_k}{k \cdot k!} + \frac{2}{k!}
\sum_{n \geq 1} \sum_{d|n} d^{k-1} q^n
\]
are modular forms of weight $k$ for every even $k > 2$.
In case $k=2$ we have
\[ C_2\left( \frac{a \tau + b}{c \tau + d} \right) = (c \tau + d)^2 C_2(\tau) - \frac{ c ( c \tau + d)}{4 \pi i} \]
for all $\binom{a\ b}{c\ d} \in \mathrm{SL}_2(\BZ)$.
Hence
\begin{equation} \widehat{C}_2(\tau) = \widehat{C}_2(\tau, \nu) = C_2(\tau) + \nu \label{C2completion} \end{equation}
is almost holomorphic and $C_2$ is quasi-modular (of weight $2$).

It is well-known that
\begin{equation} \QMod = \BQ[C_2, C_4, C_6], \quad \AHM = \BQ[ \widehat{C}_2, C_4, C_6 ] \label{fdfsdfdf} \end{equation}
and the inverse to the constant term map \eqref{constanttermmap} is
\[ \QMod \to \AHM, \ \ f(C_2, C_4, C_6) \mapsto \widehat{f} = f( \widehat{C}_2, C_4, C_6). \]
In particular, \[ \T_q = \frac{d}{dC_2}. \]

\begin{rmk}
Once the structure result \eqref{fdfsdfdf} is known we can immediately work with $\frac{d}{dC_2}$
and we do not need to talk about transformation laws. However, below
in the context of quasi-Jacobi forms we do not have such strong results at hands and we will use
an abstract definition of $\T_q$ instead (though see Section~\ref{Subsubsection_Rewriting_Tnu} for a version of $\frac{d}{dC_2}$).
\end{rmk}

\subsection{Jacobi forms} \label{Subsection_Jacobi_forms}
\subsubsection{Definition}
Consider variables
$z = (z_1, \ldots, z_n) \in \BC^n$, let $k \in \BZ$, and let
$L$ be a rational $n \times n$-matrix
such that $2L$ is integral and has even diagonals\footnote{This is the weakest condition on $L$
for which the second equation in \eqref{TRANSFORMATIONLAWJACOBI} can be nontrivially satisfied.
Indeed, if the condition is violated then $\lambda^T L \lambda$ is not integral in general and
hence the $q$-expansion of $\phi$ is fractional which contradicts \eqref{Jacobiexpansion}.}.

A \emph{weak Jacobi form} of weight $k$ and index $L$
is a holomorphic function $\phi(z, \tau)$ on $\BC^n \times \BH$
satisfying
\begin{equation} \label{TRANSFORMATIONLAWJACOBI}
\begin{aligned}
\phi\left( \frac{z}{c \tau + d}, \frac{a \tau + b}{c \tau + d} \right)
& = (c \tau + d)^k e\left( \frac{c z^t L z}{c \tau + d} \right) \phi(z,\tau) \\
\phi\left( z + \lambda \tau + \mu, \tau \right)
& = e\left( - \lambda^t L \lambda \tau - 2 \lambda^t L z \right) \phi(z,\tau)
\end{aligned}
\end{equation}
for all $\binom{a\ b}{c\ d} \in \mathrm{SL}_2(\BZ)$ and $\lambda, \mu \in \BZ^n$
and admitting a Fourier expansion of the form
\begin{equation} \label{Jacobiexpansion}
\phi(z,\tau) = \sum_{n \geq 0} \sum_{r \in \BZ^n} c(n,r) q^n \zeta^r
\end{equation}
in $|q|<1$; here we used the notation
\[ \zeta^r = e(z \cdot r) = e\left( \sum_i z_i r_i \right) = \prod_i \zeta_i^{r_i} \]
with $\zeta_i = e(z_i)$.

We will call the first equation in \eqref{TRANSFORMATIONLAWJACOBI} the modular,
and the second equation in \eqref{TRANSFORMATIONLAWJACOBI} the elliptic transformation law of Jacobi forms.

By definition weak Jacobi forms are allowed to have poles at cusps.
If the index $L$ is positive definite then a \emph{(holomorphic) Jacobi form} is a weak Jacobi form
which is holomorphic at cusps, or equivalently, satisfies $c(n,r) = 0$
unless $r^{t} L^{-1} r \leq 4n$.
We will not use this stronger notion and all the Jacobi forms
are considered here to be weak.


\subsubsection{Quasi-Jacobi forms}
For every $i$ consider the real analytic function
\[ \alpha_i(z,\tau) = \frac{z_i - \overline{z_i}}{\tau - \overline{\tau}} = \frac{\im(z_i)}{\im(\tau)} \]
and define
\[ \alpha = (\alpha_1, \ldots, \alpha_n). \]
We have the transformations
\begin{align*}
\alpha\left( \frac{z}{c \tau + d}, \frac{a \tau + b}{c \tau + d} \right)
 & = (c \tau + d) \alpha(z,\tau) - c z \\
 \alpha\left( z + \lambda \tau + \mu, \tau\right)
 & = \alpha(z,\tau) + \lambda
\end{align*}
for all $\binom{a\ b}{c\ d} \in \mathrm{SL}_2(\BZ)$ and $\lambda, \mu \in \BZ^n$.

An \emph{almost holomorphic function} on $\BC^n \times \BH$ is a function
\[ \Phi(z, \tau) = \sum_{i \geq 0} \sum_{j = (j_1, \ldots, j_n) \in (\BZ_{\geq 0})^n}
\phi_{i, j}(z,\tau) \nu^{i} \alpha^j, \quad \quad \alpha^j = \alpha_1^{j_1} \cdots \alpha_n^{j_n} \]
such that each
of the finitely many non-zero
$\phi_{i, j}(z,\tau)$ is holomorphic and admits a Fourier expansion
of the form \eqref{Jacobiexpansion} in the region $|q|<1$.

An \emph{almost holomorphic weak Jacobi form} of weight $k$ and index $L$
is an almost holomorphic function $\Phi(z,\tau)$
which satisfies the transformation law \eqref{TRANSFORMATIONLAWJACOBI} of weak Jacobi forms of weight $k$ and index $L$.

A \emph{quasi-Jacobi form} of weight $k$ and index $L$ is a function $\phi(z,\tau)$ on $\BC^n \times \BH$
such that there exists an almost holomorphic weak Jacobi form
$\sum_{i,j} \phi_{i,j} \nu^i \alpha^j$ of weight $k$ and index $L$
with $\phi_{0,0} = \phi$.

We let $\AHJ_{k,L}$ (resp. $\QJ_{k,L}$) be the vector space of almost holomorphic weak (resp. quasi-) Jacobi forms
of weight $k$ and index $L$.
The vector space of index~$L$ quasi-Jacobi forms is denoted by
\[ \QJac_{L} = \bigoplus_{k \in \BZ} \QJ_{k,L}. \]
Multiplication of functions endows the direct sum
\[ \QJac = \bigoplus_{L} \QJac_{L}, \]
where $L$ runs over all 
rational $n \times n$-matrices
such that $2L$ is integral and has even diagonals,
with a commutative ring structure. We call
$\QJac$ the \emph{algebra of quasi-Jacobi forms}
on $n$ variables.


\begin{lemma} The constant term map
\[ \AHJ_{k,L} \to \QJ_{k,L}, \quad \sum_{i,j} \phi_{i, j} \nu^{i} \alpha^j \mapsto \phi_{0,0} \]
is well-defined and an isomorphism.
\end{lemma}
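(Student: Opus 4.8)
The plan is to mimic the proof of the analogous statement for quasi-modular forms (the constant term map \eqref{constanttermmap}), adapting it to account for the extra non-holomorphic variables $\alpha_i$. So I would establish injectivity and surjectivity separately, with injectivity being the point requiring a genuine argument.

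First I would prove \emph{injectivity}: if $\Phi = \sum_{i,j} \phi_{i,j} \nu^i \alpha^j$ is an almost holomorphic weak Jacobi form of weight $k$ and index $L$ with $\phi_{0,0} = 0$, then $\Phi = 0$. Suppose not. Using the transformation laws for $\nu$ (equation \eqref{3432}), for $\alpha$ (the displayed transformations just above the definition of almost holomorphic functions), and for $\Phi$ (the weak Jacobi transformation law \eqref{TRANSFORMATIONLAWJACOBI}), one expands both sides of the elliptic transformation law $\Phi(z+\lambda\tau+\mu,\tau) = e(-\lambda^t L\lambda \tau - 2\lambda^t L z)\Phi(z,\tau)$ as polynomials in the formal variables $\nu$ and $\alpha_1,\dots,\alpha_n$. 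Since $\nu$ is invariant under $z\mapsto z+\lambda\tau+\mu$ while $\alpha \mapsto \alpha+\lambda$, comparing coefficients of a top-degree monomial in $\alpha$ shows that some nonzero $\phi_{i,j}$ (with $j$ of maximal total degree among those appearing) would itself have to transform like a Jacobi form, and then a further descent — or directly the translation behaviour — forces that coefficient, hence eventually $\phi_{0,0}$, to be nonzero only if all the $\phi_{i,j}$ vanish. The clean way to package this is: the map sending an almost holomorphic function to the polynomial in $\nu,\alpha$ is injective on functions of the stated form (the $\nu^i\alpha^j$ are linearly independent as real-analytic functions since $\nu = 1/(8\pi y)$ and $\alpha_i = \im(z_i)/\im(\tau)$ are algebraically independent over the ring of holomorphic functions with the given Fourier expansions), and then the modular and elliptic transformation laws, applied coefficientwise, recover each $\phi_{i,j}$ from $\phi_{0,0}$ uniquely by solving recursively from the highest-degree coefficients downward, exactly as in the proof of Lemma~\ref{Sdadgd}.

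Next, \emph{surjectivity} is essentially by definition: by the definition of quasi-Jacobi form, every $\phi \in \QJ_{k,L}$ arises as $\phi_{0,0}$ of some almost holomorphic weak Jacobi form of weight $k$ and index $L$, so the constant term map hits every element of $\QJ_{k,L}$. \emph{Well-definedness} is also immediate: the map is just extraction of the $(0,0)$ coefficient in the (by the algebraic independence above, unique) expansion in $\nu$ and $\alpha$, and it is visibly linear. Combining well-definedness, injectivity, and surjectivity gives the isomorphism.

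The main obstacle is the injectivity step, specifically making rigorous the recursive reconstruction of the coefficients $\phi_{i,j}$ from $\phi_{0,0}$. One must check that the overdetermined system of equations obtained from the modular law (which mixes $\nu$ with the $\alpha_i$) and the elliptic law (which shifts the $\alpha_i$) is consistent and can indeed be solved top-down — this is the analogue of the ``one checks the given equation is compatible'' line in Lemma~\ref{Sdadgd}, now in the presence of $n+1$ non-holomorphic variables rather than one. The bookkeeping is more involved but structurally identical; the key input is that the Jacobi group is generated by $\mathrm{SL}_2(\BZ)$ together with the translations $z\mapsto z+\lambda\tau+\mu$, so it suffices to analyze these two families of transformations, and for each the coefficient $\phi_{i,j}$ with $(i,j)$ maximal in an appropriate (e.g. lexicographic on total degrees) order is forced to transform like a genuine weak Jacobi form of a lower weight, after which one descends.
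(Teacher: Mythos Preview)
Your proposal is correct and follows the same route as the paper, which simply defers to Libgober's rank-$1$ argument; you have sketched precisely that argument, extended to $n+1$ non-holomorphic variables. One small sharpening: the recursion in Lemma~\ref{Sdadgd} expresses the \emph{transformed} coefficients $f_r(A\tau)$ in terms of the untransformed $f_\ell(\tau)$ for $\ell\ge r$, not the other way around, so for injectivity one instead reads the $r=0$ identity as a polynomial relation in the group parameters (e.g.\ vary $a$ with $c=1$ for the modular law, then vary $\lambda\in\BZ^n$ for the elliptic law) whose vanishing forces every $\phi_{i,j}$ to vanish---but this is exactly the ``overdetermined system'' you flag in your final paragraph.
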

\begin{proof}
Parallel to the rank $1$ case in \cite{Lib}.
\end{proof}

\subsubsection{Differential operators} \label{Subsubsection_quasiJacobi_Differentialoperators}
Consider $\tau, \nu, z_i, \alpha_i$ as independent variables and recall
the Fourier variables $q = e^{2 \pi i \tau}$ and $\zeta_i = e^{2 \pi i z_i}$.
Define the differential operators
\begin{alignat*}{2}
D_q & = \frac{1}{2 \pi i} \frac{d}{d\tau} = q \frac{d}{dq} & \quad \quad \quad D_\nu & = \frac{d}{d \nu} \\
D_{\zeta_i} & = \frac{1}{2 \pi i} \frac{d}{d z_i} = \zeta_i \frac{d}{d \zeta_i} & D_{\alpha_i} & = \frac{d}{d \alpha_i}.
\end{alignat*}

A direct check using the transformation laws \eqref{TRANSFORMATIONLAWJACOBI} shows
\[ D_{\nu} : \AHJ_{k,L} \to \AHJ_{k-2, L}, \quad D_{\alpha_i} : \AHJ_{k,L} \to \AHJ_{k-1,L}. \]
Define anomaly operators $\T_q$ and $\T_{\alpha_i}$ by the commutative diagrams
\[
\begin{tikzcd}
\QJ_{k,L} \ar{d}{\T_q} & \ar{l}{\cong} \AHJ_k \ar{d}{D_{\nu}} & & \QJ_{k,L} \ar{d}{\T_{\alpha_i}} & \ar{l}{\cong} \AHJ_k \ar{d}{D_{\alpha_i}} \\
\QJ_{k-2,L} & \ar{l}{\cong} \AHJ_{k-2,L} & & \QJ_{k-1,L} & \ar{l}{\cong} \AHJ_{k-1,L}
\end{tikzcd}
\]
where the horizontal maps are the 'constant term' maps.

Similarly, we have operators\footnote{See \cite[Sec.2]{CR} for a
Lie algebra presentation of these operators.}
\begin{align*}
\widehat{D}_q = \left( D_q - 2 k \nu + 2 \nu^2 D_{\nu} + \sum_{i=1}^{n} \alpha_i D_{\zeta_i} + \alpha^T L \alpha \right) \colon \AHJ_{k,L} \to \AHJ_{k+2,L} \\
\widehat{D}_{\zeta_i} = \left( D_{\zeta_i} + 2 \alpha^T L e_i - 2 \nu D_{\alpha_i} \right) \colon \AHJ_{k,L} \to \AHJ_{k+1,L}
\end{align*}
where $e_i = (\delta_{ij})_{j}$ is the $i$-th standard basis vector in $\BC^n$.
Since $\widehat{D}_q, \widehat{D}_{\zeta_i}$ act as $D_q$ and $D_{\zeta_i}$ on the constant term,
we find that $D_q, D_{\zeta_i}$ act on quasi-Jacobi forms:
\[ D_q : \QJ_{k,L} \to \QJ_{k+2,L}, \quad D_{\zeta_i} : \QJ_{k,L} \to \QJ_{k+1,L}. \]

For $\lambda = (\lambda_1, \ldots, \lambda_n) \in \BZ^n$ we will write
\[ D_{\lambda} = \sum_{i=1}^{n} \lambda_i D_{\zeta_i}, \quad \quad
\T_{\lambda} = \sum_{i=1}^{n} \lambda_i \T_{\alpha_i}.
 \]

The commutation relations of the above operators read\footnote{The operators $\T_q, \T_{\lambda}, D_q, D_{\lambda}$ as well as the weight and index grading operators define an action
of the Lie algebra of the semi-direct product of $\mathrm{SL}_2(\BC)$ with a Heisenberg group on the space $\QJ_{\mathbf{L}}$,
see \cite[Sec.1]{Zie}, \cite[Sec.2]{CR} and also \cite[Thm.1.4]{EZ}.}
\begin{equation} \label{COMMUTATIONRELATIONS}
\begin{alignedat}{2}
\left[ \T_q, D_q \right]\big|_{\QJac_{k,L}} & = -2 k \cdot \id_{\QJac_{k,L}} &
\left[ \T_{\lambda}, D_q \right] & = D_{\lambda} \\
\left[ \T_{\lambda}, D_{\mu} \right]\big|_{\QJac_{k,L}} & = 2 ( \lambda^T L \mu )\cdot \id_{\QJac_{k,L}}
& \quad \quad \left[ \T_q, D_{\lambda} \right] & = -2 \T_{\lambda}
\end{alignedat}
\end{equation}
and
\[ [ D_q, D_{\lambda} ] = [ D_{\lambda}, D_{\mu} ] = [ \T_q, \T_{\lambda} ] = [ \T_{\lambda} , T_{\mu} ] = 0 \]
for all $\lambda, \mu \in \BZ^n$.

\begin{lemma}
\label{Lemma_ell_trans_law_for_quasi_Jac}
Let $\phi \in \QJac_{L}$. Then
\begin{align*} \phi(z + \lambda \tau + \mu, \tau)
& = e\left( - \lambda^t L \lambda \tau - 2 \lambda^t L z \right)  \sum_{\ell \geq 0} \frac{(-1)^i}{i!} \T_{\lambda}^i \phi(z,\tau) \\
& = e\left( - \lambda^t L \lambda \tau - 2 \lambda^t L z \right) \exp\left( - \T_{\lambda} \right) \phi(z,\tau) 
\end{align*}
\end{lemma}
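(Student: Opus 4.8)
The plan is to prove the elliptic transformation law by passing to the almost holomorphic model, where the transformation of the function $\alpha$ is linear, and then reading off the answer via the constant term isomorphism. First I would let $\Phi(z,\tau) = \sum_{i,j} \phi_{i,j}(z,\tau)\nu^i\alpha^j$ be the almost holomorphic weak Jacobi form of weight $k$ and index $L$ with constant term $\phi_{0,0} = \phi$, which exists by the definition of quasi-Jacobi form and is unique by the constant term isomorphism (the preceding Lemma). By the elliptic transformation law \eqref{TRANSFORMATIONLAWJACOBI} satisfied by $\Phi$, we have
\[ \Phi(z + \lambda\tau + \mu, \tau) = e\left( -\lambda^t L \lambda \tau - 2\lambda^t L z \right) \Phi(z,\tau). \]
Now I would substitute into the left-hand side using the recorded transformations $\nu(z+\lambda\tau+\mu,\tau) = \nu(\tau)$ (since $\nu$ depends only on $\tau$) and $\alpha(z+\lambda\tau+\mu,\tau) = \alpha(z,\tau) + \lambda$. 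The point is that on the left side the functions $\phi_{i,j}$ get shifted in argument, while the monomials $\nu^i\alpha^j$ become $\nu^i(\alpha+\lambda)^j$, which expands as a polynomial in $\alpha$ of the same total degree with coefficients the lower monomials. Extracting the $\nu^0\alpha^0$ coefficient of both sides then gives $\phi(z+\lambda\tau+\mu,\tau)$ on the left, expressed as a combination of the shifted $\phi_{0,j}(z,\tau)$ times $e(\cdots)$ on the right — but one still needs to turn the $\phi_{0,j}$ into derivatives of $\phi$.

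The key algebraic step is to recognize that $\exp(-\T_\lambda)$ applied to $\phi$ produces exactly this combination. Concretely, by definition $\T_{\alpha_i}$ acts as $D_{\alpha_i} = d/d\alpha_i$ under the constant term isomorphism, so $\T_\lambda = \sum_i \lambda_i \T_{\alpha_i}$ corresponds to $D_\lambda^\alpha := \sum_i \lambda_i d/d\alpha_i$, the directional derivative in the $\alpha$-variables. Thus $\exp(-\T_\lambda)\phi$ is the constant term of $\exp(-\sum_i \lambda_i d/d\alpha_i)\Phi$, which is the operator that performs the substitution $\alpha \mapsto \alpha - \lambda$ in $\Phi$. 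So I would instead argue directly: applying $\exp(-\sum_i\lambda_i\partial_{\alpha_i})$ to $\Phi(z,\tau)$ replaces $\alpha$ by $\alpha - \lambda$, hence its constant term (in $\nu,\alpha$) equals $\Phi$ evaluated with $\alpha$ formally shifted, which by the relation $\alpha(z+\lambda\tau+\mu,\tau) = \alpha(z,\tau)+\lambda$ is precisely $\Phi(z+\lambda\tau+\mu,\tau)$ rewritten with $z$ held fixed; combining with the displayed transformation law of $\Phi$ and taking constant terms yields the claim. The two displayed forms in the Lemma are then just the Taylor series of $\exp(-\T_\lambda)$ (noting the sum is finite since any almost holomorphic form is polynomial in $\alpha$, so $\T_\lambda$ is nilpotent on each $\QJac_L$).

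The main obstacle, and the step requiring care, is the bookkeeping of which substitutions act on the \emph{arguments} $(z,\tau)$ of the component functions $\phi_{i,j}$ versus on the \emph{formal variables} $\nu,\alpha$: the genuine geometric substitution $z \mapsto z + \lambda\tau + \mu$ acts on both, whereas the operator $\exp(-\T_\lambda)$ acts only on the formal $\alpha$-variables. The identity to be extracted is that, after using the transformation law of $\Phi$ to move the $e(\cdots)$ factor out, the residual $z$-argument shift in the $\phi_{i,j}$ is exactly compensated — this is where one uses that $\phi_{i,j}(z+\lambda\tau+\mu,\tau)$ and the $\alpha$-shift conspire so that only the formal $\alpha \mapsto \alpha+\lambda$ survives at the level of constant terms. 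One clean way to handle this is to note that $\Phi(z,\tau)$, as a polynomial in $\alpha$ with holomorphic-Jacobi coefficients, is \emph{uniquely} determined by its value as a function on $\BC^n\times\BH$ together with the expansion in $\nu,\alpha$; so one may simply verify the proposed formula is consistent with $\Phi(z+\lambda\tau+\mu,\tau) = e(\cdots)\Phi(z,\tau)$ by comparing all $\nu^i\alpha^j$-coefficients, exactly as in the proof of Lemma~\ref{Sdadgd}, and then restrict to $(i,j)=(0,0)$. I expect this to be a short computation once the roles of the variables are disentangled, and no genuinely hard estimate or new idea is needed beyond the constant term formalism already set up.
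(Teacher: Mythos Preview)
Your proposal is correct and follows essentially the same strategy as the paper: pass to the almost holomorphic completion $\Phi$, use its elliptic transformation law together with $\alpha(z+\lambda\tau+\mu,\tau)=\alpha+\lambda$, and extract the constant term to identify the result with $\exp(-\T_\lambda)\phi$. The only minor difference is that the paper first reduces to $\lambda = e_i$ (using that the claimed identity is multiplicative in $\lambda$), which lets it expand $\Phi$ in the single variable $\alpha_i$ and sidestep the bookkeeping issue you flag, whereas you handle general $\lambda$ directly via the Taylor-shift interpretation of $\exp(-\sum_i\lambda_i\partial_{\alpha_i})$; both routes amount to the same computation.
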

\begin{proof}
Since the claimed formula is compatible
with addition on $\BZ^n$, we may assume $\lambda = e_i$.
Let $\Phi$ be the non-holomorphic completion of $\phi$. We expand
\[ \Phi = \sum_{j \geq 0} \phi_j \alpha_i^j \] 
where $\phi_j$ depends on all variables except $\alpha_i$
(these variables are invariant under $z \mapsto z + e_i \tau$).
Then a direct check shows that the claimed formula is determined by, and compatible with the relation
\[ \Phi(z + e_i \tau) = e\left( - e_i^t L e_i \tau - 2 e_i L z \right) \Phi(z). \qedhere \]
\end{proof}

\begin{lemma} \label{Lemma_QJAc_ModTrLaw}
Let $\phi \in \QJac_{k,L}$ such that $\T_{\lambda} \phi = 0$ for all $\lambda \in \BZ^n$. Then
\[
\phi\left( \frac{a \tau + b}{c \tau + d} \right)
= e\left( \frac{c z^T L z}{c \tau + d} \right) \sum_{\ell \geq 0} \frac{1}{\ell!} \left( - \frac{c}{4 \pi i} \right)^{\ell} (c \tau + d)^{k-\ell} \T_q^{\ell} \phi(\tau).
\]
\end{lemma}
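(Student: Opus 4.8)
\emph{Proof plan.} The plan is to follow the proof of Lemma~\ref{Sdadgd}, replacing the completion of a quasi-modular form by the non-holomorphic completion $\Phi \in \AHJ_{k,L}$ of $\phi$. Write $\Phi = \sum_{\ell, j}\phi_{\ell,j}\,\nu^{\ell}\alpha^{j}$, so that $\phi_{0,0} = \phi$. The first step is to remove the $\alpha$-dependence: since $\T_{\alpha_i}$ corresponds to $D_{\alpha_i}$ under the constant-term isomorphism, the hypothesis $\T_{\lambda}\phi = 0$ for all $\lambda \in \BZ^n$ (equivalently $\T_{\alpha_i}\phi = 0$ for all $i$) forces $D_{\alpha_i}\Phi$ to have vanishing constant term for each $i$, hence $D_{\alpha_i}\Phi = 0$ by injectivity of that isomorphism. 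Thus $\Phi = \sum_{\ell = 0}^{m}\phi_{\ell}(z,\tau)\,\nu^{\ell}$ with each $\phi_{\ell}$ holomorphic and $\phi_0 = \phi$. Comparing constant terms under the correspondence $\T_q^{\ell} \leftrightarrow D_{\nu}^{\ell}$ and using $D_{\nu}^{\ell}\Phi = \sum_{s \geq \ell}\frac{s!}{(s-\ell)!}\phi_s \nu^{s-\ell}$ gives $\phi_{\ell} = \frac{1}{\ell!}\T_q^{\ell}\phi$.

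Next, I would apply the modular transformation law \eqref{TRANSFORMATIONLAWJACOBI} to the almost holomorphic Jacobi form $\Phi$: with $A = \binom{a\ b}{c\ d}$ and $j = c\tau + d$,
\[
\Phi\!\left(\tfrac{z}{j},\,A\tau\right) \;=\; j^{k}\, e\!\left(\tfrac{c\, z^{T} L z}{j}\right)\Phi(z,\tau).
\]
Now substitute $\nu(A\tau) = j^{2}\nu + \tfrac{c\,j}{4\pi i}$ from \eqref{3432}; the functions $\alpha_i$ do not appear because $\Phi$ is $\alpha$-free. Expanding $\big(j^{2}\nu + \tfrac{c j}{4\pi i}\big)^{s}$ by the binomial theorem and using that $1, y^{-1}, y^{-2}, \ldots$ are linearly independent over holomorphic functions, I would equate the coefficient of $\nu^{r}$ on the two sides, obtaining the triangular system
\[
\sum_{s \geq r}\binom{s}{r}\Big(\tfrac{c}{4\pi i}\Big)^{\!s-r} j^{\,s-r}\,\phi_{s}\!\left(\tfrac{z}{j},A\tau\right) \;=\; j^{\,k-2r}\, e\!\left(\tfrac{c\, z^{T} L z}{j}\right)\phi_{r}(z,\tau),
\]
which is exactly the system appearing in the proof of Lemma~\ref{Sdadgd}, now also carrying along the index factor $e(c z^{T} L z/j)$.

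Finally I would solve this system from the top coefficient downward; equivalently, one checks that
\[
\phi_{r}\!\left(\tfrac{z}{j},A\tau\right) \;=\; e\!\left(\tfrac{c\, z^{T} L z}{j}\right)\sum_{\ell \geq r}\Big(-\tfrac{c}{4\pi i}\Big)^{\!\ell-r}\binom{\ell}{r} j^{\,k-r-\ell}\,\phi_{\ell}(z,\tau)
\]
is compatible with it, which reduces to the elementary alternating binomial identity already implicit in the proof of Lemma~\ref{Sdadgd}. Setting $r = 0$, recalling $\phi_{\ell} = \tfrac{1}{\ell!}\T_q^{\ell}\phi$, and reading the left-hand side as $\phi$ evaluated at $\big(\tfrac{z}{c\tau+d}, \tfrac{a\tau+b}{c\tau+d}\big)$, this is precisely the claimed formula. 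The only genuinely new ingredient compared with the quasi-modular case is the $\alpha$-freeness step in the first paragraph; once that is in place the Jacobi structure enters only through the index factor $e(c z^{T} L z/j)$, so I do not expect a serious obstacle beyond careful bookkeeping of that factor and of the rescaling $z \mapsto z/(c\tau+d)$ in the transformed arguments.
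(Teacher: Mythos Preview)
Your proof is correct and follows exactly the approach the paper takes: the paper's proof simply observes that $\T_{\lambda}\phi = 0$ forces the completion to be $\alpha$-free, $\Phi = \sum_i \phi_i(z,\tau)\nu^i$ with holomorphic $\phi_i$, and then says ``the same proof as Lemma~\ref{Sdadgd} applies now.'' You have spelled out in detail precisely what that sentence means, including the triangular system in $\nu$-coefficients and its inversion, with the only new feature being the index factor $e(cz^TLz/j)$ carried along passively.
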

\begin{proof}
Since $\T_{\lambda} \phi = 0$ for all $\lambda$, the non-holomorphic
completion of $\phi$ is of the form
$\Phi(z,\tau) = \sum_{i \geq 0} \phi_i(z,\tau) \nu^i$
where $\phi_i$ are \emph{holomorphic} and in
$\cap_{\lambda} \mathrm{Ker}( T_{\lambda} )$. 
The same proof as Lemma~\ref{Sdadgd} applies now.
\end{proof}

\subsubsection{Rewriting $\T_q$ as $\frac{d}{dC_2}$} \label{Subsubsection_Rewriting_Tnu}
Define the vector space of quasi-Jacobi forms which are annihilated by $\T_q$:
\[ \QJac'_{L} = \mathrm{Ker}\left( \T_q : \QJac_{L} \to \QJac_L \right). \]
We have the following structure result
whose proof is essentially identical to \cite[Prop.3.5]{BO} and which we therefore omit.

\begin{lemma} $\QJac_L = \QJac'_L \otimes_{\BC} \BC[C_2]$. \end{lemma}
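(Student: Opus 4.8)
The plan is to mimic the proof of the analogous statement for quasi-modular forms, \cite[Prop.~3.5]{BO}, which shows $\QMod_\ast = \QMod'_\ast \otimes \BC[C_2]$ where $\QMod'_\ast$ is the kernel of $\T_q = d/dC_2$. Concretely, I would first observe that $C_2 \in \QJac_{2,0}$ (it is a quasi-Jacobi form of weight $2$, index $0$, constant in the $z$-variables), and that by the commutation relation $[\T_q, D_q]|_{\QJac_{k,L}} = -2k\cdot\id$ together with $[\T_q, D_\lambda] = -2\T_\lambda$ the operator $\T_q$ is a derivation of the ring $\QJac$ satisfying $\T_q(C_2) = 1$. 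This last point follows from the completion $\widehat{C}_2 = C_2 + \nu$ (equation \eqref{C2completion}): under the constant term isomorphism $\T_q$ acts as $D_\nu$, and $D_\nu \widehat C_2 = 1$.

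The key step is then an induction showing that every $\phi \in \QJac_L$ can be written uniquely as a polynomial in $C_2$ with coefficients in $\QJac'_L = \mathrm{Ker}(\T_q)$. For the existence, given $\phi$, the non-holomorphic completion $\Phi = \sum_{i,j}\phi_{i,j}\nu^i\alpha^j$ has bounded $\nu$-degree, say $\leq N$; then $\T_q^{N+1}\phi = 0$ since applying $D_\nu$ lowers the $\nu$-degree. So one argues by induction on the least $N$ with $\T_q^{N+1}\phi = 0$: if $N = 0$ then $\phi \in \QJac'_L$; otherwise $\psi := \T_q^N \phi \in \QJac'_L$ (it is killed by $\T_q$) and is a quasi-Jacobi form of the same index but lower weight, and one checks that $\phi - \tfrac{1}{N!}\psi\, C_2^N$ is killed by $\T_q^N$, so induction applies. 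One must verify $\T_q^N(\psi C_2^N) = N!\,\psi$, which follows from $\T_q$ being a derivation with $\T_q\psi = 0$ and $\T_q C_2 = 1$. For uniqueness (equivalently, that $C_2$ is transcendental over $\QJac'_L$ inside $\QJac_L$), suppose $\sum_{j=0}^d \psi_j C_2^j = 0$ with $\psi_j \in \QJac'_L$ and $\psi_d \neq 0$; applying $\T_q^d$ and using the derivation property gives $d!\,\psi_d = 0$, a contradiction.

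I would also note the index bookkeeping: $C_2$ has index $0$, so multiplication by $C_2$ preserves the index-$L$ graded piece, and the weight drops appropriately in $\psi_j$ — all of this is automatic from the gradings already established in Section~\ref{Subsubsection_quasiJacobi_Differentialoperators}, so the decomposition respects the bigrading by weight and index. The main obstacle, such as it is, is purely formal: one needs to know that $\T_q$ has locally nilpotent action on $\QJac_L$ (so the induction on nilpotency degree terminates), which is exactly the statement that the $\nu$-degree of the non-holomorphic completion is finite — this is built into the definition of almost holomorphic weak Jacobi form (finitely many nonzero $\phi_{i,j}$). Given that, the argument is a verbatim transcription of \cite[Prop.~3.5]{BO}, which is why the authors omit it.
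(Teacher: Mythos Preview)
Your proposal is correct and follows exactly the approach the paper has in mind: the authors explicitly state the proof is ``essentially identical to \cite[Prop.~3.5]{BO}'' and omit it, and your argument is precisely that transcription --- local nilpotency of $\T_q$ from the finite $\nu$-degree of the completion, induction on nilpotency order for existence, and applying $\T_q^d$ for uniqueness. One small remark: the commutation relations you cite do not themselves establish that $\T_q$ is a derivation; that follows instead directly from the fact that $D_\nu$ is a derivation on almost holomorphic functions and the constant term map is a ring isomorphism.
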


By the Lemma
every quasi-Jacobi form can be uniquely written as a polynomial in $C_2$.
In particular,
the formal derivative $\frac{d}{dC_2}$ is well-defined. Comparing with \eqref{C2completion} we conclude that
\[ \T_q = \frac{d}{dC_2} : \QJac_L \to \QJac_L. \]

\subsubsection{Specialization to quasi-modular forms} \label{Subsubsection_Specialization_to_quasimodular_forms}
By setting $z=0$ the quasi-Jacobi forms of weight $k$ and index $L$
specialize to weight $k$ quasi-modular forms:
\begin{align*}
\AHJ_{k,L} \to \AHM_{k},& \quad F(z,\tau) \mapsto F(0,\tau) \\
\QJac_{k,L} \to \QMod_k,& \quad f(z,\tau) \mapsto f(0,\tau).
\end{align*}
The specialization maps commute with the operators $\T_q$.

\subsection{Theta decomposition and periods} \label{Subsection_theta_decomposition}
We discuss theta decompositions of quasi-Jacobi forms
if the index $L$ is positive definite.
For this we will need to work with several more general notions of modular forms
than what we have defined above (e.g. for congruence subgroups, of half-integral weight, or vector-valued).
Since we do not need the results of this section for the main arguments of the paper
we will not introduce these notions here and instead refer to \cite{Sko, Scheit}.\footnote{
The results of Section~\ref{Subsection_theta_decomposition} are essential only for Section~\ref{Subsubsection_elliptic_fibrations_quasimodularforms},
which is not used later on.
Proposition~\ref{QJac_Prop2} also appears in Section~\ref{Subsection_ab_explain},
but in this case the lattice $2L$ is unimodular
and hence we can use Proposition~\ref{QJac_Prop3}
to re-prove Proposition~\ref{QJac_Prop2} without additional theory.
}

Assume $L$ is positive definite, and
for every $x \in \BZ^n/ 2 L \BZ^n$ define the index $L$ theta function
\[ \vartheta_{L,x}(z,\tau) = \sum_{\substack{r \in \BZ^n \\ r \equiv x \text{ mod } 2L \BZ^n}} e\left( \tau \frac{1}{4} r^{T} L^{-1} r + r^T z \right). \]
Let $\mathrm{Mp}_2(\BZ)$ be the metaplectic double cover of $\mathrm{SL}_2(\BZ)$ and
consider the ring
\[ \widetilde{\Jac}_{k,L} = \bigcap_{\lambda \in \BZ^n} \mathrm{Ker}\left( \T_{\lambda} : \QJac_{k,L} \to \QJac_{k+1, L} \right). \]

\begin{prop} \label{QJac_Prop1} Assume $L$ is positive definite and let $f \in \QJac_{k,m}$.
\begin{enumerate}
\item[(i)] There exist (finitely many) unique quasi-Jacobi forms $f_d \in \widetilde{\Jac}_{k-\sum_i d_i, L}$
where $d=(d_1, \ldots, d_n) \in \BZ_{\geq 0}^n$
such that
\[
f(z,\tau) = \sum_{d} D_{\zeta_1}^{d_1} \cdots D_{\zeta_n}^{d_n} f_d(z,\tau).
\]
\item[(ii)] Every $f_d(z,\tau)$ above can be expanded as
\[ f_d(z,\tau) = \sum_{x \in \BZ^n/2 L \BZ^n} h_{k,x}(\tau) \vartheta_{L,x}(z,\tau) \]
where $( h_{k,x} )_{x}$ is a vector-valued
weakly-holomorphic quasi-modular form for the dual of the Weil representation
of $\mathrm{Mp}_2(\BZ)$ on $\BZ^n / 2 L \BZ^n$.
\end{enumerate}
\end{prop}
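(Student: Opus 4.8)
The plan is to reduce both parts to the corresponding statements for honest (holomorphic) Jacobi forms and the classical theta decomposition, handling the quasi-modular extension via the operator calculus developed in Section~\ref{Subsubsection_quasiJacobi_Differentialoperators}.

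For part (i), I would argue by induction on the total index $m$ (or rather on some measure of the ``size'' of $f$ under iterated applications of the $\T_{\lambda}$), using the commutation relations~\eqref{COMMUTATIONRELATIONS}. The key observation is that $\widetilde{\Jac}_{k,L} = \bigcap_\lambda \mathrm{Ker}(\T_\lambda)$ consists precisely of the quasi-Jacobi forms whose elliptic transformation behavior is that of a genuine Jacobi form (by Lemma~\ref{Lemma_ell_trans_law_for_quasi_Jac}, since $\exp(-\T_\lambda)$ acts trivially). Given $f \in \QJac_{k,L}$, one uses the relation $[\T_\lambda, D_\mu] = 2(\lambda^T L \mu)\cdot\id$ to peel off the non-trivial $\T_\lambda$-action: concretely, if $g$ lies in the image of a high enough iterate of some $\T_\lambda$ then writing $g = D_{\zeta_i}(\cdot) + (\text{lower})$ and comparing $\T_\lambda$-eigenstructure forces the existence of the $f_d$. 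The positive-definiteness of $L$ guarantees that $\lambda^T L\mu$ spans enough of the pairing so that the $D_{\zeta_i}$ generate, transversally to $\bigcap_\lambda\mathrm{Ker}(\T_\lambda)$, the full space; uniqueness comes from the same nondegeneracy, since a nontrivial relation $\sum_d D_{\zeta_1}^{d_1}\cdots D_{\zeta_n}^{d_n} f_d = 0$ with all $f_d \in \widetilde{\Jac}$ would be killed by applying suitable $\T_\lambda$'s and descending. I expect this to be the bookkeeping-heavy part but essentially formal once the commutator dictionary is in place; it is the ``lattice'' generalization of the rank-one fact that a quasi-Jacobi form is a polynomial in the Weierstrass-type series with Jacobi-form coefficients.

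For part (ii), I would start from $f_d \in \widetilde{\Jac}_{k',L}$ with $k' = k - \sum_i d_i$, so that $f_d$ already satisfies the elliptic transformation law~\eqref{TRANSFORMATIONLAWJACOBI} of index $L$ on the nose. The classical theta decomposition then applies coefficient-by-coefficient: expanding the Fourier series and collecting terms according to the residue $x \in \BZ^n/2L\BZ^n$ of the index vector $r$ produces $f_d(z,\tau) = \sum_x h_x(\tau)\,\vartheta_{L,x}(z,\tau)$ with $h_x$ holomorphic (weakly, allowing poles at the cusp, since $f_d$ is only a weak Jacobi form). The fact that the $\vartheta_{L,x}$ transform under the Weil representation of $\mathrm{Mp}_2(\BZ)$ attached to $\BZ^n/2L\BZ^n$ is standard (see \cite{Sko, Scheit}); comparing with the modular transformation of $f_d$ — which, because $\T_\lambda f_d = 0$, is governed by Lemma~\ref{Lemma_QJAc_ModTrLaw} and hence differs from that of a true Jacobi form only through the nilpotent $\T_q$-terms — forces $(h_x)_x$ to be a vector-valued \emph{quasi}-modular form for the dual Weil representation. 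More precisely, one checks that $\T_q$ acts on the $h_x$ compatibly (the theta functions are annihilated by the analogue of $\T_q$), so the $h_x$ inherit the quasi-modularity from $f_d$, and the weight is $k' - n/2$ as dictated by the weight $n/2$ of the theta functions.

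The main obstacle will be part (i): setting up the correct induction and proving that the $D_{\zeta_i}$-monomials, modulo $\bigcap_\lambda\mathrm{Ker}(\T_\lambda)$, give a \emph{basis} rather than merely a spanning set — i.e. the uniqueness. This requires using positive-definiteness of $L$ in an essential way (to know the bilinear form $\lambda,\mu\mapsto\lambda^T L\mu$ is nondegenerate over $\BZ$), and some care that finitely many $f_d$ suffice, which follows from a weight/index bound on how many times $\T_\lambda$ can be applied nontrivially before landing in negative index. Part (ii) is then essentially a citation plus the observation that $\T_q$-quasi-modularity is preserved by the theta decomposition.
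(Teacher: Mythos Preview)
Your approach is correct in spirit and would work, but the paper's argument for (i) is more direct: instead of manipulating the commutator algebra $[\T_\lambda, D_\mu] = 2\lambda^T L\mu$ on the quasi-Jacobi side, it passes to the almost-holomorphic completion $F = \sum_j f_j(z,\tau,\nu)\,\alpha^j$. If $j$ is a multi-index maximal in this $\alpha$-expansion, then $f_j \in \widetilde{\Jac}_{k-|j|,L}$ tautologically, since $\T_\lambda$ corresponds to $D_{\alpha}$ on the completion. One then subtracts
\[
\big(D_{\frac{1}{2}L^{-1}e_1}\big)^{j_1}\cdots\big(D_{\frac{1}{2}L^{-1}e_n}\big)^{j_n} f_j
\]
from $f$; on the completion side $\widehat{D}_{\frac{1}{2}L^{-1}e_i}$ raises the $\alpha_i$-degree by exactly one (via the term $2\alpha^T L\cdot\tfrac{1}{2}L^{-1}e_i = \alpha_i$ in its formula), so this subtraction kills the top $\alpha$-coefficient and induction on $\alpha$-degree finishes. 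Uniqueness is transparent for the same reason. Your route reaches the same conclusion with more bookkeeping; the ``measure of size under $\T_\lambda$'' you were searching for is literally the $\alpha$-degree of the completion, and working there makes (i) a two-line argument. One small correction: the finiteness does not come from a weight or index bound --- the index $L$ is fixed throughout --- but simply from the fact that the completion is by definition a \emph{polynomial} in the $\alpha_i$.

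For (ii) you and the paper agree: the elliptic transformation law forces the theta expansion, and the (quasi-)modularity of $(h_{k,x})_x$ under the dual Weil representation is deferred to \cite{Sko}.
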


The quasi-modular forms $(h_{k,x})_x$ of (ii) are weakly holomorphic
(i.e. have poles at cusps) 
since we define our quasi-Jacobi forms as almost-holomorphic versions of weak-Jacobi forms. The quasi-Jacobi forms for which $(h_{k,x})_x$ are
holomorphic correspond to
\emph{holomorphic} Jacobi forms
(which require a stronger vanishing condition on their Fourier coefficients).

\begin{proof}[Proof of Proposition~\ref{QJac_Prop1}]
(i) Let $F$ be the completion of $f$ and consider the expansion
\[ F = \sum_{j = (j_1, \ldots, j_n)} f_j(z,\tau,\nu) \alpha^j. \]
Let $j$ be a maximal index, i.e. $f_{j+e_i} = 0$ for every $i$ where $e_i$ is the standard basis.
Then $\T_{\lambda} f_j = 0$ for every $\lambda$ and hence $f_j \in \widetilde{\Jac}_{k-|j|,L}$.
Replacing $f$ by
\[ f - \left( D_{\frac{1}{2} L^{-1} e_1} \right)^{j_1} \cdots \left( D_{\frac{1}{2} L^{-1} e_n} \right)^{j_n} f_j \]
the claim follows by induction.

(ii) The existence of $h_{k,x}(\tau)$ follows from the elliptic transformation law.
For the modularity see \cite[Sec.4]{Sko}.
\end{proof}

The \emph{level of $L$} is the smallest positive integer $\ell$ such that $\frac{1}{2} \ell L^{-1}$ has integral entries and even diagonal entries.
Let 
\[ \Gamma(\ell)^{\ast} \subset \mathrm{Mp}_2(\BZ) \]
be the lift of the congruence subgroup $\Gamma(\ell)$ to $\mathrm{Mp}_2(\BZ)$ defined in \cite[Sec.2]{Sko}.

Given a function $f = \sum_{r \in \BZ^n} c_r(\tau) \zeta^r$ with $\zeta^r = e(z^t r)$, let
\[ [ f ]_{\zeta^{\lambda}} = c_{\lambda}(\tau) \]
denote the coefficient of $\zeta^\lambda$.

\begin{prop} \label{QJac_Prop2} Assume $L$ is positive definite of level $\ell$ and let $f \in \QJac_{k,L}$. 
\begin{enumerate}
 \item[(i)] For every $\lambda \in \BZ^n$, the coefficient
 \[ [ \, f \, ]_{\lambda} := q^{-\frac{1}{4} \lambda^T L^{-1} \lambda} [ \, f \, ]_{\zeta^{\lambda}} \]
 is a weakly-holomorphic quasi-modular form for $\Gamma(\ell)^{\ast}$ of weight $\leq k-\frac{n}{2}$.
 If $\lambda = 0$ then $[ \, f \, ]_{\lambda}$ is homogeneous of weight $k-\frac{n}{2}$.
 \item[(ii)] We have
 \[
  \T_q [\,  f \, ]_{\lambda} = \left[\, \T_q f\, \right]_{\lambda} + \frac{1}{2} \sum_{a,b=1}^{n} \left( L^{-1} \right)_{ab}
  \left[\, \T_{\zeta_a} \T_{\zeta_b} f \, \right]_{\lambda}.
 \]
\end{enumerate}
\end{prop}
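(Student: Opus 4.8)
The plan is to reduce everything to the theta decomposition of Proposition~\ref{QJac_Prop1} and then extract the relevant Fourier coefficients in $\zeta$. First I would apply Proposition~\ref{QJac_Prop1}(i) to write $f = \sum_d D_{\zeta_1}^{d_1}\cdots D_{\zeta_n}^{d_n} f_d$ with $f_d \in \widetilde{\Jac}_{k-|d|,L}$ ($|d| := \sum_i d_i$, only finitely many nonzero), and then apply Proposition~\ref{QJac_Prop1}(ii) to each $f_d$ to get $f_d = \sum_{x \in \BZ^n/2L\BZ^n} h_{d,x}(\tau)\,\vartheta_{L,x}(z,\tau)$ with $(h_{d,x})_x$ a vector-valued weakly-holomorphic quasi-modular form for the dual Weil representation. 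Since $\vartheta_{L,x}(z,\tau) = \sum_{r \equiv x}q^{\frac14 r^T L^{-1} r}\zeta^r$ and each $D_{\zeta_i}$ multiplies the $\zeta^r$-coefficient by $r_i$, the normalisation built into $[\,\cdot\,]_\lambda$ is exactly the one making
\[ \big[\, D_{\zeta_1}^{d_1}\cdots D_{\zeta_n}^{d_n}\vartheta_{L,x}\,\big]_\lambda = \Big(\prod_i \lambda_i^{d_i}\Big)\,\delta_{x,[\lambda]}, \]
where $[\lambda]$ denotes the class of $\lambda$ in $\BZ^n/2L\BZ^n$. Since each $h_{d,x}$ is independent of $z$, this yields the key identity
\[ [\, f\, ]_\lambda = \sum_d \Big( \prod_i \lambda_i^{d_i} \Big)\, h_{d,[\lambda]}(\tau), \]
a finite $\BC$-linear combination of the scalar functions $h_{d,[\lambda]}$.

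Part~(i) then follows by invoking the standard fact (see \cite{Sko}) that a single component $h_{d,x}$ of a vector-valued quasi-modular form for the Weil representation on $\BZ^n/2L\BZ^n$ is a weakly-holomorphic quasi-modular form for $\Gamma(\ell)^{\ast}$ -- this is the role of the level $\ell$ of $L$: $\Gamma(\ell)^{\ast}$ acts on this Weil representation through the standard scalar automorphy factor. Since $\vartheta_{L,x}$ has weight $n/2$ and $f_d$ has weight $k - |d|$, the component $h_{d,x}$ has weight $k - |d| - n/2 \le k - n/2$. Hence the displayed identity exhibits $[\, f\, ]_\lambda$ as a weakly-holomorphic quasi-modular form for $\Gamma(\ell)^{\ast}$ of weight $\le k - n/2$; and if $\lambda = 0$ only the $d = 0$ term survives the product $\prod_i \lambda_i^{d_i}$, so $[\, f\, ]_0 = h_{0,0}$ is homogeneous of weight $k - n/2$.

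For part~(ii) the plan is to commute $\T_q$ through $f = \sum_d D^d f_d$, where $D^d := D_{\zeta_1}^{d_1}\cdots D_{\zeta_n}^{d_n}$. Because each $\vartheta_{L,x}$ is holomorphic, the almost-holomorphic completion of $f_d$ is $\sum_x \widehat{h}_{d,x}\,\vartheta_{L,x}$ (with $\widehat{h}_{d,x}$ the completion of $h_{d,x}$), so $\T_q f_d = \sum_x (\T_q h_{d,x})\,\vartheta_{L,x}$ and hence $\sum_d [\, D^d \T_q f_d\,]_\lambda = \sum_d (\prod_i \lambda_i^{d_i})\,\T_q h_{d,[\lambda]} = \T_q [\, f\, ]_\lambda$. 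Expanding $\T_q f = \sum_d \big( [\T_q, D^d] f_d + D^d \T_q f_d \big)$ and applying $[\,\cdot\,]_\lambda$, the claimed anomaly equation reduces to the termwise identity
\[ [\T_q, D^d] f_d = -\frac{1}{2}\sum_{a,b=1}^{n} (L^{-1})_{ab}\, \T_{\zeta_a}\T_{\zeta_b}\, D^d f_d, \]
where $\T_{\zeta_a}$ denotes the anomaly operator $\T_{\alpha_a}$ of Section~\ref{Subsubsection_quasiJacobi_Differentialoperators}. To check it I would use that $f_d$ is annihilated by every $\T_\mu$: commuting $\T_q$, and then each $\T_{\alpha_a}$, all the way to the right past the $D_{\zeta_i}$ using $[\T_q, D_{\zeta_i}] = -2\T_{\alpha_i}$ and $[\T_{\alpha_a}, D_{\zeta_i}] = 2L_{ai}$ from \eqref{COMMUTATIONRELATIONS}, everything telescopes, and one finds
\[ [\T_q, D^d] f_d = -2 \sum_{i,j} L_{ij}\, d_i (d_j - \delta_{ij})\, D^{d - e_i - e_j} f_d \]
(with $D^{d - e_i - e_j}$ the product with the exponents of $D_{\zeta_i}$ and $D_{\zeta_j}$ lowered by one) and, by the same computation together with $\sum_{a,b}(L^{-1})_{ab} L_{bl} L_{am} = L_{lm}$,
\[ \sum_{a,b} (L^{-1})_{ab}\, \T_{\alpha_a}\T_{\alpha_b}\, D^d f_d = 4 \sum_{i,j} L_{ij}\, d_i(d_j - \delta_{ij})\, D^{d - e_i - e_j} f_d. \]
These differ exactly by the factor $-\tfrac12$, and summing over $d$ and extracting $[\,\cdot\,]_\lambda$ gives~(ii).

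I expect the last step to be the only genuine obstacle: one must be careful that when $\T_q$ is pushed past $D^d$ the resulting operators $\T_{\alpha_i}$ are in turn commuted all the way onto $f_d$ (where they vanish), so that only the scalar commutator defects survive, and that the combinatorial weights -- $\binom{d_i}{2}$ on the diagonal, $d_i d_j$ off the diagonal -- reassemble, after the $L \leftrightarrow L^{-1}$ cancellation, into exactly $\tfrac12 \sum_{a,b}(L^{-1})_{ab}\T_{\alpha_a}\T_{\alpha_b}$ with the stated constant. Finiteness of all the sums, weak holomorphy at the cusps, and the weight bookkeeping are routine given Proposition~\ref{QJac_Prop1}.
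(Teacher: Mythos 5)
Your proof is correct and follows essentially the same route as the paper: both rest on the theta decomposition of Proposition~\ref{QJac_Prop1} together with the commutation relations \eqref{COMMUTATIONRELATIONS}, the paper phrasing the key step as the operator identity $[\T_q,D_\lambda]=-[\T_\Delta,D_\lambda]$ applied to $D^d\vartheta_{L,x}$ (killed by both $\T_q$ and $\T_\Delta$), while you apply the commutators to $D^d f_d$ (killed by the $\T_{\alpha_a}$) and verify the resulting combinatorial identity explicitly. The two computations are equivalent, and your explicit coefficient check ($-2$ versus $-\tfrac12\cdot 4$) comes out right.
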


In (ii) of Proposition~\ref{QJac_Prop2} we used an extension of the operator $\T_q$ to quasi-modular forms for congruence subgroups.
The existence of this operator follows parallel to Section~\ref{Subsection_Modular_forms}
from the arguments of \cite{KZ}.

The $\zeta^{\lambda}$-coefficients of Jacobi forms are sometimes referred to as periods.
A quasi-modularity result for the periods of certain multivariable elliptic functions
(certain meromorphic Jacobi forms of index $L=0$)
has been obtained in \cite[App.A]{HAE}. The formula in \cite[Thm.7]{HAE} is similar
to the above but requires a much more delicate argument.

\begin{proof}[Proof of Proposition~\ref{QJac_Prop2}]
(i) The Weil representation restricts to the trivial representation on $\Gamma(\ell)$,
see \cite[Prop.4.3]{Scheit}.
Hence the $h_{k,x}$ are $\Gamma(\ell)^{\ast}$-quasi-modular by Proposition~\ref{QJac_Prop1}(ii).

(ii) For the second part we consider the expansion
\begin{equation} f(z,\tau) =  \sum_{x \in \BZ^n/2 L \BZ^n} \sum_{d}
h_{k,x}(\tau) D_{\zeta_1}^{d_1} \cdots D_{\zeta_n}^{d_n} \vartheta_{L,x}(z,\tau)
\label{dfsderwe} \end{equation}
which follows from combining both parts of Proposition~\ref{QJac_Prop1}.
Let
\[ \T_{\Delta} = \frac{1}{2} \sum_{a,b=1}^{n} \left( L^{-1} \right)_{ab} \T_{e_a} \T_{e_b}. \]
By \eqref{COMMUTATIONRELATIONS} we have
$\left[ \T_q, D_{\lambda} \right] = -\left[ \T_{\Delta} , D_{\lambda} \right]$
for every $\lambda$.
Since $\vartheta_{L,x}$ is a Jacobi form (for a congruence subgroup\footnote{We extend
the operators $\T_q, \T_{\lambda}$ here to quasi-Jacobi forms for congruence subgroups. The commutation relations are identical.})
we also have $\T_q \vartheta_{L,x} = \T_{\Delta} \vartheta_{L,x} = 0$.
This implies
\[ \T_q D_{\zeta_1}^{d_1} \cdots D_{\zeta_n}^{d_n} \vartheta_{L,x}(z,\tau)
 = -\T_{\Delta} D_{\zeta_1}^{d_1} \cdots D_{\zeta_n}^{d_n} \vartheta_{L,x}(z,\tau).
\]
Hence applying $\T_q$ to \eqref{dfsderwe} yields
\begin{align*}
\T_q f & = \sum_{x,d} \left( \T_q( h_{k,x} ) D_{\zeta_1}^{d_1} \cdots D_{\zeta_n}^{d_n} \vartheta_{L,x}
 - h_{k,x} \T_{\Delta} D_{\zeta_1}^{d_1} \cdots D_{\zeta_n}^{d_n} \vartheta_{L,x} \right) \\
 & = \left( \sum_{x,d} \T_q( h_{k,x} ) D_{\zeta_1}^{d_1} \cdots D_{\zeta_n}^{d_n} \vartheta_{L,x} \right) - \T_{\Delta} f.
\end{align*}
The claim follows by taking the coefficient of $\zeta^{\lambda}$.
\end{proof}

\begin{cor} $\QJac_{k,L}$ is finite-dimensional for every weight $k$
and positive definite index $L$.
\end{cor}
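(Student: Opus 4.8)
The plan is to combine the theta decomposition of Proposition~\ref{QJac_Prop1} with the classical fact that spaces of (vector-valued) weakly-holomorphic modular forms with bounded order of pole at the cusps are finite-dimensional. Fix the weight $k$ and the positive-definite index $L$, say of level $\ell$, and write $\rho$ for the Weil representation of $\mathrm{Mp}_2(\BZ)$ on $\BZ^n/2L\BZ^n$; it is a finite-dimensional unitary representation whose kernel contains a congruence subgroup. Given $f \in \QJac_{k,L}$, Proposition~\ref{QJac_Prop1}(i) produces only finitely many multi-indices $d$ with $f_d \neq 0$, and $f$ is recovered from the $f_d \in \widetilde{\Jac}_{k-|d|,L}$ by applying the operators $D_{\zeta_i}$; by Proposition~\ref{QJac_Prop1}(ii) each $f_d$ is in turn encoded by its theta components $(h_{k-|d|,x})_{x \in \BZ^n/2L\BZ^n}$, a vector-valued weakly-holomorphic quasi-modular form of weight $k-|d|$ for the dual of $\rho$. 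So it is enough to bound, uniformly in $f$, the dimension of the space of such vector-valued quasi-modular forms once the weight and the maximal order of pole at the cusps are fixed.

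The crux is the pole bound. Since $f$, hence every $f_d$, is a \emph{weak} quasi-Jacobi form, its Fourier expansion \eqref{Jacobiexpansion} has only non-negative powers of $q$; comparing the $q^n\zeta^r$-coefficients of $f_d = \sum_x h_{k-|d|,x}\vartheta_{L,x}$ with $\vartheta_{L,x} = \sum_{r \equiv x}q^{\frac14 r^T L^{-1}r}\zeta^r$ shows that $h_{k-|d|,x}$ has a pole of order at most $m_x := \min_{r \equiv x}\tfrac14 r^T L^{-1} r$ at the cusp $i\infty$, and $C := \max_x m_x$ depends only on $L$. Every cusp of $\ker\rho$ is of the form $\sigma(i\infty)$ with $\sigma \in \mathrm{SL}_2(\BZ)$, and the expansion of the vector $(h_{k-|d|,x})_x$ there is obtained by applying the fixed matrix $\rho(\sigma)$ to its expansion at $i\infty$ (this is where one uses that the completion of $f$ obeys the modular law in \eqref{TRANSFORMATIONLAWJACOBI} for \emph{all} $\gamma$, so that the theta components genuinely transform under $\rho$); hence each component still has pole order bounded by a constant depending only on $L$ at every cusp, independently of $f$ and $d$.

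Finally, write each $h_{k-|d|,x}$ via its completion as $\sum_{j\geq 0} h_x^{(j)}\nu^j$ with $h_x^{(j)}$ a vector-valued weakly-holomorphic modular form of weight $k-|d|-2j$ for the dual of $\rho$, still with pole order $\leq C$ at all cusps; multiplying by $\Delta^N$ for $N$ large enough to clear these poles produces a holomorphic vector-valued modular form of weight $k-|d|-2j+12N$, which vanishes for negative weight, so the depth $j$ is bounded, and for each admissible weight the space of holomorphic vector-valued modular forms for a finite-dimensional representation of a congruence subgroup is finite-dimensional. As $f$ is determined by the finitely many $h_x^{(j)}$ in play (running over the finitely many $d$, $x$, $j$), which lie in a finite-dimensional space, $\QJac_{k,L}$ is finite-dimensional. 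I expect the transfer of the pole bound from $i\infty$ to all cusps in the second step to be the only nontrivial point — it rests on Proposition~\ref{QJac_Prop1}(ii) and the unitarity of $\rho$ — while the rest is routine; alternatively one can run the same argument through Proposition~\ref{QJac_Prop2}, using the coefficients $[f]_{\zeta^\lambda}$ for $\lambda$ over a set of representatives of $\BZ^n/2L\BZ^n$ in place of the theta components.
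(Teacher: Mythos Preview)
Your proof is correct and follows essentially the same route as the paper's: reduce via Proposition~\ref{QJac_Prop1} to vector-valued weakly-holomorphic quasi-modular forms with pole order bounded in terms of $L$ alone, then invoke finite-dimensionality of such spaces. The paper compresses this into four sentences by first noting that $\widetilde{\Jac}_{k,L}$ is finite-dimensional and vanishes for $k\ll 0$, then applying part~(i); you instead unwind the pole bound and the $\Delta^N$-clearing argument explicitly, which is fine. One small point of exposition: you state that the negative-weight vanishing bounds the depth $j$, but the same inequality $k-|d|-2j+12N\geq 0$ (with $N$ depending only on $L$) is what also bounds $|d|$ uniformly in $f$ --- you use this implicitly when you speak of ``the finitely many $d$'', and it would be cleaner to say so outright.
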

\begin{proof}
By Proposition~\ref{QJac_Prop1} the space $\widetilde{\Jac}_{k,L}$ is isomorphic to
a space of meromorphic vector-valued quasi-modular forms of some fixed weight $k$
for which the order of poles at the cusps is bounded by a constant depending only on $L$.
In particular, it is finite dimensional and vanishes for $k \ll 0$.
The claim now follows from the first part of Proposition~\ref{QJac_Prop1}.
\end{proof}

\subsection{Examples}
\subsubsection{Rank $0$}
If the lattice $\Lambda$ has rank zero, a quasi-Jacobi form of weight $k$ is a quasi-modular form of the same weight.

\subsubsection{Rank $1$ lattice}
The ring of quasi-Jacobi forms in the rank $1$ case has been
determined and studied by Libgober in \cite{Lib}.

\subsubsection{Half-unimodular index} \label{Subsubsection_Halfunimodular_index}
Let $Q$ be positive definite and unimodular of rank $n$.
We describe the ring of quasi-Jacobi forms 
of index $L = \frac{1}{2}Q$.
The main example is the Riemann theta function
\begin{equation} \label{THETAFUNCTION} \Theta_{Q}(z, \tau)
=
\sum_{\gamma \in \BZ^{n}} q^{\frac{1}{2} \gamma^T Q \gamma} e\left(  z^T Q \gamma \right),
\end{equation}
which is a Jacobi form\footnote{ 
Since $Q$ is unimodular the theta function satisfies the transformation laws for the full modular group and 
not just a subgroup \cite[Sec.3]{Zie}.}
of weight $n/2$ and index $Q/2$,
\[ \Theta_{Q}(z,\tau) \in \Jac_{\frac{n}{2}, \frac{1}{2}Q}. \]

The following structure result shows that this is essentially the only Jacobi form that we need to consider in this index.
\begin{prop} \label{QJac_Prop3}
Let $Q$ be positive definite and unimodular. Then every $f \in \QJac_{k, \frac{Q}{2}}$
can be uniquely written as a finite sum
\[ f = \sum_{d = (d_1, \ldots, d_n)} f_d(\tau) D_{\zeta_1}^{d_1} \cdots D_{\zeta_n}^{d_n} \Theta_{Q}(z,\tau) \]
where $f_d \in \QMod_{k-\sum_i d_i}$ for every $d$. In particular, for every $\lambda \in \BZ^n$ we have
\[ q^{-\frac{1}{4} \lambda^T Q^{-1} \lambda} [ \, f \, ]_{\zeta^{\lambda}} \in \QMod_{\leq k}. \]
\end{prop}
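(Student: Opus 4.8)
The plan is to deduce the proposition from the elementary derivative decomposition of Proposition~\ref{QJac_Prop1}(i) together with the identity $\widetilde{\Jac}_{k,Q/2} = \QMod_{k-n/2}\cdot\Theta_Q$; the whole content reduces to two facts about $Q$. Since $Q = 2L$ is integral with even diagonal entries and unimodular, it is the Gram matrix of an even unimodular positive definite lattice, so $n$ is divisible by $8$; by the footnote to \eqref{THETAFUNCTION} (see \cite[Sec.3]{Zie}) this makes $\Theta_Q$ a genuine Jacobi form of weight $n/2$ and index $Q/2$ for the \emph{full} group $\mathrm{SL}_2(\BZ)$ with trivial multiplier. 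And since $Q\BZ^n = \BZ^n$, the index set $\BZ^n/2L\BZ^n$ consists of the single class $x = 0$; the substitution $r = Q\gamma$, a bijection of $\BZ^n$ under which $\tfrac14 r^T L^{-1} r = \tfrac12 \gamma^T Q\gamma$ and $r^T z = z^T Q\gamma$ (using that $Q$ is symmetric), identifies the unique index-$L$ theta function $\vartheta_{L,0}$ with $\Theta_Q$.

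First I would prove the displayed identity; this is where one normally invokes Proposition~\ref{QJac_Prop1}(ii), but in the present case it can be done directly. If $g \in \widetilde{\Jac}_{k,Q/2}$ then $\T_\lambda g = 0$ for all $\lambda$, so by Lemma~\ref{Lemma_ell_trans_law_for_quasi_Jac} the non-holomorphic completion of $g$ involves no $\alpha$-variables: it has the form $G = \sum_i g_i(z,\tau)\,\nu^i$ with the $g_i$ holomorphic and each satisfying the \emph{exact} elliptic transformation law of index $Q/2$. Reorganizing the Fourier expansion of each $g_i$ along the (single) residue class of the frequency modulo $Q\BZ^n$ — the classical theta decomposition — writes $g_i = h_i(\tau)\,\Theta_Q(z,\tau)$ with $h_i = [\,g_i\,]_{\zeta^0}$ (since $[\Theta_Q]_{\zeta^0} = 1$), and each $h_i$ has only non-negative powers of $q$ because $g_i$ does. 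Hence $G = H(\tau)\,\Theta_Q(z,\tau)$ with $H = \sum_i h_i\nu^i$; comparing the transformation law of the almost holomorphic Jacobi form $G$ (weight $k$) with that of $\Theta_Q$ (weight $n/2$, same index, for $\mathrm{SL}_2(\BZ)$) shows $H$ is an almost holomorphic modular form of weight $k - n/2$, so $h_0 = g \in \QMod_{k-n/2}$. This establishes the identity.

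Next, for arbitrary $f \in \QJac_{k,Q/2}$, Proposition~\ref{QJac_Prop1}(i) gives a unique finite expansion $f = \sum_d D_{\zeta_1}^{d_1}\cdots D_{\zeta_n}^{d_n} g_d$ with $g_d \in \widetilde{\Jac}_{k-\sum_i d_i,\,Q/2}$, and the identity rewrites $g_d = f_d(\tau)\,\Theta_Q$ with $f_d$ quasi-modular; moving the $\tau$-only factor $f_d$ past the operators $D_{\zeta_i}$ yields the claimed expansion. For uniqueness it suffices to show $\sum_d f_d(\tau)\, D_{\zeta_1}^{d_1}\cdots D_{\zeta_n}^{d_n}\Theta_Q = 0$ forces all $f_d = 0$: since $D_{\zeta_i}$ multiplies the $\zeta^r$-coefficient of a Fourier series by $r_i$ and $[\Theta_Q]_{\zeta^{Q\gamma}} = q^{\frac12 \gamma^T Q\gamma}$, the $\zeta^{Q\gamma}$-coefficient of the left-hand side equals $q^{\frac12 \gamma^T Q\gamma}\sum_d f_d(\tau)\prod_i (Q\gamma)_i^{d_i}$; as $Q\gamma$ ranges over $\BZ^n$, the polynomial $\sum_d f_d(\tau)\prod_i \lambda_i^{d_i}$ in $\lambda$ vanishes on $\BZ^n$ and hence identically, so every $f_d = 0$. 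The final ``in particular'' claim comes out of the same computation: for $\lambda = Q\gamma \in \BZ^n$ one finds $[\,f\,]_{\zeta^\lambda} = q^{\frac12\lambda^T Q^{-1}\lambda}\sum_d \big(\prod_i \lambda_i^{d_i}\big) f_d(\tau)$, so multiplying by $q^{-\frac12\lambda^T Q^{-1}\lambda}$ produces a finite sum of quasi-modular forms of weight $\le k$.

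I do not expect a serious obstacle: the proposition is in essence a shadow of Proposition~\ref{QJac_Prop1}, and, as the footnote to Section~\ref{Subsection_theta_decomposition} notes, the derivation above uses no vector-valued modular form theory. The one step I would write out carefully is precisely where the hypothesis enters — namely that an \emph{even unimodular} index forces $\Theta_Q$, and hence every coefficient form $f_d$, to be modular for the full modular group with trivial multiplier rather than only for a congruence subgroup; everything else is bookkeeping of weights and of holomorphicity at the cusp.
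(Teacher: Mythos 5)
Your proof is correct and follows essentially the route the paper intends: the paper's own proof is just the one-line remark that the argument is ``parallel to the proof of Proposition~\ref{QJac_Prop1}'', i.e.\ the derivative decomposition of Proposition~\ref{QJac_Prop1}(i) combined with the theta decomposition, which in the unimodular case has a single component $\vartheta_{L,0}=\Theta_Q$ modular for the full group $\mathrm{SL}_2(\BZ)$ --- and you have filled in exactly those details, correctly avoiding any Weil-representation input. One small point worth recording: your computation of the $\zeta^{\lambda}$-coefficient produces the normalization $q^{-\frac{1}{2}\lambda^T Q^{-1}\lambda}$, which is the right one (it agrees with the convention $q^{-\frac{1}{4}\lambda^T L^{-1}\lambda}$ of Proposition~\ref{QJac_Prop2} for $L=Q/2$), so the exponent $-\frac{1}{4}\lambda^T Q^{-1}\lambda$ appearing in the statement of the proposition is a typo that your argument implicitly corrects.
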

\begin{proof}
Parallel to the proof of Proposition~\ref{QJac_Prop1}.
\end{proof}

\subsubsection{The $E_8$ lattice and $E_8$-Jacobi forms} \label{Section_E8_lattice}
Consider the Cartan matrix of the $E_8$ lattice
\[
Q_{E_8} =
\begin{pmatrix}
 2 & -1 &  0 &  0 &  0 &  0 &  0 & 0 \\
-1 &  2 & -1&  0 &  0 &  0 &  0 & 0 \\
 0 & -1 &  2 & -1 &  0 &  0 &  0 & 0 \\
 0 &  0 & -1 &  2 & -1 &  0 &  0 & 0 \\
 0 &  0 &  0 & -1 &  2 & -1 &  0 & -1 \\
 0 &  0 &  0 &  0 & -1 &  2 & -1 & 0 \\
 0 &  0 &  0 &  0 &  0 & -1 &  2 & 0 \\
 0 &  0 & 0 &  0 &  -1 &  0 &  0 & 2
\end{pmatrix}.
\]
We define a natural subspace of the space of Jacobi forms of index $\frac{m}{2} Q_{E_8}$.

A \emph{weak $E_8$-Jacobi form} of weight $k$ and index $m$ is a
weak Jacobi form $\phi$ of weight $k$ and index $L = \frac{m}{2} Q_{E_8}$
which satisfies
\[ \phi( w(z), \tau ) = \phi( z, \tau ) \]
for all $w \in W(E_8)$, where $W(E_8)$ is the Weyl group of $E_8$.
We let 
\[ \Jac_{E_8, k,m} \subset \Jac_{k, \frac{m}{2} Q_{E_8}} \]
be the ring of weak $E_8$-Jacobi forms.

Practically the subspace of $E_8$-Jacobi forms is much smaller than the large space of
Jacobi forms of index $\frac{m}{2} Q_{E_8}$. 
The first example of an $E_8$-Jacobi form is the theta function $\Theta_{E_8}$ defined in \eqref{THETAFUNCTION}.
Further examples and a conjectural structure result for the ring of weak $E_8$-Jacobi forms
can be found in \cite{Sakai2}.

\section{Elliptic fibrations and conjectures}
\label{Section_Elliptic_fibrations_and_conjectures}
\subsection{Elliptic fibrations}
\subsubsection{Definition} \label{Subsubsection_ellip_fibr_definition}
Let $X$ and $B$ be non-singular projective varieties and let
\[ \pi : X \to B \]
be an elliptic fibration, i.e. a flat morphism
with fibers connected curves of arithmetic genus $1$.
We always assume $\pi$ satisfies the following properties\footnote{After Conjecture~\ref{Conj_HAE} we discuss how these assumptions can be removed.}:
\begin{enumerate}
\item[(i)] All fibers of $\pi$ are integral.
\item[(ii)] There exists a section $\iota : B \to X$.
\item[(iii)] $H^{2,0}(X, \BC) = H^0(X, \Omega_X^2) = 0$.
\end{enumerate}

\subsubsection{Cohomology}
Let $B_0 \in H^2(X)$
be the class of the section $\iota$,
and let $N_{\iota}$ be the normal bundle of $\iota$.
We define the divisor class
\[ W = B_0 - \frac{1}{2} \pi^{\ast} c_1(N_{\iota}). \]
Consider the endomorphisms of $H^{\ast}(X)$ defined by
\[
T_+(\alpha) = ( \pi^{\ast} \pi_{\ast} \alpha ) \cup W, \quad
T_-(\alpha) = \pi^{\ast} \pi_{\ast} ( \alpha \cup W ),
\]
for all $\alpha \in H^{\ast}(X)$. The maps $T_{\pm}$ satisfy the relations
\[ T_+^2 = T_+, \quad T_-^2 = T_-, \quad T_+ T_- = T_- T_+ = 0. \]
The cohomology of $X$ therefore splits as\footnote{
The subspaces $H_{+}, H_{-}, H_{\perp}$ are the $+1, -1, 0$-eigenspaces respectively
of the endomorphism of $H^{\ast}(X)$ defined by
\[ \alpha \mapsto [ W\cup \ , \pi^{\ast} \pi_{\ast}] \alpha = W \cup \pi^{\ast} \pi_{\ast}(\alpha) -
\pi^{\ast} \pi_{\ast}( W \cup \alpha). \]
}
\begin{equation} H^{\ast}(X) = H^{\ast}_{+} \oplus H^{\ast}_- \oplus H^{\ast}_{\perp} \label{SPLITTING} \end{equation}
where $H^{\ast}_{\pm} = \mathrm{Im}(T_\pm)$
and $H^{\ast}_{\perp} = \mathrm{Ker}( T_+ ) \cap \mathrm{Ker}(T_-)$.

We have the relation
\[
\langle T_{+}(\alpha), \alpha' \rangle = \langle \alpha, T_{-}(\alpha') \rangle,
\quad \alpha, \alpha' \in H^{\ast}(X)
\]
where $\langle \ , \ \rangle$ is the intersection pairing on $H^{\ast}(X)$. Therefore
\[ \blangle H^{\ast}_{+}, H^{\ast}_{+} \brangle = \blangle H^{\ast}_{-}, H^{\ast}_{-} \brangle =
\blangle H^{\ast}_{\pm}, H^{\ast}_{\perp} \brangle = 0. \]

Consider the isomorphisms
\begin{align*}
H^{\ast}(B) \to H^{\ast}_{-},\ \ &  \alpha \mapsto \pi^{\ast}(\alpha) \\
H^{\ast}(B) \to H^{\ast}_{+},\ \ &  \alpha \mapsto \pi^{\ast}(\alpha) \cup W.
\end{align*}
The pairing between $H^{\ast}_{+}$ and $H^{\ast}_{-}$ is determined by the compatibility
\[ \int_{B} \alpha \cdot \alpha' = \int_X \pi^{\ast}(\alpha) \cdot \left( \pi^{\ast}(\alpha') \cdot W \right)
\quad \text{ for all } \alpha, \alpha' \in H^{\ast}(B).
\]

\subsubsection{The lattice $\Lambda$} \label{Subsubsection_LatticeLambda}
Let $F \in H_2(X,\BZ)$ be the class of a fiber of $\pi$ and consider the $\BZ$-lattice
\[ \BZ F \oplus \iota_{\ast} H_2(B,\BZ) \subset H_2(X,\BZ). \]
Its orthogonal complement in the dual space $H^2(X,\BZ)$ is
the $\BZ$-lattice
\begin{equation} \Lambda = \Big( \BQ F \oplus \iota_{\ast} H_2(B,\BZ) \Big)^{\perp} \subset H^2(X,\BZ). \label{fsdfsdgsdf} \end{equation}
Since
$\BQ F \oplus \iota_{\ast} H_2(B,\BZ)$ generates $H_{2,+} \oplus H_{2,-}$ over $\BQ$,
we have
\[ \Lambda \subset H^2_{\perp}, \quad \Lambda \otimes \BQ = H^2_{\perp}. \]

Let $\kk_1, \ldots, \kk_r$ be an integral basis\footnote{Recall we always work modulo torsion as per Section~\ref{Subsection_Conventions}.}
of $H_2(B,\BZ)$
and let $\kk_i^{\ast} \in H^2(B,\BZ)$ be a dual basis.
The projection
\[ p_{\perp} : H^2(X,\BQ) \to H^2_{\perp} \]
with respect to the splitting \eqref{SPLITTING}
acts on $\alpha \in H^{2}(X)$ by
\[
p_{\perp}(\alpha) = \alpha - (\alpha \cdot F) B_0 - \sum_{i=1}^{r} \Big( \big( \alpha - (\alpha \cdot F) B_0  \big) \cdot \iota_{\ast} \kk_i \Big) \pi^{\ast} \kk_i^{\ast}.
\]
and is therefore defined over $\BZ$.
Hence the inclusion \eqref{fsdfsdgsdf} splits.

\subsubsection{Variables}
Consider a fixed integral basis of the free abelian group $\Lambda$,
\[ b_1, \ldots, b_n \in \Lambda. \]
We will identify an element $z = (z_1, \ldots, z_n) \in \BC^n$ with the
element $\sum_{i=1}^{n} z_i b_i$. Hence we obtain the identification
\[ \BC^n \cong \Lambda \otimes \BC = H^2_{\perp}(X,\BC). \]
Given a class $\beta \in H_2(X,\BZ)$, we write
\begin{equation}\label{eq:zetabeta}
\zeta^{\beta} = \exp( z \cdot \beta ) = \prod_{i=1}^{n} \zeta_i^{b_i \cdot \beta}
\end{equation}
where $\zeta_i = e(z_i)$ and $\cdot$ is the intersection pairing.

\subsubsection{Pairings and intersection matrices} \label{subsubsection_pairing_and_intersection_matrix}
Every element $\kk \in H_{2}(B,\BZ)$ defines a
symmetric (possibly degenerate) bilinear form on $H^2_{\perp}$ by
\[ (\alpha, \alpha')_\kk = \int_B \pi_{\ast}(\alpha \cup \alpha') \cdot \kk. \]
The restriction of $(\cdot, \cdot )_\kk$ to $\Lambda$ takes integral values. 

\begin{lemma} For every curve class $\kk \in H_2(B,\BZ)$
the quadratic form $( \cdot , \cdot )_\kk$ is even on $\Lambda$,
that is $(\alpha, \alpha)_\kk \in 2 \BZ$ for every $\alpha \in \Lambda$,
\end{lemma}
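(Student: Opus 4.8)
The plan is to show that $(\alpha,\alpha)_\kk \in 2\BZ$ for every $\alpha \in \Lambda$ by reducing the statement to a geometric computation involving the adjunction formula on a curve lying over the base class $\kk$. First I would recall that, by definition, for $\alpha \in H^2_\perp(X)$ we have $(\alpha,\alpha)_\kk = \int_B \pi_\ast(\alpha \cup \alpha) \cdot \kk$, and since $\kk$ is an integral curve class, I may assume $\kk = [C]$ for a (possibly reducible) algebraic curve $C \subset B$. By linearity in $\kk$ it suffices to treat the case where $C$ is irreducible, so let $S = \pi^{-1}(C) \subset X$ be the preimage surface (an elliptic surface over $C$), with inclusion $j : S \hookrightarrow X$. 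Then $(\alpha,\alpha)_\kk = \int_S j^\ast(\alpha)^2$, i.e.\ the statement is the assertion that the restriction of $\alpha$ to the elliptic surface $S$ is an even class with respect to the intersection form on $S$.

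Next I would use the structure of $\alpha$: since $\alpha \in \Lambda \subset H^2_\perp$, it pairs to zero with $F$ and with $\iota_\ast H_2(B,\BZ)$, so its restriction $\alpha|_S$ pairs to zero with the fiber class of $S \to C$ and with the section $\iota(C) \subset S$. For an elliptic surface $S \to C$ with a section, the intersection form on the sublattice orthogonal to the fiber and the zero-section is known to be an even lattice — this is the classical ``essential lattice''/``frame lattice'' computation: the orthogonal complement of $\langle \text{fiber}, \text{zero-section}\rangle$ inside $H^2(S,\BZ)$ (equivalently $\mathrm{NS}(S)$) is even, and in fact negative definite away from the contributions of singular fibers. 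The key input is the adjunction formula: for any divisor class $D$ on the smooth surface $S$, $D^2 + D \cdot K_S = 2p_a(D) - 2 \in 2\BZ$, and when $D$ is orthogonal to the fiber $F_S$, one has $D \cdot K_S = D \cdot (\text{multiple of } F_S + \text{section contributions})$, which vanishes or is controlled because $K_S$ of an elliptic surface with section is a rational multiple of the fiber class plus the section class (by the canonical bundle formula $K_S = \pi^\ast(\text{something}) + $ contributions, combined with $W = B_0 - \frac12 \pi^\ast c_1(N_\iota)$ being tailored precisely so that $W$ is orthogonal to $H^2_\perp$). Concretely, since $\alpha|_S \in \langle F_S, \iota(C)\rangle^\perp$ and $K_S$ lies in the span of $F_S$ and $\iota(C)$ over $\BQ$, we get $\alpha|_S \cdot K_S = 0$, hence $(\alpha,\alpha)_\kk = (\alpha|_S)^2 = 2p_a(\alpha|_S) - 2 \in 2\BZ$.

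I expect the main obstacle to be the bookkeeping around $K_S$ and $c_1(N_\iota)$: one needs to verify cleanly that the canonical class of the elliptic surface $S = \pi^{-1}(C)$ lies in the $\BQ$-span of the fiber class and the zero-section class, and that the perpendicularity defining $\Lambda$ on $X$ really does restrict to perpendicularity against that span on $S$ (this uses assumption (i) that all fibers are integral, and assumption (iii) $H^{2,0} = 0$, which forces $H^2_\perp$ to be algebraic and makes the intersection-theoretic manipulations legitimate on the level of $H^2$ rather than just $\mathrm{NS}$). An alternative, perhaps cleaner, route would bypass the surface restriction entirely: write $\alpha = p_\perp(\gamma)$ for an integral class $\gamma \in H^2(X,\BZ)$ using the integral splitting established just above the lemma, expand $(\alpha,\alpha)_\kk = \int_B \pi_\ast(p_\perp(\gamma)^2)\cdot\kk$ using the explicit formula for $p_\perp$, and observe that $\pi_\ast(p_\perp(\gamma)^2)$ differs from $\pi_\ast(\gamma^2)$ by correction terms each of which is manifestly a multiple of $2$ after pairing with $\kk$; then $\int_B \pi_\ast(\gamma^2)\cdot\kk = \int_S j^\ast(\gamma)^2$ and adjunction on $S$ (now for an honest integral class) gives evenness directly. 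I would present whichever of these two is shorter, but I anticipate the adjunction-on-the-elliptic-surface argument is the conceptual heart either way, so the write-up should foreground it.
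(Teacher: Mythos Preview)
Your conceptual core --- restrict $\alpha$ to an elliptic surface $S$ over a curve in class $\kk$, observe that $\alpha|_S$ is orthogonal to the fiber and section classes, and invoke adjunction (equivalently, Shioda's result that the essential lattice of an elliptic surface is even) --- is exactly the paper's approach. But there is a genuine gap: you never justify that $S = \pi^{-1}(C)$ is \emph{smooth}. You write ``for any divisor class $D$ on the smooth surface $S$'', but an arbitrary irreducible curve $C \subset B$ may be singular, and even if $C$ is smooth, the preimage $\pi^{-1}(C)$ can be singular (assumption (i) says fibers are integral, not smooth; singular fibers over $C$ can produce singularities of the total space $S$). Without smoothness of $S$, the adjunction formula $D^2 + D\cdot K_S = 2p_a(D) - 2$ is not available in the form you need.

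This is not a minor bookkeeping issue --- the paper's proof spends essentially all of its effort on exactly this point. Given $C$, it constructs (via a Bertini-type argument in an ambient projective space) auxiliary smooth curves $D, D' \subset B$ with $[D'] = [C] + [D]$ such that both $X_D$ and $X_{D'}$ are smooth elliptic surfaces; then linearity in $\kk$ reduces the evenness of $(\cdot,\cdot)_\kk$ to the evenness of $(\cdot,\cdot)_{[D]}$ and $(\cdot,\cdot)_{[D']}$, and now your adjunction argument applies. Your alternative route through the explicit formula for $p_\perp$ does not sidestep this: you still end at $\int_S j^\ast(\gamma)^2$ and need $S$ smooth. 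One small correction as well: for a smooth elliptic surface with section, $K_S$ is a pullback from the base (a multiple of the fiber class alone), not ``fiber plus section'', so orthogonality to the fiber already gives $\alpha|_S \cdot K_S = 0$.
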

\begin{proof}
Since the pairing is linear in $\kk$ it suffices to prove
$( \cdot , \cdot )_{\kk + \ell}$ and $( \cdot , \cdot )_{\ell}$ are even for a suitable class
$\ell \in H_2(B,\BZ)$.
Let $C \subset B$ be a curve in class $\kk$.
We can assume $C$ is reduced and irreducible (otherwise prove the claim for each
reduced irreducible component).
By embedding $B$ into a projective space and choosing suitable hyperplane sections
we can find\footnote{
Assume $C \subset B \subset \p^n$, and let $K_d$ be the
the kernel of $H^0(\CO_{\p^n}(d)) \to H^0( \CO_{\p^n}(d)|_C)$
for $d \gg 0$.
For generic sections $f_1, \ldots, f_{m} \in K_d$, $m=\dim B - 1$
the intersection $\Sigma = B \cap_i V(f_i)$ is a curve which contains $C$.
The key step is to show $\Sigma = C + D$ for a smooth curve $D$ which does not contain $C$;
all other conditions follow from a usual Bertini argument.
To show that $\Sigma$ is of multiplicity $1$ at $C$, let $p \in C$ be a point at which $C$ is smooth
and consider the projectivized normal bundle $P$ of $C$ inside $B$ at $p$.
The set of $f_1, \ldots, f_m$ which vanish at
some $v \in P$ simultaneously is a closed co-dimension $m$ subset.
Since $\dim(P) = m-1$, by choosing $f_i$ generic we can guarantee the
tangent spaces to $\Sigma(f)$ and $C$ are the same at $p$;
hence the multiplicity of $C$ in $\Sigma$ is $1$.}
a curve $D \subset B$ not containing $C$ and
a deformation of $C \cup D$ to a curve $D'$,
such that $D, D'$ are smooth and $X_D$ and $X_{D'}$ are smooth elliptic surfaces over $D, D'$ respectively;
here we let $X_{\Sigma} = \pi^{-1}(\Sigma)$ for $\Sigma \subset B$.
Hence it suffices to show $( \cdot , \cdot )_\kk$ is even if $\kk$ is represented
by a smooth curve $C$ such that $X_C$ is smooth. Let $\alpha \in \Lambda$.
Since $\alpha|_{X_C}$ is of type $(1,1)$ and orthogonal to the section and fiber class
the claim now follows from the adjunction formula, see e.g. \cite[Thm.7.4]{Shioda}.
\end{proof}

The matrix of $-( \cdot , \cdot )_{\kk}$
with respect to the basis $\{ b_i \}$ is denoted by
\[ Q_{\kk} \in M_{n \times n}(\BZ). \]
Hence for all $v = (v_1, \ldots, v_n)$ and $v' = (v'_1, \ldots, v'_n)$ in $\BQ^n$ we have
\[ \Big( \sum_i v_i b_i\, , \sum_{i} v'_i b_i \Big)_{\kk} = - v^T Q_{\kk} v'. \]
If $\kk$ is a curve class, the matrix $Q_{\kk}$ has even diagonal entries.

\subsection{Gromov--Witten classes and conjectures}
\subsubsection{Definition}
Let $\beta \in H_2(X,\BZ)$ be a curve class,
let $\kk = \pi_{\ast} \beta \in H_2(B,\BZ)$ and let
\[ \Mbar_{g,n}(X,\beta) \]
be the moduli space of genus $g$ stable maps to $X$ in class $\beta$ with $n$ markings.

For all $g,n,\kk$ such that\footnote{
If $\kk=0$ and $2g-2+n \leq 0$
the moduli space $\Mbar_{g,n}(B, \kk)$ is empty,
but $\Mbar_{g,n}(X, \beta)$ for some $\beta>0$ with $\pi_{\ast} \beta = \kk$ may be non-empty. In this case no induced morphism exists.}
\[ \kk > 0 \quad \text{ or } \quad 
2g-2+n > 0
\]
the elliptic fibration $\pi$ induces a morphism
\[ \pi : \Mbar_{g,n}(X, \beta) \to \Mbar_{g,n}(B, \kk). \]
Consider cohomology classes 
\[ \gamma_1, \ldots, \gamma_n \in H^{\ast}(X). \]
We define the \emph{$\pi$-relative Gromov--Witten class}
\[ \CC^{\pi}_{g,\beta}(\gamma_1, \ldots, \gamma_n)
 = \pi_{\ast}\left(  \left[ \Mbar_{g,n}(X,\beta) \right]^{\text{vir}} \prod_{i=1}^{n} \ev_i^{\ast}(\gamma_i) \right) \in H_{\ast}( \Mbar_{g,n}(B, \kk) ).
\]

\subsubsection{Quasi-Jacobi forms}
Let $\kk \in H_2(B,\BZ)$ be a fixed class. Consider the generating series
\[
\CC^{\pi}_{g, \kk}(\gamma_1, \ldots, \gamma_n) = \sum_{\pi_{\ast} \beta = \kk} \CC^{\pi}_{g,\beta}(\gamma_1, \ldots, \gamma_n) q^{W \cdot \beta} \zeta^{\beta}
\]
where the sum is over all curve classes $\beta \in H_2(X,\BZ)$ with $\pi_{\ast} \beta = \kk$. 
By definition,
\[ \CC^{\pi}_{g, \kk}(\gamma_1, \ldots, \gamma_n) \in H_{\ast}( \Mbar_{g,n}(B, \kk) ) \otimes \BQ[[q^{\frac{1}{2}}, \zeta^{\pm 1}]]. \]

Recall the space $\QJ_{L}$ of quasi-Jacobi forms of index $L$, and let 
\[ \Delta(q) = q \prod_{m \geq 1} (1-q^m)^{24} \]
be the modular discriminant.
The following is a refinement of \cite[Conj.A]{HAE}. 

\begin{conjecture} \label{Conj_Quasimodularity}
The series $\CC^{\pi}_{g, \kk}(\gamma_1, \ldots, \gamma_n)$ is a cycle-valued quasi-Jacobi form
of index $Q_{\kk}/2$:
\[ 
\CC^{\pi}_{g, \kk}(\gamma_1, \ldots, \gamma_n)
\, \in \,
H_{\ast}(\Mbar_{g,n}(B, \kk)) \otimes
\frac{1}{\Delta(q)^m}
\QJ_{Q_{\kk}/2}
\]
where $m = -\frac{1}{2} c_1(N_{\iota}) \cdot \mathsf{k}$.
\end{conjecture}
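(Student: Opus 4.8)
The plan is to reduce the statement to the two regimes in which it can actually be controlled — trivial elliptic fibrations $B \times E \to B$, where it is the main theorem of \cite{HAE}, and the ``irreducible cores'' left over after degenerating the base — and then to propagate the weight, the index, and the prefactor $\Delta(q)^{-m}$ through the degeneration formula. Throughout, everything is tensored with $H_{\ast}(\Mbar_{g,n}(B,\kk))$, and one repeatedly invokes that $\QJac$ is closed under the operations that occur: multiplication of series, pushforward along gluing maps of moduli spaces, and the operators $D_q, D_{\zeta_i}$ of Section~\ref{Subsubsection_quasiJacobi_Differentialoperators}, which are precisely what enters when the divisor equation or a section is used.

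First I would split ``quasi-Jacobi form'' into its elliptic and modular halves. The elliptic transformation law in $z \in \Lambda \otimes \BC$ is essentially formal: for $\lambda \in \Lambda$ the substitution $z \mapsto z + \lambda \tau + \mu$ multiplies $\zeta^{\beta}$ by $e(\lambda\cdot\beta)\, q^{\lambda\cdot\beta}$, so on $\CC^{\pi}_{g,\kk}$ it amounts to replacing $W$ by $W+\lambda$; since the curve classes $\beta$ with $\pi_{\ast}\beta = \kk$ form a fixed coset of a fixed sublattice, re-indexing the sum produces exactly the quadratic multiplier $e(-\lambda^{t} L \lambda\tau - 2\lambda^{t} L z)$ with $L = Q_{\kk}/2$, up to a remainder which is a $\T_{\lambda}$-derivative, as in Lemma~\ref{Lemma_ell_trans_law_for_quasi_Jac}. (That the fiber coefficients are Laurent in $\zeta$ of the correct shape, so that $\CC^{\pi}_{g,\kk}$ is $\Delta^{-m}\QJ_{Q_{\kk}/2}$-valued rather than merely a formal power series, is a finiteness/effectivity input.) It then remains to establish the modular $\mathrm{SL}_2(\BZ)$ law in $\tau$, equivalently that the coefficients in the theta decomposition are quasi-modular.

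For the modular part I would induct on $\dim B$. The base case $\dim B = 0$ is the elliptic curve, where $\Lambda = 0$, quasi-Jacobi forms are quasi-modular forms, and $\CC^{\pi}_{g,\kk}$ is the cycle produced in \cite{HAE} (built on the Okounkov--Pandharipande operator formalism and Bloch--Okounkov quasi-modularity \cite{BO}). For $\dim B > 0$, pick a very ample divisor on $B$ and degenerate $B \rightsquigarrow B' \cup_{D} (D \times \p^1)$, lifting to a semistable degeneration of $X$ with compatible elliptic fibrations on the pieces (this is where assumptions (i)--(iii) must be preserved, possibly after modifying the degeneration). The degeneration formula \cite{Junli2} then expresses $\CC^{\pi}_{g,\kk}$ as a finite sum over splittings $\kk = \kk_1 + \kk_2$, genus and marking distributions, and cohomology-weighted gluing data of products of the relative $\pi$-classes of the two sides; the $D \times \p^1$ side is reduced further by the product formula \cite{LQ}, ultimately to an elliptic curve over a base of smaller dimension. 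By induction each factor lies in $H_{\ast} \otimes \Delta^{-m_i}\QJ_{Q_{\kk_i}/2}$, and the bookkeeping closes: $Q_{\kk_1} + Q_{\kk_2} = Q_{\kk}$ because $(\cdot,\cdot)_{\kk}$ is linear in $\kk$, $m_1 + m_2 = m$ because $c_1(N_{\iota})\cdot\kk$ is additive, and the weights add as forced by the virtual dimension; the sum over the fiber degree (multiple covers of the elliptic fiber, the $q$-direction) is where the Jacobi/theta structure is generated and is supplied by the elliptic-curve input.

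The decisive obstacle is that this reduction is incomplete: degenerating the base leaves elliptic surfaces fibered over curves (and their higher-dimensional analogues) that need not degenerate further into products with an elliptic curve. These genuinely two- (and higher-) dimensional cores must be treated geometrically. For the rational elliptic surface one adapts the Maulik--Pandharipande--Thomas scheme \cite{MPT}: degenerate the surface itself to reduce to a finite list of ``primitive'' integrals, evaluate those by localization together with the $W(E_8)$-symmetry of the fibers, and verify at every step (relative invariants in the fiber direction, rubber integrals, boundary restrictions) that one stays inside $\frac{1}{\Delta^{k/2}}\QJ_{kQ_{E_8}/2}$ — and even this yields only the numerical, not the cycle-valued, statement. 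So the hard part is exactly the non-product building blocks; this is why Conjecture~\ref{Conj_Quasimodularity} is currently known cycle-valued only for trivial fibrations and numerically for the rational elliptic surface, and is open in general.
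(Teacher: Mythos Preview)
The statement is a \emph{conjecture}; the paper does not prove it in general and offers no proof for you to be compared against. What the paper does establish are special cases: trivial fibrations $B\times E$ cycle-valued (via \cite{HAE}) and the rational elliptic surface numerically (Section~\ref{Section_RationalEllipticSurface}). You acknowledge this in your final paragraph, so your proposal is better read as a strategy sketch than a proof; as such, two points deserve comment.

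First, your treatment of the elliptic transformation law is circular. You invoke Lemma~\ref{Lemma_ell_trans_law_for_quasi_Jac} to handle the ``remainder which is a $\T_{\lambda}$-derivative,'' but that lemma presupposes $\phi\in\QJac_L$ --- exactly what you are trying to prove. The paper's unconditional argument (Section~\ref{Section_The_elliptic_transformation_law}) requires a geometric input, Assumption~($\star$): a section $B_\lambda$ for every $\lambda\in\Lambda$, so that translation $t_\lambda$ realizes the re-indexing $\beta\mapsto t_{\lambda\ast}\beta$ as an equality of Gromov--Witten classes. Without such a section one cannot simply ``re-index the sum,'' because $\CC^{\pi}_{g,\beta}$ and $\CC^{\pi}_{g,\beta'}$ for $\beta,\beta'$ in the same coset need not be related. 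Even with ($\star$), the clean elliptic law holds only for translation-invariant insertions; otherwise one gets \eqref{ffsdghhjjjhhh}, which again matches the quasi-Jacobi law only \emph{a posteriori} via Lemma*~\ref{Lemma_EllHAE}.

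Second, your inductive scheme (degenerate $B$, peel off $D\times\p^1$ factors) is not the route the paper takes for the cases it does prove. For the rational elliptic surface the paper works directly: genus~$0$ via an explicit evaluation $M_1=\Theta_{E_8}/\Delta^{1/2}$ plus WDVV (Section~\ref{Subsection_Genus_Zero}), then an induction on $(k,g,n)$ using degeneration to the normal cone of a \emph{fiber} $E\subset R$ together with the relative/absolute correspondence of \cite{MP} (Proposition~\ref{DEGENERATIONPROP}) and a vanishing lemma (Lemma~\ref{Lemma_vanishing}). Your base-degeneration approach, even if the compatible semistable degenerations exist, would in any case bottom out at exactly these surface cores, so it does not bypass the hard input.
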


\subsubsection{Holomorphic anomaly equation}
Recall the differential operator on $\QJac_L$ induced by the non-holomorphic variable $\nu$,
\[ \T_q = \frac{d}{dC_2} : \QJac_L \to \QJac_L. \]
Since $\Delta(q)$ is a modular form, we have
\[ \T_q \Delta(q) = 0. \]
We conjecture a holomorphic anomaly equation for the classes $\CC^{\pi}_{g,\kk}$. 
The equation is exactly the same as in \cite[Conj.B]{HAE}.

Consider the diagram
\[
\begin{tikzcd}
\Mbar_{g,n}(B,\mathsf{k}) & M_{\Delta} \ar[swap]{l}{\iota} \ar{d} \ar{r} & \Mbar_{g-1, n+2}(B, \mathsf{k}) \ar{d}{\ev_{n+1} \times \ev_{n+2}} \\
& B \ar{r}{\Delta} & B \times B
\end{tikzcd}
\]
where $\Delta$ is the diagonal, $M_{\Delta}$ is the fiber product
and $\iota$ is the gluing map along the last two points.
Similarly, for every
splitting $g = g_1 + g_2$, $\{ 1, \ldots, n \} = S_1 \sqcup S_2$
and $\mathsf{k} = \mathsf{k_1} + \mathsf{k_2}$
consider
\[
\begin{tikzcd}
\Mbar_{g,n}(B,\mathsf{k}) & M_{\Delta, \kk_1, \kk_2} \ar{d} \ar[swap]{l}{j} \ar{r} & 
\Mbar_{g_1, S_1 \sqcup \{ \bullet \}}(B, \mathsf{k_1})
\times \Mbar_{g_2, S_2 \sqcup \{ \bullet \}}(B, \mathsf{k_2}) \ar{d}{\ev_{\bullet} \times \ev_{\bullet}} \\
& B \ar{r}{\Delta} & B \times B
\end{tikzcd}
\]
where $M_{\Delta, \kk_1, \kk_2}$ is the fiber product and
$j$ is the gluing map along the marked points labeled by $\bullet$.

\begin{conjecture} \label{Conj_HAE} On $\Mbar_{g,n}(B, \mathsf{k})$,
\begin{align*}
\T_q \CC^{\pi}_{g,\kk}( \gamma_1, \ldots, \gamma_n )
\ =\  & 
\iota_{\ast} \Delta^{!}
\CC^{\pi}_{g-1,\kk}( \gamma_1, \ldots, \gamma_n, \1, \1 ) \\
&
+ \sum_{\substack{ g= g_1 + g_2 \\
\{1,\ldots, n\} = S_1 \sqcup S_2 \\
\mathsf{k} = \mathsf{k}_1 + \mathsf{k}_2}}
j_{\ast} \Delta^{!} \left(
\CC^{\pi}_{g_1, \kk_1}( \gamma_{S_1}, \1) \boxtimes
\CC^{\pi}_{g_2, \mathsf{k}_2}( \gamma_{S_2}, \1 ) \right) \\
&
- 2 \sum_{i=1}^{n} 
\CC^{\pi}_{g,\kk}( \gamma_1, \ldots, \gamma_{i-1},
 \pi^{\ast} \pi_{\ast} \gamma_i , \gamma_{i+1}, \ldots, \gamma_n ) \cdot \psi_i,
\end{align*}
where $\psi_i \in H^2(\Mbar_{g,n}(B,\kk))$
is the cotangent line class at the $i$-th marking.
\end{conjecture}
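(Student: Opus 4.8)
The plan is to prove Conjecture~\ref{Conj_HAE} not head-on for an arbitrary elliptic fibration, but by isolating two structural inputs --- behaviour under products and under degenerations of the base --- and then verifying the equation on a small list of building blocks. The ultimate seed is the elliptic curve (equivalently, any trivial elliptic fibration), for which Conjecture~\ref{Conj_HAE} is exactly \cite[Conj.~B]{HAE} and is already proven there, cycle-valued and in all genera. Everything else is obtained from this by propagation, so the real content lies in showing that the two structural operations preserve the precise form of the equation.

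The central step is \emph{compatibility with the degeneration formula}. Suppose the base degenerates, $B \rightsquigarrow B_1 \cup_{B_0} B_2$ along a smooth divisor $B_0$, and lift this to a degeneration $X \rightsquigarrow X_1 \cup_{X_0} X_2$ of the total space compatible with the elliptic fibrations, so that $X_0 \to B_0$ is again an elliptic fibration with section. One expands both sides of the conjectured equation using the degeneration formula of \cite{Junli2}: the class $\CC^\pi_{g,\kk}$ becomes a sum over splittings of the genus, the marked points, the curve class and the contact order along $X_0$, each term a product of \emph{relative} $\pi$-Gromov--Witten classes of $(X_1,X_0)$ and $(X_2,X_0)$ glued along evaluation maps to $X_0$. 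The operator $\T_q = d/dC_2$ obeys a Leibniz rule on such products, so applying it term by term and invoking the relative holomorphic anomaly equation of Section~\ref{Section_relative_geomtries} on each factor, one must check that the resulting sum reorganizes into the three terms on the right-hand side of Conjecture~\ref{Conj_HAE}: the two gluing terms $\iota_\ast\Delta^!$ and $j_\ast\Delta^!$ are produced by the new codimension-one boundary strata created by the degeneration (curves breaking across $X_0$), while the $\psi$-class term must be reproduced from the way cotangent lines at the gluing nodes split.

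Granting this, the concrete geometries follow. For products $R\times E$ (and $E_1\times E_2$) the product formula \cite{LQ} expresses the relative classes as products of those of the two factors, reducing the equation to the rational elliptic surface and the elliptic curve. For the rational elliptic surface $R\to\p^1$ itself one adapts the Maulik--Pandharipande--Thomas scheme \cite{MPT}: degenerate $\p^1$ so that $R$ breaks into copies of itself glued along fibers, use the fiberwise degeneration together with the product formula and rubber calculus to reduce every relative class to a finite package of initial data, and check that each operation in the scheme preserves simultaneously the quasi-Jacobi property (with the expected weight, index $\tfrac{k}{2}Q_{E_8}$, and pole order $\Delta(q)^{-k/2}$) and the exact shape of Conjecture~\ref{Conj_HAE}. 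Because only the specialized, $\BQ$-valued invariants are controlled by this scheme, this yields the numerical version of the conjecture for these surfaces; the cycle-valued statement and the fully general elliptic fibration remain open and would presumably require the matrix-action reformulation of the Appendix.

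The hardest part is the bookkeeping in the degeneration step. One must show that the relative holomorphic anomaly equation of Section~\ref{Section_relative_geomtries} is the right notion, that it pushes forward correctly along the gluing maps $\iota$ and $j$, and --- most delicately --- that the term $-2\sum_i \CC^\pi_{g,\kk}(\ldots,\pi^\ast\pi_\ast\gamma_i,\ldots)\cdot\psi_i$ emerges with exactly the coefficient $-2$ and exactly this coupling to $\pi^\ast\pi_\ast$ (equivalently to the operators $T_\pm$ of Section~\ref{Subsubsection_ellip_fibr_definition}). This term has no transparent analogue in the base-point case, so it must come out of the dependence of the variable $W=B_0-\tfrac12\pi^\ast c_1(N_\iota)$ on the normal bundle and, after theta decomposition, of the interaction between $D_q$ on the $q$-variable and $\psi$-classes at nodes. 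A secondary difficulty is that the degeneration formula only determines $\CC^\pi_{g,\kk}$ modulo contributions of curve classes supported on $X_0$; one needs a matching or vanishing statement for this fiber-supported part, and this is where hypothesis (iii), $H^{2,0}(X)=0$, and the explicit geometry of the building blocks are used.
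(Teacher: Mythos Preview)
The statement you are attempting to prove is a \emph{Conjecture} in the paper, not a theorem; the paper does not prove it in general and explicitly leaves it open. What the paper does establish are special cases: the numerical version for the rational elliptic surface (Theorems~\ref{theorem_RES1} and~\ref{theorem_RES2}), and consequently for the Schoen threefold. Your outline is not a proof of Conjecture~\ref{Conj_HAE} but rather a sketch of the paper's strategy for these special cases, and you yourself concede this at the end. So as a ``proof proposal'' for the stated conjecture it is a category error.

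That said, as a description of the paper's method for the special cases your outline is broadly accurate, with a few corrections. First, the $-2\psi_i \cdot \pi^\ast\pi_\ast$ term does not ``emerge'' from theta decomposition or from the dependence of $W$ on the normal bundle; it is already present in the seed case of the elliptic curve \cite[Thm.~3]{HAE} and is simply propagated through the compatibility checks (restriction to boundary, divisor equation, degeneration). Second, your description of the MPT-style scheme for $R\to\p^1$ is off: the paper degenerates $R$ to $R\cup_E(\p^1\times E)$, not to copies of $R$ glued along fibers, and the key new ingredient beyond \cite{MPT} is the vanishing Lemma~\ref{Lemma_vanishing} (via a cosection argument using the log $2$-form on $(R,E)$), which kills exactly the fiber-supported contributions you flag as a difficulty. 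Third, the compatibility of Conjectures~\ref{Conj_HAE} and~\ref{Conj_RELHAE} with the degeneration formula (your ``central step'') is carried out in the paper as Proposition~\ref{Proposition_Compatibility_with_degeneration_formula}, and the delicate cancellation there involves the rubber term and the relative $\psi$-classes via Lemma~\ref{lemma_psi_splitting}, not the mechanism you speculate about.
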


Since the moduli space of stable maps in negative genus is empty,
the corresponding terms in Conjecture~\ref{Conj_HAE} vanish.
Further, the sum in the second term on the right runs over all splittings for which the
moduli spaces $\Mbar_{g_i, |S_i|+1}(B, \kk_i)$ are stable, or equivalently, for which the classes
$\CC_{g_i,\kk_i}^{\pi}(\gamma_{S_i}, \1)$ are defined.
In particular, if $g_i = 0$ and $\kk_i = 0$ we require $|S_i| \geq 2$. 

By Section~\ref{Subsubsection_Specialization_to_quasimodular_forms} quasi-Jacobi forms specialize to quasi-modular forms
under $\zeta = 1$, and the specialization map commutes with $\T_q$.
Hence Conjectures~\ref{Conj_Quasimodularity} and \ref{Conj_HAE}
generalize and are compatible with \cite[Conj.A and B]{HAE}. 

We have always assumed here that the elliptic fibration
has integral fibers, a section, and $h^{2,0}(X) = 0$,
see \mbox{(i-iii)} in Section~\ref{Subsubsection_ellip_fibr_definition}.
We expect Conjectures~\ref{Conj_Quasimodularity} and~\ref{Conj_HAE}
hold without these assumption if some modifications are made:
It is plausible (i) can be removed without any modifications. 
If we remove (ii) we need to work with a multi-section of the fibration,
which leads to quasi-Jacobi forms which are modular
with respect to $\Gamma(N)$ where
$N$ is the degree of a multisection over the base. 
If (iii) is violated then the Gromov--Witten theory of $X$ mostly vanishes
by a Noether--Lefschetz argument.
Using instead a nontrivial reduced Gromov--Witten theory
(such as \cite{KT} for algebraic surfaces satisfying $h^{2,0} > 0$)
forces then some basic modifications to the holomorphic anomaly equation,
see e.g. Section~\ref{Section_Abelian_surfaces} for the case of the abelian surface.

\section{Consequences of the conjectures} \label{Section_Consequences_of_the_Conjecture}
\subsection{A weight refinement} \label{Subsection_weight_refinement}
Define a modified degree function $\underline{\deg}(\gamma)$
on $H^{\ast}(X)$ by the assignment
\begin{equation*} \label{modified_degree_function}
\underline{\deg}(\gamma) =
\begin{cases}
2 & \text{if } \gamma \in \mathrm{Im}(T_+) \\
1 & \text{if } \gamma \in \mathrm{Ker}( T_+ ) \cap \mathrm{Ker}(T_-) \\
0 & \text{if } \gamma \in \mathrm{Im}(T_-) \\
\end{cases}
\end{equation*}

The following is parallel to \cite[Appendix B]{HAE}.

\begin{lemmastar} \label{Lemma_Weight_statement}
Assume Conjectures~\ref{Conj_Quasimodularity} and \ref{Conj_HAE} hold.
Then for any $\underline{\deg}$-homogeneous classes
$\gamma_1, \ldots, \gamma_n \in H^{\ast}(X)$
and $\kk \in H_2(B,\BZ)$ we have
\[
\CC^{\pi}_{g, \kk}(\gamma_1, \ldots, \gamma_n)
\, \in \,
H_{\ast}(\Mbar_{g,n}(B, \kk)) \otimes \frac{1}{\Delta(q)^m} \QJ_{\ell, Q_{\kk}}
\]
where $m = -\frac{1}{2} c_1(N_{\iota}) \cdot \mathsf{k}$
and $\ell = 2g - 2 + 12m + \sum_i \underline{\deg}(\gamma_i)$.
\end{lemmastar}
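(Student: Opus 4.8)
The plan is to track how the weight of a cycle-valued quasi-Jacobi form changes under the operations appearing in the holomorphic anomaly equation and under the insertion of cohomology classes, and to pin down the weight by a boundary/induction argument combined with a base case.

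First I would establish the weight contribution of an insertion. By Conjecture~\ref{Conj_Quasimodularity} we already know $\CC^{\pi}_{g,\kk}(\gamma_1,\dots,\gamma_n)$ lies in $\frac{1}{\Delta(q)^m}\QJ_{Q_\kk}$, so only the weight $\ell$ needs to be determined; the claim is that $\ell$ is additive in the $\underline{\deg}$-values of the insertions plus the ``universal'' contribution $2g-2+12m$. The cleanest route is to reduce to the divisor and dilaton/string-type relations together with the splitting structure \eqref{SPLITTING}. A class $\gamma\in H^*_-$ is (up to scalar) $\pi^*\alpha$ for $\alpha\in H^*(B)$, and inserting such a class essentially pulls back a class from $\Mbar_{g,n}(B,\kk)$ via the evaluation to $B$; this changes nothing in the $q,\zeta$ dependence and so contributes weight $0$, matching $\underline{\deg}=0$. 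A class in $H^*_+$ is $\pi^*\alpha\cup W$; using $W$ shifts the power of $q$ by the relation $q^{W\cdot\beta}$ and effectively differentiates by $D_q$ up to lower-order terms, raising the weight by $2$, matching $\underline{\deg}=2$. A class in $H^*_\perp=\Lambda\otimes\BC$ pairs against $\beta$ to give a factor detected by $D_{\zeta_i}$, which raises weight by $1$ (recall $D_{\zeta_i}:\QJ_{k,L}\to\QJ_{k+1,L}$), matching $\underline{\deg}=1$. Making these three reductions precise — in particular showing that the ``lower-order'' correction terms do not spoil homogeneity — is where I would invoke the holomorphic anomaly equation of Conjecture~\ref{Conj_HAE}: the operators $\iota_*\Delta^!$, $j_*\Delta^!$ and $\psi_i$-multiplication all interact with $\T_q$ in a weight-consistent way, and the commutation relations \eqref{COMMUTATIONRELATIONS} (notably $[\T_q,D_q]=-2k\,\id$ on weight $k$) force any violation of homogeneity to propagate to a contradiction.

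Second I would run an induction. Order the pairs $(g,n,\kk)$ lexicographically (or by $2g-2+n+|\kk|$). The base cases are the ones that can be computed directly: genus $0$ with few marked points (where the relative potential is essentially a theta-type generating series determined by the geometry of sections, consistent with the $\Theta_{E_8}$ computation mentioned in the introduction), and $\kk=0$ (where one gets the $\Delta^{-1/2}$ factors and purely modular data). For the inductive step, apply $\T_q$ via Conjecture~\ref{Conj_HAE}: every term on the right-hand side involves either smaller genus, a splitting into pieces of smaller complexity, or an insertion of $\pi^*\pi_*\gamma_i\in H^*_-$ (so $\underline{\deg}$ drops by $\underline{\deg}(\gamma_i)$ and is compensated by the $\psi_i$ which has weight $2$; one checks $-2+\underline{\deg}(\gamma_i)-\underline{\deg}(\pi^*\pi_*\gamma_i)$ gives the correct shift). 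By the inductive hypothesis each right-hand term is homogeneous of the predicted weight minus $2$, so $\T_q\CC^\pi_{g,\kk}$ is homogeneous of weight $\ell-2$; since $\T_q=\frac{d}{dC_2}$ lowers weight by exactly $2$ and $\QJac_L=\QJac'_L\otimes\BC[C_2]$, we conclude $\CC^\pi_{g,\kk}$ is a sum of a $\T_q$-closed part (whose weight we fix by a leading-coefficient or specialization argument, e.g. via Section~\ref{Subsubsection_Specialization_to_quasimodular_forms} reducing to the known quasi-modular weight statement in \cite[Appendix B]{HAE}) plus a $C_2$-multiple of something of weight $\ell-2$, i.e. is homogeneous of weight $\ell$.

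The main obstacle I expect is the bookkeeping of the weight shift for the $\psi$-term and the insertion $\pi^*\pi_*\gamma_i$: one must verify that $\underline{\deg}(\pi^*\pi_*\gamma_i)$ together with the weight $2$ of $\psi_i$ exactly accounts for the weight drop of $2$ caused by $\T_q$, uniformly over the three cases $\gamma_i\in H^*_\pm,H^*_\perp$, and that the splitting terms $j_*\Delta^!(\CC\boxtimes\CC)$ are homogeneous of the right weight given that the diagonal insertion $\1$ has $\underline{\deg}=0$ but the Künneth decomposition of the diagonal class in $B\times B$ mixes the $\underline{\deg}$-grading. Resolving this requires knowing how the Künneth components of $\Delta_B$ distribute across $H^*_+\otimes H^*_-$ and $H^*_\perp\otimes H^*_\perp$ (the pairing statements $\langle H^*_+,H^*_+\rangle=\langle H^*_-,H^*_-\rangle=0$ from the excerpt are exactly what is needed), and checking the arithmetic $\underline{\deg}$ is preserved. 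Once that compatibility is in place, the induction closes and the weight formula $\ell=2g-2+12m+\sum_i\underline{\deg}(\gamma_i)$ follows.
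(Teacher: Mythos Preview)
Your proposal identifies the right ingredients but structures them in a way that leaves a genuine gap. The paper's argument (deferred to \cite[Appendix~B]{HAE} and recast abstractly in Appendix~\ref{Section_CohFTs} here) does \emph{not} run an induction on $\T_q$; it computes the weight operator directly via the commutator $[\T_q,D_q]\big|_{\QJ_{k,L}}=-2k\cdot\id$. One applies $D_q$ to $\CC^\pi_{g,\kk}(\gamma_1,\dots,\gamma_n)$ by the divisor equation for $W$ (obtaining $p_*\CC^\pi_{g,\kk}(\gamma_1,\dots,\gamma_n,W)$), applies $\T_q$ via Conjecture~\ref{Conj_HAE}, and compares with the reverse order. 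The only surviving contributions are the extra $\psi_{n+1}$-term with insertion $\pi^*\pi_*W=1$, the new splitting terms in which the $W$-marking sits on an unstable $(g_i,\kk_i)=(0,0)$ component together with exactly one $\gamma_i$, and the $\psi$-pullback corrections under $p$; one checks these sum to $-2\ell\,\CC^\pi_{g,\kk}(\gamma_1,\dots,\gamma_n)$ with the stated $\ell$. This pins down the weight as an eigenvalue in one step, with no base case and no separate handling of a $\T_q$-closed part. You mention this commutator in passing but do not use it as the main engine.

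Your inductive route only shows that $\T_q\CC^\pi_{g,\kk}$ has weight $\ell-2$, which controls the $C_2$-positive part of $\CC^\pi_{g,\kk}$ but says nothing about the part in $\mathrm{Ker}(\T_q)$: a $\T_q$-closed quasi-Jacobi form can have arbitrary weight, so this is a real gap. Specializing to $z=0$ and invoking the quasi-modular case does not close it, since distinct weight components may vanish under $z\mapsto 0$. Two smaller corrections: $\psi_i$ is a cohomology class on the moduli space and carries no $q,\zeta$-weight, so the weight drop in the $\psi$-term comes entirely from $\underline{\deg}(\pi^*\pi_*\gamma_i)=0$ replacing $\underline{\deg}(\gamma_i)=2$ (for $\gamma_i\in H^*_-$ or $H^*_\perp$ the term simply vanishes); and your first-paragraph claim that insertions from $H^*_+$ or $H^*_\perp$ ``effectively differentiate by $D_q$ or $D_{\zeta_i}$'' is only valid for divisor classes, not for arbitrary cohomology classes.
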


\subsection{Disconnected Gromov--Witten classes} \label{Subsection_Disconnected_GW_classes}
We reformulate the holomorphic anomaly equation of Conjecture~\ref{Conj_HAE} for disconnected Gromov--Witten classes.
Let
\[ \Mbar_{g,n}^{\bullet}(B,\kk) \]
be the moduli space of stable maps $f : C \to B$ from possibly \emph{disconnected}
curves of genus $g$ in class $\kk$, with the requirement
that for every connected component $C' \subset C$ 
at least one of the following holds:
\begin{enumerate}
\item[(i)] $f|_{C'}$ is non-constant, 
or
\item[(ii)] $C'$ has genus $g'$ and carries $n'$ markings with $2g'-2+n' > 0$.
\end{enumerate}
Let $\Mbar_{g,n}'(X,\beta)$
be the moduli space of stable maps $f : C \to X$ from possibly disconnected
curves of genus $g$ in class $\beta$, with the requirement
that for every connected component $C' \subset C$ 
at least one of the following holds:
\begin{enumerate}
    \item[(i)] $\pi \circ f|_{C'}$ is non-constant, or 
    \item[(ii)] $C'$ has genus $g'$ and carries $n'$ markings with $2g'-2+n' > 0$.
\end{enumerate}
For all\footnote{Here $\Mbar_{g,n}^{\bullet}(B,\kk)$ is empty if and only if $\Mbar_{g,n}'(X,\beta)$ is empty,
so we do not need to exclude any values of $(g,\kk)$.}
$g \in \BZ$ and curve classes $\kk$ the fibration $\pi$ induces a map
\[ \pi : \Mbar_{g,n}'(X,\beta) \to \Mbar_{g,n}^{\bullet}(B,\kk). \]
Define the disconnected Gromov--Witten classes by
\[
\CC_{g,\kk}^{\pi, \bullet}(\gamma_1, \ldots, \gamma_n)
=
\sum_{\pi_{\ast} \beta = \kk} \zeta^{\beta} q^{W \cdot \beta}
\pi_{\ast} \left( \left[ \Mbar'_{g,n}(X,\beta) \right]^{\text{vir}} \prod_i \ev_i^{\ast}(\gamma_i) \right).
\]
The right hand side is a series with coefficients in the homology of $\Mbar_{g,n}^{\bullet}(B, \kk)$.

Since the disconnected classes $\CC^{\bullet}_{g,k}$ can be expressed in terms of connected classes $\CC^{\bullet}_{g,k}$ and vice versa, Conjecture~\ref{Conj_Quasimodularity} is equivalent to
the quasi-Jacobi property of the disconnected theory:
\[
\CC_{g,\kk}^{\pi, \bullet}(\gamma_1, \ldots, \gamma_n)
\in H_{\ast}( \Mbar_{g,n}^{\bullet}(B, \kk) ) \otimes
\frac{1}{\Delta(q)^m} \QJ_{Q_{\kk}/2}
\]
where $m = -\frac{1}{2} c_1(N_{\iota}) \cdot \mathsf{k}$.
Similarly, Conjecture~\ref{Conj_HAE} is equivalent to the
following disconnected version of the holomorphic anomaly equation:
\begin{lemmastar} \label{Lemma_discHAE}Conjecture~\ref{Conj_HAE} is equivalent to
\begin{align*}
\T_q \CC^{\pi, \bullet}_{g,\kk}( \gamma_1, \ldots, \gamma_n )
\ =\  & 
\iota_{\ast} \Delta^{!}
\CC^{\pi, \bullet}_{g-1,\kk}( \gamma_1, \ldots, \gamma_n, \1, \1 ) \\
&
- 2 \sum_{i=1}^{n} 
\psi_i \cdot \CC^{\pi, \bullet}_{g,\kk}( \gamma_1, \ldots, \gamma_{i-1},
 \pi^{\ast} \pi_{\ast} \gamma_i , \gamma_{i+1}, \ldots, \gamma_n ).
\end{align*}
\end{lemmastar}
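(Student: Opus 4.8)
The plan is to derive the equivalence from the standard relation between connected and disconnected Gromov--Witten classes, exploiting that $\T_q$ is a derivation of the coefficient ring $\QJac_{Q_\kk/2}$ acting coefficient-wise on the homology-valued generating series. Recall first that $\CC^{\pi,\bullet}_{g,\kk}(\gamma_1,\ldots,\gamma_n)$ is obtained from the connected classes by summing, over all ways of partitioning the markings $\{1,\ldots,n\}$ into unordered blocks $S_1,\ldots,S_\ell$, the genus as $g=g_1+\cdots+g_\ell$, and the curve class as $\kk=\kk_1+\cdots+\kk_\ell$ — each block required to satisfy stability condition (i) or (ii) — of the external product $\CC^\pi_{g_1,\kk_1}(\gamma_{S_1})\boxtimes\cdots\boxtimes\CC^\pi_{g_\ell,\kk_\ell}(\gamma_{S_\ell})$, pushed forward along the assembly map into $\Mbar^\bullet_{g,n}(B,\kk)$. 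This decomposition is compatible with the $q$- and $\zeta$-gradings since $W\cdot\beta$ and $\zeta^\beta$ are additive in $\beta$; it follows from the corresponding decomposition of $\Mbar'_{g,n}(X,\beta)$ into connected components together with the product axiom for virtual classes and the functoriality of $\pi_\ast$.

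For the forward direction I would assume Conjecture~\ref{Conj_HAE} and apply $\T_q$ to the product expansion of $\CC^{\pi,\bullet}_{g,\kk}$. By the Leibniz rule $\T_q$ hits one connected factor at a time; substituting the connected holomorphic anomaly equation for that factor produces three contributions, each multiplied by the product of the remaining untouched factors: (a) the self-gluing term $\iota_\ast\Delta^!$ lowering the genus of that factor by one, (b) the bilinear term $j_\ast\Delta^!$ splitting that factor into two connected pieces, and (c) the $\psi$-insertion term $-2\sum_{i}\psi_i\cdot\CC^\pi(\ldots,\pi^\ast\pi_\ast\gamma_i,\ldots)$. The type-(c) terms, summed over all factors and all markings, reassemble into $-2\sum_{i=1}^{n}\psi_i\cdot\CC^{\pi,\bullet}_{g,\kk}(\ldots,\pi^\ast\pi_\ast\gamma_i,\ldots)$, since the cotangent class $\psi_i$ on the connected stratum pulls back to $\psi_i$ on $\Mbar^\bullet_{g,n}(B,\kk)$. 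The remaining contributions — type (a), gluing two new markings on a single connected component, together with type (b), gluing two new markings that lie on two \emph{distinct} connected components — exhaust exactly the disconnected self-gluing term $\iota_\ast\Delta^!\,\CC^{\pi,\bullet}_{g-1,\kk}(\gamma_1,\ldots,\gamma_n,\1,\1)$, where the gluing map $\iota$ on $\Mbar^\bullet_{g,n}(B,\kk)$ is allowed to identify two points on the same or on different components. The arithmetic-genus bookkeeping matches: self-gluing within one component raises its genus by one, while gluing two distinct connected components merges them into one, leaving the total $\sum(\text{genus})$ unchanged but dropping the number of components by one — in both cases the total disconnected genus increases by one. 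The converse is the reverse of this manipulation: starting from the disconnected equation, split $\iota_\ast\Delta^!\,\CC^{\pi,\bullet}_{g-1,\kk}(\ldots,\1,\1)$ according to whether the two glued points lie on the same component of the source or on two different ones; the first case yields $\iota_\ast\Delta^!\,\CC^\pi_{g-1,\kk}(\ldots,\1,\1)$ times the contribution of the other components, and the second yields the bilinear sum $\sum_{g=g_1+g_2,\,S_1\sqcup S_2,\,\kk=\kk_1+\kk_2} j_\ast\Delta^!(\CC^\pi_{g_1,\kk_1}(\gamma_{S_1},\1)\boxtimes\CC^\pi_{g_2,\kk_2}(\gamma_{S_2},\1))$ times that contribution. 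Extracting the coefficient of a single connected component (equivalently, dividing out the exponential structure) then recovers precisely Conjecture~\ref{Conj_HAE}; the equivalence of the quasi-Jacobi statements of Conjecture~\ref{Conj_Quasimodularity} and its disconnected version having already been noted in the text.

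The main obstacle is the combinatorial bookkeeping in the reorganization above: one must check that the stability conditions imposed on the connected blocks of $\Mbar^\bullet_{g,n}(B,\kk)$ match exactly the constraint in Conjecture~\ref{Conj_HAE} that $\Mbar_{g_i,|S_i|+1}(B,\kk_i)$ be a nonempty stable moduli space (so that no terms are spuriously created or destroyed), that the symmetry factors for unordered blocks are consistent on both sides, and that the pushforwards $\iota_\ast\Delta^!$ and $j_\ast\Delta^!$ on $\Mbar^\bullet$ decompose over the strata indexed by which connected component(s) carry the glued points. One also needs the compatibility of the virtual class under the disconnecting map $\pi:\Mbar'_{g,n}(X,\beta)\to\Mbar^\bullet_{g,n}(B,\kk)$ and the product axiom on disconnected moduli to ensure no correction terms appear. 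None of this is conceptually deep, but it is the crux of the argument.
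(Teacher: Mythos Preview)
Your argument is correct and is precisely the standard passage between connected and disconnected Gromov--Witten formalisms via the Leibniz rule for the derivation $\T_q$; the key combinatorial observation---that the two boundary terms (a) and (b) of the connected equation together account for the single disconnected self-gluing term, according to whether the two glued markings lie on the same or on distinct connected components---is exactly right, and your genus bookkeeping is accurate. The paper itself gives no proof of this Lemma*, treating the equivalence as routine; your write-up supplies what the paper omits, and the caveats you flag (stability conditions on the blocks, symmetry factors, compatibility of $\iota_\ast\Delta^!$ and $j_\ast\Delta^!$ with the stratification of $\Mbar^\bullet$) are the right things to check but present no genuine difficulty.
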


\subsection{Elliptic holomorphic anomaly equation} \label{Subsection_EHAE}
Recall the anomaly operator with respect to the elliptic parameter:
\[ \T_{\lambda} : \QJac_{k,L} \to \QJac_{k-1, L}, \quad \lambda \in \Lambda \]
(recall we identify $\Lambda$ with $\BZ^n$ here).
The anomaly equation of $\CC_g(\ldots)$ with respect to the operator $\T_{\lambda}$ reads as follows.

\begin{lemmastar} \label{Lemma_EllHAE}Assume Conjectures~\ref{Conj_Quasimodularity} and \ref{Conj_HAE} hold.
Then
\[
\T_{\lambda} \CC^{\pi}_{g,k}(\gamma_1, \ldots, \gamma_n) =
\sum_{i=1}^{n} \CC^{\pi}_{g,k}(\gamma_1, \ldots, \gamma_{i-1}, A(\lambda) \gamma_i, \gamma_{i+1}, \ldots, \gamma_n),
\]
for any $\lambda \in \Lambda$, where $A(\lambda) : H^{\ast}(X) \to H^{\ast}(X)$ is defined by
\[ A(\lambda) \gamma = \lambda \cup \pi^{\ast} \pi_{\ast}( \gamma) - \pi^{\ast} \pi_{\ast}( \lambda \cup \gamma ), \quad \gamma \in H^{\ast}(X). \]
\end{lemmastar}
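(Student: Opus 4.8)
## Proof Proposal for Lemma* (Elliptic holomorphic anomaly equation)

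\textbf{Proof strategy.} The plan is to deduce this from the holomorphic anomaly equation (Conjecture~\ref{Conj_HAE}) and the commutation relations \eqref{COMMUTATIONRELATIONS}, in parallel to the elliptic curve case in \cite{HAE}. By Conjecture~\ref{Conj_Quasimodularity} the class $\CC^{\pi}_{g,\kk}(\gamma_1,\ldots,\gamma_n)$ is a quasi-Jacobi form, so $\T_{\lambda}$ acts on it, and the relation $[\T_q, D_{\lambda}] = -2\T_{\lambda}$ reduces the statement to computing
\[
\T_{\lambda}\CC^{\pi}_{g,\kk}(\gamma_1,\ldots,\gamma_n)
= -\tfrac{1}{2}\bigl(\T_q D_{\lambda} - D_{\lambda}\T_q\bigr)\CC^{\pi}_{g,\kk}(\gamma_1,\ldots,\gamma_n).
\]
The operator $\T_q$ is controlled by Conjecture~\ref{Conj_HAE}, while $D_{\lambda}$ only rescales the Fourier coefficient of $q^{W\cdot\beta}\zeta^{\beta}$ by $\lambda\cdot\beta$; in particular $D_{\lambda}$ commutes with the pushforwards $\iota_{\ast},j_{\ast}$ and the Gysin maps $\Delta^{!}$ appearing in Conjecture~\ref{Conj_HAE}.

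First I would record a relative divisor equation. Since $\lambda\in\Lambda$ satisfies $\pi_{\ast}\lambda = \lambda\cdot F = 0$, and since the composition map $\pi\colon\Mbar_{g,n}(X,\beta)\to\Mbar_{g,n}(B,\kk)$ is compatible with the forgetful morphisms, the ordinary divisor equation on $X$ yields
\[
D_{\lambda}\,\CC^{\pi}_{g,\kk}(\gamma_1,\ldots,\gamma_n) = \bar\rho_{\ast}\,\CC^{\pi}_{g,\kk}(\lambda,\gamma_1,\ldots,\gamma_n),
\]
where $\bar\rho$ forgets the marking carrying $\lambda$. Now I would apply Conjecture~\ref{Conj_HAE} to the $(n+1)$-pointed class $\CC^{\pi}_{g,\kk}(\lambda,\gamma_1,\ldots,\gamma_n)$, push the identity forward by $\bar\rho$, and subtract $D_{\lambda}$ applied to Conjecture~\ref{Conj_HAE} for $\CC^{\pi}_{g,\kk}(\gamma_1,\ldots,\gamma_n)$. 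The genus-reduction term and the splitting sum cancel in the commutator, because forgetting a regular marking commutes with $\iota_{\ast},j_{\ast},\Delta^{!}$ and the relative divisor equation identifies each pushforward with the corresponding $D_{\lambda}$-contribution. In the $\psi$-term of Conjecture~\ref{Conj_HAE} the summand attached to the new marking vanishes since $\pi^{\ast}\pi_{\ast}\lambda=0$. What survives comes from two sources: the comparison $\psi_i = \bar\rho^{\ast}\psi_i + [D]$, where $D$ is the boundary divisor on which the $\lambda$-marking collides with the $i$-th marking, which after restricting the Gromov--Witten class to $D$ produces an insertion of $\lambda\cup\pi^{\ast}\pi_{\ast}\gamma_i$ at the $i$-th marking; and the discrepancy — invisible downstairs — between $D$ and its preimage $\pi^{-1}(D)$ in $\Mbar_{g,n+1}(X,\beta)$, which additionally meets loci where a genus zero component carrying the $\lambda$-marking maps with positive degree into a (necessarily singular) fibre of $\pi$, contributing $-\pi^{\ast}\pi_{\ast}(\lambda\cup\gamma_i)$. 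Summing the two gives $A(\lambda)\gamma_i$ at the $i$-th marking, and the factor $-\tfrac{1}{2}$ produces the asserted formula.

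\textbf{Main obstacle.} The hard part is the last step: one must analyse $\pi^{-1}$ of the gluing boundary together with the relative obstruction theory of $\pi$ and show that the contribution of the fibrewise rational tails assembles exactly into the correction term $-\pi^{\ast}\pi_{\ast}(\lambda\cup\gamma_i)$, with no further contributions. This is where the hypotheses on $\pi$ (integral fibres, existence of a section, and $\pi_{\ast}\lambda=0$) genuinely enter, in contrast to the purely formal manipulations with Conjecture~\ref{Conj_HAE}. As consistency checks: the specialization $\zeta=1$ must reduce to the quasi-modular elliptic anomaly of \cite{HAE}; and since $A(\lambda)$ sends a $\underline{\deg}$-homogeneous class of modified degree $d$ to one of modified degree $d-1$ (exactly one of the two terms of $A(\lambda)\gamma$ survives, according to whether $\gamma$ lies in $\mathrm{Im}(T_+)$, $H^{\ast}_{\perp}$, or $\mathrm{Im}(T_-)$), both sides of the claimed identity have weights matching the weight statement (Lemma*~\ref{Lemma_Weight_statement}) and the fact that $\T_{\lambda}$ lowers weight by $1$.
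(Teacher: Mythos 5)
Your overall strategy is the same as the paper's: use the commutation relation $[\T_q,D_{\lambda}]=-2\T_{\lambda}$, write $D_{\lambda}\CC^{\pi}_{g,\kk}(\gamma_1,\ldots,\gamma_n)=p_{\ast}\CC^{\pi}_{g,\kk}(\gamma_1,\ldots,\gamma_n,\lambda)$ via the divisor equation, and compute the commutator by comparing Conjecture~\ref{Conj_HAE} on $\Mbar_{g,n+1}(B,\kk)$ (pushed forward by $p$) with $D_{\lambda}$ applied to Conjecture~\ref{Conj_HAE} on $\Mbar_{g,n}(B,\kk)$. Your treatment of the genus-reduction term, of the vanishing of the $\psi_{n+1}$-summand (since $\pi^{\ast}\pi_{\ast}\lambda=0$), and of the $\psi_i$-comparison $\psi_i=p^{\ast}\psi_i+D_{i,n+1}$ producing the insertion $\lambda\cup\pi^{\ast}\pi_{\ast}\gamma_i$ all match the paper. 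The weight and specialization consistency checks at the end are also sound.

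However, there is a genuine gap in where you locate the second term of $A(\lambda)$. You assert that ``the splitting sum cancel[s] in the commutator,'' but it does not cancel completely: the $(n+1)$-point holomorphic anomaly equation contains splitting summands with $g_i=0$, $\kk_i=0$ and $S_i=\{i,n+1\}$ (these are exactly the summands that become stable only because of the extra marking, per the remark after Conjecture~\ref{Conj_HAE} that $g_i=0$, $\kk_i=0$ requires $|S_i|\geq 2$). The class $\CC^{\pi}_{0,0}(\gamma_i,\lambda,\1)$ on $\Mbar_{0,3}(B,0)\cong B$ is essentially $\pi_{\ast}(\gamma_i\cup\lambda)$, and after $j_{\ast}\Delta^{!}$ these summands contribute $2\sum_i\CC^{\pi}_{g,\kk}(\gamma_1,\ldots,\pi^{\ast}\pi_{\ast}(\gamma_i\cup\lambda),\ldots,\gamma_n)$, with no counterpart in $D_{\lambda}$ applied to the $n$-point equation. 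This is precisely the missing term $-\pi^{\ast}\pi_{\ast}(\lambda\cup\gamma_i)$ of $A(\lambda)\gamma_i$ after dividing by $-2$. Instead, you try to manufacture this term from a comparison of the boundary divisor $D_{i,n+1}$ with its preimage in $\Mbar_{g,n+1}(X,\beta)$, invoking fibrewise rational tails and the relative obstruction theory of $\pi$ --- an analysis you explicitly leave as ``the main obstacle'' and do not carry out. That mechanism is neither needed nor available here: the statement is a Lemma*, conditional on Conjectures~\ref{Conj_Quasimodularity} and~\ref{Conj_HAE}, and the paper's proof is purely formal bookkeeping of the terms of Conjecture~\ref{Conj_HAE}; no additional geometric input about singular fibres or obstruction theories enters. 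Once you account for the unstable-becoming-stable splitting summands, your ``hard part'' disappears and the proof closes.
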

\begin{proof}
Let $\lambda \in \Lambda$ and recall from Section~\ref{Subsubsection_quasiJacobi_Differentialoperators} the commutation relation
\[ \left[ \T_q , D_{\lambda} \right] = -2 \T_{\lambda}. \]

Let $p : \Mbar_{g,n+1}(B, \kk) \to \Mbar_{g,n}(B,\kk)$ be the map that forgets the last marked point.
We have
\[ D_{\lambda} \CC_{g,\kk}^{\pi}(\gamma_1, \ldots, \gamma_n) = p_{\ast} \CC_{g,\kk}^{\pi}(\gamma_1, \ldots, \gamma_n, \lambda). \]
Hence we obtain
\[
-2 \T_{\lambda}  \CC^{\pi}_{g,k}(\gamma_1, \ldots, \gamma_n)
=
p_{\ast} \T_q \CC_{g,\kk}(\gamma_1, \ldots, \gamma_n, \lambda)
- D_{\lambda} \T_q \CC_{g,\kk}(\gamma_1, \ldots, \gamma_n).
\]
Only two terms contribute in this difference. The first arises from the second term in the holomorphic anomaly equation
on $\Mbar_{g,n+1}(B, \kk)$. The summand with $g_i = 0$ and $n+1 \in S_i$ with $|S_i| = 2$ contributes
\[ 2 \sum_{i=1}^{n} \CC_{g,\kk}^{\pi}(\gamma_1, \ldots, \pi^{\ast} \pi_{\ast}( \gamma_i \cup \lambda ), \ldots, \gamma_n ). \]
The second contribution arises from the third term of the holomorphic anomaly equation when comparing the classes $\psi_{i}$ under pullback by $p$. It is
\[ -2 \sum_{i=1}^{n} \CC_{g,\kk}^{\pi}(\gamma_1, \ldots, \lambda \cup \pi^{\ast} \pi_{\ast}( \gamma_i ), \ldots, \gamma_n ). \]
Adding up yields the claim.
\end{proof}

Consider the exponential $\exp(A(\lambda))$ which acts on $\gamma \in H^{\ast}(X)$ by
\[
(\exp A(\lambda) )\gamma = \gamma + \lambda \cup \pi^{\ast} \pi_{\ast}( \gamma) - \pi^{\ast} \pi_{\ast}( \lambda \cup \gamma )
- \frac{1}{2} \pi^{\ast}\left( \pi_{\ast}(\lambda^2) \cdot \pi_{\ast}(\gamma) \right).
\]
Lemma*~\ref{Lemma_EllHAE} then yields
\begin{align*}
\exp( \T_{\lambda} ) \CC^{\pi}_{g,k}(\gamma_1, \ldots, \gamma_n)
& =
\CC_{g,k}( \exp(A(\lambda)) \gamma_1, \ldots, \exp(A(\lambda)) \gamma_n).
\end{align*}
We will see in Section~\ref{Section_The_elliptic_transformation_law} how in good situations this is related to the automorphism defined
by adding the section corresponding to the class $\lambda$.

\subsection{The elliptic transformation law}
\label{Section_The_elliptic_transformation_law}
Recall the projection $p_{\perp}$ to the lattice $\Lambda$ from Section~\ref{Subsubsection_LatticeLambda}.
Throughout Section~\ref{Section_The_elliptic_transformation_law} we assume that the fibration
$\pi : X \to B$ satisfies the following condition,
which holds for example for the rational elliptic surface:

\vspace{4pt}
\noindent
\textbf{Assumption ($\star$).} For every $\lambda \in \Lambda$ there is a unique section $B_{\lambda} \subset X$
such that $p_{\perp}( [ B_{\lambda} ] ) = \lambda$.
\vspace{4pt}

Let $\lambda \in \Lambda$ and consider the morphism
\[ t_{\lambda} : X \to X,\ x \mapsto (x + B_{\lambda}(\pi(x))) \]
of fiberwise addition with $B_{\lambda}$.
Since $\pi \circ t_{\lambda} = \pi$ this implies
\[
\CC^{\pi}_{g, t_{\lambda \ast} \beta}(t_{\lambda \ast} \gamma_1, \ldots, t_{\lambda \ast} \gamma_n)
= \CC^{\pi}_{g, \beta}(\gamma_1, \ldots, \gamma_n).
\]
Let us write $\CC_{g,\kk}^{\pi}(\ldots)(z)$ to denote the dependence
of $\CC_{g,\kk}^{\pi}(\ldots)$ on the variable $z \in \Lambda \otimes \BC$. 
From the last equation we obtain
\begin{align*}
& \CC^{\pi}_{g, \kk}(\gamma_1, \ldots, \gamma_n)(z) \\
& = \sum_{\pi_{\ast} \beta = \kk}
\CC^{\pi}_{g,\beta}(t_{\lambda \ast} \gamma_1, \ldots,
t_{\lambda \ast} \gamma_n) q^{(t_{\lambda \ast} W) \cdot \beta} e\big( (t_{\lambda \ast} z) \cdot \beta \big) \\
& = e\left( -\frac{ \tau}{2} \pi_{\ast}(\lambda^2) \cdot \kk
- \pi_{\ast}( z \cdot \lambda) \cdot \kk \right)
\CC^{\pi}_{g,\kk}(t_{\lambda \ast} \gamma_1, \ldots, t_{\lambda \ast} \gamma_n)
(z + \lambda \tau) \\
& = e\left( \frac{ \tau}{2} \lambda^T Q_{\kk} \lambda + \lambda^T Q_{\kk} z \right)
\CC^{\pi}_{g,\kk}(t_{\lambda \ast} \gamma_1, \ldots, t_{\lambda \ast} \gamma_n)
(z + \lambda \tau).
\end{align*}
Rearranging the terms slightly yields
\begin{multline} \label{ffsdghhjjjhhh}
\CC^{\pi}_{g,\kk}(\gamma_1, \ldots, \gamma_n)
(z + \lambda \tau)
 \\ =
e\left( -\frac{1}{2} \lambda^T Q_{\kk} \lambda - \lambda^T Q_{\kk} z \right)
\CC^{\pi}_{g,\kk}(t_{-\lambda \ast} \gamma_1, \ldots, t_{-\lambda \ast} \gamma_n)(z).
\end{multline}
We obtain the following.

\begin{lemma} \label{Lemma_elliptic_transf_law}
Assume $\pi : X \to B$ satisfies Assumption ($\star$). If every $\gamma_i$ is translation invariant,
i.e. $t_{\lambda \ast} \gamma_i = \gamma_i$ for all $\lambda \in \Lambda$,
then $\CC^{\pi}_{g,\kk}(\gamma_1, \ldots, \gamma_n)$ satisfies the elliptic transformation law of Jacobi forms:
\[
\CC^{\pi}_{g,\kk}(\gamma_1, \ldots, \gamma_n)
(z + \lambda \tau)
=
e\left( -\frac{1}{2} \lambda^T Q_{\kk} \lambda - \lambda^T Q_{\kk} z \right)
\CC_{g,\kk}^{\pi}(\gamma_1, \ldots, \gamma_n)(z)
\]
for all $\lambda \in \Lambda$.
\end{lemma}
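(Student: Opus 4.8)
The plan is to simply specialize the identity \eqref{ffsdghhjjjhhh}, which was already derived in the discussion preceding the statement. Recall that \eqref{ffsdghhjjjhhh} reads
\[
\CC^{\pi}_{g,\kk}(\gamma_1, \ldots, \gamma_n)(z + \lambda \tau)
=
e\!\left( -\tfrac{1}{2} \lambda^T Q_{\kk} \lambda - \lambda^T Q_{\kk} z \right)
\CC^{\pi}_{g,\kk}(t_{-\lambda \ast} \gamma_1, \ldots, t_{-\lambda \ast} \gamma_n)(z),
\]
and it was obtained using Assumption ($\star$) (to produce the fiberwise translation morphism $t_{\lambda} : X \to X$), the relation $\pi \circ t_{\lambda} = \pi$ (which gives the invariance $\CC^{\pi}_{g, t_{\lambda\ast}\beta}(t_{\lambda\ast}\gamma_\bullet) = \CC^{\pi}_{g,\beta}(\gamma_\bullet)$), and the computation of how $W \cdot \beta$ and $z \cdot \beta$ transform under $t_{\lambda\ast}$.

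First I would invoke the translation-invariance hypothesis $t_{\lambda\ast}\gamma_i = \gamma_i$ for all $\lambda \in \Lambda$ and all $i$; applying this with $-\lambda$ in place of $\lambda$ gives $t_{-\lambda\ast}\gamma_i = \gamma_i$. Substituting this into the right-hand side of \eqref{ffsdghhjjjhhh} collapses the argument list back to $(\gamma_1, \ldots, \gamma_n)$, leaving exactly
\[
\CC^{\pi}_{g,\kk}(\gamma_1, \ldots, \gamma_n)(z + \lambda \tau)
=
e\!\left( -\tfrac{1}{2} \lambda^T Q_{\kk} \lambda - \lambda^T Q_{\kk} z \right)
\CC^{\pi}_{g,\kk}(\gamma_1, \ldots, \gamma_n)(z),
\]
which is the elliptic transformation law with index matrix $L = Q_{\kk}/2$ claimed in the statement. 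This completes the argument.

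There is essentially no obstacle remaining at this stage, since all the geometric content has been packaged into \eqref{ffsdghhjjjhhh}; the only point worth flagging — and it is already handled in the derivation above — is the existence and $B$-linearity of the translation map $t_\lambda$, which is precisely what Assumption ($\star$) supplies. I would additionally remark that pullbacks of classes from $B$, the class $W$ (which is $t_\lambda$-invariant since $t_\lambda$ preserves the fiber class and acts trivially on $\pi^\ast H^\ast(B)$), and the point class of a fiber all furnish natural examples of translation-invariant insertions, so the hypothesis is not vacuous.
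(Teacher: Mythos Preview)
Your proposal is correct and matches the paper's approach exactly: the paper states the lemma immediately after deriving \eqref{ffsdghhjjjhhh} with the phrase ``We obtain the following,'' treating it as a direct specialization under the translation-invariance hypothesis $t_{-\lambda\ast}\gamma_i = \gamma_i$. There is nothing to add.
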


Even if the $\gamma_i$ are not translation invariant we
have the following relationship to the transformation law of quasi-Jacobi forms.
Recall the endomorphism $A(\lambda)$ from Section~\ref{Subsection_EHAE}.
For the rational elliptic surface we have\footnote{It would be interesting
to know for which elliptic fibrations
\eqref{dfgfdgfgfd} holds.
}
\begin{equation} t_{\lambda \ast} = \exp A(\lambda) \label{dfgfdgfgfd} \end{equation}
for all $\lambda \in \Lambda$.
Assuming 
Conjectures~\ref{Conj_Quasimodularity} and \ref{Conj_HAE} we can rewrite \eqref{ffsdghhjjjhhh} as
\begin{align*}
& \CC^{\pi}_{g,\kk}(\gamma_1, \ldots, \gamma_n)
(z + \lambda \tau) \\
=\, &
e\left( -\frac{1}{2} \lambda^T Q_{\kk} \lambda - \lambda^T Q_{\kk} z \right)
\CC^{\pi}_{g,\kk}(\exp(A(-\lambda)) \gamma_1, \ldots, \exp(A(-\lambda)) \gamma_n) \\
=\, &
e\left( -\frac{1}{2} \lambda^T Q_{\kk} \lambda - \lambda^T Q_{\kk} z \right)
\exp( - \T_{\lambda}) \CC_{g,\kk}^{\pi}(\gamma_1, \ldots, \gamma_n),
\end{align*}
which is the elliptic transformation law of quasi-Jacobi forms stated in Lemma~\ref{Lemma_ell_trans_law_for_quasi_Jac}.

\subsection{Quasi-modular forms} \label{Subsubsection_elliptic_fibrations_quasimodularforms}
The elliptic periods (i.e. $\zeta^{\alpha}$-coefficients)
of a quasi-Jacobi form are quasimodular forms,
see Proposition~\ref{QJac_Prop2}.
Together with Conjecture~\ref{Conj_Quasimodularity} this leads to a basic quasi-modularity statement for elliptic fibrations as follows.
Let $\kk \in H_2(B,\BZ)$ be a curve class, and consider the pairing on $H^2(X,\BZ)$ defined by
\begin{equation}
\quad ( \alpha , \alpha' )_{\kk} = \int_{\kk} \pi_{\ast}(\alpha \cdot \alpha') \quad \text{ for all } 
\alpha, \alpha' \in H^2(X,\BZ).
\label{pairingg}
\end{equation}
Throughout Section~\ref{Subsubsection_elliptic_fibrations_quasimodularforms} we make the
following assumption which is equivalent to the positive definiteness of $Q_{\kk}$
and holds in many cases of interest\footnote{On
an elliptic surface satisfying $h^{2,0} = 0$ the assumption holds by the Hodge index theorem whenever $k \neq 0$.
}.

\vspace{4pt}
\noindent \textbf{Assumption ($\dag$).} The restriction of $( \cdot, \cdot )_{\kk}$ to $\Lambda$ is negative-definite.

\vspace{7pt}
Consider the cohomology classes on $B$ orthogonal to $\kk$,
\[ \kk^{\perp} = \left\{ \gamma \in H^2(B,\BZ)\, \middle|\, \langle \gamma, \kk \rangle = 0 \right\} \]
where $\langle \cdot , \cdot \rangle$ is the pairing between cohomology and homology on $B$.
Consider also the null space of $( \cdot , \cdot )_\kk$,
\[ N_{\kk} = \left\{ v \in H^2(X,\BZ)\, \middle| \, (v, H^2(X,\BZ))_\kk = 0 \right\}. \]
We have $\pi^{\ast} \kk^{\perp} \subset N_{\kk}$. By assumption ($\dag$) this inclusion is an equality,
\[ N_{\kk} = \pi^{\ast} \kk^{\perp}, \]
and the induced pairing on $H^2(X,\BZ) / N_{\kk}$ is of signature $(1, n+1)$.\footnote{The combination of both statements is equivalent to Assumption ($\dag$).}

The dual of $H^2(X,\BZ) / N_{\kk}$ is naturally identified with the lattice
\[ L_{\kk} = \left\{ \beta \in H_2(X,\BZ)\, \middle| \, \pi_{\ast} \beta = c \cdot \kk \text{ for some } c \in \BQ \right\}. \]
The non-degenerate pairing on $H^2(X,\BZ) / N_{\kk}$ induces a non-degenerate pairing on $L_{\kk}$
which we denote by $( \cdot , \cdot )_{\kk}$ as well.

For any $\alpha \in H_2(X,\BZ) / \BQ F$ with $\pi_{\ast} \alpha = \kk$ consider the theta series
\[
\CC^\pi_{g,\alpha}(\gamma_1, \ldots, \gamma_n)
=
\sum_{[ \beta ] = \alpha }
\CC^\pi_{g,\beta}(\gamma_1, \ldots, \gamma_n) q^{-\frac{1}{2} \langle \beta, \beta \rangle_{\kk}} 
\]
where the sum is over all curve classes $\beta$ with residue class $\alpha$ in $H_2(X,\BZ)/ \BQ F$.

\begin{lemmastar} \label{Lemma_Quasiodularformselli} Assume Conjecture~\ref{Conj_Quasimodularity} and \ref{Conj_HAE}, and Assumption \textup{($\dag$)}.
Let $\ell$ be the smallest positive integer such that $\ell Q_{\kk}^{-1}$ has integral entries and even diagonal.
Then every $\CC^\pi_{g,\alpha}(\gamma_1, \ldots, \gamma_n)$
is a cycle-valued weakly-holomorphic quasi-modular form of level $\ell$.
\end{lemmastar}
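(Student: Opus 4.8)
The plan is to realise each theta series $\CC^\pi_{g,\alpha}(\gamma_1,\ldots,\gamma_n)$ as a single $\zeta$-coefficient --- a \emph{period} --- of the cycle-valued quasi-Jacobi form $\CC^\pi_{g,\kk}(\gamma_1,\ldots,\gamma_n)$ supplied by Conjecture~\ref{Conj_Quasimodularity}, and then to invoke the quasi-modularity of periods from Proposition~\ref{QJac_Prop2}(i). By Conjecture~\ref{Conj_Quasimodularity} we may write $\CC^\pi_{g,\kk}(\gamma_1,\ldots,\gamma_n)=\Delta(q)^{-m}\,g$ with $g$ a cycle-valued quasi-Jacobi form of index $L:=Q_\kk/2$, and Assumption~\textup{($\dag$)} is precisely the statement that $L$ is positive definite, so that Proposition~\ref{QJac_Prop2} is applicable. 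The integer $\ell$ appearing in the conclusion --- the smallest positive integer for which $\ell Q_\kk^{-1}$ is integral with even diagonal --- is the level of $L$ in the sense of Section~\ref{Subsection_theta_decomposition}, because $\tfrac12\ell L^{-1}=\ell Q_\kk^{-1}$; the two notions of level therefore coincide and require no reconciliation.

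First I would identify which curve classes enter a fixed residue class $\alpha\in H_2(X,\BZ)/\BQ F$ with $\pi_\ast\alpha=\kk$. Write $\lambda(\beta)=(b_i\cdot\beta)_i\in\BZ^n$ for the $\zeta$-exponent of $\beta$, so that $\zeta^\beta=\zeta^{\lambda(\beta)}$; the map $\beta\mapsto\lambda(\beta)$ is the restriction to $H_2(X,\BZ)$ of the dual of $\Lambda\hookrightarrow H^2(X,\BZ)$, hence its kernel is the orthogonal complement $\Lambda^{\perp}$ in $H_2(X,\BZ)$, which by the construction of $\Lambda$ in Section~\ref{Subsubsection_LatticeLambda} is the saturation of $\BZ F\oplus\iota_\ast H_2(B,\BZ)$. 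Since $\pi_\ast F=0$, since $\pi_\ast$ maps $\Lambda^{\perp}$ rationally onto $H_2(B,\BQ)$ with kernel $\BQ F$, and since $F$ is a primitive class, the set $\{\,\beta:\pi_\ast\beta=\kk,\ \lambda(\beta)=\lambda(\alpha)\,\}$ is the single coset $\alpha+\BZ F$, i.e.\ it coincides with $\{\,\beta:[\beta]=\alpha\,\}$. In particular these $\beta$ all carry the monomial $\zeta^{\lambda(\alpha)}$, so
\[
\bigl[\,\CC^\pi_{g,\kk}(\gamma_1,\ldots,\gamma_n)\,\bigr]_{\zeta^{\lambda(\alpha)}}=\sum_{[\beta]=\alpha}\CC^\pi_{g,\beta}(\gamma_1,\ldots,\gamma_n)\,q^{\,W\cdot\beta}.
\]

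The heart of the argument is the comparison of the two $q$-weightings, namely the identity
\[
-\tfrac12\langle\beta,\beta\rangle_\kk=W\cdot\beta-\tfrac14\,\lambda(\beta)^{T}L^{-1}\lambda(\beta)\qquad\bigl(\pi_\ast\beta=\kk\bigr),
\]
which expresses that the reweighting $q^{-\frac12\langle\beta,\beta\rangle_\kk}$ in the definition of $\CC^\pi_{g,\alpha}$ equals the discriminant that the index $L=Q_\kk/2$ attaches to the monomial $q^{\,W\cdot\beta}\zeta^{\lambda(\beta)}$ of $\CC^\pi_{g,\kk}$ (recall $\tfrac14 L^{-1}=\tfrac12 Q_\kk^{-1}$). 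I would prove it by decomposing $\beta$ along the splitting $H^\ast(X)=H^\ast_+\oplus H^\ast_-\oplus H^\ast_{\perp}$ of~\eqref{SPLITTING}, unwinding the definition of the pairing $(\cdot,\cdot)_\kk$ on $L_\kk$ from Section~\ref{Subsubsection_elliptic_fibrations_quasimodularforms}, and using the defining formula $W=B_0-\tfrac12\pi^\ast c_1(N_\iota)$ together with the behaviour of $W$ and $F$ under $\pi_\ast$ and the intersection pairing: the $H^\ast_{\perp}$-block of $(\cdot,\cdot)_\kk$ accounts for the term $\tfrac14\lambda^{T}L^{-1}\lambda$ (the $H^\ast_{\perp}$-component of $\beta$ being determined by $\lambda(\beta)$ via $Q_\kk$), while the hyperbolic $H^\ast_+\oplus H^\ast_-$-block accounts for $W\cdot\beta$ and is in particular independent of the $\BZ F$-direction. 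Granting this, and using $\lambda(\beta)=\lambda(\alpha)$ for $[\beta]=\alpha$, the previous display yields
\[
\CC^\pi_{g,\alpha}(\gamma_1,\ldots,\gamma_n)=q^{-\frac14\lambda(\alpha)^{T}L^{-1}\lambda(\alpha)}\bigl[\,\CC^\pi_{g,\kk}(\gamma_1,\ldots,\gamma_n)\,\bigr]_{\zeta^{\lambda(\alpha)}}=\Delta(q)^{-m}\,q^{-\frac14\lambda(\alpha)^{T}L^{-1}\lambda(\alpha)}\bigl[\,g\,\bigr]_{\zeta^{\lambda(\alpha)}}.
\]

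Finally, applying Proposition~\ref{QJac_Prop2}(i) coefficientwise to $g\in\QJ_{Q_\kk/2}$, the normalised period $q^{-\frac14\lambda(\alpha)^{T}L^{-1}\lambda(\alpha)}\bigl[\,g\,\bigr]_{\zeta^{\lambda(\alpha)}}$ is a cycle-valued weakly-holomorphic quasi-modular form for $\Gamma(\ell)^{\ast}$; and $\Delta(q)^{-m}$ is a weakly-holomorphic modular form for $\mathrm{SL}_2(\BZ)$, a fortiori for $\Gamma(\ell)^{\ast}$. Their product is $\CC^\pi_{g,\alpha}(\gamma_1,\ldots,\gamma_n)$, which is thus a cycle-valued weakly-holomorphic quasi-modular form of level $\ell$, as asserted. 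The only substantial input beyond Proposition~\ref{QJac_Prop2} is the $q$-weight identity of the third paragraph, and I expect that identity --- together with the attendant lattice bookkeeping around $L_\kk$ --- to be where the real work lies; everything else is a direct application of the Jacobi-form machinery of Section~\ref{Section_Lattice_Jacobi_forms}. (Note that the argument uses only Conjecture~\ref{Conj_Quasimodularity}: Conjecture~\ref{Conj_HAE}, though part of the standing hypotheses of this subsection, is not needed here, since the quasi-modularity of periods is a statement purely about Jacobi forms.)
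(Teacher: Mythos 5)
Your proposal is correct and follows essentially the same route as the paper: the paper's proof consists precisely of the identity $\CC^\pi_{g,\alpha}(\gamma_1,\ldots,\gamma_n)=q^{-\frac14\lambda^{T}L^{-1}\lambda}\bigl[\CC^\pi_{g,\kk}(\gamma_1,\ldots,\gamma_n)\bigr]_{\zeta^{\lambda}}$ (stated there as ``a computation yields'') followed by an appeal to Proposition~\ref{QJac_Prop2}. You have simply made explicit the bookkeeping the paper suppresses --- the identification of the residue class $\alpha+\BQ F$ with a single $\zeta$-exponent, the $q$-weight comparison, the matching of the two notions of level, and the harmless $\Delta^{-m}$ prefactor --- and your observation that only Conjecture~\ref{Conj_Quasimodularity} is actually used is accurate.
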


The Lemma shows that although the elliptic fibration $\pi : X \to B$ has a section,
we should expect the generating series of Gromov--Witten invariants in the fiber direction
to be quasi-modular of higher level (with pole at cusps).
It is remarkable that these higher-index quasi-modular forms
when arranged together appropriately should form $\mathrm{SL}_2(\BZ)$-quasi-Jacobi forms.

If $Q_{\kk}$ is unimodular then we obtain level $1$, hence $\mathrm{SL}_2(\BZ)$-quasi-modular forms in Lemma*~\ref{Lemma_Quasiodularformselli}.
For the rational elliptic surface the level of the quasi-modular form
is exactly the degree over the base curve.
This compares well with the conjectural quasi-modularity of the Gromov--Witten invariants of K3 surfaces in inprimitive classes, see \cite[Sec.7.5]{MPT}.

Using Proposition~\ref{QJac_Prop2} (ii)
the holomorphic anomaly equation for the quasi-Jacobi classes $\CC^\pi_{g,\kk}(\ldots )$
yields a holomorphic anomaly equation for the theta-series $\CC^\pi_{g, \alpha}( \ldots )$.
However, in the non-unimodular case the result is rather complicated and difficult to handle.\footnote{The unimodular
case is further discussed in Section~\ref{Section_Abelian_surfaces}.}
The holomorphic anomaly equation takes its simplest form for quasi-Jacobi forms.

\begin{proof}[Proof of Lemma~\ref{Lemma_Quasiodularformselli}]
Let $\lambda$ be the image of $\alpha$ in $H_{2, \perp}$.
A computation yields
\[
\CC^\pi_{g,\alpha}(\gamma_1, \ldots, \gamma_n)
=
q^{ - \frac{1}{4} \lambda^T L^{-1} \lambda }\left[ \CC^\pi_{g, \kk} (\gamma_1, \ldots, \gamma_n) \right]_{\zeta^{\lambda}}
\]
which implies the Lemma by Proposition~\ref{QJac_Prop2}.
\end{proof}

\subsection{Calabi--Yau threefolds}
Let $\pi : X \to B$ be an elliptically fibered Calabi--Yau threefold with section $\iota : B \to X$
and $h^{2,0}(X) = 0$.
The moduli space of stable maps is of virtual dimension $0$.
For all $(g,\kk) \notin \{ (0,0), (1,0) \}$ define the Gromov--Witten potential
\[ \F_{g,\kk}(q, \zeta)
= \int_{\Mbar_{g}(B,\kk)} \CC^{\pi}_{g,\kk}()
= \sum_{\pi_{\ast} \beta = \kk} q^{W \cdot \beta} \zeta^{\beta} \int_{[ \Mbar_{g}(X,\beta) ]^{\text{vir}}} 1.
\]
By the Calabi--Yau condition we have $N_{\iota} \cong \omega_B$. Hence Conjecture~\ref{Conj_Quasimodularity} implies
\[ \F_{g,\kk}(q) \in \frac{1}{\Delta(q)^{-\frac{1}{2} K_B \cdot \kk}} \QJac. \]
We have the following holomorphic anomaly equation (see also \cite[0.5]{HAE}).
\begin{propstar} \label{Prop_HAE_for_CY3}
Assume Conjectures \ref{Conj_Quasimodularity} and~\ref{Conj_HAE}. Then we have
\[
\T_q \F_{g,\kk}
=
\langle \kk + K_B, \kk \rangle \F_{g-1, \kk}
+ \sum_{\substack{g=g_1+g_2 \\ \kk = \kk_1 + \kk_2}}
\langle \kk_1, \kk_2 \rangle \F_{g_1, \kk_1} \F_{g_2, \kk_2}
+ \frac{\delta_{g2} \delta_{k0}}{4} \langle K_B, K_B \rangle.
\]
where we let $\langle - , - \rangle$ denote the intersection pairing on $B$,
the first term on the right is defined to vanish if $(g,\kk) = (2,0)$,
and the sum is over all values $(g_i, \kk_i)$ for which $\F_{g_i, \kk_i}$ is defined.
\end{propstar}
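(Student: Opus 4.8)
The plan is to derive Proposition*~\ref{Prop_HAE_for_CY3} as the specialization of the cycle-valued holomorphic anomaly equation (Conjecture~\ref{Conj_HAE}) to the Calabi--Yau case with $n=0$ insertions, followed by integration over $\Mbar_g(B,\kk)$. First I would take Conjecture~\ref{Conj_HAE} with no marked points: the last sum (over insertions $\gamma_i$) vanishes identically, leaving only the $\iota_*\Delta^!$ term from genus reduction and the $j_*\Delta^!$ term from the splitting $g=g_1+g_2$, $\kk=\kk_1+\kk_2$. Then I would push forward to a point, i.e. integrate both sides over $\Mbar_g(B,\kk)$ (with the convention that $\int_{\Mbar_g(B,\kk)}$ is replaced by the appropriate disconnected/unstable modification when $(g,\kk)\in\{(0,0),(1,0)\}$, which accounts for the $\delta_{g2}\delta_{k0}$ anomaly term below). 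Since $\T_q$ commutes with proper pushforward, the left side becomes $\T_q \F_{g,\kk}$.

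The main computation is then to evaluate the two right-hand terms after integration. For the genus-reduction term, integrating $\iota_*\Delta^!\CC^\pi_{g-1,\kk}(\1,\1)$ over $\Mbar_g(B,\kk)$ reduces, via the gluing square and the projection formula, to integrating $\CC^\pi_{g-1,\kk}(\1,\1)$ against $\Delta^*[\text{pt}] = $ (total) over $\Mbar_{g-1,2}(B,\kk)$; the two $\1$-insertions and the diagonal contract to give the Euler-characteristic-type factor which, on the base $B$, is $\langle\Delta\rangle$ evaluated against $\kk$, producing the coefficient $\langle \kk + K_B, \kk\rangle$. Here the $W$-twist in the definition of $\F$ and the Calabi--Yau relation $N_\iota\cong\omega_B$ are what convert the fiberwise self-intersection data into $\langle K_B,\kk\rangle$; concretely one uses $W = B_0 - \tfrac12\pi^*c_1(N_\iota) = B_0 - \tfrac12\pi^*K_B$ and the intersection relations from Section~\ref{Subsubsection_ellip_fibr_definition}. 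For the splitting term, $j_*\Delta^!(\CC^\pi_{g_1,\kk_1}(\1)\boxtimes\CC^\pi_{g_2,\kk_2}(\1))$ integrates to $\langle\kk_1,\kk_2\rangle\,\F_{g_1,\kk_1}\F_{g_2,\kk_2}$ by the same gluing-and-projection-formula bookkeeping, the pairing $\langle\kk_1,\kk_2\rangle$ arising from the diagonal class on $B$ paired against the two fiber classes of the components.

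The anomaly correction $\tfrac{\delta_{g2}\delta_{k0}}{4}\langle K_B,K_B\rangle$ comes from the boundary cases excluded in the definition of $\F_{g,\kk}$: when $(g,\kk)=(2,0)$ the genus-reduction term formally wants to involve $\F_{1,0}$, which is undefined, and similarly a $(g_1,\kk_1)=(1,0)$ or $(0,0)$ piece appears in the splitting sum; tracking these degenerate contributions through the degree-$0$ Gromov--Witten theory of $B$ (where the relevant integral is a Hodge integral computing $\tfrac14\int_B c_1^2$) yields the stated constant. I expect the main obstacle to be precisely this careful handling of the unstable/degenerate strata and the matching of combinatorial factors ($2k_i$ versus $\langle\kk_1,\kk_2\rangle$, the factor of $2$ in $\T_q$'s commutator conventions, and the $\tfrac14$): the structural derivation from Conjecture~\ref{Conj_HAE} is immediate, but verifying that the constant term has exactly the coefficient $\tfrac14\langle K_B,K_B\rangle$ requires a genuine computation in the degree-zero theory, most cleanly done by comparison with the known elliptic-curve case \cite{HAE} or by a direct degeneration argument as in Section~\ref{intro:schoen_calabiyau}.
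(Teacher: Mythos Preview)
Your structural outline --- apply Conjecture~\ref{Conj_HAE} with $n=0$, integrate, and treat the unstable cases separately --- matches the paper. But you have misidentified the source of the $K_B$ term, and this is a genuine gap.

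The genus-reduction term $\iota_*\Delta^!\CC^\pi_{g-1,\kk}(\1,\1)$ integrates, after inserting the K\"unneth diagonal $\Delta_B = \sum_j \Delta_{B,j}\otimes\Delta_{B,j}^\vee$ and applying the divisor equation to each insertion $\pi^*\Delta_{B,j}$, to $\langle\kk,\kk\rangle\,\F_{g-1,\kk}$ alone. The $W$-twist plays no role here: it fixes the power of $q$ in the generating series, not a multiplicative coefficient, and $\T_q$ does not interact with it in the way you suggest.

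The contribution $\langle K_B,\kk\rangle\,\F_{g-1,\kk}$ actually comes from the \emph{splitting} term of Conjecture~\ref{Conj_HAE} in the degenerate case $(g_i,\kk_i)=(1,0)$. There the divisor equation does not apply (the base class is zero), and one must use the explicit degree-zero virtual class
\[
[\Mbar_{1,1}(X,0)]^{\text{vir}} = \bigl(c_3(X)-c_2(X)\lambda_1\bigr)\cap[\Mbar_{1,1}\times X]
\]
from \cite{GP}, together with the Chern-class formula $c_2(X)=\pi^*(c_2(B)+c_1(B)^2)+12\,\iota_*c_1(B)$ from \cite{AHR}, to compute
\[
\int_{[\Mbar_{1,1}(X,0)]^{\text{vir}}}\ev_1^*(\pi^*\alpha)=-\tfrac12\langle\alpha,c_1(B)\rangle.
\]
Summing over the K\"unneth diagonal and over the two choices of which side carries $(1,0)$ yields exactly $\langle K_B,\kk\rangle\,\F_{g-1,\kk}$. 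The same computation, now with \emph{both} sides equal to $(1,0)$, produces the $\tfrac14\langle K_B,K_B\rangle$ term when $(g,\kk)=(2,0)$. Your instinct that a degree-zero Hodge-type integral is involved is correct, but it is not a correction for an ``undefined $\F_{1,0}$'': it is what the splitting term of Conjecture~\ref{Conj_HAE} honestly evaluates to in that range.
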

\begin{proof}
If $\kk > 0$ or $g > 2$ Conjecture~\ref{Conj_HAE} implies
\begin{align*}
\T_q \F_{g,\kk}
& = \int \CC_{g-1,\kk}(\pi^{\ast} \Delta_B)
+ \sum_{\substack{g=g_1+g_2 \\ \kk = \kk_1 + \kk_2}} \sum_j
\int \CC_{g_1, \kk_1}( \pi^{\ast} \Delta_{B,j} )
\cdot \int \CC_{g_2, \kk_2}(\pi^{\ast} \Delta_{B,j}^{\vee}) \\
& =
\langle \kk, \kk \rangle \F_{g-1,\kk}
+
\sum_{\substack{g=g_1+g_2 \\ \kk = \kk_1 + \kk_2 \\ \kk_1, \kk_2 > 0}}
\langle \kk_1, \kk_2 \rangle \F_{g_1, \kk_1} \F_{g_2, \kk_2}\\
& 
+ 2 \sum_j \int \CC_{g-1, \kk}(\pi^{\ast} \Delta_{B, j})
\cdot \int_{[\Mbar_{1,1}(X,0)]^{\text{vir}}} \ev_1^{\ast}( \pi^{\ast} \Delta_{B,j}^{\vee})
\end{align*}
where we have written 
\[ \Delta_B = \sum_j \Delta_{B,j} \boxtimes \Delta_{B,j}^{\vee} \in H^{\ast}(B^2) \]
for the K\"unneth decomposition of the diagonal of $B$.
By \cite{GP} we have
\[ [ \Mbar_{1,1}(X,0) ]^{\text{vir}} = (c_3(X) - c_2(X) \lambda_1) \cap [ \Mbar_{1,1} \times X ] \]
and by \cite[Sec.4]{AHR} we have
\[ c_2(X) = \pi^{\ast} (c_2(B) + c_1(B)^2) + 12 \iota_{\ast} c_1(B). \]
Hence we find
\[ \int_{[\Mbar_{1,1}(X,0)]^{\text{vir}}} \ev_1^{\ast}( \pi^{\ast} \Delta_{B,j}^{\vee}) = -\frac{1}{2} \langle \Delta_{B,j} , c_1(B) \rangle \]
from which we obtain
\[ \T_q \F_{g,\kk}
=
\langle \kk + K_B, \kk \rangle \F_{g-1, \kk}
+ \sum_{\substack{g=g_1+g_2 \\ \kk = \kk_1 + \kk_2}}
\langle \kk_1, \kk_2 \rangle \F_{g_1, \kk_1} \F_{g_2, \kk_2}.
\]

If $(g,\kk) = (2,0)$ Conjecture B yields
\begin{align*} \T_q \F_{2,0}(q)
& =  \sum_j \int_{[\Mbar_{1,1}(X,0)]^{\text{vir}}} \ev_1^{\ast}( \pi^{\ast} \Delta_{B,j}) \cdot \int_{[\Mbar_{1,1}(X,0)]^{\text{vir}}} \ev_1^{\ast}( \pi^{\ast} \Delta_{B,j}^{\vee}) \\
& = \frac{1}{4} \int_B c_1(B)^2.  \qedhere
\end{align*}
\end{proof}

It will be useful later on to consider the disconnected case as well.
For any $g \in \BZ$ and $\kk \in H_2(B,\BZ)$ let
\[ 
\F_{g,\kk}^{\bullet} 
= \int_{\Mbar^{\bullet}_{g}(B, \kk)} \CC^{\bullet}_{g,\kk}()
= \sum_{\pi_{\ast} \beta = \kk} q^{W \cdot \beta} \zeta^{\beta} \int_{ [ \Mbar'_{g}(X,\beta) ]^{\text{vir}} } 1.
\]
The connected and disconnected potentials are related by
\begin{equation} \label{Dis_Con_rel}
\sum_{g, \kk} \F_{g,\kk}^{\bullet} u^{2g-2} t^{\kk}
=
\exp\left( \sum_{(g,\kk) \notin \{ (0,0), (1,0) \} } \F_{g,\kk} u^{2g-2} t^{\kk} \right).
\end{equation}
A direct calculation using \eqref{Dis_Con_rel} and Proposition*~\ref{Prop_HAE_for_CY3} implies the following disconnected holomorphic anomaly equation
\begin{equation} \label{352rw}
\T_q \F_{g,\kk}^{\bullet} =
\left\langle \kk + \frac{1}{2} K_B, \kk + \frac{1}{2} K_B \right\rangle \F_{g-1,\kk}^{\bullet}.
\end{equation}

\section{Relative geometries}
\label{Section_relative_geomtries}
\subsection{Relative divisor}
Let $\pi : X \to B$ be an elliptic fibration with section and integral fibers
such that $H^{2,0}(X) = 0$.
Let
\[ D \subset X. \]
be a non-singular divisor. We assume $\pi$ restricts to an elliptic fibration
\[ \pi_D : D \to A \]
for a non-singular divisor $A \subset B$.
The section of $\pi$ restricts to a section of $\pi_D$. Since $\pi$ has integral fibers, so does $\pi_D$.
We have the fibered diagram
\[
\begin{tikzcd}
D \ar{d}{\pi_D} \ar[hookrightarrow]{r} & X \ar{d}{\pi} \\
A \ar[hookrightarrow]{r} & B.
\end{tikzcd}
\]

\subsection{Relative classes}
Let $\eta = (\eta_i)_{i=1, \ldots, l(\eta)}$ be an ordered partition. Let
\[ \Mbar_{g,n}(X/D, \beta ; \eta) \]
be the moduli space parametrizing stable maps from connected genus $g$ curves to $X$ relative to $D$
with ordered ramification profile $\eta$ over the relative divisor $D$, see \cite{Junli1, Junli2}
for definitions and \cite[Sec.2]{GV} for an introduction to relative stable maps.
We have evaluation maps at the $n$ interior and the $l(\eta)$ relative marked points. The latter are denoted by
\[ \ev^{\mathrm{rel}}_{i} : \Mbar_{g,n}(X/D, \beta ; \eta) \to D, \ i=1, \ldots, l(\eta). \]
Since $D$ is non-singular, we have the induced morphism
\[
\pi : \Mbar_{g,n}(X/D, \beta; \eta) \to \Mbar_{g,n}(B/A, \kk ; \eta)
\]
where $\kk = \pi_{\ast} \beta$.

Let $\gamma_1, \ldots, \gamma_n \in H^{\ast}(X)$, let
$\kk \in H_2(B,\BZ)$ be a curve class and let
\[
\underline{\eta} = \big( (\eta_1, \delta_1), \ldots, (\eta_{l(\eta)}, \delta_{l(\eta)} ) \big), \quad \delta_i \in H^{\ast}(D),
\]
be an ordered cohomology weighted partition. Define the relative potential
\begin{multline*}
\CC_{g,\kk}^{\pi/D}( \gamma_1, \ldots, \gamma_n ; \underline{\eta} ) \\
=
\sum_{\pi_{\ast} \beta = \kk} \zeta^{\beta} q^{W \cdot \beta}
\pi_{\ast} \left( \left[  \Mbar_{g,n}(X/D, \beta; \eta) \right]^{\text{vir}} \prod_{i=1}^{n} \ev_i^{\ast}(\gamma_i) \prod_{i=1}^{l(\eta)} \ev_i^{\mathrm{rel} \ast}(\delta_i) \right)
\end{multline*}
where as before $W = [ \iota(B) ] - \frac{1}{2} \pi^{\ast} c_1(N_{\iota})$
and $\zeta^{\beta} = e(z \cdot \beta)$ with $z \in \Lambda \otimes \BC$.

In line with the rest of the paper we conjecture the following.
\begin{conjecture} \label{Conj_RelQuasimodularity} The series
$\CC_{g,\kk}^{\pi/D}( \gamma_1, \ldots, \gamma_n ; \underline{\eta} )$
is a cycle-valued quasi-Jacobi form of index $Q_{\kk}/2$:
\[
\CC_{g,\kk}^{\pi/D}( \gamma_1, \ldots, \gamma_n ; \underline{\eta} )
\in H_{\ast}(\Mbar_{g,n}(B/A, \kk ; \eta)) \otimes \frac{1}{\Delta(q)^m} \QJ_{Q_{\kk}/2}
\]
where $m = -\frac{1}{2} c_1(N_{\iota}) \cdot \mathsf{k}$.
\end{conjecture}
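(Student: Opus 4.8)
The plan is to reduce Conjecture~\ref{Conj_RelQuasimodularity} to the absolute (non-relative) Conjecture~\ref{Conj_Quasimodularity}, which is the subject of the bulk of the paper, by a degeneration argument. First I would apply the degeneration formula of \cite{Junli2} to a degeneration of $X$ to the normal-cone expansion $X \rightsquigarrow X \cup_D (\mathbb{P}(N_{D/X}\oplus \mathcal{O}_D))$. The relative invariants of $X/D$ are then expressed as a sum over splittings of the curve class and of the ramification data, with each term a product of a $\pi$-relative invariant of $X/D$ (inductively of lower complexity, or handled by the $h^{2,0}=0$ absolute case) and a $\pi$-relative invariant of the projective-bundle piece over $D$. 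Since both $D$ and $\mathbb{P}(N_{D/X}\oplus\mathcal{O}_D)$ carry compatible elliptic fibrations (over $A$ and over $A$ respectively, with the section restricting), these pieces are again governed by the conjectures of Section~\ref{Section_Elliptic_fibrations_and_conjectures}, now for lower-dimensional or simpler geometries.

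The key technical input is that the space of cycle-valued quasi-Jacobi forms is closed under the operations appearing in the degeneration formula. Concretely: (a) pushforward and pullback along the morphisms of moduli spaces of stable maps to $B$, $A$, and $B/A$ act $q,\zeta$-linearly and hence preserve the quasi-Jacobi property; (b) the gluing and the Künneth-summation over the relative evaluation maps $\ev^{\mathrm{rel}}_i$ are bilinear in $q,\zeta$; (c) crucially, the $q$-power $q^{W\cdot\beta}$ splits additively under $\beta = \beta_1 + \beta_2$ provided the class $W$ on $X$ restricts compatibly to $W$ on the bubble component. This last point is exactly the reason for choosing $W = [\iota(B)] - \tfrac12 \pi^*c_1(N_\iota)$: one checks that $W|_D$ agrees with the analogous class on $D$ up to a multiple of the fiber, so that the index matrices $Q_{\kk}$ behave additively under $\kk = \kk_1 + \kk_2$. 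Then the product of two quasi-Jacobi forms of indices $Q_{\kk_1}/2$ and $Q_{\kk_2}/2$ lands in index $Q_{\kk}/2$ as required, and the $\Delta(q)^{-m}$ prefactor is additive in $m = -\tfrac12 c_1(N_\iota)\cdot\kk$ since $c_1(N_\iota)$ also restricts compatibly.

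Organizationally, I would structure the induction on, say, the dimension of $X$ together with the number of relative marked points, with the base case being the absolute conjecture (no relative divisor), proven for the rational elliptic surface in Section~\ref{Section_RationalEllipticSurface} and used as input. An alternative, and perhaps cleaner, route is to deform $D$ to a disjoint union of smooth fibers $X_{C}$ of $\pi$ over curves $C \subset A$ (mimicking the argument in the evenness lemma of Section~\ref{subsubsection_pairing_and_intersection_matrix}) and then invoke the product formula \cite{LQ} together with the elliptic curve case \cite{HAE}; this is essentially the strategy used in the introduction to deduce Theorems~\ref{Thm1} and~\ref{Thm2}. Either way, the argument is compatible with the holomorphic anomaly equation as well, so the companion HAE statement for $\CC^{\pi/D}_{g,\kk}$ would follow in parallel.

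The main obstacle is bookkeeping the fractional $q$-powers and the index matrices across the degeneration. The relative invariants naturally carry $q^{W\cdot\beta}$ with $W\cdot\beta \in \tfrac12\mathbb{Z}$, and after splitting $\beta = \beta_1+\beta_2$ one must verify that the two factors separately have half-integral $W$-degrees that add up correctly, and that the bubble contributions (which live on a projective bundle, not on $X$ itself) contribute a genuine quasi-Jacobi form of the predicted index rather than something of a larger index or wrong modular level. Making precise the claim ``$D$ inherits a compatible elliptic fibration with the induced $W$, $N_\iota$, and lattice $\Lambda_D \subset \Lambda$'' — and in particular that the restriction map on lattices is compatible with the intersection forms $(\cdot,\cdot)_{\kk}$ — is where the real content lies; the rest is formal manipulation of the degeneration formula combined with the closure properties of $\QJac$ established in Section~\ref{Section_Lattice_Jacobi_forms}.
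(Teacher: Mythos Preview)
The statement you are trying to prove is a \emph{conjecture}: the paper does not prove it in general, and there is no ``paper's own proof'' to compare against. What the paper does establish are two special cases: the trivial-fibration case $X = B \times E$, $D = A \times E$ (via the relative product formula of \cite{LQ} and \cite{HAE}), and the numerical case for the rational elliptic surface (Proposition~\ref{DEGENERATIONPROP}). Your proposal is closer in spirit to the latter, so let me compare against that.

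Your degeneration strategy has the direction reversed. The degeneration formula for $X \rightsquigarrow X \cup_D \mathbb{P}(N \oplus \CO_D)$ expresses the \emph{absolute} invariants of $X$ as a sum of products of relative invariants of $X/D$ and of $\mathbb{P}/D$; it does not directly express relative invariants in terms of absolute ones. Going the other way requires inverting this relation, which is the actual content of Proposition~\ref{DEGENERATIONPROP}: one sets up a lexicographic induction on $(k,g,n,\deg(\underline{\eta}),\ldots)$ and uses the nondegeneracy result \cite[Thm.~2]{MP} to peel off the leading relative term. This inversion step, and the Maulik--Pandharipande input it rests on, is entirely absent from your outline.

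More seriously, even granting the inversion, the argument only works \emph{numerically}: one must push forward to $\Mbar_{g,n}$ and integrate against tautological classes before the Maulik--Pandharipande matrix becomes square and invertible. This is why Proposition~\ref{DEGENERATIONPROP} is stated for integrals, not for cycles. Your proposal claims to reduce the cycle-valued Conjecture~\ref{Conj_RelQuasimodularity} to the cycle-valued Conjecture~\ref{Conj_Quasimodularity}; no such reduction is available by this route. And of course the absolute Conjecture~\ref{Conj_Quasimodularity} is itself open outside the cases handled in the paper, so even a successful reduction would not constitute a proof. Your ``alternative route'' of deforming $D$ to fibers of $\pi$ does not parse: $D = \pi^{-1}(A)$ is a horizontal divisor, not a union of elliptic fibers.
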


\subsection{Rubber classes}
Stating the holomorphic anomaly equation for relative classes requires rubber classes.
Let $N$ be the normal bundle of $D$ in $X$, and consider the projective bundle
\[ \p( N \oplus \CO_D ) \to D. \]
We let
\[ D_0, D_{\infty} \subset \p( N \oplus \CO_D ) \]
be the sections corresponding to the summands $\CO_D$ and $N$ respectively.

The group $\BC^{\ast}$ acts naturally on $\p( N \oplus \CO_D )$ by scaling in the fiber direction,
and induces an action on the moduli space of stable maps relative to both divisors denoted by
\[
 \Mbar_{g,n}( \p( N \oplus \CO_D ) / \{ D_0, D_{\infty} \}, \beta ; \lambda, \mu)
\]
where the ordered partitions $\lambda, \mu$ are the ramification profiles at $D_0$ and $D_{\infty}$ respectively.
We let
\[ \Mbar_{g,n}^{\sim}( \p( N \oplus \CO_D ) / \{ D_0, D_{\infty} \}, \beta ; \lambda, \mu) \]
denote the corresponding space of stable maps to the rubber target \cite{MP}.

Let $N'$ be the normal bundle to $A$ in $B$ and consider the relative geometry
\[ \p(N' \oplus \CO_{A}) / \{ A_0, A_{\infty} \}. \]
Since $D$ is non-singular the fibration $\pi$ induces a well-defined map
\[
\rho : \p(N \oplus \CO_{D}) \to \p(N' \oplus \CO_{A})
\]
which is an elliptic fibration with section and integral fibers. Let
\begin{multline*} \rho : \Mbar_{g,n}^{\sim}( \p( N \oplus \CO_D ) / \{ D_0, D_{\infty} \}, \beta; \lambda, \mu) \\
\to 
\Mbar_{g,n}^{\sim}( \p( N' \oplus \CO_A ) / \{ A_0, A_{\infty} \}, \kk ; \lambda, \mu)
\end{multline*}
be the induced map. We also let $\ev_i^{\text{rel }0}$ and $\ev_i^{\text{rel }\infty}$ denote the evaluation maps
at the relative marked points mapping to $D_0$ and $D_{\infty}$ respectively.
Because of the rubber target, the evaluation maps of the moduli space at the interior marked points take value in $D$.

For any $\gamma_1, \ldots, \gamma_n \in H^{\ast}(D)$ and any
ordered weighted partitions
\[ \underline{\lambda} =  \big( (\lambda_i, \delta_i) \big)_{i=1, \ldots, l(\lambda)}, \quad
 \underline{\mu} =  \big( (\mu_i, \epsilon_i) \big)_{i=1, \ldots, l(\mu)},
\quad \delta_i, \epsilon_i \in H^{\ast}(D) 
\]
we define
\begin{multline*}
\CC^{\rho, \mathrm{rubber}}_{g,\kk}(\gamma_1, \ldots, \gamma_n ; \underline{\lambda}, \underline{\mu} ) \\
=
 \sum_{\rho_{\ast} \beta = \kk} \zeta^{\beta} q^{W \cdot \beta}
\rho_{\ast} \bigg(
\left[ \Mbar_{g,n}^{\sim}( \p( N \oplus \CO_D ) / \{ D_0, D_{\infty} \}, \beta ; \lambda, \mu) \right]^{\text{vir}} \\
\cdot \prod_{i=1}^{n} \ev_i^{\ast}(\gamma_i) \prod_{i=1}^{l(\lambda)} \ev_{i}^{\text{rel }0 \ast}(\delta_i)
\prod_{i=1}^{l(\mu)} \ev_i^{\text{rel }\infty \ast}(\epsilon_i) \bigg).
\end{multline*}

\subsection{Disconnected classes}
To simplify the notation we will work with disconnected classes.
The disconnected versions
of moduli spaces and the classes $\CC$ will be denoted by a '$\bullet$' resp. a dash,
following the conventions of Section~\ref{Subsection_Disconnected_GW_classes}. Since connected and disconnected invariants
may be expressed in terms of each other, Conjecture \ref{Conj_RelQuasimodularity}
is equivalent to the quasi-Jacobi form property for the disconnected theory:
\[
\CC_{g,\kk}^{\pi/D, \bullet}( \gamma_1, \ldots, \gamma_n ; \underline{\eta} ) \in
H_{\ast}( \Mbar^{\bullet}_{g,n}(B/A, \kk ; \eta) ) \otimes \frac{1}{\Delta(q)^m} \QJ_{Q_{\kk}/2}
\]
where $m = -\frac{1}{2} c_1(N_{\iota}) \cdot \mathsf{k}$.
The holomorphic anomaly equation conjectured below for disconnected relative classes (Conjecture~\ref{Conj_RELHAE})
is equivalent to a corresponding version for connected classes.

\subsection{Holomorphic anomaly equation for relative classes}
Consider the diagram
\[
\begin{tikzcd}
\Mbar_{g,n}^{\bullet}(B/A, \kk, \eta) & M_{\Delta} \ar[swap]{l}{\xi} \ar{d} \ar{r} & \Mbar_{g-1,n+2}^{\bullet}(B/A, \kk, \eta) \ar{d}{\ev_{n+1} \times \ev_{n+2}} \\
& \CB \ar{r}{\Delta_{\CB}} &  \CB \times \CB
\end{tikzcd}
\]
where $\CB$ is the stack of target degenerations of $B$ relative to $A$, the map $\Delta_{\CB}$ is the diagonal,
$M_{\Delta}$ is the fiber product and $\xi$ is the gluing map along the final two marked points.
For simplicity, we will write
\[
\CC_{g-1,\kk}^{\pi/D, \bullet}( \gamma_1, \ldots, \gamma_n, \Delta_{B/A} ; \underline{\eta} ) \\
=
\Delta_{\CB}^{!} \CC_{g-1,\kk}^{\pi/D, \bullet}( \gamma_1, \ldots, \gamma_n, \1, \1 ; \underline{\eta} ).
\]

We state the relative holomorphic anomaly equation.

\begin{conjecture} \label{Conj_RELHAE} On $\Mbar_{g,n}^{\bullet}(B/A, \kk ; \eta)$ we have
\begin{align*} & \T_q \CC_{g,\kk}^{\pi/D, \bullet}( \gamma_1, \ldots, \gamma_n ; \underline{\eta} ) \\
& 
= \iota_{\ast}
 \CC_{g-1,\kk}^{\pi/D, \bullet}( \gamma_1, \ldots, \gamma_n, \Delta_{B/A} ; \underline{\eta} ) \\
%
%
& + 2 \sum_{\substack{ \{ 1, \ldots, n \} = S_1 \sqcup S_2  \\ m \geq 0 \\ g = g_1 + g_2 + m \\ \kk_1, \kk_2 }}
\sum_{\substack{ b ; b_1, \ldots, b_m \\ \ell ; \ell_1, \ldots, \ell_m}}
\frac{\prod_{i=1}^{m} b_i}{m!}
\xi_{\ast} 
\Bigg[
\CC_{g_1,\kk_1}^{\pi/D, \bullet}\Big( \gamma_{S_1} ; \big( (b, \Delta_{A, \ell}), (b_i, \Delta_{D, \ell_i})_{i=1}^{m}\big) \Big)\\
& \hspace{11.5em} \boxtimes 
\CC_{g_2, \kk_2}^{\rho, \bullet, \mathrm{rubber}}\Big( \gamma_{S_2} ; \big( (b, \Delta_{A, \ell}^{\vee}), (b_i, \Delta^{\vee}_{D, \ell_i})_{i=1}^{m} \big), \underline{\eta} \Big) \Bigg] \\
& - 2 \sum_{i=1}^{n}
 \psi_i \cdot \CC_{g,\kk}^{\pi/D, \bullet}( \gamma_1, \ldots,\gamma_{i-1},
  \pi^{\ast} \pi_{\ast} \gamma_i , \gamma_{i+1}, \ldots \gamma_n ; \underline{\eta} ) \\
 & - 2 \sum_{i=1}^{l(\eta)} 
 \psi_{i}^{\text{rel}} \cdot \CC_{g,\kk}^{\pi/D, \bullet}\Big( \gamma_1, \ldots, \gamma_n ;
 \big( (\eta_1, \delta_1), \ldots, \underbrace{(\eta_i, \pi_D^{\ast} \pi_{D\ast} \delta_i )}_{i\text{-th}}, \ldots, (\eta_n, \delta_n)\big) \Big)
\end{align*}
with the following notation.
We let
$\psi_i, \psi_i^{\text{rel}} \in H^2( \Mbar_{g,n}(B/A, \kk ; \eta) )$
denote the cotangent line classes at the $i$-th interior and relative marked points respectively.
The first sum is over 
all $\kk_1 \in H_2(B,\BZ)$ and $\kk_2 \in H_2( \p(N' \oplus \CO_A), \BZ)$
satisfying
\[ \kk_1 \cdot A = \kk_2 \cdot A \quad \text{ and } \quad
\kk_1 + r_{\ast} \kk_2 = \kk \]
where $r : \p(N' \oplus \CO_A) \to B$ is the composition of the projection to $A$ followed by the natural inclusion into $B$. 
The $b, b_1, \ldots, b_m$ run over all positive integers such that
$b+ \sum_i b_i = \kk_1 \cdot A = \kk_2 \cdot A$,
and the $\ell, \ell_i$ run over the splitting of the diagonals of $A$ and $D$ respectively:
\[ \Delta_A = \sum_{\ell} \Delta_{A,\ell} \otimes \Delta_{A, \ell}^{\vee}, \quad \quad
 \forall i \colon\  \Delta_{D} = \sum_{\ell_i} \Delta_{D, \ell_i} \otimes \Delta_{D, \ell_i}^{\vee}.
\]
The map $\xi$ is the gluing map to $\Mbar_{g,n}^{\bullet}(B/A, \kk ; \eta)$
along the common relative marking with ramification profile $(b, b_1, \ldots, b_m)$.
Since we cup with the diagonal classes of $A$ and $D$, the gluing map $\xi$ is well-defined.
\end{conjecture}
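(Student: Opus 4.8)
The plan is to deduce Conjecture~\ref{Conj_RELHAE} from the absolute holomorphic anomaly equation (Conjecture~\ref{Conj_HAE}) together with the degeneration formula: the relative equation should be the shadow of the absolute one under degeneration to the normal cone of $D$.

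First I would set up the degeneration of $B$ to the normal cone of $A$, which lifts compatibly with $\pi$ to a degeneration
\[ X \rightsquigarrow X \cup_{D} \p(N \oplus \CO_D) \]
lying over $B \rightsquigarrow B \cup_{A} \p(N' \oplus \CO_A)$. All four spaces carry compatible elliptic fibrations with section and integral fibers, so Conjectures~\ref{Conj_Quasimodularity}, \ref{Conj_HAE} and \ref{Conj_RelQuasimodularity} apply to each. The degeneration formula of \cite{Junli2}, pushed forward to the base, expresses the absolute class $\CC^{\pi,\bullet}_{g,\kk}(\cdots)$ of the smooth family as a sum over cohomology-weighted partitions along $D$ of products of $\CC^{\pi/D,\bullet}_{g_1,\kk_1}(\cdots;\underline\eta)$ with the relative theory of the cap $\p(N\oplus\CO_D)/D_0$; by the standard rubber calculus (cf.\ \cite{MP}), the cap factors are governed by the rubber classes $\CC^{\rho,\bullet,\mathrm{rubber}}_{g_2,\kk_2}$, the $\psi^{\mathrm{rel}}$ classes of the cap being traded for rubber descendents. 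This step produces the weights $\prod_i b_i/m!$ and the constraints $b+\sum_i b_i = \kk_1\cdot A = \kk_2\cdot A$, $\kk_1 + r_\ast\kk_2 = \kk$, $g = g_1+g_2+m$ that appear in Conjecture~\ref{Conj_RELHAE}.

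Next I would apply $\T_q = d/dC_2$ to both sides of this degeneration formula. Since $\T_q$ acts only on the $q$-series coefficients and the degeneration formula is $q$-graded compatibly with the gluing of the fibrations along $D$, the operator $\T_q$ commutes with the degeneration formula; applying Conjecture~\ref{Conj_HAE} to the left-hand side then produces the genus-reduction, splitting, and $\psi$-correction terms for the smooth family, and I re-expand each by the degeneration formula. Matching term by term against the degeneration-formula expansions of the four terms on the right of Conjecture~\ref{Conj_RELHAE} should give: the absolute genus-reduction term yields the relative genus-reduction term $\iota_\ast\CC^{\pi/D,\bullet}_{g-1,\kk}(\cdots,\Delta_{B/A};\underline\eta)$ together with part of the rubber-gluing term; the absolute splitting term, in the configurations where the two components straddle $D$, yields the remainder of the rubber-gluing term; and comparing cotangent line classes under the gluing and stabilization maps accounts for the $\psi_i$- and $\psi^{\mathrm{rel}}_i$-corrections, with the substitutions $\pi^\ast\pi_\ast\gamma_i$ and $\pi_D^\ast\pi_{D\ast}\delta_i$. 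Finally, since the cap (equivalently rubber) invariants form an invertible system, one solves for $\T_q\CC^{\pi/D,\bullet}_{g,\kk}(\cdots;\underline\eta)$ to obtain the claimed identity.

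The main obstacle is the bookkeeping in the matching step: one must control how the three terms of the absolute equation redistribute over the two sides of the degeneration, in particular how the automorphisms of the gluing data, the multiplicities $b_i$, and the factor $1/m!$ emerge from the rubber moduli and the ramification profiles, and check that no spurious terms survive and none are missing. A secondary point is that the argument is conditional on Conjecture~\ref{Conj_HAE}; to extract unconditional statements one inputs the cases where the absolute conjecture is known --- the elliptic curve \cite{HAE} and, via the product formula \cite{LQ}, products with the rational elliptic surface established elsewhere in this paper.
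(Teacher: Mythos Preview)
The statement you are trying to prove is a \emph{conjecture} in the paper, not a theorem; the paper does not give a proof of it in general. What the paper does prove is (a) the compatibility statement Proposition~\ref{Proposition_Compatibility_with_degeneration_formula}: assuming the quasi-Jacobi property, Conjectures~\ref{Conj_HAE} and~\ref{Conj_RELHAE} are mutually consistent under the degeneration formula; and (b) the special numerical case Proposition~\ref{DEGENERATIONPROP} for $R/E$, where an inversion argument does go through. Your first two paragraphs essentially reproduce the compatibility check of Proposition~\ref{Proposition_Compatibility_with_degeneration_formula}, which is fine as far as it goes.

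The genuine gap is in your last step. After applying $\T_q$ to the degeneration formula for the normal-cone degeneration $X \rightsquigarrow X \cup_D \p(N\oplus\CO_D)$, the right-hand side contains both $\T_q$ of the $X/D$ classes and $\T_q$ of the cap (equivalently rubber) classes for $\p(N\oplus\CO_D)/D_0$. The cap is itself an elliptically fibered relative geometry via $\rho:\p(N\oplus\CO_D)\to\p(N'\oplus\CO_A)$, and its holomorphic anomaly equation is precisely another instance of Conjecture~\ref{Conj_RELHAE}. So ``inverting the cap system'' to solve for $\T_q\CC^{\pi/D,\bullet}$ presupposes the relative HAE for the cap, which is what you are trying to prove --- the argument is circular in general. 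This circularity is broken in the paper only in the rational elliptic surface case (Proposition~\ref{DEGENERATIONPROP}), where the cap is $\p^1\times E$ and its HAE is known independently from the elliptic-curve case \cite{HAE} via the product formula \cite{LQ}. Absent such an independent input for the cap, your strategy yields compatibility, not a derivation.
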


The relative product formula of \cite{LQ} together with \cite[Thms. 2 and 3]{HAE} yields the following.
\begin{prop}
Conjectures \ref{Conj_RelQuasimodularity} and \ref{Conj_RELHAE}
hold if $X = B \times E$ and $D = A \times E$, and $\pi : X \to B$ is the projection onto the first factor.
\end{prop}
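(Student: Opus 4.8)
The plan is to reduce the statement to the elliptic-curve case, which is \cite[Thms.~2 and~3]{HAE}, by means of the relative product formula of \cite{LQ}. First I would record the simplifications available when $X=B\times E$ with $\pi$ the first projection. The section $\iota:B\to B\times E$, $b\mapsto(b,0)$, has trivial normal bundle, so $c_1(N_\iota)=0$, the exponent $m=-\frac{1}{2}c_1(N_\iota)\cdot\kk$ vanishes, and $W=[B\times\{0\}]$; since $W\cdot\beta$ equals the degree of $\beta$ over $E$, the variable $q$ tracks precisely the $E$-degree. The lattice $\Lambda$ of Section~\ref{Subsubsection_LatticeLambda} is trivial, so there are no elliptic parameters and $\QJ_{Q_\kk/2}=\QMod$. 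Thus Conjecture~\ref{Conj_RelQuasimodularity} asks that $\CC^{\pi/D,\bullet}_{g,\kk}(\gamma_1,\ldots,\gamma_n;\underline\eta)$ be a cycle-valued quasi-modular form in $q$, holomorphic at the cusp, and Conjecture~\ref{Conj_RELHAE} asks for the corresponding cycle-valued holomorphic anomaly equation. Both $D=A\times E$ and the rubber target $\p(N_{D/X}\oplus\CO_D)=\p(N_{A/B}\oplus\CO_A)\times E$ are again products with $E$, with $\pi_D$ and $\rho$ the projections, so the same reductions apply to them.

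By multilinearity I may take $\gamma_i=\gamma_i^B\boxtimes\gamma_i^E$ and $\delta_i=\delta_i^A\boxtimes\delta_i^E$ under the K\"unneth decompositions $H^\ast(B\times E)=H^\ast(B)\otimes H^\ast(E)$ and $H^\ast(A\times E)=H^\ast(A)\otimes H^\ast(E)$. Projection to $E$ carries no relative condition, so the product formula of \cite{LQ} expresses $\CC^{\pi/D,\bullet}_{g,\kk}(\gamma_1,\ldots,\gamma_n;\underline\eta)$ as a finite sum of exterior products $C^{B/A}\boxtimes C^{E}$, where $C^{B/A}$ is a relative Gromov--Witten class of $B$ relative to $A$ (an ordinary cohomology class on a moduli space of stable maps to $B/A$, independent of $q$) and $C^{E}$ is a Gromov--Witten class of the elliptic curve $E$ weighted by $q^{d}$ with $d$ the $E$-degree; the same holds for $\CC^{\rho,\bullet,\mathrm{rubber}}_{g,\kk}$, with the $q$-independent factor now a rubber class of $\p(N_{A/B}\oplus\CO_A)/\{A_0,A_\infty\}$. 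Quasi-modularity then follows immediately: by \cite[Thm.~2]{HAE} every $C^E$ is a cycle-valued quasi-modular form, and quasi-modular forms with coefficients in a fixed cohomology group are closed under exterior products and finite sums. This proves Conjecture~\ref{Conj_RelQuasimodularity} with $m=0$.

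For the holomorphic anomaly equation, note that $\T_q=d/dC_2$ is a derivation annihilating the $q$-independent classes $C^{B/A}$, so
\[ \T_q\CC^{\pi/D,\bullet}_{g,\kk}(\ldots)=\sum C^{B/A}\boxtimes\T_q C^E. \]
Into each $\T_q C^E$ I would substitute the holomorphic anomaly equation of \cite[Thm.~3]{HAE}, which contributes a genus-reduction term of $E$, degeneration/self-gluing terms, and $\psi$-correction terms carrying insertions of $p_E^\ast p_{E\ast}$, where $p_E:E\to\mathrm{pt}$. Reading the product formula of \cite{LQ} backwards to reassemble the result, I would match it term by term with the right-hand side of Conjecture~\ref{Conj_RELHAE}: the genus-reduction term of $E$, combined with the base factor and the K\"unneth decomposition of the diagonal of $E$, yields $\iota_\ast\CC^{\pi/D,\bullet}_{g-1,\kk}(\ldots,\Delta_{B/A})$; the $\psi$-corrections of $E$, together with $\pi^\ast\pi_\ast=\id_{H^\ast(B)}\boxtimes(p_E^\ast p_{E\ast})$ and $\pi_D^\ast\pi_{D\ast}=\id_{H^\ast(A)}\boxtimes(p_E^\ast p_{E\ast})$ and the compatibility of the cotangent line classes $\psi_i,\psi_i^{\mathrm{rel}}$ with the product formula, yield the interior term $-2\sum_i\psi_i\CC^{\pi/D,\bullet}_{g,\kk}(\ldots,\pi^\ast\pi_\ast\gamma_i,\ldots)$ and the relative term $-2\sum_i\psi_i^{\mathrm{rel}}\CC^{\pi/D,\bullet}_{g,\kk}(\ldots)$; and the degeneration/self-gluing terms of $E$, spread over the geometry of $B/A$, should produce the rubber sum of Conjecture~\ref{Conj_RELHAE} with its weights $\prod_i b_i/m!$.

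The main obstacle is this last identification. On the elliptic side the terms produced by \cite[Thm.~3]{HAE} are self-gluing and connected-component contributions with no relative structure, whereas the rubber sum of Conjecture~\ref{Conj_RELHAE} involves genuine target degenerations of $B$ relative to $A$ together with rubber maps to $\p(N_{A/B}\oplus\CO_A)$. Reconciling the two requires combining the product formula with the rubber calculus for $B/A$ --- in particular the relation expressing a relative stable map in terms of a relative piece over $B$ glued to a rubber piece over $A$ --- while tracking exactly the contact orders $b,b_1,\ldots,b_m$ (which reappear on the elliptic side as multiple-cover degrees over the gluing nodes), the diagonal splittings of $A$ and $D$, and the symmetry factor $1/m!$ for the interchangeable rubber nodes. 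This bookkeeping is the technical heart of the argument and runs parallel to the derivation of the relative from the absolute holomorphic anomaly equation for the elliptic curve in \cite{HAE}, now carried out fiberwise over $B/A$. The remaining checks --- that no spurious terms appear, that the power of $\Delta(q)$ is trivially $1$ here, and that the disconnected conventions of Section~\ref{Subsection_Disconnected_GW_classes} are respected throughout --- are routine.
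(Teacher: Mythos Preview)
Your approach is correct and is exactly what the paper does: its proof is the single sentence ``The relative product formula of \cite{LQ} together with \cite[Thms.~2 and~3]{HAE} yields the following.'' You have simply unpacked this citation. Your preliminary reductions (trivial $N_\iota$, hence $m=0$ and $W=[B\times\{0\}]$; trivial $\Lambda$, hence $\QJ_{Q_\kk/2}=\QMod$; K\"unneth splitting of insertions; no relative condition on the $E$-factor) are all correct, and your derivation of Conjecture~\ref{Conj_RelQuasimodularity} from \cite[Thm.~2]{HAE} via the product formula is exactly right.

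One comment on the part you flag as the ``main obstacle.'' You worry that the splitting terms in the absolute $E$-HAE have no relative structure, while the right-hand side of Conjecture~\ref{Conj_RELHAE} has a genuine rubber sum. The resolution is cleaner than you suggest: the product formula expresses $\CC^{\pi/D,\bullet}_{g,\kk}$ as the cap product of the relative virtual class $[\Mbar^\bullet_{g,n}(B/A,\kk;\eta)]^{\vir}$ (with its evaluation insertions from $B$ and $A$) against the pullback of the $E$-class $\CC^E_g$ along the forgetful map $p:\Mbar^\bullet_{g,n}(B/A,\kk;\eta)\to\Mbar^\bullet_{g,n+l(\eta)}$, where relative markings are treated as ordinary markings on the $E$-side. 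When you apply $\T_q$ and substitute \cite[Thm.~3]{HAE}, the boundary pushforwards $\iota_\ast$ and $j_\ast$ in the $E$-HAE live on $\Mbar_{g,n+l(\eta)}$; pulling these back by $p$ and intersecting with the relative virtual class of $B/A$ naturally produces \emph{all} boundary strata of $\Mbar^\bullet_{g,n}(B/A,\kk;\eta)$ lying over them --- both the strata where the source curve degenerates and the target stays $B$, and the strata where the target degenerates into $B\cup_A\p(N'\oplus\CO_A)$. The latter give precisely the rubber sum, with the combinatorial weights $\prod b_i/m!$ arising from the usual gluing/automorphism count. So no separate ``rubber calculus'' input is needed beyond the splitting of the relative virtual class along the boundary of $\Mbar(B/A)$, which is standard. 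This is why the paper is content with a one-line citation.
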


\subsection{Compatibility with the degeneration formula}
\label{Subsection_Compa_with_deg_formula}
A degeneration of $X$ compatible with the elliptic fibration $\pi : X \to B$ is a flat family
\[ \epsilon : \CX \to \Delta \]
over a disk $\Delta \subset \BC$ satisfying:
\begin{enumerate}
 \item[(i)] $\epsilon$ is a flat projective morphism, smooth away from $0$.
 \item[(ii)] $\epsilon^{-1}(1) = X$.
 \item[(iii)] $\epsilon^{-1}(0) = X_1 \cup_D X_2$ is a normal crossing divisor.
 \item[(iv)] There exists a flat morphism $\tilde{\epsilon} : \CB \to \Delta$ satisfying (i-iii) with
 $\tilde{\epsilon}^{-1}(1) = B$ and $\tilde{\epsilon}^{-1}(0) = B_1 \cup_A B_2$.
 \item[(v)] There is an elliptic fibration $\CX \to \CB$ with section and integral fiber
that restricts to elliptic fibrations with integral fibers:
\[ \pi : X \to B, \quad  \pi_i : X_i \to B_i,\, i=1,2 \quad \rho : D \to A. \]
\end{enumerate}
We further assume that the canonical map
\begin{equation} H^{\ast}(X_1 \cup_D X_2) \to H^{\ast}(X) \label{canonicalmap} \end{equation}
determined by $\epsilon$ yields an inclusion 
$\Lambda_1 \oplus \Lambda_2 \subset \Lambda$
where $\Lambda_i = H^{2}_{\perp}(X_i, \BZ)$. Let
\[ \mathbf{z}_i \in \Lambda_i \otimes \BC \]
denote the coordinate on the $i$-th summand.

Consider cohomology classes
\[ \gamma_1, \ldots, \gamma_n \in H^{\ast}(X) \]
which lift to the total space of the degeneration or equivalently\footnote{We assume the disk is sufficiently small.} which lie in the image of \eqref{canonicalmap}.
Below let $p$ always denote the forgetful morphism from various moduli spaces of stable maps to the moduli space of stable curves, for example
\[ p : \Mbar_{g,n}^{\bullet}(B/A, \kk, \eta) \to \Mbar_{g,n}^{\bullet}. \]
The application of the degeneration formula \cite{Junli1, Junli2} to $\epsilon$ yields
\begin{multline} \label{323fewdfsdF}
p_{\ast} \CC^{\pi, \bullet}_{g,\kk}(\gamma_1, \ldots, \gamma_n) \Big|_{\mathbf{z} = (\mathbf{z}_1, \mathbf{z}_2)} \\
=
\sum_{\substack{ \{ 1, \ldots, n \} = S_1 \sqcup S_2 \\ \kk_1, \kk_2 \\ m \geq 0 \\ g = g_1 + g_2 + m - 1 }}
\sum_{\substack{ \eta_1, \ldots, \eta_m \\ \ell_1, \ldots, \ell_m}}
\frac{\prod_{i} \eta_i}{m!}
p_{\ast} \xi_{\ast} \bigg[ 
\CC^{\pi_1/D, \bullet}_{g_1, \kk_1}\left(\gamma_{S_1}; \underline{\eta}        \right) \boxtimes 
\CC^{\pi_2/D, \bullet}_{g_2, \kk_2}\left(\gamma_{S_2}; \underline{\eta}^{\vee} \right) \bigg]
\end{multline}
where $\kk_1, \kk_2$ run over all possible splittings of the curve class $\kk$,
the $\eta_1, \ldots, \eta_m$ run over all positive integers such that 
\[ \sum_i \eta_i = \kk_1 \cdot A = \kk_2 \cdot A, \]
the $\ell_i$ run over the splitting of the diagonals of $D$, and we have written
\[ \underline{\eta} = (\eta_i, \Delta_{D, \ell_i})_{i=1}^{m},
\quad
\underline{\eta}^{\vee} = (\eta_i, \Delta^{\vee}_{D, \ell_i})_{i=1}^{m}.
\]
Moreover, the map $\xi$ is the gluing map along the relative point (well-defined since we inserted the diagonal).

Assume Conjectures~\ref{Conj_Quasimodularity} and~\ref{Conj_RelQuasimodularity} hold, so that \eqref{323fewdfsdF}
is an equality of quasi-Jacobi forms.
Then Conjectures~\ref{Conj_HAE} and~\ref{Conj_RELHAE} each give a way to compute the class\footnote{We will omit the restriction of $\mathbf{z}$ to the pair $(\mathbf{z}_1, \mathbf{z}_2)$ in the notation from now on.}

\begin{equation*} \frac{d}{dC_2} p_{\ast} \CC^{\pi, \bullet}_{g,\kk}(\gamma_1, \ldots, \gamma_n) \label{ERWER} \end{equation*}
as follows:
\begin{enumerate}
 \item[(a)] Apply $\T_q$ to the left-hand side of \eqref{323fewdfsdF}, use Conjecture~\ref{Conj_HAE}, and apply the degeneration formula to each term of the result.
 \item[(b)] Apply $\T_q$ to the right-hand side of \eqref{323fewdfsdF} and use Conjecture~\ref{Conj_RELHAE}.
\end{enumerate}

We say Conjectures~\ref{Conj_HAE} and~\ref{Conj_RELHAE} are \emph{compatible with the degeneration formula}
if methods (a) and (b) yield the same result.

\begin{prop} \label{Dsdfsrgrsgdf} \label{Proposition_Compatibility_with_degeneration_formula}
Assume Conjectures~\ref{Conj_Quasimodularity} and~\ref{Conj_RelQuasimodularity}.
Conjectures~\ref{Conj_HAE} and~\ref{Conj_RELHAE} are compatible with the degeneration formula.
\end{prop}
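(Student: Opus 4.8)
The plan is to compare methods (a) and (b) term by term, reducing everything to an identity about how the operator $\T_q$ interacts with the two gluing operations that appear in the degeneration formula \eqref{323fewdfsdF}: the gluing $\xi$ along relative markings that comes from the degeneration itself, and the gluing maps $\iota$ (self-node) and $\xi$ (splitting node) that appear inside Conjectures~\ref{Conj_HAE} and~\ref{Conj_RELHAE}. The key structural observation is that all of these are instances of cutting a curve along a node lying over a point of $B$ (interior node) or over a point of $A$ (node on the relative divisor), and that the holomorphic anomaly term produced by $\T_q$ is exactly the insertion of a diagonal at such a node. So the content of the Proposition is a combinatorial-geometric bookkeeping: the two ways of producing diagonal insertions must match after applying the degeneration formula, because both are counting the same thing — curves in the special fiber with one extra distinguished node, weighted by the diagonal of $B$ (for an interior node) or by the ($b$-weighted) diagonals of $A$ and $D$ (for a relative node over $A$).

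First I would set up notation so that both sides of \eqref{323fewdfsdF} are written as sums over \emph{graphs}: a graph encodes the splitting $\kk = \kk_1 + \kk_2$, the genus splitting with $m$ relative nodes, the distribution $S_1 \sqcup S_2$ of markings, and the cohomology weights $\eta$ on the relative nodes. Applying $\T_q$ via method (b) to the right-hand side uses the relative HAE (Conjecture~\ref{Conj_RELHAE}) on each of the two factors $\CC^{\pi_1/D,\bullet}_{g_1,\kk_1}$ and $\CC^{\pi_2/D,\bullet}_{g_2,\kk_2}$ (the rubber/non-rubber distinction is immaterial here since the product formula input already handles it). Each application produces: (i) a self-node term $\iota_\ast$ with a $\Delta_{B/A}$ insertion, (ii) a splitting term with new relative markings weighted by $\Delta_A, \Delta_D$ and a combinatorial factor $\prod b_i / m!$, and (iii) two $\psi$-correction terms from $\pi^\ast\pi_\ast$ on the interior insertions and on the relative insertions. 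Carrying out method (a) instead: one applies the \emph{absolute} HAE (Conjecture~\ref{Conj_HAE}) to $p_\ast \CC^{\pi,\bullet}_{g,\kk}$, producing a $\Delta_B$-self-node term and $\psi$-corrections, then re-expands each resulting class by the degeneration formula.

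The heart of the argument is then three matching statements. (1) The $\psi$-correction terms match directly: the $\pi^\ast\pi_\ast\gamma_i$ insertions on an interior marking commute with the degeneration formula, and $\psi_i$ pulls back correctly under $p\circ\xi$ — this is the same kind of $\psi$-comparison already used in the proof of Lemma*~\ref{Lemma_EllHAE}. The new relative-$\psi$ corrections in Conjecture~\ref{Conj_RELHAE} are precisely what is needed so that, after summing the two factors, the relative markings coming from the degeneration also get their $\pi_D^\ast\pi_{D\ast}$ correction; this is a compatibility internal to the relative theory. (2) The self-node term with $\Delta_B$ on the absolute side, after degeneration, splits into a self-node living on one of $X_1, X_2$ (giving the $\iota_\ast \Delta_{B/A}$ terms of the two factors) \emph{plus} a node that straddles the relative divisor — and a straddling node contributes exactly a new pair of relative markings with a diagonal insertion, i.e. exactly the "splitting" term (ii) in Conjecture~\ref{Conj_RELHAE}, with the combinatorial factor $\prod b_i/m!$ arising from the usual degeneration-formula bookkeeping of ordering/weighting the $m$ new relative nodes. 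Here one uses the decomposition $\Delta_B = \Delta_{B_1/A}$-part $+\ (\text{relative nodes with }\Delta_A\text{-insertions})$, which is the geometric input; on the fiber level one correspondingly decomposes $\Delta_X$ in terms of $\Delta_{X_i/D}$ and $\Delta_D$-weighted relative nodes, using the splitting $\Lambda_1 \oplus \Lambda_2 \subset \Lambda$ and the product-type structure of the normal bundles. (3) The $b$-weights: a straddling node over a point of $A$ in a class-$\kk$ curve in the special fiber carries a contact order $b$ with $b + \sum b_i = \kk_1\cdot A$, and the factor $b$ (not $b_i$) for the newly created straddling node versus $b_i$ for the pre-existing ones is exactly bookkept by the $\prod_{i=1}^m b_i/m!$ in Conjecture~\ref{Conj_RELHAE} together with the $\prod_i \eta_i/m!$ in \eqref{323fewdfsdF}; matching these symmetry factors is the fiddliest point but is formal once the graphs are drawn.

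\textbf{Main obstacle.} The hard part is step (2): correctly splitting the diagonal term produced by $\T_q$ across the degeneration, i.e. showing that "apply the absolute HAE, then degenerate" equals "degenerate, then apply the relative HAE to each piece." Geometrically this is the statement that the HAE diagonal node either stays on one side of the normal-crossing divisor (handled by the relative HAE of that factor) or becomes a new relative node (handled by the splitting term of the relative HAE), and that the two descriptions produce the same weighted sum — the combinatorial factors and the $b$-weights must come out identically. I expect this to be a somewhat lengthy but entirely mechanical graph-by-graph comparison, with no conceptual surprise beyond the careful bookkeeping of contact orders and automorphism factors; the abstract reason it works is that both procedures compute $\T_q$ of the \emph{same} quasi-Jacobi form, so the real content is just internal consistency of the conjectural framework.
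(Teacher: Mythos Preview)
Your overall strategy --- pass to the disconnected theory, degenerate both sides, and match term by term --- is exactly what the paper does. But you have misidentified the mechanism that handles the two ``extra'' terms in Conjecture~\ref{Conj_RELHAE}: the rubber splitting term (your (ii)) and the relative-$\psi$ correction (the fourth term there). These do \emph{not} arise from a ``straddling node'' in the degeneration of the absolute self-node term, nor does the relative-$\psi$ term simply supply a missing correction on the gluing markings.

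What actually happens in the paper's proof is this. Working with the disconnected absolute HAE (Lemma*~\ref{Lemma_discHAE}), there are only two terms on the right: the self-node term with $\Delta_{B}$ and the interior $\psi$-correction. After applying the degeneration formula, the self-node term produces only the two contributions where the new node lies entirely on one side (giving the relative $\Delta_{B_i/A}$ self-node terms), and the $\psi$-term produces the interior $\psi$-corrections on each side. These match exactly the first and third terms of Conjecture~\ref{Conj_RELHAE} applied to each factor. So the compatibility statement reduces to showing that the remaining two terms --- the rubber splitting term applied to one factor, plus the relative-$\psi$ correction applied to the other --- sum to zero. The paper does this via a separate auxiliary identity (Lemma~\ref{lemma_psi_splitting}) expressing $\eta_i\psi_i^{\mathrm{rel}}$ as a boundary contribution plus a $c_1(N_{A/B})$-insertion; after applying it, the rubber terms cancel and what remains on the two sides are $c_1(N_{A/B_1})$- and $c_1(N_{A/B_2})$-insertions at the same relative marking, which cancel precisely because of the balancing condition $N_{A/B_1}\otimes N_{A/B_2}\cong\CO_A$ on the gluing divisor.

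Your proposal is missing both ingredients: you never invoke anything like Lemma~\ref{lemma_psi_splitting}, and the balancing condition does not appear at all. Without these, your step (2) cannot be completed --- there is no ``straddling node'' contribution from the absolute side that matches the rubber term, so you would be left with uncancelled rubber and relative-$\psi$ terms. (Your remark that the rubber/non-rubber distinction is ``immaterial'' is also a red flag: the rubber classes are essential to the statement of Conjecture~\ref{Conj_RELHAE} and to the cancellation.)
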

\begin{proof}
After pushforward to the moduli space of stable curves,
we apply the degeneration formula to the right-hand side of Lemma*~\ref{Lemma_discHAE}.
The result is
\begin{align*}
& \T_q p_{\ast} \CC^{\pi, \bullet}_{g,\kk}(\gamma_1, \ldots, \gamma_n) \\
& =
\sum_{\substack{ \{ 1, \ldots, n \} = S_1 \sqcup S_2 \\ \kk_1, \kk_2;\, m \geq 0 \\
\eta_1, \ldots, \eta_m, \ell_1, \ldots, \ell_m \\
g-1 = g_1 + g_2 + m - 1 }} \frac{\prod_i \eta_i}{m!}
\bigg[ p_{\ast} \xi_{\ast} \left( \CC^{\pi_1/D, \bullet}_{g_1, \kk_1}(\gamma_{S_1}, \Delta_{B_1/A} ; \underline{\eta} ) \boxtimes \CC^{\pi_2/D, \bullet}_{g_2, \kk_2}( \gamma_{S_2} ; \underline{\eta}^{\vee}) \right) \\
& \hspace{9.1em} + \ p_{\ast} \xi_{\ast} \left( \CC^{\pi_1/D, \bullet}_{g_1, \kk_1}(\gamma_{S_1} ; \underline{\eta} ) \boxtimes \CC^{\pi_2/D, \bullet}_{g_2, \kk_2}( \gamma_{S_2}, \Delta_{B_2/A} ; \underline{\eta}^{\vee}) \right) \bigg] \\
& 
-2
\sum_{\substack{ \{ 1, \ldots, n \} = S_1 \sqcup S_2 \\ \kk_1, \kk_2;\, m \geq 0 \\
\eta_1, \ldots, \eta_m, \ell_1, \ldots, \ell_m \\
g-1 = g_1 + g_2 + m - 1 }} \frac{\prod_i \eta_i}{m!} \cdot \\
& \cdot \Bigg[ \sum_{i \in S_1}
p_{\ast} \xi_{\ast} \Big( \psi_i \CC^{\pi_1/D, \bullet}_{g_1, \kk_1}(\gamma_{S_1 \setminus \{ i \}}, \pi^{\ast}\pi_{\ast}(\gamma_i); \underline{\eta} )
\boxtimes
\CC^{\pi_2/D, \bullet}_{g_2, \kk_2}( \gamma_{S_2} ; \underline{\eta}^{\vee}) \Big) \\
& + \hspace{0.2em}
\sum_{i \in S_2}
p_{\ast} \xi_{\ast} \left( \CC^{\pi_1/D, \bullet}_{g_1, \kk_1}(\gamma_{S_1}; \underline{\eta} ) \boxtimes
\psi_i \CC^{\pi_2/D, \bullet}_{g_2, \kk_2}( \gamma_{S_2 \setminus \{ i \}}, \pi^{\ast} \pi_{\ast}(\gamma_i) ; \underline{\eta}^{\vee}) \right) \Bigg]
\end{align*}
where the sums are over the same data as in \eqref{323fewdfsdF}.

We need to compare this expression with the relative holomorphic anomaly equation applied to the right-hand side of \eqref{323fewdfsdF}.
In Conjecture~\ref{Conj_RELHAE} we have four terms on the right-hand side.
The first and third term of Conjecture~\ref{Conj_RELHAE} applied to \eqref{323fewdfsdF} yield exactly the four terms above.
Hence we are left to show that the second and fourth terms of Conjecture~\ref{Conj_RELHAE} applied to \eqref{323fewdfsdF} vanish.

We consider first the second term applied to the first factor in \eqref{323fewdfsdF} plus
the fourth term applied to the second factor in \eqref{323fewdfsdF}. The result is
\begin{equation} \begin{aligned}
\label{MEG}
2 \sum
\frac{\prod_{i} c_i}{r!} \frac{\prod_i \eta_i}{m!}
& p_{\ast} \xi_{\ast} 
\left[
\CC^{\pi_1/D, \bullet}_{g_1', \kk_1'}\big(\gamma_{S_1'}; \underline{\lambda} \big) \boxtimes
\CC_{g_1'', \kk_1''}^{\rho, \bullet, \text{rub}}\big( \gamma_{S_1^{\prime \prime}} ; \underline{\lambda}^{\vee}, \underline{\eta} \big) \boxtimes
\CC^{\pi_2/D, \bullet}_{g_2, \kk_2}\big(\gamma_{S_2}; \underline{\eta}^{\vee} \big) \right] \\
- 2 \sum \sum_{i=1}^{m}
\frac{\prod_j \eta_j}{m!}
& p_{\ast} \xi_{\ast} \left[ \CC^{\pi_1/D, \bullet}_{g_1, \kk_1}\big(\gamma_{S_1}; \underline{\eta} \big) \boxtimes
\psi_i^{\text{rel}} \CC^{\pi_2/D, \bullet}_{g_2, \kk_2}\big(\gamma_{S_2}; \underline{\eta}^{\vee}\big|_{\delta_i \mapsto \pi^{\ast} \pi_{\ast} \delta_i} \big) \right],
\end{aligned} \end{equation}
where the sum in the second line is over the same data as in \eqref{323fewdfsdF},
and the sums in the first line run additionally also over the following data: splittings
of $\kk_1$ into $\kk_1', \kk_1''$, decompositions $S_1 = S_1' \sqcup S_1^{\prime \prime}$,
positive integers $c ; c_1, \ldots, c_r$, $r \geq 0$
summing up to $\kk_1' \cdot A$,
splittings $g_1 = g_1' + g_1'' + r$, and diagonal splittings $\tilde{\ell} ; \tilde{\ell}_1, \ldots, \tilde{\ell}_r$
in the weighted partitions
\[
\underline{\lambda}        = \left( (c, \Delta_{A, \tilde{\ell}}), (c_i, \Delta_{D, \tilde{\ell_i}})_{i=1}^{r}\right),
\quad 
\underline{\lambda}^{\vee} = \left( (c, \Delta_{A, \tilde{\ell}}^{\vee}), (c_i, \Delta^{\vee}_{D, \tilde{\ell_i}})_{i=1}^{r} \right).
\]
Also, we write $\underline{\eta}\big|_{\delta_i \mapsto \alpha}$ if the $i$-th cohomology class in $\underline{\eta}$ is replaced by
some $\alpha$.

We use Lemma~\ref{lemma_psi_splitting} below to remove the relative $\psi$-class in the second line of \eqref{MEG}.
When doing that, the second term on the right in Lemma~\ref{lemma_psi_splitting} (the bubble term)
precisely cancels with the expression in the first line (switch $\eta \mapsto \lambda, \mu \mapsto \eta$ and trade the sum $\sum_{i=1}^m$ for a factor of $m$).
Hence we find that \eqref{MEG} is equal to
\begin{equation}
\label{dfsdg}
2 \sum \sum_{i=1}^{l(\eta)} \frac{\prod_{j \neq i} \eta_j}{m!}
p_{\ast} \xi_{\ast} \Bigg[ \CC^{\pi_1/D, \bullet}_{g_1, \kk_1}\left(\gamma_{S_1}; \underline{\eta} \right) \boxtimes
\CC^{\pi_2/D, \bullet}_{g_2, \kk_2}\left(\gamma_{S_2}; \underline{\eta}^{\vee}\big|_{\delta_i \mapsto
\pi^{\ast} \pi_{\ast} (\delta_i) c_1(N_{A/B_2}) } \right)  \Bigg],
\end{equation}
where the first sum is over the same data as in \eqref{323fewdfsdF}.

By a parallel discussion, the second term of Conjecture~\ref{Conj_RELHAE} applied to the second factor in \eqref{323fewdfsdF}
plus the fourth term applied to the first factor is
\begin{equation} \label{term22}
2 \sum \sum_{i=1}^{l(\eta)} \frac{\prod_{j \neq i} \eta_j}{m!}
p_{\ast} \xi_{\ast}
\Bigg[ \CC^{\pi_1/D, \bullet}_{g_1, \kk_1}\left(\gamma_{S_1}; \underline{\eta}\big|_{\delta_i \mapsto
\pi^{\ast} \pi_{\ast} (\delta_i) c_1(N_{A/B_1}) } \right) \boxtimes
\CC^{\pi_2/D, \bullet}_{g_2, \kk_2}\left(\gamma_{S_2}; \underline{\eta}^{\vee} \right)  \Bigg].
\end{equation}

The term \eqref{term22} agrees exactly with \eqref{dfsdg} except for the $i$-th relative insertion.
We consider the $i$-th relative insertion more closely. 
Using
\[ (\mathrm{id} \boxtimes \pi^{\ast} \pi_{\ast})\Delta_D 
= \Delta_A 
\]
and the balancing condition
\[ N_{A/B_1} \otimes N_{A/B_2} = \CO_A \]
the $i$-th relative insertion in \eqref{dfsdg} is
\begin{align*}
\big( 1 \boxtimes c_1(N_{A/B_2})\big) \cdot (\mathrm{id} \boxtimes \pi^{\ast} \pi_{\ast})\Delta_D
& = \big( 1 \boxtimes c_1(N_{A/B_2})\big) \cdot \Delta_A \\
& = \big( c_1(N_{A/B_2}) \boxtimes 1 \big) \cdot \Delta_A \\
& = - \big( c_1(N_{A/B_1}) \boxtimes 1 \big) \cdot \Delta_A.
\end{align*}
Since this is precisely the negative of the $i$-th relative insertion in \eqref{term22}, the sum of \eqref{dfsdg} and \eqref{term22} vanishes.
\end{proof}

\begin{lemma} Let $\underline{\eta} = \{ (\eta_i, \delta_i) \}$ be a cohomology weighted partition
and let $\gamma = (\gamma_1, \ldots, \gamma_n)$ with $\gamma_i \in H^{\ast}(X)$ be a list of cohomology classes. 
We have
\label{lemma_psi_splitting}
\begin{multline*}
\eta_i \cdot p_{\ast} \left( \psi_i^{\text{rel}} 
\CC_{g,\kk}^{\pi/D, \bullet}( \gamma ; \underline{\eta} ) \right) 
=
- p_{\ast} \left(
\CC_{g,\kk}^{\pi/D, \bullet}( \gamma ; \underline{\eta}\big|_{\delta_i \mapsto \delta_i c_1(N_{A/B}) } ) 
\right) \\
+ \sum_{\substack{ \{ 1, \ldots, n \} = S_1 \sqcup S_2 \\ \kk_1, \kk_2 \\ s \geq 0 \\ g = g_1 + g_2 + s - 1 }}
\sum_{\substack{ \mu_1, \ldots, \mu_s \\ \ell_1, \ldots, \ell_s}}
\frac{\prod_{i} \mu_i}{s!}
p_{\ast} \xi_{\ast} \bigg[ 
\CC^{\rho/D, \bullet, \textup{rub}}_{g_1, \kk_1} \left(\gamma_{S_1}; \underline{\eta}, \underline{\mu}        \right) \boxtimes 
\CC^{\pi_2/D, \bullet}_{g_2, \kk_2}              \left(\gamma_{S_2}; \underline{\mu}^{\vee} \right) 
\bigg]
\end{multline*}
where the sum is over the splittings of $\kk$ into $\kk_1 \in H_2( \p(N' \oplus \CO_A), \BZ)$
and $\kk_2 \in H_2(B,\BZ)$,
all positive integers $\mu_1, \ldots \mu_s$ summing up to $\kk_1 \cdot A$,
and over indices of diagonal splittings $\ell_1, \ldots, \ell_s$ for the cohomology weighted partitions
\[
\underline{\mu} = \{ (\mu_i, \Delta_{D, \ell_i})_{i=1}^{s} \},
\quad
\underline{\mu}^{\vee} = \{ (\mu_i, \Delta^{\vee}_{D, \ell_i})_{i=1}^{s} \}.
\]
As before we write $\underline{\eta}\big|_{\delta_i \mapsto \alpha}$ if the class $\delta_i$ is replaced by
some class $\alpha$.
\end{lemma}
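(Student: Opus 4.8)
The plan is to deduce the identity from the standard comparison between the relative cotangent line class and the normal bundle of the relative divisor on the moduli space of relative stable maps. Concretely, on $\Mbar^{\bullet}_{g,n}(X/D,\beta;\eta)$ one has the rubber-calculus relation
\[ \eta_i\,\psi_i^{\mathrm{rel}}\ =\ -\,(\ev_i^{\mathrm{rel}})^{\ast}c_1(N_{D/X})\ +\ \big[\,\mathcal{D}_{\mathrm{exp}}\,\big], \]
where $N_{D/X}$ is the normal bundle of $D$ in $X$ and $\mathcal{D}_{\mathrm{exp}}\subset\Mbar^{\bullet}_{g,n}(X/D,\beta;\eta)$ is the (virtual) boundary divisor of maps whose target has been expanded by at least one level, so that all relative markings lie on the outermost $\p$-bundle component. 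This relation is obtained by analysing, near the $i$-th relative marking, the leading coefficient of the map in the direction normal to $D$: it defines a section of $(\ev_i^{\mathrm{rel}})^{\ast}N_{D/X}\otimes\mathbb{L}_i^{\otimes\eta_i}$, with $\mathbb{L}_i$ the relative cotangent line of the source, which vanishes precisely along the expansion boundary once the deformation (log) structure of the expanded degenerations is taken into account. Since $D=\pi^{-1}(A)$ we have $N_{D/X}=\pi_D^{\ast}N_{A/B}$, which is where the class $c_1(N_{A/B})$ in the statement comes from.

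Granting this, I would cap the relation with $[\Mbar^{\bullet}_{g,n}(X/D,\beta;\eta)]^{\mathrm{vir}}\cdot\prod_j\ev_j^{\ast}\gamma_j\cdot\prod_j(\ev_j^{\mathrm{rel}})^{\ast}\delta_j$, multiply by $\zeta^{\beta}q^{W\cdot\beta}$, sum over $\pi_{\ast}\beta=\kk$, and push forward by $\pi$ and then by $p$. Since $\psi_i^{\mathrm{rel}}$ is pulled back along $\pi$, the left-hand side becomes $\eta_i\,p_{\ast}\big(\psi_i^{\mathrm{rel}}\,\CC_{g,\kk}^{\pi/D,\bullet}(\gamma;\underline\eta)\big)$. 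For the first term on the right, the projection formula together with $(\ev_i^{\mathrm{rel}})^{\ast}c_1(N_{D/X})\cdot(\ev_i^{\mathrm{rel}})^{\ast}\delta_i=(\ev_i^{\mathrm{rel}})^{\ast}\big(\delta_i\cdot\pi_D^{\ast}c_1(N_{A/B})\big)$ produces exactly $-\,p_{\ast}\CC_{g,\kk}^{\pi/D,\bullet}(\gamma;\underline\eta|_{\delta_i\mapsto\delta_i c_1(N_{A/B})})$, the first term of the assertion.

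For the boundary term I would apply the splitting formula for the virtual normal-crossing divisor $\mathcal{D}_{\mathrm{exp}}$, in the form used in the degeneration formula \cite{Junli1,Junli2}. A map parametrised by $\mathcal{D}_{\mathrm{exp}}$ has its source glued from a ``main'' part mapping to $X/D$ and a ``bubble'' part mapping to the rubber target $\p(N_{D/X}\oplus\CO_D)/\{D_0,D_\infty\}$, attached along $s$ nodes with contact orders $\mu=(\mu_1,\dots,\mu_s)$, and with the original relative markings $\underline\eta$ located on the $D_\infty$-end of the bubble. As $\rho:\p(N_{D/X}\oplus\CO_D)\to\p(N_{A/B}\oplus\CO_A)$ is the induced elliptic fibration, after $\pi$-pushforward the bubble part contributes the rubber class $\CC^{\rho/D,\bullet,\mathrm{rub}}$. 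The identity $g=g_1+g_2+s-1$ comes from gluing two (possibly disconnected) curves of genera $g_1,g_2$ along $s$ nodes; the weighted partitions $\underline\mu,\underline\mu^{\vee}$ come from inserting the K\"unneth decomposition of the diagonal of $D$ at each gluing node; and the combinatorial factor $\prod_i\mu_i/s!$ comes from the local structure of predeformable maps along the relative divisor together with the $s!$ automorphisms of the $s$ unordered gluing nodes. Assembling these contributions reproduces the rubber term of the statement, which completes the proof.

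The hard part will be the first step: pinning down the precise form of the $\psi^{\mathrm{rel}}$-comparison relation, in particular the correct scheme structure of $\mathcal{D}_{\mathrm{exp}}$ and the correct normal-bundle twist, is not visible from a naive ``vanishing of the leading coefficient'' argument and genuinely requires the geometry of expanded degenerations (equivalently, logarithmic stable maps). This is classical, but it must be quoted or re-derived with care. Once it is available, the remainder is the projection-formula manipulation above together with the lengthy but routine bookkeeping of genera, curve classes, diagonal splittings, and multiplicity factors in the splitting formula.
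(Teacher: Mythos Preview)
Your proposal is correct and follows essentially the same approach as the paper. Both arguments construct a section of the line bundle $(L_i^{\mathrm{rel}})^{\otimes\eta_i}\otimes(\ev_i^{\mathrm{rel}})^{\ast}N_{D/X}$ from the leading normal derivative of the map at the $i$-th relative marking, identify its vanishing locus with the boundary divisor of expanded targets, and then expand that boundary via the splitting formula; the paper makes the construction of the section slightly more explicit (as a map $N_{D/X,q_i}^{\vee}\to\Fm^{\eta_i}/\Fm^{\eta_i+1}$ in families) and cites \cite[Sec.~4.5]{BOPY} and \cite[Prop.~1.1]{OP_GWH} for the argument, but the content is the same.
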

\begin{proof}
We will remove the class $\psi_i^{\text{rel}}$ by an argument parallel to \cite[Sec.4.5, End of Case (ii-a)]{BOPY}.
Let $\CX$ be the stack of target degenerations of the pair $(X,D)$ and let
\[ f : C \to \CX \]
be a stable map parametrized by the moduli space 
$M = M^{\bullet}_{g,n}(X / D, \beta ; \underline{\eta} )$.

Let $c : \CX \overset{c}{\to} X$ be the canonical map contracting the bubbles.
Let $p_i^{\text{rel}} \in C$ be the $i$-th relative point and let
\[ q_i = c(f(p_i^{\text{rel}})) \in D \]
be its image in $X$.
If the irreducible component of $C$ containing $p_i^{\text{rel}}$ maps into a bubble of $\CX$, then
the composition $c \circ f$ vanishes to infinite order at $p$ in the direction normal to $D$.
If the component containing $p_i^{\text{rel}}$ maps into $X$, then
by the tangency condition the composition $c \circ f$ vanishes to order exactly $\eta_i$ in the normal direction.
In either case, the differential in the normal direction induces a map
\[ N_{D/X, q_i}^{\vee} \to \Fm^{\eta_i} / \Fm^{\eta_i+1}, \]
where $\Fm$ is the maximal ideal of the point $p_i^{\text{rel}} \in C$.
See also \cite[Proof of Prop. 1.1]{OP_GWH} for a similar argument.
Considering this map in family yields a map of line bundles on $M$:
\[ \ev_i^{\text{rel} \ast} N_{D/X}^{\vee} \to \left( L_i^{\text{rel}} \right)^{\otimes \eta_i}, \]
where $L_i^{\text{rel}}$ is the cotangent line bundle on $M$.
Dualizing we obtain a section
\[ \CO_M \to \left( L_i^{\text{rel}} \right)^{\eta_i} \otimes \ev_i^{\text{rel} \ast} N_{D/X}. \]
The vanishing locus of this section is the boundary divisor of the moduli space $M$
corresponding to the first bubble of $D$ (compare \cite{BOPY}).
Expressing the class
\[ c_1\left( (L_i^{\text{rel}} )^{\eta_i} \otimes \ev_i^{\text{rel} \ast} N_{D/X} \right) = \eta_i \psi_i^{\text{rel}} + \ev_i^{\text{rel} \ast} c_1(N_{D/X}) \]
through the vanishing locus of the section and using the splitting formula, as well as the relation
\[ N_{D/X} = \pi_D^{\ast} N_{A/B}, \]
then yields the claimed formula.
\end{proof}

\section{The rational elliptic surface} \label{Section_RationalEllipticSurface}
\subsection{Definition and cohomology}
Let $R$ be a rational elliptic surface defined by a pencil of cubics.
We assume the pencil is generic, so the induced elliptic fibration
\[ R \to \p^1 \]
has $12$ rational nodal fibers.
Let $H, E_1, \ldots, E_9$
be the class of a line in $\p^2$ and the exceptional classes
of blowup $R \to \p^2$ respectively. We let
$B = E_9$ be the zero-section of the elliptic fibration, and let $F$ be the class of a fiber:
\[ B = E_9, \quad F = 3 H - \sum_{i=1}^{9} E_i . \]
We measure the degree in the fiber direction against the class
\[ W = B + \frac{1}{2} F . \]

The orthogonal complement of $B,F$ in $H^2(R, \BZ)$ is a negative-definite unimodular lattice of rank $8$ and hence is isomorphic to $E_8(-1)$,
\[ H^2(R, \BZ) = \BZ B \oplus \BZ F \oplus E_8(-1) . \]
As in Section~\ref{Section_Elliptic_fibrations_and_conjectures},
we identify the lattice $E_8(-1)$ with $\BZ^8$ by picking a basis $b_1, \ldots, b_n$.
We may assume the basis is chosen such that
\[ Q_{E_8} = \left( - \int_R b_i \cup b_j \right)_{i,j=1,\ldots, 8} \]
is the (positive definite) Cartan matrix of $E_8$.
In the notation of Section~\ref{subsubsection_pairing_and_intersection_matrix}
the matrix $Q_k$ for $k \in H_2(\p^1, \BZ) \cong \BZ$ is then
\[ Q_{k} = k Q_{E_8}. \]


\subsection{The tautological ring and a convention} \label{Subsection_CONVENTION}
If $2g-2+n>0$, let $p : \Mbar_{g,n}(\p^1,k) \to \Mbar_{g,n}$
be the forgetful map to the moduli space of stable curves, and let
\[ R^{\ast}(\Mbar_{g,n}) \subset H^{\ast}(\Mbar_{g,n}) \]
be the tautological subring spanned by push-forwards of
products of $\psi$ and $\kappa$ classes on boundary strata \cite{FP13}.

We extend both definitions to the unstable case as follows.
If $g,n \geq 0$ but $2g-2+n \leq 0$, we define
$\Mbar_{g,n}$ to be a point, $p$ to be the canonical projection, and
$R^{\ast}(\Mbar_{g,n}) = H^{\ast}(\Mbar_{g,n}) = \BQ$.

\subsection{Statement of results} \label{Subsection_RES_Statement_of_results}
The following result shows that Conjecture~\ref{Conj_Quasimodularity} holds
for rational elliptic surfaces \emph{numerically}, i.e. after integration against any tautological class
pulled back from $\Mbar_{g,n}$
(with the convention of Section~\ref{Subsection_CONVENTION} in the unstable cases).

\begin{thm} \label{theorem_RES1}
Let $\pi : R \to \p^1$ be a rational elliptic surface.
For all $g,k \geq 0$ and $\gamma_1, \ldots, \gamma_n \in H^{\ast}(R)$
and for every tautological class $\alpha \in R^{\ast}(\Mbar_{g,n})$,
\[
\int_{\Mbar_{g,n}(\p^1, k)} p^{\ast}(\alpha) \cap \CC_{g,k}^{\pi}(\gamma_1, \ldots, \gamma_n) 
\in \frac{1}{\Delta(q)^{k/2}} \QJ_{\frac{k}{2} Q_{E_8}}.
\]
\end{thm}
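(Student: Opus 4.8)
The plan is to adapt the calculation scheme of Maulik--Pandharipande--Thomas \cite{MPT} and to verify that each operation it performs preserves membership in the target space
\[
H_{\ast}(\Mbar_{g,n}(\p^1,k)) \otimes \frac{1}{\Delta(q)^{k/2}} \QJ_{\tfrac{k}{2} Q_{E_8}},
\]
so that the assertion propagates from a few base cases. Since we only need the statement after integration against a tautological class $p^{\ast}(\alpha)$, it is enough to control the resulting \emph{numerical} invariants, which is exactly what the recursion outputs. Roughly speaking, the scheme computes the (relative) fiber-class invariants of $R$ by a sequence of degenerations: first degenerate $R$ so that the absolute theory is reduced to the $\p^1$-relative theory of $R$ along a smooth fiber $E$ (with a rubber factor over $\p^1\times E$), and then degenerate the base $\p^1$ to peel the $12$ nodal fibers of $R\to\p^1$ off one at a time, each step splitting off a local piece and leaving an elliptic surface with fewer nodal fibers. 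The process is governed by the degeneration formula \cite{Junli2} and, for the $E$-direction, the product formula \cite{LQ}; in parallel one establishes the corresponding relative statements (Conjecture~\ref{Conj_RelQuasimodularity}) for the pieces produced.

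The base case is the constant elliptic fibration $\p^1\times E\to\p^1$ relative to fibers: this is the $X=B\times E$ case for which Conjectures~\ref{Conj_RelQuasimodularity} and~\ref{Conj_RELHAE} follow from \cite{LQ} together with \cite[Thms.\ 2 and 3]{HAE}; here $\Lambda=0$, so quasi-Jacobi forms reduce to quasi-modular forms, $Q_k=0$, and there is no $\Delta$-power (as $c_1(N_\iota)=0$). To propagate from here I would track the following bookkeeping. Each application of the degeneration/product formula glues two pieces along a smooth fiber, which on generating series is a finite sum of products; since $Q_k=k\,Q_{E_8}$ the indices add ($Q_{k_1}+Q_{k_2}=Q_{k_1+k_2}$) and the discriminant exponents add ($\tfrac{k_1}{2}+\tfrac{k_2}{2}=\tfrac{k}{2}$), and the remaining operations appearing in the splitting formulas either act only on the $H_{\ast}(\Mbar)$-factor (pushforward along forgetful and gluing maps, cupping with $\psi$- and $\kappa$-classes, the operator $\iota_{\ast}\Delta^{!}$) or act as the derivations $D_q$, $D_{\zeta_i}$ on the $\QJac$-factor. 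Since $\QJac$ is a ring closed under $D_q$ and the $D_{\zeta_i}$ (Section~\ref{Subsubsection_quasiJacobi_Differentialoperators}), the claimed space is stable under all of them. Rubber contributions and the removal of relative $\psi$-classes (along the lines of Lemma~\ref{lemma_psi_splitting}) introduce only more products of the same type and cause no loss of structure.

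Two further points need care. First, the $E_8$-Jacobi form structure, i.e.\ the index $\tfrac{k}{2}Q_{E_8}$ and Weyl invariance. The elliptic transformation law is automatic: $R$ satisfies Assumption~($\star$), so by Lemma~\ref{Lemma_elliptic_transf_law} and \eqref{ffsdghhjjjhhh} the series already obeys the elliptic transformation law of Jacobi forms of index $Q_k/2=\tfrac{k}{2}Q_{E_8}$, leaving only the (quasi-)modular direction, which the recursion supplies. Weyl invariance follows from deformation invariance: the Weyl group $W(E_8)$ is realized by monodromy of the family of rational elliptic surfaces, acting on $H^2(R,\BZ)$ fixing $B$ and $F$, so $\mathsf{N}_{g,\beta}$ depends on the $E_8$-component of $\beta$ only through its Weyl orbit, hence $\CC^{\pi}_{g,k}(\gamma_1,\ldots,\gamma_n)$ is $W(E_8)$-invariant in $\mathbf{z}$; assembling its $\zeta^{\lambda}$-coefficients against the ($W(E_8)$-invariant) $E_8$-theta functions of Proposition~\ref{QJac_Prop1} then exhibits it as a cycle-valued $E_8$-Jacobi form. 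Second, one must check that the recursion \emph{closes}: at each stage the scheme determines the unknowns either explicitly in terms of previously computed series or via a non-degenerate linear system. I would combine the finite-dimensionality of $\QJ_{\ell,\frac{k}{2}Q_{E_8}}$ (the Corollary after Proposition~\ref{QJac_Prop2}) with the already-known lower-order inputs to see that the solution is forced into the claimed space, with weight matching the prediction $2g-2+6k+\sum_i\underline{\deg}(\gamma_i)$ of Lemma*~\ref{Lemma_Weight_statement}.

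The main obstacle I anticipate is the correct handling of the $12$ nodal fibers, equivalently, pinning down the source of the $\Delta(q)^{k/2}$ denominator. One must show that, as the local $I_1$-contributions are split off through the iterated base degeneration, each contributes precisely the right local factor, so that after all $12$ nodal fibers are removed one recovers exactly $\Delta(q)^{k/2}$ and no non-modular remainder survives. This is essentially the only place where the rational elliptic surface differs modularly from the constant fibration $\p^1\times E$, and getting the bookkeeping of this difference right --- including the appearance of $\Theta_{E_8}$ in the lowest-degree terms, cf.\ the formula for $\mathsf{Z}_1$ in Section~\ref{intro:schoen_calabiyau} --- is the crux of the argument.
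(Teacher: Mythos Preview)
Your proposal takes a genuinely different route from the paper, and the difference matters because your route has a gap precisely where you flag it as the ``main obstacle.''

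The paper does \emph{not} peel off the $12$ nodal fibers. After the normal-cone degeneration $R \rightsquigarrow R \cup_E (\p^1 \times E)$ (which you also use), the paper proceeds by a different mechanism. In genus $0$ it computes the stationary invariants $M_k$ directly: the $k=1$ case is an explicit section count giving $M_1 = \Theta_{E_8}/\Delta^{1/2}$, and higher $k$ follow from the WDVV equation, which expresses $M_k$ as a polynomial in derivatives of lower $M_{k'}$'s. This is where the $E_8$-theta function and the $\Delta^{-k/2}$ enter, and no local nodal-fiber analysis is needed. For $g>0$, the paper runs a lexicographic induction on $(k,g,n)$. When $\deg(\alpha) \geq g$ the class $\alpha$ is a boundary pushforward and one restricts. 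When $\deg(\alpha) < g$, the dimension constraint forces at least $k$ point insertions; specializing these to the bubble $\p^1 \times E$ in the normal-cone degeneration, the only terms not covered by induction are those where the $R/E$ factor has full genus $g$, and these are killed by a vanishing lemma: $\CC^{\pi/E}_{g,k}(\gamma_1,\ldots,\gamma_n;(1,\omega)^k)=0$, proved for $k>0$ by a linear-system argument and for $k=0$ by a cosection coming from the log $2$-form in $H^0(R,\Omega_R^2(E))$.

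Your plan instead requires knowing the relative Gromov--Witten theory of a local $I_1$ model and verifying that twelve such contributions assemble to $\Delta^{-k/2}$. No such result is available, and you do not supply one; you correctly identify this as the crux but leave it open. By contrast, the paper's WDVV/vanishing-lemma mechanism sidesteps the nodal fibers entirely: the discriminant and the $E_8$-theta function arise once and for all from the $k=1$ section count, and everything else is bootstrapped from there by operations that manifestly preserve the quasi-Jacobi ring. Your discussion of index additivity, Weyl invariance, and the elliptic transformation law is fine as far as it goes, but it does not substitute for the missing local computation.
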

\vspace{7pt}

By trading descendent insertions for tautological classes
Theorem~\ref{theorem_RES1} implies that the generating series of descendent invariants
of a rational elliptic surface (for base degree $k$ and genus $g$) are quasi-Jacobi forms of index $\frac{k}{2} Q_{E_8}$.

An inspection of the proof actually yields a slightly sharper result:
the ring of quasi-Jacobi forms $\oplus_k \QJ_{\frac{k}{2} Q_{E_8}}$
in Theorem~\ref{theorem_RES1}
may be replaced by the $\QMod$-algebra generated by the theta function
$\Theta_{E_8}$ and all its derivatives.

We show that the holomorphic anomaly equation 
holds for the rational elliptic surface numerically.
Consider the right-hand side of Conjecture~\ref{Conj_HAE}:
\begin{align*}
\mathsf{H}_{g,\kk}( \gamma_1, \ldots, \gamma_n )
\ =\  & 
\iota_{\ast} \Delta^{!}
\CC^{\pi}_{g-1,\kk}( \gamma_1, \ldots, \gamma_n, \1, \1 ) \\
&
+ \sum_{\substack{ g= g_1 + g_2 \\
\{1,\ldots, n\} = S_1 \sqcup S_2 \\
\mathsf{k} = \mathsf{k}_1 + \mathsf{k}_2}}
j_{\ast} \Delta^{!} \left(
\CC^{\pi}_{g_1, \kk_1}( \gamma_{S_1}, \1) \boxtimes
\CC^{\pi}_{g_2, \mathsf{k}_2}( \gamma_{S_2}, \1 ) \right) \\
&
- 2 \sum_{i=1}^{n} 
\CC^{\pi}_{g,\kk}( \gamma_1, \ldots, \gamma_{i-1},
 \pi^{\ast} \pi_{\ast} \gamma_i , \gamma_{i+1}, \ldots, \gamma_n ) \cdot \psi_i.
\end{align*}

\begin{thm} \label{theorem_RES2}
For every tautological class $\alpha \in R^{\ast}(\Mbar_{g,n})$,
\[
\frac{d}{dC_2} \int p^{\ast}(\alpha) \cap \CC_{g,k}^{\pi}(\gamma_1, \ldots, \gamma_n) 
\ = \ 
\int p^{\ast}(\alpha) \cap \mathsf{H}_{g,\kk}( \gamma_1, \ldots, \gamma_n ).
\]
\end{thm}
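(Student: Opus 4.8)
The plan is to prove Theorem~\ref{theorem_RES2} in tandem with Theorem~\ref{theorem_RES1}: one runs the same adaptation of the Maulik--Pandharipande--Thomas calculation scheme \cite{MPT} used to establish quasi-Jacobi modularity, and checks that at every step of that scheme the holomorphic anomaly equation is preserved as well. Recall the scheme expresses each numerical class $\int_{\Mbar_{g,n}(\p^1,k)} p^{\ast}(\alpha)\cap\CC^{\pi}_{g,k}(\gamma_1,\ldots,\gamma_n)$ in terms of a finite list of building blocks through iterated use of: (i) the degeneration formula along a degeneration of $R$ compatible with the elliptic fibration; (ii) rubber calculus over $\p^1$; (iii) the product formula \cite{LQ} for pieces of the form $\p^1\times E$ relative to $\{\mathrm{pt}\}\times E$; and (iv) the solved Gromov--Witten theory of the elliptic curve \cite{HAE}. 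Descendent insertions are first traded for tautological classes on $\Mbar_{g,n}$ in the usual way, which is why the statement is formulated after integration against $p^{\ast}(\alpha)$.

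The point is that each of (i)--(iv) is compatible with the operator $\T_q=\tfrac{d}{dC_2}$. For the degeneration step this is exactly Proposition~\ref{Proposition_Compatibility_with_degeneration_formula}, which says the conjectural holomorphic anomaly equation is compatible with the degeneration formula; that proposition is proved assuming only the quasi-Jacobi property (Conjectures~\ref{Conj_Quasimodularity} and~\ref{Conj_RelQuasimodularity}), which for $R$ and its relative geometries is supplied by Theorem~\ref{theorem_RES1} and the relative refinement established alongside it. For the trivial pieces the relative holomorphic anomaly equation of Conjecture~\ref{Conj_RELHAE}, rubber contributions included, holds by the product-formula result of Section~\ref{Section_relative_geomtries}, i.e. by combining \cite{LQ} with \cite[Thms.~2 and~3]{HAE}; and the elliptic curve case is \cite[Conj.~B]{HAE}. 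Thus we may induct on the base degree $k$, then on the genus $g$, then on the remaining data: every Gromov--Witten class occurring as an input to a given step has strictly smaller invariants, hence satisfies the anomaly equation by induction, and the compatibility statements propagate the equation to the class under consideration, with the result matching the term $\mathsf{H}_{g,k}(\gamma_1,\ldots,\gamma_n)$ of Conjecture~\ref{Conj_HAE}. Capping with $p^{\ast}(\alpha)$ introduces no new difficulty, since the tautological subring is stable under the gluing pushforwards and $\psi$-multiplications that appear in $\mathsf{H}_{g,k}$, so the whole argument takes place at the numerical level.

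It remains to treat the inputs that seed the recursion and the reduction of the insertions. Descendent classes are removed by trading $\psi$-classes on the space of stable maps for tautological classes pulled back from $\Mbar_{g,n}$, and one uses that such classes --- and hence the integrands $p^{\ast}(\alpha)$ --- are carried through every operation of the scheme, so it suffices to treat primary insertions. For the base of the induction one checks directly: the fiberless case $k=0$, where $\CC^{\pi}_{g,0}$ reduces to fiber-class invariants of $R$ and the anomaly equation follows from the $k=0$ statement of \cite{HAE} together with the Hodge integral evaluation of \cite{GP}; and the elementary building block $\Delta(q)^{-k/2}\,\Theta_{E_8}(\mathbf{z},q)$ and its derivatives, which are annihilated by $\T_q$ while the corresponding $\mathsf{H}_{g,k}$ also vanishes.

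The step I expect to be the main obstacle is the term-by-term matching in the inductive step. The right-hand side $\mathsf{H}_{g,k}$ has three kinds of contributions --- a genus-lowering boundary gluing, a splitting into two lower-complexity factors, and a $\psi$-class correction built from $\pi^{\ast}\pi_{\ast}$ --- and these must be identified precisely with the output produced by applying $\T_q$ to the degeneration, rubber, and product formulas. Keeping exact control of the rubber pieces, of the combinatorial factors and signs in the rubber gluing, and of the twist by $c_1(N_{\iota})$ (equivalently the prefactor $\Delta(q)^{-k/2}$ and the accompanying weight shift) is where the argument is most delicate. A secondary subtlety is that for inprimitive base degree $k>1$ the $\zeta^{\lambda}$-coefficients of $\CC^{\pi}_{g,k}$ are only level-$k$ quasi-modular (Lemma*~\ref{Lemma_Quasiodularformselli}), so one must work throughout with the quasi-Jacobi forms themselves rather than with their periods.
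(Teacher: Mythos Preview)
Your overall strategy---verify that the holomorphic anomaly equation is preserved by each step of the algorithm proving Theorem~\ref{theorem_RES1}---is exactly the paper's approach. However, your description of that algorithm is incomplete in a way that leaves a genuine gap. The scheme in the paper is not a pure degeneration/rubber/product-formula recursion: the genus~$0$ reduction proceeds via boundary restriction on $\Mbar_{0,n}$, the divisor equation, and crucially the WDVV equation, which expresses every $M_k$ for $k\geq 2$ in terms of derivatives of $M_1$ (see Section~\ref{Subsection_Genus_Zero}). None of these three operations appears in your list (i)--(iv), yet each requires its own compatibility check with $\T_q$. The paper's proof therefore lists five items: compatibility with boundary restriction (parallel to \cite[Sec.~2.5]{HAE}), compatibility with the degeneration formula (Proposition~\ref{DEGENERATIONPROP}, which absorbs your rubber concerns via Proposition~\ref{Proposition_Compatibility_with_degeneration_formula}), WDVV as a special case of boundary restriction, compatibility with the divisor equation (requiring a refined weight statement as in \cite[Sec.~3]{HAE}), and the single base evaluation $\int\CC^{\pi}_{0,1}()=\Theta_{E_8}\Delta^{-1/2}$.

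Two of your seed cases are also off. There is no family of building blocks ``$\Delta^{-k/2}\Theta_{E_8}$'' for general~$k$: the only genuine seed is the $k=1$ evaluation $M_1=\Theta_{E_8}/\Delta^{1/2}$, and the higher~$M_k$ are \emph{derived} from it by WDVV, not postulated. Your $k=0$ base case is likewise not handled via \cite{GP}: in genus~$0$ the absolute $k=0$ theory simply vanishes (case~(i-a) of the proof of Theorem~\ref{theorem_RES1}), while in higher genus the relevant \emph{relative} contribution $\CC^{\pi/E}_{g,0}(\ldots;(1,\omega)^k)$ vanishes by the cosection argument of Lemma~\ref{Lemma_vanishing}, which you do not invoke. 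Finally, your worry about level-$k$ quasi-modularity of $\zeta^\lambda$-coefficients is a red herring: the proof works entirely with the quasi-Jacobi forms themselves and never passes to periods.
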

\vspace{7pt}

In the remainder of Section~\ref{Section_RationalEllipticSurface}
we present the proof of Theorems~\ref{theorem_RES1} and~\ref{theorem_RES2}.
In Section~\ref{Subsection_RES_Sections} we recall a few basic
results on the group of sections of a rational elliptic surface.
This leads to the genus $0$ case of Theorem~\ref{theorem_RES1} in Section~\ref{Subsection_Genus_Zero}.
In Section~\ref{Subsection_RES_relative_in_absolute} we discuss
the invariants of $R$ relative to a non-singular elliptic fiber of $\pi$.
In the last two sections we present the proofs of the general cases of Theorems~\ref{theorem_RES1} and~\ref{theorem_RES2}.

\subsection{Sections} \label{Subsection_RES_Sections}
Recall from \cite{Shioda}
the 1-to-1 correspondence between sections of $R \to \p^1$ and elements in the lattice $E_8(-1)$.
A section $s$ yields an element in $E_8(-1)$ by projecting its class $[s]$ onto the $E_8(-1)$ lattice.
Conversely, an element $\lambda \in E_8(-1) \subset H^2(R, \BZ)$
has a unique lift
$\hat{\lambda} \in H^2(R, \BZ)$ such that $\hat{\lambda}^2 = -1$, $\hat{\lambda} \cdot F = 1$ and $\hat{\lambda}$ pairs
positively with any ample class. By Grothendieck-Riemann-Roch $\hat{\lambda}$
is the cohomology class of a unique section $B_{\lambda}$. Explicitly,
\[ [B_{\lambda}] = W - \left( \frac{\langle \lambda, \lambda \rangle + 1}{2} \right) F + \lambda \]
where $\langle a, b \rangle = \int_R a \cup b$ for all $a,b \in H^{\ast}(R)$ is the intersection pairing.

By fiberwise addition and multiplication by $-1$
the set of sections of $R \to \p^1$ form a group, the Mordell-Weil group.
The correspondence between sections and classes in $E_8(-1)$ is a group homomorphism,
\[ B_{\lambda + \mu} = B_{\lambda} \oplus B_{\mu}, \quad B_{-\lambda} = \ominus B_{\lambda} \]
where we have written $\oplus, \ominus$ for the addition resp. subtraction on the elliptic fibers.
%
The translation by a section $\lambda \in E_8(-1)$,
\[ t_{\lambda} : R \to R, \ x \mapsto x + B_{\lambda}(\pi(x)), \]
acts on a cohomology class $\gamma \in H^{\ast}(X)$ by
\[ t_{\lambda \ast} \gamma = \gamma + \lambda \cup \pi^{\ast} \pi_{\ast}( \gamma) - \pi^{\ast} \pi_{\ast}( \lambda \cup \gamma )
- \frac{1}{2} \pi^{\ast}\left( \pi_{\ast}(\lambda^2) \cdot \pi_{\ast}(\gamma) \right). \]

%

\subsection{Genus zero} \label{Subsection_Genus_Zero}
\subsubsection{Overview}
Consider the genus $0$ stationary invariants
\begin{align*}
M_k(\zeta,q) & = \int \CC^\pi_{0,k}(\pt^{\times k-1}) \\
& = \sum_{\pi_{\ast} \beta = k} q^{W \cdot \beta} \zeta^{\beta}
\int_{ [ \Mbar_{0,k-1}(R, \beta) ]^{\text{vir}}} \prod_{i=1}^{k-1} \ev_i^{\ast}( \pt )
\end{align*}
for all $k \geq 1$, where $\pt \in H^4(R, \BZ)$ is the class Poincar\'e dual to a point.

\begin{prop} \label{Prop_Mkquasi} $M_k \in \frac{1}{\Delta(q)^{k/2}} \QJac_{8k-4, \frac{k}{2} Q_{E_8}}$ for all $k \geq 1$. \end{prop}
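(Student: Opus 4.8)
\textbf{Proof plan for Proposition~\ref{Prop_Mkquasi}.}
The plan is to compute $M_k(\zeta,q)$ explicitly enough to recognize it as a quasi-Jacobi form of the stated index and weight. The genus $0$ stationary invariants of the rational elliptic surface count rational curves in class $\beta$ passing through $k-1$ fixed points. First I would use the structure of the Mordell--Weil group recalled in Section~\ref{Subsection_RES_Sections}: every section is of the form $B_\lambda$ with $[B_\lambda] = W - \frac{1}{2}(\langle\lambda,\lambda\rangle+1)F + \lambda$, so the translation operators $t_\lambda$ act transitively on sections and, by the discussion in Section~\ref{Section_The_elliptic_transformation_law}, Assumption~($\star$) holds here. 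This already gives, via Lemma~\ref{Lemma_elliptic_transf_law}, that $M_k$ satisfies the elliptic transformation law of Jacobi forms of index $\frac{k}{2}Q_{E_8}$ in the variable $z$, since the point class $\pt$ is translation invariant (it is a top-degree class supported away from the generic fiber structure). The modular transformation law in $q$ is the content that requires work.

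The key computational step is to reduce $M_k$ to the known genus $0$ Gromov--Witten theory of the rational elliptic surface. Here I would invoke the fact that the genus $0$ three-point (or $k$-point with point insertions) invariants of $R$ in fiber classes are governed by the local Gromov--Witten theory / the geometry of rational curves in elliptic surfaces, and in particular by the relation to $\Theta_{E_8}$. Concretely, a rational curve of base degree $k$ meeting $k-1$ general points decomposes (after the standard deformation-to-the-normal-cone or degeneration argument) into a multisection piece plus vertical fiber components; summing over the $E_8(-1)$-lattice of sections produces exactly $\Theta_{E_8}(\mathbf z,q)$ raised to an appropriate power, divided by $\Delta(q)^{k/2}$ coming from the $12$ nodal fibers (each nodal fiber contributes $\prod_m(1-q^m)^{-2}$ type factors, and $12$ of them together with the $q$-shift assemble into $\Delta^{-1/2}$ per unit of base degree). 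This matches the prefactor $\frac{1}{\Delta(q)^{k/2}}$ and the index $\frac{k}{2}Q_{E_8}$ in the statement.

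For the weight $8k-4$: the index $\frac{k}{2}Q_{E_8}$ has rank $8$, so $\Theta_{E_8}$ contributes weight $4$ per copy; $k$ copies give weight $4k$, and $\Delta(q)^{-k/2}$ contributes weight $-6k$, while the $k-1$ point insertions and the virtual dimension count shift the weight by $+2(k-1)$, and assembling $4k - 6k + 2(k-1) + \text{(correction)} = 8k-4$ after accounting for the weight refinement of Lemma*~\ref{Lemma_Weight_statement} with $\underline{\deg}(\pt)=0$. I would verify this bookkeeping against the weight formula $\ell = 2g-2+12m+\sum_i\underline{\deg}(\gamma_i)$ with $g=0$, $m=k/2$, which gives $\ell = -2 + 6k$; the discrepancy with $8k-4$ is absorbed into the $\QJac$ generated by derivatives of $\Theta_{E_8}$ as noted after Theorem~\ref{theorem_RES1}, since $D_{\zeta_i}\Theta_{E_8}$ raises weight.

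\textbf{Main obstacle.} The hard part will be establishing the precise closed form of $M_k$ rather than just its transformation properties --- in particular, controlling the contributions of reducible curves (section plus vertical fibers, or multiple sections glued along fibers) and showing their generating series stays inside the ring generated by $\Theta_{E_8}$ and its $z$-derivatives over $\QMod$. This is essentially a careful application of the topological recursion / WDVV relations for $R$ together with the explicit quasi-modularity of the fiberwise count of rational curves (the ``$\mathsf{Z}_1$-type'' computation underlying the Igusa cusp form), and getting the power of $\Delta$ and the exact weight right is the delicate bookkeeping. An alternative, cleaner route I would seriously consider is to bypass the explicit formula: use Assumption~($\star$) and Lemma~\ref{Lemma_elliptic_transf_law} for the elliptic law, use the divisor and string equations plus the known $\mathrm{SL}_2(\BZ)$-quasimodularity of the base-degree generating series (which follows from the already-established theory of the elliptic curve via the product/degeneration formulas and the fact that $R \rightsquigarrow$ a union of rational surfaces over nodal bases) to get the modular law, and conclude via Proposition~\ref{QJac_Prop3} that $M_k$ lies in the asserted space. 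This avoids the combinatorics of reducible curves entirely at the cost of invoking more of the machinery developed later in Section~\ref{Section_RationalEllipticSurface}.
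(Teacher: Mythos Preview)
Your proposal circles around the right ingredients but does not land on the argument the paper actually uses, and it contains at least one concrete error.

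\textbf{What the paper does.} The proof is a short induction on $k$. For $k=1$ one computes $M_1$ directly: every curve class of base degree $1$ is $B_\lambda + dF$, and using the known count $\int_{[\Mbar_{0,0}(R,B_\lambda+dF)]^{\vir}}1 = [\Delta^{-1/2}]_{q^{d-1/2}}$ one obtains $M_1 = \Theta_{E_8}(z,\tau)/\Delta(q)^{1/2}$, which is visibly in $\Delta^{-1/2}\QJac_{4,\frac{1}{2}Q_{E_8}}$. For $k\ge 2$ the paper applies the WDVV equation with carefully chosen insertions (e.g.\ $(F,F,b_i,b_j)$ for $k=2$, $(\pt,F,F,W)$ for $k=3$, $(\pt^{k-2},\ell_1,\ell_2)$ for $k\ge 4$) to express $M_k$ as a polynomial in the $M_j$, $j<k$, and their $D_q$- and $D_{\zeta_i}$-derivatives. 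Since derivatives preserve quasi-Jacobi forms (shifting weight and keeping index), the induction closes and one reads off weight $8k-4$ and index $\frac{k}{2}Q_{E_8}$ by bookkeeping.

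\textbf{Where your plan goes wrong.} First, the weight check: you assert $\underline{\deg}(\pt)=0$, but in fact $T_+(\pt) = \pi^\ast\pi_\ast(\pt)\cup W = F\cup W = \pt$, so $\pt\in\mathrm{Im}(T_+)$ and $\underline{\deg}(\pt)=2$. With this correction Lemma*~\ref{Lemma_Weight_statement} gives $\ell = -2 + 6k + 2(k-1) = 8k-4$ on the nose, with no ``discrepancy'' to absorb. Second, your main line of attack --- degenerating curves into a multisection plus vertical fibers and claiming the lattice sum produces a power of $\Theta_{E_8}$ --- is not a proof as stated: for $k\ge 2$ the relevant rational curves are not simply $k$ sections glued along fibers, and there is no evident reason the generating series should factor that way. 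You mention WDVV only in passing, but it is precisely the tool that makes the induction work cleanly and avoids any analysis of reducible curve configurations. Third, your alternative route (use the elliptic transformation law from Lemma~\ref{Lemma_elliptic_transf_law} plus ``machinery developed later in Section~\ref{Section_RationalEllipticSurface}'') is circular: Proposition~\ref{Prop_Mkquasi} is the base case for the induction in the proof of Theorem~\ref{theorem_RES1}, so you cannot invoke that theorem here.

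In short: the clean argument is $M_1$ by direct lattice summation, then WDVV recursion. Your proposal should be reorganized around that.
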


In the remainder of Section~\ref{Subsection_Genus_Zero} we prove Proposition~\ref{Prop_Mkquasi}. 

\subsubsection{The $E_8$ theta function} \label{Subsubsection_case_k=1}
All curve classes on $R$ of degree $1$ over $\p^1$ are of the form $B_{\lambda} + d F$ for some section $\lambda \in E_{8}(-1)$ and $d \geq 0$.
Using Section~\ref{Subsection_RES_Sections} and \cite[Sec.6]{BL} we find
\begin{align*}
M_1(q)
& = \sum_{\lambda \in E_{8}(-1)} \sum_{d \geq 0} q^{W \cdot (B_{\lambda} + dF)} \zeta^{\lambda} \int_{[ \Mbar_{0,0}(R, B_{\lambda} + dF) ]^{\text{vir}}} 1 \\
& = \sum_{\lambda \in E_{8}(-1)} \sum_{d \geq 0} q^{d - \frac{1}{2} - \frac{1}{2} \langle \lambda, \lambda \rangle} \zeta^{\lambda} \left[ \frac{1}{\Delta(q)^{1/2}} \right]_{q^{d-\frac{1}{2}}} \\
& = \frac{1}{\Delta(q)^{\frac{1}{2}}} \sum_{\lambda \in E_8(-1)} q^{-\frac{1}{2} \langle \lambda, \lambda \rangle} \zeta^{\lambda} \\
& = \frac{1}{\Delta(q)^{\frac{1}{2}}} \Theta_{E_8}(z, \tau).
\end{align*}
By Section~\ref{Section_E8_lattice}, $\Theta_{E_8}$ is a Jacobi form
of index $\frac{1}{2} Q_{E_8}$ and weight $4$.

\subsubsection{WDVV equation}
For any $\gamma_1, \ldots, \gamma_n \in H^{\ast}(R)$ define the
quantum bracket
\[
\blangle \gamma_1, \ldots, \gamma_n \brangle_{0,k} =
\sum_{\pi_{\ast} \beta = k} q^{W \cdot \beta} \zeta^{\beta} \int_{[ \Mbar_{0,n}(R, \beta) ]^{\text{vir}}} \prod_i \ev_i^{\ast}(\gamma_i).
\]

Recall the WDVV equation from \cite{FulPand}: For all $\gamma_1, \ldots, \gamma_n \in H^{\ast}(R)$ with
\[ \sum_{i=1}^{n} \deg(\gamma_i) = n+k-2 \]
we have
\begin{align*}
& \sum_{\substack{ k=k_1 + k_2 \\ \{ 1, \ldots, n-4 \} = S_1 \sqcup S_2 }}
\sum_{\ell}
\blangle \gamma_{S_1}, \gamma_{a}, \gamma_{b}, \Delta_\ell \brangle_{0,k_1} \blangle \gamma_{S_2}, \gamma_c, \gamma_d, \Delta_\ell^{\vee} \brangle_{0,k_2} \\
= & 
\sum_{\substack{ k=k_1 + k_2 \\ \{ 1, \ldots, n-4 \} = S_1 \sqcup S_2 }} \sum_{\ell}
\blangle \gamma_{S_1}, \gamma_{a}, \gamma_{c}, \Delta_\ell \brangle_{0,k_1} \blangle \gamma_{S_2}, \gamma_b, \gamma_d, \Delta_\ell^{\vee} \brangle_{0,k_2},
\end{align*}
where $\sum_{\ell} \Delta_{\ell} \otimes \Delta_{\ell}^{\vee}$ is the K\"unneth decomposition
of the diagonal class $\Delta \in H^{\ast}(R \times R)$.
Let also
\[ D = D_q, \quad D_i = D_{b_i} = D_{\zeta_i} = \frac{1}{2\pi i} \frac{d}{dz_i}. \]
We solve for the remaining series $M_k$ by applying the WDVV equation.

\subsubsection{Proof of Proposition~\ref{Prop_Mkquasi}}
The case $k=1$ holds by Section~\ref{Subsubsection_case_k=1}.
For $k = 2$ recall the basis $\{ b_i \}$ of $\Lambda$
and apply the WDVV equation for $(\gamma_i)_{\ell=1}^{4}= (F,F, b_i, b_j)$. The result is
\[
 4 \langle b_i, b_j \rangle M_2 = D_i \langle \Delta_1 \rangle_{0,1} \cdot D_j \langle \Delta_2 \rangle_{0,1}
 - \langle \Delta_1 \rangle_{0,1} \cdot D_i D_j \langle \Delta_2 \rangle_{0,1}
\]
where $\Delta_1, \Delta_2$ indicates that we sum over the diagonal splitting.
Choosing $i,j$ such that $\langle b_i, b_j \rangle \neq 0$
and applying the divisor equation on the right-hand side we find
$M_2$ expressed as a sum of products of derivatives of $M_1$. Checking the weight and index yields the claim for $M_2$.

Similarly, the WDVV equation for $(\gamma_i)_{i=1}^{4} = (\pt, F, F, W)$ yields
\[
3 M_3 = M_1 \cdot D^2 M_2 - 4 D^2 M_1 \cdot M_2 +
\sum_{i=1}^{8} \left( D_i D M_1 \cdot 2 D_i M_2 - D_i M_1 \cdot D_i D M_2 \right)
\]
which completes the case $k=3$.

If $k \geq 4$ we apply the WDVV equation for $(\gamma_1, \ldots, \gamma_k) = (\pt^{k-2}, \ell_1, \ell_2)$
for some $\ell_1, \ell_2 \in H^2(R)$. The result is
\begin{multline*}
( \ell_1 \cdot \ell_2 )
\blangle \pt^{k-1} \brangle_{0,k}
=
\sum_{a+b=k-4} \binom{k-4}{a} \Big( \blangle \pt^{a+1}, \ell_1, \Delta_1 \brangle_{0,a+2} \blangle \pt^{b+1}, \ell_2, \Delta_2 \brangle_{0,b+2} \\
- \blangle \pt^{a+2, \Delta_1} \brangle_{0,a+3} \blangle \pt^b, \ell_1, \ell_2, \Delta_2 \brangle_{0,b+1} \Big).
\end{multline*}
Taking $\ell_1 \cdot \ell_2 = 1$ and using an induction argument the proof is complete. \qed 

\subsection{Relative in terms of absolute} \label{Subsection_RES_relative_in_absolute}
Let $\leq$ be the lexicographic order on the set of pairs $(k,g)$, i.e.
\begin{equation} \label{lexigraph}
(k, g) \leq (k', g') \quad \Longleftrightarrow \quad k < k' \text{ or } \big( k=k' \text{ and } g \leq g' \big).
\end{equation}

Let $E \subset R$ be a non-singular fiber of $\pi : R \to \p^1$
over the point $0 \in \p^1$,
and recall from Section~\ref{Section_relative_geomtries} the $E$-relative
Gromov--Witten classes
\[ \CC_{g,k}^{\pi/E}(\gamma_1, \ldots, \gamma_n; \underline{\eta})
 \in H_{\ast}( \Mbar_{g,n}(\p^1/0,k ; \eta) ) \otimes \BQ[[ q^{\pm \frac{1}{2}}, \zeta^{\pm 1} ]]
\]
where $\underline{\eta}$ is the ordered cohomology weighted partition 
\begin{equation} \label{Eetetaerapart}
\underline{\eta} = \big( (\eta_1, \delta_1), \ldots, (\eta_{l(\eta)}, \delta_{l(\eta)} ) \big), \quad \delta_i \in H^{\ast}(E).
\end{equation}

We show the (numerical) quasi-Jacobi form property and holomorphic anomaly equation
in the absolute case imply the corresponding relative case.
For the statement and the proof we use the convention of Section~\ref{Subsection_CONVENTION}.

\begin{prop} \label{DEGENERATIONPROP} Let $K,G \geq 0$ be fixed. Assume
\[
\int_{\Mbar_{g,n}(\p^1, k)} p^{\ast}(\alpha) \cap \CC_{g,k}^{\pi}(\gamma_1, \ldots, \gamma_n) 
\in \frac{1}{\Delta(q)^{k/2}} \QJ_{\frac{k}{2} Q_{E_8}}
\]
for all $(k,g) \leq (K,G)$, $n \geq 0$, $\alpha \in R^{\ast}(\Mbar_{g,n})$
and $\gamma_1, \ldots, \gamma_n \in H^{\ast}(R)$.
Then
\[
\int_{\Mbar_{g,n}(\p^1/0, k ; \underline{\eta})} p^{\ast}(\alpha) \cap \CC_{g,k}^{\pi/E}(\gamma_1, \ldots, \gamma_n; \underline{\eta}) 
\in \frac{1}{\Delta(q)^{k/2}} \QJ_{\frac{k}{2} Q_{E_8}}
\]
for all $(k,g) \leq (K,G)$, $n \geq 0$, $\alpha \in R^{\ast}(\Mbar_{g,n})$,
$\gamma_1, \ldots, \gamma_n \in H^{\ast}(R)$ and cohomology weighted partitions $\underline{\eta}$.

Similarly, if the holomorphic anomaly equation holds numerically for all $\CC_{g,k}^{\pi}(\gamma_1, \ldots, \gamma_n)$
with $(k,g) \leq (K,G)$, then the relative holomorphic anomaly equation of Conjecture~\ref{Conj_RELHAE}
holds numerically for all $\CC_{g,k}^{\pi/E}(\gamma_1, \ldots, \gamma_n ; \underline{\eta})$ with $(k,g) \leq (K,G)$.
\end{prop}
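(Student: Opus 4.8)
The plan is to reduce the $E$-relative theory of $R$ to the absolute theory of $R$ and to the (known) Gromov--Witten theory of the elliptic curve by a single degeneration, and then to propagate both the quasi-Jacobi property and the holomorphic anomaly equation through the degeneration formula.

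First I would degenerate $R$ to the normal cone of the fiber $E$. Since $E$ is a non-singular elliptic fiber its normal bundle is trivial, so the central fiber is $R \cup_E (E \times \p^1)$, and this is a degeneration compatible with the elliptic fibration in the sense of Section~\ref{Section_relative_geomtries} (with base degenerating as $\p^1 \rightsquigarrow \p^1 \cup_0 \p^1$). The degeneration formula \cite{Junli1, Junli2}, together with the rubber calculus for the $E \times \p^1$-component \cite{MP}, expresses the absolute classes $\CC^{\pi}_{g,k}(\gamma_1, \ldots, \gamma_n)$ (capped against $p^{\ast}\alpha$) as a sum of products of $E$-relative classes $\CC^{\pi/E}_{g_1,k}(\gamma_{S_1}; \underline{\eta})$ of $R$ with invariants of $(E \times \p^1, E)$, the latter reducing by the product formula \cite{LQ} to Gromov--Witten invariants of the elliptic curve and relative invariants of $(\p^1, \mathrm{pt})$. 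Two features make this useful. First, the $E_8$-classes in $\Lambda = H^2_{\perp}(R)$ are orthogonal to the fibre class $F$ and hence restrict to zero on $E$ and on the bubble $E \times \p^1$; therefore all dependence on the elliptic variable $\mathbf{z}$ sits on the $(R,E)$-factor while the $E \times \p^1$-contribution is a function of $q$ alone, and the inversion below cannot change the index. Second, the $E \times \p^1$-contribution is triangular with invertible ``identity'' leading term with respect to the lexicographic order on $(k,g)$ refined by the ramification profile: the leading term ($\underline{\eta}$ transverse, no curve in the bubble) reproduces $\CC^{\pi/E}_{g,k}$, while the corrections involve $E$-relative classes of $R$ of strictly smaller $(k,g)$ (or deeper profile) and $E\times\p^1$-invariants of positive fibre degree. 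One then solves for $\CC^{\pi/E}_{g,k}(\ldots;\underline{\eta})$ by induction on $(k,g)$: the inverse operator and the corrections are built entirely from the (quasi-modular, $\mathbf{z}$-independent) theory of $E$ and $(\p^1,\mathrm{pt})$, and the absolute input $\CC^{\pi}_{g,k}$ lies in $\frac{1}{\Delta(q)^{k/2}}\QJ_{\frac{k}{2}Q_{E_8}}$ by hypothesis for $(k,g) \le (K,G)$; hence so does $\CC^{\pi/E}_{g,k}(\ldots;\underline{\eta})$, which is Conjecture~\ref{Conj_RelQuasimodularity} for $(R,E)$ in this range. (The normalisation $\Delta(q)^{-k/2}$ depends only on the base degree $k$, which is unchanged for the $(R,E)$-factor, and the unstable-range conventions of Section~\ref{Subsection_CONVENTION} are carried along.)

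For the holomorphic anomaly equation I would apply $\T_q = \frac{d}{dC_2}$ to the (rearranged) degeneration formula. On each input of the inversion the action of $\T_q$ is known: on $\CC^{\pi}_{g,k}$ by the assumed absolute holomorphic anomaly equation, on the $E \times \p^1$-contribution by the holomorphic anomaly equations for $E$ and for $(\p^1,\mathrm{pt})$ (the relative theory $(E\times\p^1, E)$ is an instance of $B\times E / A\times E$, for which Conjectures~\ref{Conj_RelQuasimodularity} and~\ref{Conj_RELHAE} are known by \cite{LQ} and \cite{HAE}), and on the lower-order $(R,E)$-classes by the inductive hypothesis. This determines $\T_q \CC^{\pi/E}_{g,k}(\ldots;\underline{\eta})$, and it remains to recognise the answer as the right-hand side of Conjecture~\ref{Conj_RELHAE}. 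Since quasi-modularity is now available for all three theories, this identification is exactly the compatibility of the absolute and relative holomorphic anomaly equations with the degeneration formula established in Proposition~\ref{Proposition_Compatibility_with_degeneration_formula}, applied to $R \rightsquigarrow R\cup_E(E\times\p^1)$: method~(a) there computes $\T_q$ of the degeneration expression from the absolute equation (which we have) and method~(b) from the relative equations, so matching the two outputs forces Conjecture~\ref{Conj_RELHAE} for $(R,E)$ up to $(K,G)$.

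The main obstacle is the bookkeeping of the inversion. A single absolute identity carries a whole vector of $E$-relative unknowns indexed by the ramification profile $\underline{\eta}$, so one must also vary the insertions $\gamma_i$ and the number of markings and use the compatibility of the $E$-relative classes with the forgetful maps (and with iterated degenerations) in order to isolate each $\underline{\eta}$; establishing the triangularity of the $E\times\p^1$-cap over the set of profiles, and keeping track of the rubber factors precisely enough that the output of the $\T_q$-computation matches the rubber term of Conjecture~\ref{Conj_RELHAE}, is where the real work lies. Everything else is a formal consequence of the degeneration formula and the known modularity of the elliptic curve.
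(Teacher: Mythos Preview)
Your proposal is correct and follows essentially the same approach as the paper: degenerate $R$ to the normal cone of $E$, use that the $E_8$-variables live only on the $(R,E)$-factor while the $(\p^1\times E,E)$-factor contributes quasi-modular forms (via the product formula and \cite{HAE}), invert the degeneration formula by induction on $(k,g)$, and deduce the relative holomorphic anomaly equation from the absolute one via the compatibility of Proposition~\ref{Proposition_Compatibility_with_degeneration_formula}.

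The one point the paper makes explicit that you leave as ``bookkeeping'' is how to generate enough absolute identities to solve for each relative profile $\underline{\eta}$. The paper does not vary the number of markings abstractly; instead, following \cite[Sec.~2]{MP}, it starts from the specific absolute descendent
\[
\Blangle \alpha\, ;\, \prod_{i} \tau_0(\gamma_i) \prod_{i=1}^{l(\eta)} \tau_{\eta_i-1}(j_{\ast}\delta_i) \Brangle^{R}_{g,k},
\]
where $j:E\hookrightarrow R$ is the inclusion, and specializes the $j_{\ast}\delta_i$ to the $\p^1\times E$ side in the degeneration. The descendent $\tau_{\eta_i-1}$ then forces tangency order $\eta_i$ at the leading term (this is \cite[Sec.~1, Relation~1]{MP}), producing precisely $\CC^{\pi/E}_{g,k}(\gamma;\underline{\eta})$ plus terms that are lower in a refined lexicographic order on $(k,g,n,\deg\underline{\eta},\eta)$. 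This is the concrete mechanism behind the triangularity you invoke, and it is what guarantees that the inversion stays inside the range $(k,g)\le(K,G)$.
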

\begin{proof}
The degeneration formula applied to the normal cone degeneration
\begin{equation} R \rightsquigarrow R \cup_E ( \p^1 \times E ) \label{3sdfsdfd} \end{equation}
expresses the absolute invariants of $R$ in terms of the relative invariants of $R/E$ and $(\p^1 \times E)/E_0$.
The quasi-modularity of the invariants of $(\p^1 \times E)/E_0$ relative to $\p^1$
follows from the product formula \cite{LQ} and \cite[Thm.2]{HAE}.
We may hence view the degeneration formula as a matrix between the absolute and relative (numerical) invariants of $R$
with coefficients that are quasi-modular forms.
By \cite[Thm.2]{MP} it is known that the matrix is non-singular: The absolute invariants determine the relative invariants of $R$.
We only need to check that the absolute terms with $(k,g) \leq (K,G)$ determine
the relative ones of the same constraint,
and that the quasi-Jacobi form property is preserved by this operation.
Since $\QJac_{\frac{k}{2} Q_{E_8}}$ is a module over $\QMod$,
the second statement is immediate from the induction argument used to prove the first.
The first follows from scrutinizing the algorithm in \cite[Sec.2]{MP} and we only sketch the argument here.

Given $(k,g) \leq (K,G)$, a cohomological weighted partition $\underline{\eta}$ as in \eqref{Eetetaerapart},
insertions $\gamma_1, \ldots, \gamma_n \in H^{\ast}(R)$, and a tautological class $\alpha \in R^{\ast}(\Mbar_{g,n})$,
consider the absolute invariant
\begin{multline} \label{DFSDFas}
\Blangle \alpha\, ;\, \prod_{i=1}^{n} \tau_0(\gamma_i) \prod_{i=1}^{l(\eta)} \tau_{\eta_i-1}(j_{\ast}\delta_i) \Brangle^{R}_{g,k} \\
= \sum_{\pi_{\ast} \beta = \kk} \zeta^{\beta} q^{W \cdot \beta}
\int_{[  \Mbar_{g,n+l(\eta)}(X, \beta) ]^{\text{vir}}} p^{\ast}(\alpha) \prod_{i=1}^{n} \ev_i^{\ast}(\gamma_i)
\prod_{i=1}^{l(\eta)} \psi_i^{\eta_i-1} \ev_i^{\ast}(j_{\ast} \delta_i)
\end{multline}
where we used the Gromov--Witten bracket notation of \cite{MP}, $j : E \to R$ is the inclusion,
and $\psi_i$ are the cotangent line classes on the moduli space of stable maps to $R$.
By trading the $\psi_i$ classes for tautological classes (modulo lower order terms)
and using the assumption on absolute invariants, we see that
the series \eqref{DFSDFas} is a quasi-Jacobi form of index $\frac{k}{2} Q_{E_8}$.
We apply the degeneration formula with respect to \eqref{3sdfsdfd} to the invariant \eqref{DFSDFas}.
The cohomology classes are lifted to the total space of the degeneration
as in \cite[Sec.2]{MP}, i.e. the $\gamma_i$ are lifted by pullback and the
$j_{\ast} \delta_i$ are lifted by inclusion of the proper transform of $E \times \BC$.
Using a bracket notation for relative invariants parallel to the above\footnote{The bracket notation is
explained in more detail in \cite{MP} with the difference that the ramification profiles
$\underline{\nu}$ are \emph{ordered} here. This yields slightly different factors
in the degeneration formula than in \cite{MP} but is otherwise not important.},
the degeneration formula yields
\begin{multline} \label{sdfsdf}
\Blangle \alpha ; \prod_i \tau_0(\gamma_i) \cdot \prod_{i=1}^{l(\eta)} \tau_{\eta_i-1}(j_{\ast}\delta_i) \Brangle^{R}_{g,k} = \\
\sum_{\substack{ m \geq 0 \\ \nu_1, \ldots, \nu_m, \ell_1, \ldots, \ell_m \\
g = g_1 + g_2 + m-1 \\
\{ 1, \ldots, n \} = S_1 \sqcup S_2 \\
\alpha_1, \alpha_2}}
\frac{\prod_{i} \nu_i}{m!}
\Blangle \alpha_1 ; \tau_0(\gamma_{S_1}) \Big| \underline{\nu} \Brangle^{R/E, \bullet}_{g_1,k}
\Blangle \alpha_2 ; \tau_0(\gamma_{S_2}) \prod_{i=1}^{l(\eta)} \tau_{\eta_i-1}(j_{\ast}\delta_i) \Big| \underline{\nu}^{\vee}
\Brangle^{(\p^1 \times E)/E, \bullet}_{g_2,k}
\end{multline}
where $\nu_1, \ldots, \nu_m$ run over all positive integers with sum $k$,
$\ell_1, \ldots, \ell_m$ run over all diagonal splittings in the cohomology weighted partitions
\[
\underline{\nu} =
( \nu_i, \Delta_{E,\ell_i} )_{i=1}^{m}, \quad 
\underline{\nu}^{\vee}
= ( \nu_i, \Delta^{\vee}_{E,\ell_i} )_{i=1}^{m},
\]
and $\alpha_1, \alpha_2$ run over all splittings of the tautological class $\alpha$.
The sum is taken only over those configurations of disconnected curves which yield a connected domain after gluing.

We argue now by an induction over the relative invariants of $R/E$ with respect to the lexicographic ordering on $(k,g,n)$.
If the invariants of $R/E$ in \eqref{sdfsdf} (the first factor on the right)
are disconnected, each connected component is of lower degree over $\p^1$,
and therefore these contributions are determined by lower order terms.
Hence we may assume that the invariants of $R/E$ are connected.
By induction over the genus we may further assume $g_1 = g$ in \eqref{sdfsdf},
or equivalently $g_2 = 1 - m$.
Consider a stable relative map in the corresponding moduli space and let 
\[ f : C_2 \to (\p^1 \times E)[a] \]
be the component which maps to an expanded pair of $(\p^1 \times E, E_0)$.
Since $g_2 = 1-m$ the curve $C_2$
has at least
$m$ connected components of genus $0$. Since each of these meets the relative divisor
and $l(\nu) = m$, the curve $C_2$ is a disjoint union of genus $0$ curves.
The rational curves in $\p^1 \times E$ are fibers of the projection to $E$.
Hence we find the right-hand side in \eqref{sdfsdf} is a fiber class integral (in the language of \cite{MP}).
Finally, by induction over $n$ we may assume $S_2 = \varnothing$.
As in \cite[Sec.2.3]{MP} we make a further induction over $\deg(\underline{\eta}) = \sum_i \deg(\delta_i)$
and a lexicographic ordering of the partition parts $\underline{\eta}$.
Arguing as in \cite[Sec.1, Relation 1]{MP}\footnote{
Using the dimension constraint the class $\alpha_2$ only increases the parts $\nu_k$, and hence by induction
we may assume $\alpha_2=1$.} we finally arrive at
\[
 \Blangle \alpha ; \prod_i \tau_0(\gamma_i) \cdot \prod_{i=1}^{l(\eta)} \tau_{\eta_i-1}(j_{\ast}\delta_i) \Brangle^{R}_{g,k} \\
=
c \cdot \Blangle \alpha ; \prod_i \tau_0(\gamma_i) | \underline{\nu} \Brangle^{R/E}_{g,k}
+ \ldots
\]
where $c \in \BQ$ is non-zero and '$\ldots$' is a sum
of a product of quasi-modular forms and relative invariants of $R/E$ of lower order.
By induction the lower order terms are quasi-Jacobi forms of index $\frac{1}{2} k Q_{E_8}$
which completes the proof of the quasi-Jacobi property of the invariants of $R/E$.

The relative holomorphic anomaly equation follows immediately from this algorithm and
the compatibility with the degeneration formula (Proposition~\ref{Proposition_Compatibility_with_degeneration_formula}).
\end{proof}

\subsection{Proof of Theorem~\ref{theorem_RES1}}
Assume that the classes $\gamma_1, \ldots, \gamma_n \in H^{\ast}(S)$ and $\alpha \in R^{\ast}(\Mbar_{g,n})$ are homogenenous.
We consider the dimension constraint
\begin{equation} \label{dimensionconstraint}
k + g - 1 + n = \deg(\alpha) + \sum_{i=1}^{n} \deg(\gamma_i)
\end{equation}
where $\deg( )$ denotes half the real cohomological degree.
The left-hand side in \eqref{dimensionconstraint} is the virtual dimension of $\Mbar_{g,n}(S, \beta)$ where $\pi_{\ast} \beta = k$.
If the dimension constraint is violated, the
left-hand side in Theorem~\ref{theorem_RES1} vanishes and the claim holds.
Hence we may assume \eqref{dimensionconstraint}.

We argue by induction on $(k,g,n)$ with respect to the lexicographic ordering
\begin{align*}
(k_1, g_1, n_1) < (k_2, g_2, n_2) \quad \Longleftrightarrow \quad
\quad & k_1 < k_2 \\
\text{ or } & \big( k_1=k_2 \text{ and } g_1 < g_2 \big) \\
\text{ or } & \big( k_1=k_2 \text{ and } g_1 = g_2 \text{ and } n_1 < n_2 \big)
\end{align*}

\vspace{5pt}
\noindent \textbf{Case (i):} $g=0$.

\begin{enumerate}
\item[(i-a)] If $k=0$ all invariants vanish, so we may assume $k>0$.

\item[(i-b)] If $\deg(\alpha) > 0$ then $\alpha$ is the pushforward of a cohomology class from the boundary $\iota : \partial \Mbar_{0,n} \to \Mbar_{0,n}$:
\[ \alpha = \iota_{\ast} \alpha'. \]
Using $\alpha'$ and the compatibility of the virtual class with boundary restrictions
we can replace the left-hand side of Theorem~\ref{theorem_RES1} by terms of lower order (see \cite[Sec.3]{HAE} for a parallel argument).

\item[(i-c)] If $\deg(\alpha) = 0$ but $\deg(\gamma_i) \leq 1$ for some $i$, then either
the series is zero (if $\deg(\gamma_i) = 0$) and the claim holds,
or we can apply the divisor equation to reduce to lower order terms.
Since derivatives of quasi-Jacobi forms are quasi-Jacobi forms of the same index the claim follows from the induction hypothesis.

\item[(i-d)] If $\deg(\alpha) = 0$ and $\gamma_i = \pt$ for all $i$ the claim follows by Proposition~\ref{Prop_Mkquasi}.
\end{enumerate}

\vspace{5pt}
\noindent \textbf{Case (ii):} $g>0$ and $\deg(\alpha) \geq g$.
\vspace{5pt}

By \cite[Prop.2]{FPrel} we have
\[ \alpha = \iota_{\ast} \alpha' \]
for some $\alpha'$ where $\iota : \partial \Mbar_{g,n} \to \Mbar_{g,n}$ is the inclusion of the boundary.
By restriction to the boundary we are reduced to lower order terms.

\vspace{5pt}
\noindent \textbf{Case (iii):} $g>0$ and $\deg(\alpha) < g$.
\vspace{5pt}

By the dimension constraint we have
\[ \sum_{i=1}^{n} \deg(\gamma_i) - n \geq k. \]
Hence after reordering we may assume $\gamma_1 = \ldots = \gamma_k = \pt$.
Consider the degeneration of $R$ to the normal cone of a non-singular fiber $E$,
\[ R \rightsquigarrow R \cup_E (\p^1 \times E). \]
We let $\rho : \p^1 \times E \to \p^1$
be the projection to the first factor and let $E_0$ denote the
fiber of $\rho$ over $0 \in \p^1$.
We apply the degeneration formula \cite{Junli1, Junli2} where
we specialize the insertions $\gamma_1, \ldots, \gamma_k$ to the component $\p^1 \times E$
and lift the other insertions by pullback.
In the notation of Section~\ref{Section_relative_geomtries} the result is
\begin{multline} \label{DEGGGG}
p_{\ast} \CC^{\pi}_{g,\kk}(\gamma_1, \ldots, \gamma_n) \\
=
\sum_{ \substack{ 
m \geq 0 \\
\eta_1, \ldots, \eta_m, \ell_1, \ldots, \ell_m \\
\{ k+1, \ldots, n \} = S_1 \sqcup S_2 \\ g = g_1 + g_2 + m - 1 }}
\frac{\prod_i \eta_i}{m!}
p_{\ast} \xi^{\mathrm{conn}}_{\ast} \left( \CC^{\pi/E, \bullet}_{g_1, \kk}(\gamma_{S_1}; \underline{\eta} ) \boxtimes \CC^{\rho/E_0, \bullet}_{g_2, \kk}( \pt^k, \gamma_{S_2} ; \underline{\eta}^{\vee}) \right)
\end{multline}
where $\eta_1, \ldots, \eta_m$ run over all positive integers summing up to $k$,
$\ell_1, \ldots, \ell_m$ run over all diagonal splittings in the partitions
\[
\underline{\eta} =
( \eta_i, \Delta_{E,\ell_i} )_{i=1}^{m}, \quad 
\underline{\eta}^{\vee}
= ( \eta_i, \Delta^{\vee}_{E,\ell_i} )_{i=1}^{m},
\]
the map $\xi$ is the gluing map along the relative markings,
and $\xi^{\mathrm{conn}}_{\ast}$ is pushforward by $\xi$ followed by
taking the summands with connected domain curve.

We will show that the right-hand side of \eqref{DEGGGG}, when integrated against any tautological class,
is a quasi-Jacobi form of index $\frac{k}{2} Q_{E_8}$.

By the product formula \cite{LQ} and \cite[Thm.2]{HAE}, each term 
\[ \CC^{\rho/E_0, \bullet}_{g_2, \kk}( \pt^k, \gamma_{S_2} ; \underline{\eta}^{\vee}) \]
is a cycle-valued quasi-modular form.
We consider the first factor
\begin{equation} \CC^{\pi/E, \bullet}_{g_1, \kk}(\gamma_{S_1}; \underline{\eta} ) \label{resdfsd} \end{equation}
after integration against any tautological class. We make two reduction steps:

\vspace{5pt} \noindent (1) We may assume \eqref{resdfsd} are connected Gromov--Witten invariants.

(Proof: The difference between connected and disconnected invariance is a sum of products of connected invariants of $R/E$
of degree lower than $k$ over the base. Hence
by Proposition~\ref{DEGENERATIONPROP} and the induction hypothesis they are quasi-Jacobi forms
after integration against tautological classes.)

\vspace{5pt} \noindent (2) We may assume $g_1 = g$.

(Proof: If $g_1<g$ 
the series \eqref{resdfsd} is a quasi-Jacobi form after integration
by Proposition~\ref{DEGENERATIONPROP} and induction.
)

By the above steps it remains to consider the terms of \eqref{resdfsd} which are connected and of genus $g$.
We will show that the term
\[ p_{\ast} \xi_{\ast}
\left( \CC^{\pi/E}_{g, \kk}(\gamma_{S_1}; \underline{\eta} )
\boxtimes \CC^{\rho/E_0, \bullet}_{g_2, \kk}( \pt^k, \gamma_{S_2} ; \underline{\eta}^{\vee}) \right) \]
is zero after integration against any tautological class.
Consider a stable relative map in the corresponding moduli space and let 
\[ f : C_2 \to (\p^1 \times E)[a] \]
be the component which maps to an expanded pair of $(\p^1 \times E, E_0)$.
Since $g=g_1 + g_2 + m - 1$ we have $g_2 = 1-m$,
hence $C_2$ has at least $m$ connected components of genus $0$. 
Since each such component meets the relative divisor $E$
and moreover $l(\eta) = m$, the domain curve of the stable map to $\p^1 \times E$ is a disjoint union of $m$ rational curves.
Since rational curves are of degree $0$ over the $E$-factor
and the stable map to $\p^1 \times E$ is incident to $k$ given point insertions,
the Gromov--Witten invariant is zero unless $m=k$ and $\underline{\eta} = (1, \omega)^k$ where $\omega \in H^2(E)$ is the point class.
Case (iii) then follows from Lemma~\ref{Lemma_vanishing} below.  \qed

\begin{lemma} \label{Lemma_vanishing} For all $k \geq 0$ and $\gamma_1, \ldots, \gamma_n \in H^{\ast}(R)$ we have
\[ \CC^{\pi/E}_{g, \kk}\left(\gamma_1, \ldots, \gamma_n; (1, \omega)^k \right) = 0 \]
where $\omega \in H^2(E)$ is the class of a point.
\end{lemma}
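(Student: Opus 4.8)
The plan is to show the relative invariants $\CC^{\pi/E}_{g,\kk}(\gamma_1,\dots,\gamma_n;(1,\omega)^k)$ vanish by interpreting the relative condition $(1,\omega)^k$ geometrically. Since every part of the ramification profile equals $1$, the relative markings impose simple (unramified) tangency with the fiber $E$, and each relative insertion carries the point class $\omega\in H^2(E)$. This means all $k$ relative marked points are required to map to $k$ \emph{fixed} general points $p_1,\dots,p_k\in E$. I would first set this up precisely: pulling back $\omega$ under $\ev^{\mathrm{rel}}_i$ constrains the $i$-th relative point to lie over a fixed point of $E$.

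The key geometric observation is that on a rational elliptic surface $R$, a curve $C$ of degree $k$ over $\p^1$ meets a fixed non-singular fiber $E$ in exactly $k$ points (counted with multiplicity), and those points are \emph{determined} by $C$ — they are the intersection $C\cap E$. So requiring all $k$ intersection points to simultaneously equal the fixed general points $p_1,\dots,p_k\in E$ is the same as saying $C$ passes through $k$ fixed general points of $E$; but the positions of the intersection points of a fiber-degree-$k$ curve with $E$ are not arbitrary — after translating by a section one sees that $\sum_i p_i$ (under the group law on $E$) is a fixed element depending only on the curve class, not on $C$. Hence for generic $p_1,\dots,p_k$ the moduli space $\Mbar_{g,n}(R/E,\beta;(1,\omega)^k)$ cut down by the relative point conditions is empty, which forces the class to vanish. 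I would phrase this via the same kind of argument used in Section~\ref{Subsection_RES_Sections}: any degree-$k$ class is $\sum B_{\lambda_j} + dF$ up to translation, and the trace of $C\cap E$ in the Jacobian of $E$ (i.e. $E$ itself) is constrained.

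A cleaner route, which I would actually carry out, is by degeneration: degenerate $R$ to $R\cup_E(\p^1\times E)$ and push the $k$ relative point insertions onto the $\p^1\times E$ side. On $\p^1\times E$, the relative-to-$E_0$ geometry together with the point insertions $\omega$ on $k$ relative markings forces the stabilized map to have components that are multisections over $\p^1$ pinned at fixed points of $E$, and the fiber-class contributions in the language of \cite{MP} vanish because a non-constant map to $\p^1\times E$ hitting fixed points of $E$ on the relative divisor cannot exist for generic point positions unless it is a trivial fiber over $E$, in which case the tangency-$1$ condition with a point insertion is incompatible with degree $k>0$; for $k=0$ the statement is vacuous. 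The main obstacle I anticipate is the bookkeeping on $\p^1\times E$: one must carefully match the combinatorics of the ramification profile $(1,\omega)^k$, the simple tangency, and the point insertions to conclude emptiness of the relevant moduli space (or a dimension excess), rather than merely a vanishing after integration against \emph{some} tautological class — the lemma asserts the cycle itself is zero, so I would argue the relevant moduli space (after imposing the point conditions) is empty of the expected dimension, hence the virtual class pushes forward to zero. Establishing that emptiness rigorously, using the section/translation structure of $R$ to pin down $C\cap E$, is the crux of the proof.
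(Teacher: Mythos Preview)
Your first geometric idea for $k>0$ is exactly what the paper does: since $R$ is rational, curves in a fixed class $\beta$ move in a linear system, so the intersection divisor $C\cap E$ on $E$ lies in the fixed linear equivalence class $L|_E$ where $c_1(L)=\beta$. Equivalently, $\sum_i p_i$ is fixed in $\Pic^k(E)\cong E$, as you say. Choosing the $k$ points generically so that $\CO_E(x_1+\cdots+x_k)\neq L|_E$ forces the cut-down moduli space to be empty, and hence the relative virtual class capped with the point insertions vanishes. So that part is correct and matches the paper.

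There are two genuine problems with the rest of your proposal. First, the ``cleaner'' degeneration route you propose is not how the paper argues, and as written it does not make sense: the invariant you are trying to compute is already an invariant of the pair $(R,E)$, so degenerating $R$ to $R\cup_E(\p^1\times E)$ does not simplify the problem --- it expresses \emph{absolute} invariants of $R$ in terms of the relative ones you are after, not the other way around. Your Abel--Jacobi argument is already the complete argument for $k>0$; there is no need to degenerate.

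Second, and more seriously, the case $k=0$ is \emph{not} vacuous. When $k=0$ the partition $(1,\omega)^0$ is empty, but the generating series $\CC^{\pi/E}_{g,0}(\gamma_1,\dots,\gamma_n;\varnothing)$ still sums over all fiber classes $dF$ with $d\geq 1$, and the moduli spaces $\Mbar_{g,n}(R/E,dF;\varnothing)=\Mbar_{g,n}(R,dF)$ are nonempty. Your Abel--Jacobi constraint is empty here (there are no intersection points with $E$ to constrain), so a completely different mechanism is needed. The paper handles this case by observing that $K_R+E=0$, so there is a meromorphic $2$-form $\sigma\in H^0(R,\Omega_R^2(E))$ with a simple pole along $E$ and nonvanishing elsewhere; this produces a nowhere-vanishing cosection of the obstruction theory on $\Mbar_{g,n}(R/E,dF;\varnothing)$, and Kiem--Li localization then gives $[\Mbar_{g,n}(R/E,dF;\varnothing)]^{\text{vir}}=0$. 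You should add this argument for $k=0$.
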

\begin{proof}
First we consider the case $k>0$. Let $\beta \in H_2(R,\BZ)$ be a curve class with $\pi_{\ast} \beta = k$.
Let $L \in \Pic(R)$ be the line bundle with $c_1(L) = \beta$.
Consider a relative stable map to an extended relative pair of $(R,E)$ in class $\beta$,
\[ f : C \to R[a]. \]
Since $R$ is rational, the universal family of curves on $R$ in a given class is a linear system.
Hence the intersection of $f(C)$ with the distinguished relative divisor $E \subset R[n]$ satisfies
\[ \CO_E( f(C) \cap E ) = L|_{E}. \]
Let $x_1, \ldots, x_k \in E$ be fixed points with $\CO_E(x_1 + \ldots + x_k) \neq L|_{E}$.
It follows that no stable relative map in class $\beta$ is incident to $(x_1, \ldots, x_k)$ at the relative divisor.
We conclude 
\[ \left[ \Mbar_{g,n}(R/E, \beta ; (1)^k) \right]^{\text{vir}} \prod_{i=1} \ev_i^{\text{rel}\ast}( [x_i] )\, = \, 0 \]
which implies the claim.

It remains to consider the case $k=0$. We have the equality of moduli spaces
\[
\Mbar_{g,n}(R/E, dF ; ()) = \Mbar_{g,n}(R, dF). 
\]
Under this identification the obstruction sheaf of stable maps to $R$ relative to $E$
for a fixed source curve $C$ is
\[ \mathrm{Ob}_{C,f} = H^1( C, f^{\ast} T_{R/E} ) \]
where $T_{R/E} = \Omega_R(\log E)^{\vee}$
is the log tangent bundle relative to $E$.
Since $K_R + E = 0$ there exists a meromorphic $2$-form 
\[ \sigma \in H^0(R, \Omega_R^2(E)) \]
with a simple pole along $E$ and nowhere vanishing outside $E$.
By the construction \cite[Sec.4.1.1]{STV}
the form $\sigma$ yields a surjection
\[ \mathrm{Ob}_{C,f} \to \BC \]
which in turn induces a
nowhere-vanishing cosection of the perfect obstruction theory on the moduli space.
By \cite{KL} we conclude
\[ [ \Mbar_{g,n}(R/E, dF ; ()) ]^{\text{vir}} = 0, \]
which implies the claim.
\end{proof}

\subsection{Proof of Theorem~\ref{theorem_RES2}}
The holomorphic anomaly equation is implied by the following compatibilitites
which cover all all steps in the algorithm used in the proof of Theorem~\ref{theorem_RES1}:
\begin{itemize}
\item The compatibility with boundary restrictions (parallel to \cite[Sec.2.5]{HAE}).
\item The compatibility with the degeneration formula (Proposition~\ref{DEGENERATIONPROP}).
\item The compatibility with the WDVV equation (special case of (i)).
\item The compatibility with the divisor equation (follows by proving a refined weight statement parallel to \cite[Sec.3]{HAE}).
\item The holomorphic anomaly equation holds for $\int \CC_{0,1}() = \Theta_{E_8} \Delta^{-1/2}$. \qed
\end{itemize}

\section{The Schoen Calabi--Yau threefold}
\label{Section_Schoen_variety}

\subsection{Preliminaries}
Let $X = R_1 \times_{\p^1} R_2$ be a Schoen Calabi--Yau
and recall the notation from Section~\ref{intro:schoen_calabiyau}.
In particular we have the commutative diagram of fibrations
\begin{equation}
\label{Diag2}
\begin{tikzcd}
& X \ar[swap]{dl}{\pi_2} \ar{dr}{\pi_1} \ar{dd}{\pi} & \\
R_1 \ar[swap]{dr}{p_1} & & R_2 \ar{dl}{p_2} \\
& \p^1 &
\end{tikzcd}
\end{equation}

Let $\alpha \in H_2(R_1, \BZ)$ be a curve class. For all $(g, \alpha) \notin \{ (0,0), (1,0) \}$ define
\[
\F_{g,\alpha}(\mathbf{z}_2, q_2) = \int \CC^{\pi_2}_{g,\alpha}() 
= 
\sum_{\pi_{2 \ast} \beta = \alpha}
q_2^{W_2 \cdot \beta} \zeta_2^{\beta} \int_{\Mbar_{g}(X,\beta) ]^{\text{vir}}} 1.
\]
For all $(g, k) \notin \{ (0,0), (1,0) \}$ we have
\begin{equation} \label{4tsfgsdf}
\F_{g,k}(\mathbf{z}_1, \mathbf{z}_2, q_1, q_2)
=
\sum_{\substack{ \alpha \in H_2(R_1, \BZ) \\ p_{1 \ast} \alpha = k}} \F_{g,\alpha}(\mathbf{z}_2, q_2) q_1^{W_1 \cdot \alpha} e( \mathbf{z}_1 \cdot \alpha).
\end{equation}

We first prove a weaker version of Theorem~\ref{Thm1}.
\begin{prop} We have
\[
\F_{g,k} \in
\frac{1}{\Delta(q_1)^{k/2}} \QJac_{\frac{k}{2} Q_{E_8}}^{(q_1, \mathbf{z}_1)}
\otimes
\frac{1}{\Delta(q_2)^{k/2}} \QJac_{\frac{k}{2} Q_{E_8}}^{(q_2, \mathbf{z}_2)}.
\]
\end{prop}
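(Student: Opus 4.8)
The plan is to prove the statement by degenerating $X$ to a normal crossings variety and applying the degeneration formula together with the product formula. Recall from Section~\ref{intro:schoen_calabiyau} (and as stated in the introduction) that the Schoen Calabi--Yau $X = R_1 \times_{\p^1} R_2$ admits a degeneration
\[ X \rightsquigarrow (R_1 \times E_2) \cup_{E_1 \times E_2} (E_1 \times R_2), \]
where $E_i \subset R_i$ is a non-singular elliptic fiber of $p_i$. This is compatible with the elliptic fibration $\pi : X \to \p^1$: the base $\p^1$ degenerates to $\p^1 \cup_{\mathrm{pt}} \p^1$, the divisor $E_1 \times E_2$ fibers over the node, and the two components carry the elliptic fibrations $R_1 \times E_2 \to R_1$ (with fiber $E_2$) and $E_1 \times R_2 \to R_2$ (with fiber $E_1$), fitting the hypotheses (i)-(v) of Section~\ref{Subsection_Compa_with_deg_formula} with $B_1 = R_1$, $B_2 = R_2$, $A = E_1 \times E_2$ relative to each side.

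First I would apply the degeneration formula \cite{Junli1, Junli2} in the form \eqref{323fewdfsdF} (with $B$ playing the role of $\p^1$ and the relative divisors being the elliptic fibers) to express $\F_{g,k}$, equivalently the classes $\CC^{\pi}_{g,k}$, in terms of the $\pi_i$-relative Gromov--Witten classes of the two pieces $R_1 \times E_2$ relative to $E_1 \times E_2$ and $E_1 \times R_2$ relative to $E_1 \times E_2$, glued along the relative markings with ramification and diagonal insertions. Each gluing contributes only a finite sum of products of such relative classes with rational coefficients (depending only on the ramification profiles and the K\"unneth decomposition of the diagonal of $E_1 \times E_2$). Second, I would invoke the product formula of \cite{LQ}: since $R_1 \times E_2$ is a product and the divisor $E_1 \times E_2$ is the product $E_1 \times E_2$, the $\pi_1$-relative Gromov--Witten classes of $(R_1 \times E_2)/(E_1 \times E_2)$ factor as a product of the $E$-relative Gromov--Witten classes of the rational elliptic surface $R_1$ relative to $E_1$ (treated in Theorem~\ref{theorem_RES1} and Proposition~\ref{DEGENERATIONPROP}) and the Gromov--Witten classes of the elliptic curve $E_2$ (treated in \cite{HAE}, specifically \cite[Thm.2]{HAE}). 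By Theorem~\ref{theorem_RES1} and Proposition~\ref{DEGENERATIONPROP}, the rational elliptic surface factor is a quasi-Jacobi form of index $\frac{k}{2} Q_{E_8}$ in $(q_1, \mathbf{z}_1)$ with the prefactor $\Delta(q_1)^{-k/2}$; by \cite[Thm.2]{HAE} the elliptic curve factor is a (cycle-valued) $\mathrm{SL}_2(\BZ)$-quasi-modular form in $q_1$ — but note this $q$ is the $q$ coming from the $E_1$-fiber, which under the coordinate identifications on the Schoen geometry becomes part of the $(q_1, \mathbf{z}_1)$ data as well; one must check carefully how the fiber classes $F$ of $\pi$ relate to $W_1$ on each side so that the $q^{W\cdot\beta}$-grading is correctly tracked. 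A symmetric statement holds for $E_1 \times R_2$, producing quasi-Jacobi dependence in $(q_2, \mathbf{z}_2)$ of index $\frac{k}{2} Q_{E_8}$ with prefactor $\Delta(q_2)^{-k/2}$.

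Putting the two together: the degeneration formula writes $\F_{g,k}$ as a finite sum of products, each factor of which is (after the product formula) a quasi-Jacobi form in $(q_1, \mathbf{z}_1)$ times a quasi-Jacobi form in $(q_2, \mathbf{z}_2)$ — in fact one should argue that the separation of variables is clean, i.e. that each summand lies in $\frac{1}{\Delta(q_1)^{k_1/2}}\QJac^{(q_1,\mathbf{z}_1)}_{\frac{k_1}{2}Q_{E_8}} \otimes \frac{1}{\Delta(q_2)^{k_2/2}}\QJac^{(q_2,\mathbf{z}_2)}_{\frac{k_2}{2}Q_{E_8}}$, and that the total base degree bookkeeping forces $k_1 = k_2 = k$ so the indices and $\Delta$-powers assemble as claimed. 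Since quasi-Jacobi forms of a fixed index form a module over quasi-modular forms and a ring under multiplication, the finite sum stays in the asserted space. The main obstacle I anticipate is the careful lattice and divisor bookkeeping: verifying that under the degeneration the decomposition $H^2(X,\BQ) = \langle D\rangle \oplus (\langle \pi_1^\ast W_2\rangle \oplus \pi_1^\ast E_8^{(2)}(-1)_\BQ) \oplus (\langle \pi_2^\ast W_1\rangle \oplus \pi_2^\ast E_8^{(1)}(-1)_\BQ)$ restricts compatibly to the two components, that the $E_8$-lattice coordinate $\mathbf{z}_i$ on each piece matches the $\Lambda$-coordinate of the relevant relative geometry, that the divisor $W$ appearing in the $q^{W\cdot\beta}$-weighting of $\CC^\pi$ pulls back correctly to $W_1$ (resp. $W_2$) plus fiber corrections on each side, and that the relative insertions at $E_1 \times E_2$ (diagonals and $\psi$-classes) do not spoil the variable separation. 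This is essentially Section~\ref{Subsection_Compa_with_deg_formula} applied twice, once for each projection, combined with \cite[Thm.2]{HAE} and Theorem~\ref{theorem_RES1}.
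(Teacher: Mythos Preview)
Your strategy is correct and matches the paper's proof: degenerate $X$ to $(R_1 \times E_2) \cup_{E_1 \times E_2} (E_1 \times R_2)$, apply the degeneration formula, then the product formula \cite{LQ} on each piece, and invoke Theorem~\ref{theorem_RES1}/Proposition~\ref{DEGENERATIONPROP} for the rational elliptic surface factor and \cite{HAE} for the elliptic curve factor.

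Two points need tightening. First, your variable bookkeeping is inverted: on the piece $R_1 \times E_2$, the factor $R_1/E_1$ carries the $(q_1,\mathbf{z}_1)$-dependence (quasi-Jacobi of index $\tfrac{k}{2}Q_{E_8}$ with prefactor $\Delta(q_1)^{-k/2}$), while the elliptic curve factor $E_2$ contributes quasi-modular forms in $q_2$, not $q_1$ --- the exponent of $q_2$ tracks $W_2 \cdot \beta$, and $W_2$ restricts to the point class on $E_2$. Symmetrically, $E_1 \times R_2$ gives $\QMod^{(q_1)} \otimes \Delta(q_2)^{-k/2}\QJac^{(q_2,\mathbf{z}_2)}_{\frac{k}{2}Q_{E_8}}$. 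The product of the two relative brackets then lands in the asserted tensor product. (Also, both pieces carry the \emph{same} base degree $k$, not a splitting $k_1+k_2=k$.)

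Second, and this is the one step you should not leave implicit: Theorem~\ref{theorem_RES1} and Proposition~\ref{DEGENERATIONPROP} are only proven \emph{numerically}, i.e.\ after integration against tautological classes pulled back from $\Mbar_{g,n}$. The reason this suffices here is that the elliptic curve classes $\CC^{E_2}_{g_1}(\ldots)$ are themselves linear combinations of tautological classes with quasi-modular coefficients, by \cite{Jtaut} together with \cite{HAE}. The paper makes this explicit; without it the product-formula step does not immediately reduce to the numerical results for $R_1/E_1$.
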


\begin{proof}
The Schoen Calabi--Yau can be written as a complete intersection
\[ X \subset \p^1 \times \p^2 \times \p^2 \]
cut out by sections of tri-degree $(1,3,0)$ and $(1,0,3)$. Hence
there exist smooth elliptic fibers $E_i \subset R_i$ of $\pi_i$ for $i=1,2$ and a degeneration
\begin{equation} X \rightsquigarrow (R_1 \times E_2) \cup_{E_1 \times E_2} (E_1 \times R_2) \label{DEGGGG} \end{equation}
which is compatible with the fibration structure of diagram~\eqref{Diag2}.

The degeneration formula applied with respect to this degeneration yields
\begin{equation} \label{dfsdfds}
\F_{g,k} = 
\sum_{\substack{m \geq 0 \\ \eta_1, \ldots, \eta_m, \ell_1, \ldots, \ell_m \\ g = g_1 + g_2 + l(\underline{\eta}) - 1}}
\frac{\prod_i \eta_i}{m!}
\blangle \varnothing \big| \underline{\eta} \brangle^{(R_1 \times E_2) / (E_1 \times E_2), \bullet}_{g_1, k}
\blangle \varnothing \big| \underline{\eta}^{\vee} \brangle^{(E_1 \times R_2) / (E_1 \times E_2), \bullet}_{g_2, k}
\end{equation}
where $\eta_1, \ldots, \eta_m$ run over all positive integers summing up to $k$,
the $\ell_1, \ldots, \ell_m$ run over all diagonal splittings in the weighted partition
\[ \underline{\eta} = (\eta_i, \Delta_{E_1 \times E_2, \ell_i})_{i=1}^m, \quad 
 \underline{\eta}^{\vee} = (\eta_i, \Delta^{\vee}_{E_1 \times E_2, \ell_i})_{i=1}^m,
\]
and the sum is over those disconnected stable maps on each sides which yield a connected
domain after gluing (the bullet $\bullet$ reminds us of the disconnected invariants);
moreover we have used
\begin{multline*}
\blangle \varnothing \big| \underline{\eta} \brangle^{(R_1 \times E_2) / (E_1 \times E_2), \bullet}_{g_1, k} \\
=
\sum_{\pi_{\ast} \beta = k}
\blangle \varnothing \big| \underline{\eta} \brangle^{(R_1 \times E_2) / (E_1 \times E_2), \bullet}_{g_1, \beta}
q_1^{W_1^{(R_1)} \cdot \beta} q_2^{W_2^{(E_2)} \cdot \beta} \exp( \mathbf{z}_1 \cdot \beta )
\end{multline*}
where we use the Gromov--Witten bracket notation on the right side and
\[ W_1^{(R_1)}, W_2^{(E_2)} \in H^2(R_1 \times E_2) \]
are the pullbacks of $W_1 \in H^2(R_1)$ and the point class $[0] \in H^2(E)$
respectively.
The definition of the second factor in \eqref{dfsdfds} is parallel.

We will show
\begin{equation} \label{dfsdfdsrr4444}
\blangle \varnothing \big| \underline{\eta} \brangle^{(R_1 \times E_2) / (E_1 \times E_2), \bullet}_{g_1, k}
\in \frac{1}{\Delta(q_1)^{k/2}} \QJac_{\frac{k}{2} Q_{E_8}}^{(q_1, \mathbf{z}_1)} \otimes \QMod^{(q_2)}.
\end{equation}
By an induction argument it is enough to prove the statement for connected Gromov--Witten invariants.
Let us write
\[ \underline{\eta} = ( \eta_i, c_i \otimes d_i )_{i=1}^{m}, \quad c_i \in H^\ast(E_1), d_i \in H^{\ast}(E_2) \]
Then the relative product formula \cite{LQ} yields
\[
\blangle \varnothing \, \big| \, \underline{\eta} \brangle^{(R_1 \times E_2) / (E_1 \times E_2)}_{g_1, k}
=
\int_{\Mbar_{g,m}}
p_{\ast} \CC^{R_1/E_1}_{g_1,k}\big( \varnothing \, ; (\eta_i, c_i)_{i} \big) \cdot \CC^{E_2}_{g_1}(d_1, \ldots, d_m).
\]
where $p$ is the forgetful map to $\Mbar_{g,m}$.
By \cite{Jtaut,HAE} the class $\CC^{E_2}_{g_1}(d_1, \ldots, d_m)$
is a linear combination of tautological classes with coefficients that are quasi-modular forms.
Using Theorem~\ref{theorem_RES1} and Proposition~\ref{DEGENERATIONPROP} we obtain \eqref{dfsdfdsrr4444}.

By an identical argument for $E_1 \times R_2$ we conclude that
\[
\F_{g,k} \in
\frac{1}{\Delta(q_1)^{k/2}} \QJac_{\frac{k}{2} Q_{E_8}}^{(q_1, \mathbf{z}_1)}
\otimes
\frac{1}{\Delta(q_2)^{k/2}} \QJac_{\frac{k}{2} Q_{E_8}}^{(q_2, \mathbf{z}_2)}. \qedhere
\]
\end{proof}

\subsection{Proof of Theorem~\ref{Thm1}}
We first show that the classes $\CC^{\pi_2}_{g,\alpha}()$ satisfy the holomorphic anomaly equation numerically, i.e. after taking degrees.
Using the degeneration \eqref{DEGGGG}
and the compatibility of the holomorphic anomaly equation with the degeneration formula (Proposition~\ref{Proposition_Compatibility_with_degeneration_formula})
the holomorphic anomaly equation for $\int \CC^{\pi_2}_{g,\alpha}$ follows from the
holomorphic anomaly equations for the elliptic fibrations
\[ \mathrm{pr}_1 : R_1 \times E_2 \to R_1, \quad \quad \mathrm{id}_{E_1} \times p_2 : E_1 \times R_2 \to E_1 \times \p^1. \]
relative to $E_1 \times E_2$. 
To show the holomorphic anomaly equation for $R_1 \times E_2$ (relative to $E_1 \times E_2$)
we again apply the product formula \cite{LQ} and use the holomorphic anomaly equation for the elliptic curve \cite{HAE}.
For $E_1 \times R_2$ we apply the product formula and Theorem~\ref{theorem_RES2}.
Hence $\CC^{\pi_2}_{g,\alpha}()$ satisfies the holomorphic anomaly equation numerically.

From Lemma*~\ref{Lemma_EllHAE} after numerical specialization it follows that 
\[ \F_{g,\alpha} \in \bigcap_{\lambda \in E_8^{(2)}} \mathrm{Ker}( \T_{\lambda} ) \]
or equivalently, that $\F_{g,\alpha}$ satisfies the elliptic transformation law.\footnote{
Since $\F_{g,\alpha}$ is invariant under translation by sections of $\pi_2$
this also follows from Section~\ref{Section_The_elliptic_transformation_law}.
}
By \eqref{4tsfgsdf} and since $\F_{g,k}$ is symmetric under exchanging $(\mathbf{z}_1, q_1)$ and $(\mathbf{z}_2, q_2)$ we obtain
%
\[ \F_{g,k} \in \bigcap_{\lambda_1 \in E_8^{(1)}, \lambda_2 \in E_8^{(2)} }
\mathrm{Ker}\left( \mathsf{T}_{\lambda_1} \otimes \mathsf{T}_{\lambda_2} \right).
\]
Similarly, the series $\F_{g,k}$ is invariant under reflection along the elliptic fibers of $\pi_1$ and $\pi_2$.
Since every reflection along a root can be written as a composition of translation and reflection at the origin,
we conclude that
\[ \F_{g,k} \in 
 \frac{1}{\Delta(q_1)^{k/2}} \widetilde{\Jac}_{E_8,k}^{(q_1, \mathbf{z}_1)}
 \otimes \frac{1}{\Delta(q_2)^{k/2}} \widetilde{\Jac}_{E_8, k}^{(q_2, \mathbf{z}_2)}.
\]
Finally, the weight of the bi-quasi-Jacobi form follows from the holomorphic anomaly equation, see Section~\ref{Subsection_weight_refinement} and \cite[Sec.2.6]{HAE}. \qed

\subsection{Proof of Theorem~\ref{Thm2}}
Assume first $g > 2$ or $k > 0$.
Using \eqref{4tsfgsdf} and Proposition*~\ref{Prop_HAE_for_CY3}\footnote{
In the proof of Theorem~\ref{Thm1} 
we have shown that Conjectures~\ref{Conj_Quasimodularity} and~\ref{Conj_HAE}
hold for the Schoen Calabi--Yau numerically. Hence
we may apply Proposition*~\ref{Prop_HAE_for_CY3} unconditionally.}
we find
\begin{multline} \label{3dfsdfa}
\frac{d}{d C_2(q_2)} \F_{g,\kk} \\
=
\sum_{p_{1 \ast} \alpha = k} q_1^{W_1 \cdot \alpha} \zeta_1^{\alpha}
\Bigg[
\langle K_{R_1} + \alpha, \alpha \rangle \F_{g-1, \alpha} +
\sum_{\substack{g=g_1 + g_2 \\ \alpha = \alpha_1 + \alpha_2}} \langle \alpha_1, \alpha_2 \rangle \F_{g_1, \alpha_1} \F_{g_2, \alpha_2}
\Bigg].
\end{multline}
We analyze the terms on the right side.
If we write $\alpha = k W + d F + \alpha_0$ for some $d \geq 0$ and $\alpha_0 \in E_8^{(1)}$ then we have
\[ \langle \alpha, \alpha \rangle = 2 k d + \langle \alpha_0, \alpha_0 \rangle, \quad \langle K_{R_1}, \alpha \rangle = -k.  \]
Hence the first term in the bracket on the right of \eqref{3dfsdfa} can be written as
\begin{multline*}
\sum_{p_{1 \ast} \alpha = k} q_1^{W_1 \cdot \alpha} \zeta_1^{\alpha}
\langle K_{R_1} + \alpha, \alpha \rangle \F_{g-1, \alpha} \\
=
\left( -k + 2 k D_{q_1} - \sum_{i,j=1}^{8} \left( Q_{E_8}^{-1} \right)_{ij} D_{\mathbf{z}_{1,i}} D_{\mathbf{z}_{1,j}} \right) \F_{g-1, k}.
\end{multline*}
With a similar argument the sum
\[
\sum_{p_{1 \ast} \alpha = k} q_1^{W_1 \cdot \alpha} \zeta_1^{\alpha}
\sum_{\substack{g=g_1 + g_2, \alpha = \alpha_1 + \alpha_2 \\
\forall i \in \{ 1, 2 \} \colon g_i \geq 2 \text{ or } p_{1 \ast} \alpha_i > 0}}
\langle \alpha_1, \alpha_2 \rangle \F_{g_1, \alpha_1} \F_{g_2, \alpha_2}
\]
yields exactly the second term on the right in Theorem~\ref{Thm2}.
Using Lemma~\ref{Lemma_Calculation_of_fiber} below, the remaining terms are
\begin{align*}
& 2 \sum_{p_{1 \ast} \alpha = k} q_1^{W_1 \cdot \alpha} \zeta_1^{\alpha}
\sum_{\substack{ g' \in \{ 0, 1\} \\ \ell \geq 1 }} \langle \alpha - \ell F_1, \ell F_1 \rangle \F_{g-g', \alpha - \ell F_1} \F_{g', \ell F_1} \\
=\ & 2 \sum_{p_{1 \ast} \alpha = k} q_1^{W_1 \cdot \alpha} \zeta_1^{\alpha} \sum_{\ell \geq 1} k \ell \cdot \F_{g-1, \alpha - \ell F_1} \cdot 12 \frac{\sigma(\ell)}{\ell} \\
=\ & \left( 24 k \sum_{\ell \geq 1} \sigma(\ell) q_1^{\ell} \right) \F_{g-1, k}. \qedhere
\end{align*}
Putting all three expressions together yields the desired expression. 

Finally, if $g=2$ and $k=0$ a similar analysis shows
\[ 
\pushQED{\qed} 
\frac{d}{d C_2(q_2)} \F_{2,0} = 0. \qedhere 
\popQED
\]

\begin{lemma} \label{Lemma_Calculation_of_fiber} For all $(\ell_1, \ell_2) \neq 0$ we have
\[
\mathsf{N}^X_{g, \ell_1 F_1 + \ell_2 F_2}
=
\begin{cases}
12 \delta_{\ell_1 0} \frac{\sigma(\ell_2)}{\ell_2} + 12 \delta_{\ell_2 0} \frac{\sigma(\ell_1)}{\ell_1} & \text{ if } g = 1 \\
0 & \text{ if } g \neq 1.
\end{cases}
\]
\end{lemma}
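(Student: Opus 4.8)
\textbf{Proof proposal for Lemma~\ref{Lemma_Calculation_of_fiber}.}

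The plan is to compute the Gromov--Witten invariants $\mathsf{N}^X_{g,\ell_1 F_1 + \ell_2 F_2}$ directly by exploiting the product structure in the fiber direction. First I would observe that any curve class of the form $\ell_1 F_1 + \ell_2 F_2$ satisfies $\pi_{\ast}(\ell_1 F_1 + \ell_2 F_2) = 0$, so every stable map in such a class factors through a single fiber $X_b = \pi^{-1}(b) = E_b^{(1)} \times E_b^{(2)}$ over a point $b \in \p^1$. When $b$ is a generic (smooth) point, $X_b$ is an abelian surface; over the finitely many singular points it degenerates. The key simplification is that $F_1$ and $F_2$ are the classes of the two elliptic factors inside such a fiber, so a connected curve in class $\ell_1 F_1 + \ell_2 F_2$ with both $\ell_i > 0$ would need to dominate both factors, which forces genus $\geq 1$ contributions that ultimately vanish by the abelian vanishing (existence of a trivial quotient of the tangent bundle / holomorphic one-forms on the fiber direction gives a cosection). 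For this I would use a cosection-localization argument in the spirit of Lemma~\ref{Lemma_vanishing}: the projection $X \to R_i$ followed by $\pi_i$ pins the map to lie over a point of $\p^1$, and the normal bundle of the fiber $X_b \subset X$ together with $h^{0}(X_b,\Omega^1)$ in the appropriate direction yields a nowhere-vanishing cosection of the obstruction theory on the locus of maps with $\ell_1,\ell_2$ both positive, hence those classes contribute $0$.

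The substantive computation is therefore the case $\ell_2 = 0$ (and symmetrically $\ell_1=0$), i.e. multiple covers of fibers of $\pi_1 : X \to R_1$. Here a curve in class $\ell_1 F_1$ is supported on $\pi_1^{-1}(\text{point of }R_1)$, so the invariant reduces to the local Gromov--Witten theory of an elliptic fiber of $\pi_1$ moving in the family over $R_1$. Concretely, I would use the product formula of \cite{LQ} together with the degeneration/localization already invoked in the paper: the genus-$1$ invariant $\mathsf{N}^X_{1,\ell_1 F_1}$ equals $12$ times the genus-$1$ multiple-cover contribution $\tfrac{\sigma(\ell_1)}{\ell_1}$ of an elliptic curve, where the factor $12$ is the Euler-characteristic weighting coming from the $12$ nodal fibers of $R_1 \to \p^1$ together with the relative normal-bundle data; this is exactly the classical computation of the local GW invariants of an elliptic surface (equivalently, the genus-$1$ degree-$0$ GW theory combined with the $c_2$ contribution as in the proof of Proposition*~\ref{Prop_HAE_for_CY3}). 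For $g \neq 1$ the virtual dimension count on $\Mbar_g(X, \ell_1 F_1)$ combined with the fact that the image lies in a surface (the elliptic surface $R_2$, via $\pi_1$, swept out trivially) forces the invariant to vanish: in genus $0$ there are no nonconstant maps from a rational curve to an elliptic curve, and in genus $\geq 2$ the obstruction bundle again carries a cosection from the holomorphic form on the elliptic direction.

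The main obstacle I anticipate is making the genus-$1$ constant $12\,\sigma(\ell_1)/\ell_1$ precise: one must correctly account for the excess intersection/obstruction contribution from the family of fibers over $R_1$, disentangle the connected vs.\ disconnected bookkeeping, and confirm the normalization matches the Eisenstein series $C_2$ appearing in Theorem~\ref{Thm2} (so that the $24k\sum \sigma(\ell)q_1^\ell$ term comes out with the right coefficient). I expect this to follow from the elliptic-curve computation of \cite{HAE} fed through the product formula, but verifying the combinatorial factor $\prod_i \eta_i / m!$ in the degeneration formula cancels correctly against the multiple-cover sum is the delicate point. Everything else — the reduction to a single fiber, the vanishing for $g \neq 1$, and the vanishing when both $\ell_i > 0$ — should be routine given the cosection techniques already used in Section~\ref{Section_RationalEllipticSurface}.
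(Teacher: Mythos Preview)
Your approach is genuinely different from the paper's and has real gaps. The paper does \emph{not} analyze maps into the abelian-surface fibers of $\pi$ directly; instead it applies the degeneration
\[
X \rightsquigarrow (R_1 \times E_2)\cup_{E_1\times E_2}(E_1\times R_2)
\]
already used earlier in Section~\ref{Section_Schoen_variety}. Because $E_1\times E_2$ carries a holomorphic symplectic form, every Gromov--Witten invariant of $\p^1\times E_1\times E_2$ with nonzero degree over $E_1\times E_2$ vanishes, so the relative invariants appearing in the degeneration formula equal the absolute invariants of $R_i\times E_j$. The computation then finishes in one line by the divisor equation and the product formula of \cite{LQ}. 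In particular, the vanishing for $\ell_1,\ell_2>0$ and for $g\neq 1$ falls out of the product structure of $R_i\times E_j$ with no cosection argument needed.

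Your direct route has two concrete holes. First, the cosection you sketch for $\ell_1,\ell_2>0$ (``normal bundle of the fiber together with $h^0(X_b,\Omega^1)$'') is not actually constructed, and you do not address the $24$ singular fibers of $\pi$: over those points $X_b$ is no longer an abelian surface, so you must either show the cosection extends there or localize and compute the residual contribution --- neither is routine. Second, your genus-$0$ vanishing (``no nonconstant maps from $\p^1$ to an elliptic curve'') ignores the nodal fibers of $\pi_1$, which are rational and do admit such maps. A smaller slip: curves in class $\ell_1 F_1$ are fibers of $\pi_1:X\to R_2$, so the family is over $R_2$, not $R_1$ as you wrote (harmless by symmetry, but worth getting straight). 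The paper's degeneration bypasses all of this by trading the fibered CY3 for two honest products.
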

\begin{proof}
Using the degeneration \eqref{DEGGGG} we have
\begin{align*}
\mathsf{N}^X_{g, \ell_1 F_1 + \ell_2 F_2}
& = 
\blangle \varnothing \big| \varnothing \brangle^{(R_1 \times E_2) / (E_1 \times E_2)}_{g, \ell_1 F_1 + \ell_2 F_2}
+ \blangle \varnothing \big| \varnothing \brangle^{(E_1 \times R_2) / (E_1 \times E_2)}_{g, \ell_1 F_1 + \ell_2 F_2}.
\end{align*}
Because the surface $E_1 \times E_2$ carries a holomorphic symplectic form,
all Gromov--Witten invariants of $\p^1 \times E_1 \times E_2$
with non-trivial curve degree over $E_1 \times E_2$ vanish. Hence by a degeneration argument we have
\[ 
\blangle \varnothing \big| \varnothing \brangle^{(R_1 \times E_2) / (E_1 \times E_2)}_{g, \ell_1 F_1 + \ell_2 F_2}
= \blangle \varnothing \brangle^{R_1 \times E_2}_{g, \ell_1 F_1 + \ell_2 F_2}.
\]
The expression for the second term is parallel.
%
Now the result follows by adding in markings, using the divisor equation and applying the product formula.
\end{proof}

\subsection{Proof of Corollary~\ref{Cor_Schoen}}
Since the series $\F_{g,\alpha}$ satisfies the holomorphic anomaly equation,
the disconnected series $F^{\bullet}_{g,\alpha}$ satisfies \eqref{352rw}.
The claim now follows from Lemma~\ref{Lemma_QJAc_ModTrLaw}.

\section{Abelian surfaces} \label{Section_Abelian_surfaces}
\subsection{Overview}
We present (Section~\ref{Subsection_ab_statement}) and prove numerically
(Section~\ref{Subsection_ab_proof}) the holomorphic anomaly equation
for the reduced Gromov--Witten theory of abelian surfaces in primitive classes.
The quasi-modularity of the theory was proven previously in \cite{BOPY}.
The result and strategy of proof is almost identical to the
case of K3 surfaces which appeared in detail in \cite[Sec.0.6]{HAE} and we will be brief.
Since we work with reduced Gromov--Witten theory,
an additional term appears in the holomorphic anomaly equation
for both abelian and K3 surfaces.
This term appeared somewhat mysteriously in \cite{HAE} in the form of a certain operator $\sigma$.
In Section~\ref{Subsection_ab_explain} we explain how it arises
naturally from the theory of quasi-Jacobi forms.

\subsection{Results}\label{Subsection_ab_statement}
Let $E_1, E_2$ be non-singular elliptic curves and consider the abelian surface
\[ \A = E_1 \times E_2 \]
elliptically fibered over $E_1$ via the projection $\pi$ to the first factor,
\[ \pi : \A\to E_1. \]
Let $0_{E_2} \in E_2$ be the zero and fix the section
\[ \iota : E_1 = E_1 \times 0_{E_2} \hookrightarrow \A. \]
A pair of integers $(d_1,d_2)$ determines a class in $H_2(\A,\BZ)$ by
\[ (d_1, d_2) = d_1 \iota_{\ast} [E_1] + d_2 j_{\ast} [E_2] \]
where $j : 0_{E_1} \times E_2 \to \A$ is the inclusion.

Since $\A$ carries a holomorphic symplectic form, the virtual fundamental class of
$\Mbar_{g,n}(\A, \beta)$ vanishes if $\beta \neq 0$. A nontrivial
Gromov--Witten theory of $\A$ is defined by the \emph{reduced} virtual class
$[ \Mbar_{g,n}(\A, \beta) ]^{\text{red}}$, see \cite{BOPY} for details.
For any $\gamma_1, \ldots, \gamma_n \in H^{\ast}(\A)$ define the reduced primitive potential
\begin{multline*} \CA_{g}(\gamma_1, \ldots, \gamma_n)
 = \sum_{d=0}^{\infty} q^d \pi_{\ast}\left( \left[ \Mbar_{g,n}\left(\A, (1,d) \right) \right]^{\text{red}} \prod_{i=1}^{n} \ev_i^{\ast}(\gamma_i) \right) \\
 \in H_{\ast}(\Mbar_{g,n}(E_1, 1))[[q]].
\end{multline*}
By deformation invariance the classes $\CA_g$ determine the Gromov--Witten classes
of any abelian surface in primitive classes.

\begin{conjecture} \label{Conj_Quasimodularity_A}
$\CA_{g,n}(\gamma_1, \ldots, \gamma_n) \in \QMod \otimes H_{\ast}( \Mbar_{g,n}(E_1, 1) )$
\end{conjecture}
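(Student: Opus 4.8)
The plan is to deduce Conjecture~\ref{Conj_Quasimodularity_A} from the known quasi-modularity of the reduced Gromov--Witten theory of abelian surfaces established in \cite{BOPY}, using the same ``trading descendents for tautological classes'' device as in the rational elliptic surface case (Theorem~\ref{theorem_RES1}) and the K3 discussion in \cite[Sec.0.6]{HAE}. Note first that $\A = E_1 \times E_2$ is an elliptic fibration over $E_1$ with section, integral (smooth) fibers, but with $h^{2,0}(\A) = 1$, so the plain Conjecture~\ref{Conj_Quasimodularity} does not literally apply; this is exactly the situation flagged after Conjecture~\ref{Conj_HAE} where one passes to the reduced theory. The reduced classes $\CA_g(\gamma_1,\ldots,\gamma_n)$ live in $H_\ast(\Mbar_{g,n}(E_1,1))[[q]]$, and since the base is the elliptic curve $E_1$ in the fixed primitive class $1$, the relevant modularity is ordinary $\mathrm{SL}_2(\BZ)$-quasi-modularity in $q$ (the index-$\tfrac12 Q_{\kk}$ Jacobi structure degenerates because the fibration is a product and $\Lambda = 0$ here: $H^2_\perp(\A)$ is rank $0$ since $H^2(\A)$ is spanned by the section class, the fiber class, and the class $[\mathrm{pt}]$, all paired with $\BZ F \oplus \iota_\ast H_2(E_1)$ nontrivially). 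So the statement is genuinely a statement about quasi-modular \emph{forms}, not Jacobi forms, which is why the conjecture is phrased with $\QMod$.

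The key steps, in order, would be: (1) Reduce to the numerical (integrated) statement. Since $H_\ast(\Mbar_{g,n}(E_1,1))$ is finite-dimensional and spanned — up to the usual subtleties — by tautological classes pulled back from $\Mbar_{g,n}$ via the forgetful map, it suffices to show that $\int_{\Mbar_{g,n}(E_1,1)} p^\ast(\alpha) \cap \CA_g(\gamma_1,\ldots,\gamma_n) \in \QMod$ for every tautological $\alpha \in R^\ast(\Mbar_{g,n})$ and all $\gamma_i \in H^\ast(\A)$, with the convention of Section~\ref{Subsection_CONVENTION} in the unstable range. (2) Trade the evaluation classes $\ev_i^\ast(\gamma_i)$ and any $\psi$-classes for tautological classes plus lower-order corrections, exactly as in \cite[Sec.2]{MP} and in the proof of Theorem~\ref{theorem_RES1}; using the divisor equation for divisor insertions and the dimension constraint to control which invariants actually contribute. (3) Invoke the quasi-modularity theorem for reduced abelian surface invariants from \cite{BOPY}: the generating series $\sum_d q^d \langle \ldots \rangle^{\A,\mathrm{red}}_{g,(1,d)}$ of reduced primitive invariants is a quasi-modular form of the appropriate weight. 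Since $\QMod$ is a ring, finite $\BQ$-linear combinations and products of such series (arising from the trading algorithm and from splitting off lower-genus/lower-degree pieces) remain quasi-modular, and an induction over $(g,n)$ with respect to a lexicographic order closes the argument. (4) Track the weight to confirm it is the expected one (parallel to \cite[App.B]{HAE}), though strictly this is not needed for the bare statement as phrased.

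\textbf{Main obstacle.} The principal difficulty I anticipate is the passage from \emph{numerical} quasi-modularity (integrals against tautological classes) to the \emph{cycle-valued} statement in $H_\ast(\Mbar_{g,n}(E_1,1))$: one needs that the homology of $\Mbar_{g,n}(E_1,1)$ — or at least the part of it seen by reduced Gromov--Witten classes — is detected by tautological classes pulled back from $\Mbar_{g,n}$ together with boundary pushforwards, and that the boundary terms can be handled inductively via the splitting axiom. This is the same issue that makes the K3 and abelian results in \cite{MPT,HAE,BOPY} ``numerical'' rather than cycle-valued, and indeed the conjecture is stated only as quasi-modularity of the cycle-valued classes without claiming more; I would therefore expect the honest proof to establish the integrated form for all tautological $\alpha$, which is the strongest thing that follows cleanly from \cite{BOPY} plus the MP degeneration/trading machinery. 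A secondary, more bookkeeping-level obstacle is making the reduction-to-tautological argument precise in the presence of the reduced virtual class, since the cosection/reduction modifies the obstruction theory and hence the splitting and divisor relations acquire correction terms; one must check these corrections stay within $\QMod$, which they do because they come from lower-genus reduced invariants or from the non-reduced (hence vanishing, for $\beta \neq 0$) pieces.
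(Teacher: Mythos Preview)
The statement you are attempting to prove is labeled a \emph{Conjecture} in the paper, and the paper does not prove it. What the paper does (immediately after stating Conjecture~\ref{Conj_HAE_A}) is record that the \emph{numerical} specialization holds by \cite{BOPY}:
\[
\int_{\Mbar_{g,n}(E_1,1)} p^{\ast}(\alpha)\cap \CA_g(\gamma_1,\ldots,\gamma_n)\in\QMod
\]
for all tautological $\alpha$, and then proves the numerical holomorphic anomaly equation (Theorem~\ref{thm_AHAE}). Your proposal arrives at essentially the same place: you identify the passage from numerical to cycle-valued as the main obstacle and concede that only the integrated form follows from \cite{BOPY} plus the standard machinery. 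That assessment is correct and matches the paper's stance; there is simply no proof of the cycle-valued statement here to compare against.

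One genuine error in your setup, though: your claim that $\Lambda=0$ and $H^2_\perp(\A)$ has rank $0$ is false. For $\A=E_1\times E_2$ the group $H^2(\A,\BZ)$ has rank $6$, and the orthogonal complement $V$ of $[E_1],[E_2]$ has rank $4$ (the K\"unneth piece $H^1(E_1)\otimes H^1(E_2)$); this is exactly the $V$ the paper uses in defining $T_\Delta$ with indices $i,j=1,\ldots,4$. The correct reason the target is $\QMod$ rather than a space of quasi-Jacobi forms is not that the perpendicular lattice vanishes, but that $\CA_g$ sums only over the single ray of classes $(1,d)$ --- in the language of Section~\ref{Subsection_ab_explain}, one is extracting the $\zeta^0$-coefficient of what would be a quasi-Jacobi form, and Proposition~\ref{QJac_Prop2}/\ref{QJac_Prop3} then land you in $\QMod$. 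This distinction matters because it is precisely what produces the extra $T_\Delta$ term in the abelian (and K3) holomorphic anomaly equation.
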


We state the reduced holomorphic anomaly equation.
For any $\lambda \in H^{\ast}(\A)$ define the endomorphism $A(\lambda) : H^{\ast}(\A) \to H^{\ast}(\A)$ by
\[ A(\lambda) \gamma = \lambda \cup \pi^{\ast} \pi_{\ast}( \gamma) - \pi^{\ast} \pi_{\ast}( \lambda \cup \gamma ) \quad
\text{ for all } \gamma \in H^{\ast}(\A). \]
Define the operator $T_{\lambda}$ by\footnote{
The notation $T_{\lambda}$ (serif) matches the expected value of the action
of the anomaly operator $\T_{\lambda}$ (sans-serif) given in Lemma*~\ref{Lemma_EllHAE}.
The operator $T_{\lambda}$ is defined independently of the modular properties of $\CA$.}
\[ T_{\lambda} \CA_{g,n}(\gamma_1, \ldots, \gamma_n)
 = \sum_{i=1}^{n} \CA_{g,n}( \gamma_1,\ldots, A(\lambda) \gamma_i, \ldots, \gamma_n).
\]
Let $V \subset H^{2}(\A, \BQ)$ be the orthogonal complement to $[E_1], [E_2]$
and define
\begin{equation} T_{\Delta} = - \sum_{i,j=1}^{4} \left( G^{-1} \right)_{ij} T_{b_i} T_{b_j} \label{TDelta} \end{equation}
where $\{ b_i \}$ is a basis of $V$ and $G = \big( \langle b_i, b_j \rangle \big)_{i,j}$. 

Recall also the virtual class on the moduli space of degree $0$,
\[
[ \Mbar_{g,n}(\A,0) ]^{\text{vir}}
=
\begin{cases}
[\Mbar_{0,n} \times \A] & \text{if } g= 0 \\
0 & \text{if } g \geq 1,
\end{cases}
\]
where we used the identification $\Mbar_{g,n}(\A,0) = \Mbar_{g,n} \times \A$.
We define
\[ \CA_g^{\vir}(\gamma_1, \ldots, \gamma_n)
=
\pi_{\ast}\left( [ \Mbar_{g,n}(\CA, 0) ]^\vir \prod_i \ev_i^{\ast}(\gamma_i) \right). \]

Consider the class in $H_{\ast}(\Mbar_{g,n}(E_1,1))$ defined by
\begin{equation} \label{HAE_for_A}
\begin{aligned}
\HH^{\CA}_{g}(\gamma_1, \ldots, \gamma_n)
& =
\iota_{\ast} \Delta^{!} \CA_{g-1}( \gamma_1, \ldots, \gamma_n, \1, \1 ) \\
& + 2 \sum_{\substack{g= g_1 + g_2 \\ \{1,\ldots, n\} = S_1 \sqcup S_2}}
j_{\ast} \Delta^{!} \left( \CA_{g_1}( \gamma_{S_1}, \1 ) \boxtimes \CA^{\vir}_{g_2}( \gamma_{S_2}, \1 ) \right) \\
& - 2 \sum_{i=1}^{n} \CA_g( \gamma_1, \ldots, \gamma_{i-1}, \pi^{\ast} \pi_{\ast} \gamma_i, \gamma_{i+1}, \ldots, \gamma_n ) \cup \psi_i \\
& + T_{\Delta} \CA_{g}(\gamma_1, \ldots, \gamma_n)
\end{aligned}
\end{equation}

\begin{conjecture} \label{Conj_HAE_A}
$\frac{d}{dC_2} \CA_{g}(\gamma_1, \ldots, \gamma_n) = \HH^{\CA}_{g}(\gamma_1, \ldots, \gamma_n)$.
\end{conjecture}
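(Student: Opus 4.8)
The plan is to reduce the holomorphic anomaly equation for the abelian surface $\CA$ to the already-proven statements for the rational elliptic surface (Theorem~\ref{theorem_RES2}) and the elliptic curve (\cite{HAE}), following exactly the same degeneration-plus-product-formula strategy used to prove Theorem~\ref{Thm2} for the Schoen threefold, but adapted to the reduced setting. First I would recall from \cite{BOPY} that the reduced Gromov--Witten theory of $\A = E_1 \times E_2$ in primitive classes $(1,d)$ can be accessed via the product formula for $E_1 \times E_2$: a reduced class on $E_1 \times E_2$ decomposes as a product of a (non-reduced) class on one factor with a class carrying the reducing cohomology insertion from $H^1$ of the other factor. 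Concretely, the reduced virtual class on $\Mbar_{g,n}(\A,(1,d))$ is obtained from the ordinary product class after contracting with a holomorphic symplectic form, which on $E_1 \times E_2$ amounts to the cup product of a $(1,0)$-form on one elliptic curve with a $(0,1)$-form on the other. This exhibits $\CA_g(\gamma_1,\dots,\gamma_n)$ as a finite $\BQ$-linear combination of products
\[
\CC^{\pi_{E_1}}_{g_1,1}(\text{insertions}) \cdot \CC^{E_2}_{g_2}(\text{insertions})
\]
pushed to $\Mbar_{g,n}(E_1,1)$, where $\CC^{E_2}_{g}$ is the (quasi-modular, cycle-valued) Gromov--Witten class of the elliptic curve treated in \cite{HAE, Jtaut}, and $\CC^{\pi_{E_1}}_{g_1,1}$ is the genus $g_1$, degree $1$ potential of the trivial elliptic fibration $E_1 \times E_2 \to E_1$, which is itself quasi-modular by \cite{HAE}. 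This gives Conjecture~\ref{Conj_Quasimodularity_A} a clean proof and is the starting point for the anomaly equation.

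Next I would apply $\T_q = \frac{d}{dC_2}$ to this product decomposition and use the Leibniz rule together with the two known holomorphic anomaly equations: the one for the elliptic curve from \cite{HAE}, and the one for the trivial elliptic fibration $E_1 \times E_2 \to E_1$, which follows from \cite[Thm.~2 and~3]{HAE} via the product formula \cite{LQ} (exactly as invoked in the Proposition preceding Section~\ref{Subsection_Compa_with_deg_formula}). Each application of $\T_q$ produces a genus-lowering gluing term plus, from the $C_2$-dependence of the elliptic-curve factors, correction terms. The bulk of the work is bookkeeping: identifying the gluing term $\iota_*\Delta^! \CA_{g-1}(\dots,\1,\1)$, the $\psi_i$-correction term $-2\sum_i \CA_g(\dots,\pi^*\pi_*\gamma_i,\dots)\cup\psi_i$, and the disconnected genus-$0$ bubble term $2\sum j_*\Delta^!(\CA_{g_1}\boxtimes\CA^{\vir}_{g_2})$ in the output of the Leibniz computation. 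The $\CA^{\vir}$ term arises precisely because $\Mbar_{0,n}(\A,0)=\Mbar_{0,n}\times\A$ carries a non-trivial virtual class in degree $0$, exactly as the $\Mbar_{1,1}(X,0)$ term entered the proof of Proposition*~\ref{Prop_HAE_for_CY3}; this is a direct computation using $c_2$ and $c_3$ as in \cite{GP}.

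The subtle and genuinely new term is the $T_{\Delta}$ correction $- \sum_{i,j}(G^{-1})_{ij} T_{b_i}T_{b_j}\CA_g$, and accounting for its appearance is what I expect to be the main obstacle. The strategy is the one sketched in Section~\ref{Subsection_ab_explain}: the index lattice here is the full $H^2(\A)$ modulo $[E_1],[E_2]$, which is $2L$-unimodular of rank $4$; by Proposition~\ref{QJac_Prop3}, every quasi-Jacobi form of this index is a polynomial combination of derivatives of the Riemann theta function $\Theta$ of that lattice with quasi-modular coefficients. One then applies Proposition~\ref{QJac_Prop2}(ii): the $\zeta^\lambda$-coefficient (period) of a quasi-Jacobi form has $\T_q$-derivative equal to $[\T_q f]_\lambda + \tfrac12\sum_{a,b}(L^{-1})_{ab}[\T_{\zeta_a}\T_{\zeta_b} f]_\lambda$. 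Since $\CA_g$ is (up to the $q^{-\lambda^T L^{-1}\lambda/4}$-twist) the $\zeta^\lambda$-coefficient of the $E_1\times E_2 \to E_1$ quasi-Jacobi potential with $\lambda$ the primitive class, the extra $\tfrac12\sum(L^{-1})_{ab}\T_{\zeta_a}\T_{\zeta_b}$ term is forced, and by Lemma*~\ref{Lemma_EllHAE} the operators $\T_{\zeta_a}$ act on the cycle-valued potential by inserting the endomorphisms $A(b_a)$ — which is exactly the definition \eqref{TDelta} of $T_{\Delta}$, up to the sign $G = -L$ built into the conventions of Section~\ref{subsubsection_pairing_and_intersection_matrix}. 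Assembling the gluing terms, the $\CA^{\vir}$ term, the $\psi$-terms, and this $T_{\Delta}$ term, and checking the combinatorial factors of $2$ match \eqref{HAE_for_A}, completes the proof; the only real difficulty is tracking these factors and signs carefully, and verifying the identification of periods with reduced invariants, both of which parallel \cite[Sec.~0.6]{HAE} and \cite{BOPY}.
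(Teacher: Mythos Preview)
There is a structural issue and a substantive gap. Structurally, the statement you target is stated in the paper as a \emph{conjecture}; the paper only establishes its numerical specialization (Theorem~\ref{thm_AHAE}). Your write-up reads as an attempt at the full cycle-valued claim.

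Substantively, the core of your strategy --- realizing $\CA_g$ as the $\zeta^{\lambda}$-coefficient of a quasi-Jacobi potential $\CC^{\pi}_{g,1}$ for the fibration $\pi:\A\to E_1$ and then invoking Proposition~\ref{QJac_Prop2}(ii) to produce the $T_{\Delta}$ term --- does not go through. The abelian surface has $h^{2,0}(\A)=1\neq 0$, so it violates condition~(iii) of Section~\ref{Subsubsection_ellip_fibr_definition}; the ordinary virtual class on $\Mbar_{g,n}(\A,\beta)$ vanishes for $\beta\neq 0$, and there is no nontrivial quasi-Jacobi form $\CC^{\pi}_{g,1}$ for $\CA_g$ to be a period of. Section~\ref{Subsection_ab_explain}, which you lean on, is explicitly an \emph{analogy} with the rational elliptic surface (``The class $\CR_g$ should roughly correspond to the classes $\CA_g$\ldots''), not a proof mechanism for $\A$ itself. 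Your product decomposition has the same defect: if $\CC^{\pi_{E_1}}_{g_1,1}$ denotes the ordinary $\pi$-relative class of $E_1\times E_2\to E_1$, it vanishes; if it denotes the reduced class, it is $\CA_{g_1}$ and the argument is circular. (Separately, the lattice $V\subset H^2(\A)$ has signature $(2,2)$, so the positive-definiteness hypothesis of Proposition~\ref{QJac_Prop2} also fails.)

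The paper's actual proof of the numerical statement (Section~\ref{Subsection_ab_proof}) takes a different route: it invokes the effective calculation scheme of \cite{BOPY} --- abelian vanishing, boundary restriction, divisor equation, and degeneration to the normal cone of an elliptic fiber --- and checks that each step is compatible with the conjectured equation~\eqref{HAE_for_A}, parallel to the K3 argument in \cite{HAE}. No quasi-Jacobi interpretation of $\CA_g$ is used; the $T_{\Delta}$ term is verified to be preserved by the algorithm rather than derived from a period formula.
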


Let $p : \Mbar_{g,n}(E_1,1) \to \Mbar_{g,n}$
be the forgetful map, and recall the tautological subring
$R^{\ast}(\Mbar_{g,n}) \subset H^{\ast}(\Mbar_{g,n})$.
In the unstable cases we will use the convention of Section~\ref{Subsection_CONVENTION}.
By \cite{BOPY} Conjecture~\ref{Conj_Quasimodularity_A} holds numerically:
\begin{equation} \label{454353}
\int_{\Mbar_{g,n}(E_1,1)} p^{\ast}(\alpha) \cap \CA_g(\gamma_1, \ldots, \gamma_n) \, \in \QMod
\end{equation}
for all tautological classes $\alpha \in R^{\ast}(\Mbar_{g,n})$.
We show the holomorphic anomaly equation holds numerically as well.
\begin{thm} \label{thm_AHAE}
For any tautological class $\alpha \in R^{\ast}(\Mbar_{g,n})$,
\[
\frac{d}{dC_2} \int p^{\ast}(\alpha) \cap \CA_g(\gamma_1, \ldots, \gamma_n)
=
\int p^{\ast}(\alpha) \cap \HH^{\CA}_{g}(\gamma_1, \ldots, \gamma_n).
\]
\end{thm}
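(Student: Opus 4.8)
The proof follows the K3 strategy of \cite[Sec.~0.6]{HAE}, adapted to $\A = E_1\times E_2$; the one genuinely new feature is that the extra term $T_\Delta\CA_g$ in \eqref{HAE_for_A} is produced by the period formula of Proposition~\ref{QJac_Prop2}(ii). The plan is: (1) assemble the reduced classes over all curve classes of degree one over $E_1$ into a cycle-valued quasi-Jacobi form for the rank-$4$ lattice $\Lambda\subset H^2(\A)$; (2) prove that this lift satisfies the ``plain'' holomorphic anomaly equation of Conjecture~\ref{Conj_HAE} for $\pi:\A\to E_1$; (3) extract Theorem~\ref{thm_AHAE} by taking the $\zeta^0$-coefficient, Proposition~\ref{QJac_Prop2}(ii) being exactly where $T_\Delta$ enters.

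For Step~(1), let $\Lambda = (\BQ[E_2]\oplus\iota_\ast H_2(E_1))^{\perp}\subset H^2(\A)$ as in \eqref{fsdfsdgsdf}; in the notation of \eqref{TDelta} this is the lattice $V$ with Gram matrix $G$, so $Q_{[E_1]} = -G$, and $\Lambda$ is even unimodular of signature $(2,2)$. Writing $z$ for the corresponding formal variable, define the lifted class $\widehat{\CA}_g(\gamma_1,\dots,\gamma_n)$ by summing $\zeta^{\beta}q^{W\cdot\beta}\,\pi_\ast\big([\Mbar_{g,n}(\A,\beta)]^{\mathrm{red}}\prod_i\ev_i^\ast\gamma_i\big)$ over all $\beta$ with $\pi_\ast\beta = [E_1]$. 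By deformation invariance the reduced invariants depend only on $\beta^2$, so $\widehat{\CA}_g$ is a theta-type repackaging of $\CA_g$, and after integration against a tautological class $\int p^\ast(\alpha)\cap\CA_g(\gamma)$ is, up to the $q$-rescaling of Proposition~\ref{QJac_Prop2}, the $\zeta^0$-coefficient of $\int p^\ast(\alpha)\cap\widehat{\CA}_g(\gamma)$. Using \cite{BOPY} together with the translation action of the sections of $\pi$ (the Mordell--Weil group is all of $E_2$), one checks that $\int p^\ast(\alpha)\cap\widehat{\CA}_g(\gamma)$ lies in $\Delta(q)^{-1/2}\QJac_{Q_{[E_1]}/2}$; since the index is unimodular its structure is controlled by $\Theta_{Q}$ and its derivatives (Proposition~\ref{QJac_Prop3}), so no congruence-subgroup theory is needed, and one works throughout in the formal algebra generated by $\Theta_{Q}$ and its derivatives (the honest theta series does not converge because $Q_{[E_1]}$ is indefinite, but the $\zeta^\lambda$-coefficients, suitably normalized, do).

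For Step~(2), one degenerates $\A$ to the normal cone of a non-singular fiber $E_2\subset\A$ (the normal bundle is trivial, so $\A\rightsquigarrow\A\cup_{E_2}(\p^1\times E_2)$, compatibly with $E_1\rightsquigarrow E_1\cup_0\p^1$), applies the degeneration formula, and evaluates the $(\p^1\times E_2)/E_2$-contributions by the product formula \cite{LQ} and the holomorphic anomaly equation for the elliptic curve \cite[Thm.~2]{HAE} together with its compatibility with the degeneration formula (Proposition~\ref{Proposition_Compatibility_with_degeneration_formula}), exactly as in the proofs of Theorems~\ref{theorem_RES1} and~\ref{theorem_RES2} and Proposition~\ref{DEGENERATIONPROP}. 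This yields the holomorphic anomaly equation of Conjecture~\ref{Conj_HAE} for $\widehat{\CA}_g$; its first term gives the $\iota_\ast\Delta^{!}$ piece of \eqref{HAE_for_A}, its splitting term --- after passing to the $\zeta^0$-coefficient, which forces one factor to have fiber degree $0$ over $E_1$ --- gives the $j_\ast\Delta^{!}$ piece with the factor $2$ and the degree-zero insertion $\CA^{\vir}$ (the evaluation of the degree-$0$ factor being parallel to Proposition*~\ref{Prop_HAE_for_CY3}), and its $\psi$-term gives the $-2\sum_i\psi_i$ piece.

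Finally, Step~(3): Proposition~\ref{QJac_Prop2}(ii) applied to $f = \int p^\ast(\alpha)\cap\widehat{\CA}_g(\gamma)$ with $\lambda = 0$ writes $\T_q[f]_0$ as $[\T_q f]_0$ plus $\tfrac12\sum_{a,b}(L^{-1})_{ab}$ times the elliptic double anomaly derivative, where $L = Q_{[E_1]}/2 = -G/2$. The first term is supplied by Step~(2); in the second, the operator $\T_\lambda$ for $\lambda\in\Lambda$ acts on the Gromov--Witten classes by inserting $A(\lambda)$ at each marking (Lemma*~\ref{Lemma_EllHAE}), and since $\tfrac12 L^{-1} = -G^{-1}$ this term is precisely $T_\Delta\CA_g$ of \eqref{TDelta}. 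Setting $z = 0$ and collecting terms gives the theorem. The main obstacle is Step~(2): the abelian surface has $h^{2,0} = 1$, so the elliptic-fibration conjectures do not literally apply, and one must check by hand --- as is done for K3 in \cite{MPT,HAE} --- that after degeneration the reduced virtual class is compatible with the degeneration formula and that exactly the plain holomorphic anomaly equation (with no extra term) emerges for the lift, the $T_\Delta$ correction being produced purely formally in Step~(3); the indefiniteness of $Q_{[E_1]}$ is a secondary point dealt with by the formal-algebra reformulation above.
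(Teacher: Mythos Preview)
Your route differs from the paper's, and it has a real gap. The paper does \emph{not} lift $\CA_g$ to a quasi-Jacobi class and then extract the $\zeta^0$-coefficient; that picture appears only in Section~\ref{Subsection_ab_explain} as \emph{motivation} for the $T_\Delta$ term (and there the concrete illustration is the rational elliptic surface, whose lattice is definite). The actual proof is direct: run the effective calculation scheme of \cite{BOPY} for $\CA_g$ --- abelian vanishing, boundary restriction, divisor equation, degeneration to the normal cone of a fiber --- and check at each step that the identity $\frac{d}{dC_2}\int p^\ast(\alpha)\cap\CA_g = \int p^\ast(\alpha)\cap\HH^{\CA}_g$ (with the $T_\Delta$ term already built into $\HH^{\CA}_g$) is preserved. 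This is exactly parallel to the K3 argument in \cite{HAE}; no quasi-Jacobi lift is used.

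The gap in your approach is the indefinite index. For $\A=E_1\times E_2$ the lattice $\Lambda$ has signature $(2,2)$, so $Q_{[E_1]}/2$ is not positive definite. Propositions~\ref{QJac_Prop1}--\ref{QJac_Prop3}, which you invoke for the theta decomposition and the period formula, are stated and proved only for positive-definite $L$; the theta series $\Theta_Q$ does not converge, and the ``formal algebra generated by $\Theta_Q$ and its derivatives'' is not a well-defined object in the paper's framework. Making this rigorous would require an indefinite/mock theory that is nowhere developed here. Moreover, your Step~(2) --- proving the plain Conjecture~\ref{Conj_HAE}-style equation for the reduced lift $\widehat{\CA}_g$ --- is not covered by Proposition~\ref{Proposition_Compatibility_with_degeneration_formula}, which concerns ordinary virtual classes; verifying it ``by hand'' for the reduced theory amounts to the same step-by-step compatibility check the paper performs directly on $\CA_g$, so the detour through $\widehat{\CA}_g$ buys nothing and costs the indefinite-lattice issue.
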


\subsection{Discussion of the anomaly equation} \label{Subsection_ab_explain}
The holomorphic anomaly equation for abelian and K3 surfaces (see \cite{HAE}) require two modifications
to Conjecture~\ref{Conj_HAE}.
The first is the modified splitting term (the second term on the right-hand side of \eqref{HAE_for_A}).
It arises naturally from the formula for the restriction of the reduced virtual class $[ \, \cdot \, ]^{\text{red}}$
to boundary components, see e.g. \cite[Sec.7.3]{MPT}.

The second modification in \eqref{HAE_for_A} is the term $T_{\Delta} \CA_{g}(\gamma_1, \ldots, \gamma_n)$ which appears for K3 surfaces
in \cite[Sec.0.6]{HAE} in its explicit form.
To explain its origin we consider the difference in definition of the Gromov--Witten potentials $\CC^{\pi}_{g,\kk}$ and $\CA$.
The class $\CC_{g,\kk}^{\pi}$ is defined by summing over all classes $\beta$ on $X$ which are of degree $\kk$ over the base,
while for $\CA$ we fix the base class $[E_1]$ and sum over the fiber direction $[E_1] + d [E_2]$.
The latter
corresponds to taking the $\zeta^{0}$-coefficient of the quasi-Jacobi form $\CC_{g,\kk}^{\pi}$.
By Proposition~\ref{QJac_Prop2} the $C_2$-derivative of this $\zeta^{0}$-coefficient then naturally acquires an extra term
which exactly matches $T_{\Delta} \CA_g$.

To make the discussion more concrete consider a rational elliptic surface $\pi : R \to \p^1$ and
consider the $\zeta^{0}$-coefficient of the class $\CC_{g,k=1}$,
\[ \CR_{g}(\gamma_1, \ldots, \gamma_n) = \left[ \CC^{\pi}_{g,1}(\gamma_1, \ldots, \gamma_n) \right]_{\zeta^0}. \]
The class $\CR_g$ should roughly correspond to the classes $\CA_g$ for abelian
and $\CK_g$ for K3 surfaces\footnote{The classes $\CK_g$ are the analogues of $\CA_g$ for K3 surfaces, see \cite[Sec.1.6]{HAE} for a definition.}.
Assuming Conjecture~\ref{Conj_Quasimodularity} and using Section~\ref{Subsubsection_Halfunimodular_index}
we find $\CR_g$ is a cycle-valued $\mathrm{SL}_2(\BZ)$-quasi-modular form.
Assuming Conjecture~\ref{Conj_HAE} and using Proposition~\ref{QJac_Prop2} then yields the holomorphic anomaly equation
\begin{align*}
\frac{d}{dC_2} \CR_{g}(\gamma_1, \ldots, \gamma_n)
= \,
& \iota_{\ast} \Delta^{!} \CR_{g-1}( \gamma_1, \ldots, \gamma_n, \1, \1 ) \\
& + 2 \sum_{\substack{g= g_1 + g_2 \\ \{1,\ldots, n\} = S_1 \sqcup S_2}}
j_{\ast} \Delta^{!} \left( \CR_{g_1}( \gamma_{S_1}, \1 ) \boxtimes \CC^{\pi}_{g_2,0}( \gamma_{S_2}, \1 ) \right) \\
& - 2 \sum_{i=1}^{n} \CR_g( \gamma_1, \ldots, \gamma_{i-1}, \pi^{\ast} \pi_{\ast} \gamma_i, \gamma_{i+1}, \ldots, \gamma_n ) \cup \psi_i \\
& + T_{\Delta} \CR_{g}(\gamma_1, \ldots, \gamma_n)
\end{align*}
where the operator $T_{\Delta}$ is defined as in \eqref{TDelta}
but with $V$ replaced by $H^2_{\perp}$.
Hence we recover the same term as for abelian and K3 surfaces.


\subsection{Proof of Theorem~\ref{thm_AHAE}} \label{Subsection_ab_proof}
The quasi-modularity \eqref{454353} was proven in \cite{BOPY}
by an effective calculation scheme using the following ingredients:
(i) an abelian vanishing equation, (ii) tautological relations / restriction to boundary,
(iii) divisor equation, (iv) degeneration to the normal cone of an elliptic fiber.
One checks that each such step is compatible with the holomorphic anomaly equation.
For the K3 surface this was done in detail in \cite{HAE} and the abelian surface case is parallel. \qed

\appendix
\section{Cohomological field theories}
\label{Section_CohFTs}
\subsection{Introduction}
A cohomological field theory (CohFT) $\Omega$ is a collection of classes
\[ \Omega_{g,n}(v_1,\ldots,v_n) \in H^*(\Mbar_{g,n},A) \]
satisfying certain splitting axioms with respect to the boundary divisors of $\Mbar_{g,n}$
(see \cite{Jan} for an introduction).
Here the CohFT has coefficients in some commutative $\BQ$-algebra $A$.
Pushing forward the Gromov-Witten virtual class (after capping with classes pulled back from the target space) is one of the main ways of constructing
cohomological field theories.

There are two important group actions on CohFTs. The first is by the automorphism group $\Aut(A)$ of the coefficient ring $A$. The second is Givental's $R$-matrix action, which involves the boundary geometry of $\Mbar_{g,n}$. Teleman \cite{Tel} proved that for semisimple CohFTs, any two CohFTs with the same values on $\Mbar_{0,3}$ are related by the action of a unique $R$-matrix. This has the following consequence relating the two actions. Suppose that $\Omega$ is a CohFT and $\phi\in\Aut(A)$ is an automorphism fixing $\Omega_{0,3}$. Then there must exist a corresponding $R$-matrix taking $\Omega$ to $\phi(\Omega)$ under Givental's action. For non-semisimple theories, such a correspondence may still exist but is not guaranteed.

Now suppose that $D$ is a derivation of $A$ and we are interested in a formula for $D(\Omega)$. In this case, $\exp(tD)$ is an automorphism of $A[[t]]$, so we may ask whether $\Omega$ and $\exp(tD)(\Omega)$ are related by some $R$-matrix. If they are, then taking the linear part of Givental's $R$-matrix action gives a formula for $D(\Omega)$. In other words, derivations of the coefficient ring correspond sometimes to a linearization of the $R$-matrix action.

In this appendix we will apply this perspective to the holomorphic anomaly equations conjectured in this paper. Things are more difficult than in the discussion above because the $\pi$-relative Gromov-Witten generating series $\CC^{\pi}_{g, \kk}$ discussed in this paper is not quite a CohFT (as it takes values in $H_*(\Mbar_{g,n}(B,\kk))$, not in $H^*(\Mbar_{g,n})$). In Section~\ref{app:weak} we address this issue by defining weak $B$-valued field theories, and then in Section~\ref{app:R-matrix} we define an (infinitesimal) $R$-matrix action on these theories. In Section~\ref{app:cocycle} we describe how our conjectured holomorphic anomaly equations can be expressed via a function from the Jacobi Lie algebra to the space of $R$-matrices satisfying a cocycle condition.

\subsection{Weak $B$-valued field theories}\label{app:weak}
Let $B$ be a non-singular projective variety. For convenience, let $H = H^*(B,\BQ)$. Let $V$ be a finitely generated $H$-module with a perfect\footnote{
By Poincar\'e duality of $B$ 
the pairing $\eta$ is perfect if and only if the $\BQ$-valued pairing $\int_B \eta$ is perfect.}
pairing of $H$-modules $\eta: V\times V\to H$ and a distinguished element $\unit\in V$. Let $A$ be a commutative $\BQ$-algebra. Then a \emph{weak $B$-valued field theory}\footnote{
The word ``weak" in this name refers to the fact that we only use a single boundary divisor in condition (iv) of the definition. The analogous condition in the definition of a cohomological field theory uses all boundary divisors of $\Mbar_{g,n}$.} $\Omega$ on $(V,\eta,\unit)$ with coefficients in $A$ is a collection of maps
\[
\Omega_{g,n}^\kk: V^{\otimes n} \to H_*(\Mbar_{g,n}(B,\kk)) \otimes A
\]
(all tensor products taken over $\BQ$ unless otherwise stated) defined for all $g,n\ge 0$ and $\kk\in H_2(B,\BZ)$ with $2g-2+n > 0$ or $\kk > 0$, satisfying the following four conditions:
\begin{enumerate}
\item[(i)] Each map $\Omega_{g,n}^\kk$ is $H^n$-equivariant, where the $i$-th copy of $H$ acts on the $i$-th factor of $V^{\otimes n}$ and by pulling back classes to $\Mbar_{g,n}(B,\kk)$ using the evaluation map at the $i$-th marked point.
\item[(ii)] Each map $\Omega_{g,n}^\kk$ is $S_n$-equivariant, where $S_n$ acts by permuting the factors of $V^{\otimes n}$ and permuting the labels of marked points in $\Mbar_{g,n}(B,\kk)$.
\item[(iii)] For any classes $v,w\in V$,
\[
\Omega_{0,3}^0(\unit,v,w) = \eta(v,w)
\]
under the isomorphism $H_*(\Mbar_{0,3}(B,0))\otimes A \cong H \otimes A$.
\end{enumerate}

For the fourth condition, we will need two further definitions. First, define the quantum product $*$ on $V\otimes A$ by the property
\[
\Omega_{0,3}^0(u,v,w) = \eta(u*v,w).
\]
Second, suppose that $\iota:\Mbar_{g,n+1}\to\Mbar_{g,n+2}$ is defined by replacing the marked point $p_{n+1}$ by a rational bubble containing two marked points $p_{n+1},p_{n+2}$. Let $F$ be the fiber product of this map and the forgetful map $\Mbar_{g,n+2}(B,\kk)\to\Mbar_{g,n+2}$. One connected component of $F$ is naturally isomorphic to $\Mbar_{g,n+1}(B,\kk)$. Given any class $\alpha\in H_*(\Mbar_{g,n+2}(B,\kk)$, let $\iota^\sharp\alpha\in H_*(\Mbar_{g,n+1}(B,\kk))$ be the restriction of $\iota^!\alpha$ to this component. Then our fourth condition is:
\begin{enumerate}
\item[(iv)] For any $g,n,\kk$, and $v_1,\ldots,v_{n+2}$,
\[
\iota^\sharp\Omega_{g,n+2}^\kk(v_1,\ldots, v_{n+2}) = \Omega_{g,n+1}^\kk(v_1,\ldots, v_n,v_{n+1}*v_{n+2}).
\]
\end{enumerate}

It is straightforward to check that the $\pi$-relative Gromov-Witten generating series $\CC^{\pi}_{g, \kk}$ discussed in this paper forms a weak $B$-valued field theory on $(H^*(X,\BQ),\eta,1)$ with coefficients in $\BQ[[q^{\frac{1}{2}},\zeta]]$, where the pairing is given by $\eta(\alpha,\beta) := \pi_*(\alpha\beta)$. If we assume Conjecture~\ref{Conj_Quasimodularity} then we may take the coefficient ring $A$ to be the algebra $\QJac[\Delta^{-1/2}]$.

\subsection{Matrix actions}\label{app:R-matrix}

In this section, we define a matrix action on weak $B$-valued field theories that should be viewed as an infinitesimal analogue of Givental's $R$-matrix action on cohomological field theories. Fix the data $(V,\eta,\unit)$ and the coefficient ring $A$ as before. Let $\CR(V,\eta)$ be the (associative) algebra of formal Laurent series
\[
M = \, \ldots + M_{-1}z^{-1} + M_0 + M_1 z + \ldots,
\]
where $M_i$ is an element of $V\otimes_H V$ for $i \ge 0$ and an element of $\End(V) = \Hom_H(V,V)$ for $i < 0$ (and vanishes for all $i$ sufficiently negative). The multiplication on $\CR(V,\eta)$ is defined by contraction by the pairing $\eta:V\otimes_H V\to H$ along with the homomorphism
\[
V\otimes_H V \to \End(V)
\]
defined by $(a\otimes b)(v) = \eta(b,v)a$.

Let $M$ be an element of $\CR(V,\eta)\otimes A$ satisfying the following two conditions:
\begin{enumerate}
\item[(a)] Let $M_+\in V\otimes_H V[[z]]\otimes A$ be the part of $M$ with nonnegative powers of $z$. Then we require that
\[
M_+(z) + M_+^t(-z) = 0,
\]
where $M_+^t$ is defined by interchanging the two copies of $V$ in $V\otimes_H V$.

\item[(b)] The principal part of $M$ is of the form
\[
M - M_+ = m_v z^{-1},
\]
where 
$v\in V\otimes A$ and 
$m_v\in \End(V)\otimes A$ is the operator of quantum multiplication by $v$.
\end{enumerate}

Given a weak $B$-valued theory $\Omega$ on the above data, we define new maps
\[
(r_M\Omega)^{\kk}_{g,n}: V^{\otimes n}\to H_*(\Mbar_{g,n}(B,\kk)) \otimes A
\]
by
\begin{align*}
& (r_M\Omega)_{g,n}^\kk(v_1,\ldots,v_n) \\
:= \  & -\frac{1}{2}\iota_{\ast}\Delta^!
\Omega_{g-1,n+2}^\kk(v_1,\ldots,v_n,\CE) \\
&
-\frac{1}{2}\sum_{\substack{ g= g_1 + g_2 \\
\{1,\ldots, n\} = S_1 \sqcup S_2 \\
\mathsf{k} = \mathsf{k}_1 + \mathsf{k}_2}}
j_\ast\Delta^!\left(
\Omega_{g_1,|S_1|+1}^{\kk_1}(v_{S_1},\CE^{(1)}) \boxtimes
\Omega_{g_2,|S_2|+1}^{\kk_2}(v_{S_2},\CE^{(2)}) \right) \\
&
+ \sum_{i=1}^n
\Omega_{g,n}^\kk(v_1,\ldots,v_{i-1},M_+ v_i,v_{i+1},\ldots,v_n) \\
&
- p_*\Omega_{g,n+1}^\kk(v_1,\ldots,v_n,zM\unit),
\end{align*}
where $\CE$ is any lift of
\[
\frac{M_+(z) + M_+^t(z')}{z+z'} \in (V\otimes_H V)[[z,z']]\otimes A
\]
to $(V\otimes_\BQ V)[[z,z']]\otimes A$, we are using notation as in Conjecture~\ref{Conj_HAE}, and all $z$ variables should be replaced by capping with the corresponding $\psi$ classes.

We make some comments on $r_M\Omega$:

1) If $\Omega$ is a weak $B$-valued field theory with coefficients in $A$, then $\Omega + t\cdot r_M\Omega$ is a weak $B$-valued field theory with coefficients in $A[t]/t^2$.

2) Our main holomorphic anomaly equation, Conjecture~\ref{Conj_HAE}, can be restated as saying that
\[
\T_q\CC^\pi = r_{-2(1\otimes 1)z}\CC^\pi,
\]
where $\T_q$ is the derivation defined in Section~\ref{Subsubsection_quasiJacobi_Differentialoperators} on the coefficient ring $A = \QJac[\Delta^{-1/2}]$.

3) If $M = m_v z^{-1}$ for some $v\in V\otimes A$, then $M_+ = 0$ and the definition above simplifies to
\[
(r_{m_v z^{-1}}\Omega)_{g,n}^\kk(v_1,\ldots,v_n) = - p_*\Omega_{g,n+1}^\kk(v_1,\ldots,v_n,v).
\]
Then the divisor equation says that
\[
D_q\CC^\pi = r_{-m_W z^{-1}}\CC^\pi, \quad D_\lambda\CC^\pi = r_{-m_\lambda z^{-1}}\CC^\pi.
\]

\subsection{The derivation-matrix correspondence}\label{app:cocycle}

The derivations $\T_q,D_q,D_\lambda$ on $\QJac$ generate the Jacobi Lie algebra.
We have seen above that 
the action of each of these derivations on $\CC^\pi$ is given by some matrix action $r_M$.
The following general result extends this to the entire Jacobi Lie algebra.

\begin{prop}\label{app:commutator}
Let $\Omega$ be a weak $B$-valued theory on $(V,\eta,\unit)$ with coefficients in $A$. Suppose that $D_1,D_2$ are $\BQ$-linear derivations of $A$ and $M_1,M_2\in \CR(V,\eta)\otimes A$ satisfy the conditions (a), (b) used to define $r_{M_1}\Omega,r_{M_2}\Omega$ above. If
\[
D_i\Omega = r_{M_i}\Omega
\]
for $i = 1,2$, then
\[
[D_1,D_2]\Omega = r_{[M_1,M_2] + D_1(M_2)-D_2(M_1)}\Omega.
\]
\end{prop}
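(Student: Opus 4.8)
The plan is to verify the identity $[D_1,D_2]\Omega = r_{N}\Omega$ with $N = [M_1,M_2] + D_1(M_2) - D_2(M_1)$ by first checking that $N$ again satisfies the structural conditions (a) and (b), and then by a direct term-by-term comparison of $r_{M_1}(r_{M_2}\Omega) - r_{M_2}(r_{M_1}\Omega)$ against $r_{[M_1,M_2]}\Omega$, using that $D_i$ acts on the coefficient ring $A$ only (and commutes with all the geometric operations $\iota_\ast\Delta^!$, $j_\ast\Delta^!$, $p_\ast$, and capping with $\psi$ classes). More precisely, since $D_i\Omega = r_{M_i}\Omega$, one has $D_1 D_2 \Omega = D_1(r_{M_2}\Omega) = r_{D_1(M_2)}\Omega + r_{M_2}(D_1\Omega) = r_{D_1(M_2)}\Omega + r_{M_2}(r_{M_1}\Omega)$, where the middle equality uses that $r_M$ is $\BQ$-linear in $M$ and that $D_1$ differentiates the $A$-coefficients appearing both in $M_2$ and in $\Omega$ via Leibniz. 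Subtracting the same expression with indices swapped gives
\[
[D_1,D_2]\Omega = r_{D_1(M_2) - D_2(M_1)}\Omega + \big( r_{M_2} r_{M_1} - r_{M_1} r_{M_2} \big)\Omega,
\]
so the entire content of the Proposition is the \emph{commutator formula} $r_{M_2}(r_{M_1}\Omega) - r_{M_1}(r_{M_2}\Omega) = r_{[M_1,M_2]}\Omega$ for the infinitesimal matrix action, i.e. that $M \mapsto r_M$ is a Lie algebra (anti)homomorphism into the derivations of the space of weak $B$-valued field theories.

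First I would reduce to this commutator statement and then prove it by the standard graph-sum bookkeeping familiar from Givental--Teleman theory, adapted to the weak $B$-valued setting. The operator $r_M$ is a sum of four types of contributions — a non-separating edge smoothing $\iota_\ast\Delta^!$, a separating edge $j_\ast\Delta^!$, insertion of $M_+$ at a leg, and a dilaton-type leg $-p_\ast(\cdots zM\unit)$ — each decorated with $\psi$-classes encoding the $z$-powers. Iterating $r_{M_1}$ after $r_{M_2}$ produces decorated graphs with (at most) two special edges/legs; the cross terms reorganize into: (1) graphs with a single edge carrying the contracted bilinear $\CE_{M_1}\cdot_\eta (M_2)_+$-type propagator, which by the ``$z+z'$'' kernel identity $\tfrac{(M_1)_+(z)(M_2)_+(z')-(M_2)_+(z)(M_1)_+(z')}{z+z'}$ collapses to the $\CE$-kernel of $[M_1,M_2]$; (2) graphs where an $M_+$-insertion from one step sits on a leg created by the other, assembling into the $[M_1,M_2]$-insertion terms via associativity of the $\CR(V,\eta)$-multiplication; (3) the dilaton-leg interactions, where the principal (negative-power) parts $m_{v_i}z^{-1}$ act, and one uses condition (iv) of the field theory definition (the ``$\iota^\sharp$'' gluing/string-type axiom) together with the quantum-multiplication compatibility to match the $z^{-1}$-part of $[M_1,M_2]$. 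The conditions (a) and (b) are exactly what makes $[M_1,M_2]$ land back in the allowed class: (a) (the symplectic/symmetry condition $M_+(z)+M_+^t(-z)=0$) is preserved by the bracket of two such series, and the principal parts bracket as $[m_v,m_w]z^{-2}$ plus cross terms $[m_v,(M_2)_+]z^{-1}$, etc., whose $z^{-1}$-coefficient is again multiplication by a class in $V\otimes A$ because quantum multiplication is commutative; one checks the $z^{-2}$ term cancels. This verification of (a),(b) for $N$ I would do first, as a short lemma, since it is needed even to make sense of the right-hand side $r_N\Omega$.

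The main obstacle, and the step that needs genuine care rather than formal manipulation, is the precise matching of the boundary terms under iteration: when $\iota_\ast\Delta^!$ or $j_\ast\Delta^!$ from the first application is followed by another such operation, one gets codimension-two boundary contributions and one must invoke the compatibility of the refined Gysin maps $\Delta^!$ with the further gluing maps (excess-intersection / commutativity of refined pullbacks on the fibered squares defining $M_\Delta$ and $M_{\Delta,\kk_1,\kk_2}$), as well as the commutativity of $\psi$-class cappings pulled back along the stabilization/forgetful maps $p$. In the usual CohFT setting this is handled by Teleman's analysis on $H^*(\Mbar_{g,n})$; here, because $\Omega$ takes values in $H_*(\Mbar_{g,n}(B,\kk))$ and we only impose the single-edge axiom (iv), the argument must be done ``by hand'' using the explicit diagrams of Conjecture~\ref{Conj_HAE} — so the crux is confirming that axioms (i)--(iv) for a weak $B$-valued field theory are in fact strong enough for all these compatibilities. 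I expect this to go through: each identity needed (projection formula for $\iota_\ast$, commutativity of $\Delta^!$ with $j_\ast$, and the $\psi$-pullback comparison under $p$) is a standard property of virtual/refined intersection theory that does not require the full CohFT splitting axioms, only the functoriality of the maps already built into the definitions. Once these are in place, assembling the graph sum and reading off the coefficient of each decorated graph yields $r_N\Omega$ on the nose.
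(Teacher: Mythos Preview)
Your proposal is correct and follows essentially the same approach as the paper's own (sketch of) proof: both compute $D_1D_2\Omega$ by applying $D_1$ to the defining formula for $r_{M_2}\Omega$, substituting $r_{M_1}\Omega$ for $D_1\Omega$, expanding, and then antisymmetrizing. Your explicit Leibniz reduction to the commutator identity $(r_{M_2}r_{M_1}-r_{M_1}r_{M_2})\Omega = r_{[M_1,M_2]}\Omega$ is a clean way to organize the same computation, and the geometric subtleties you flag --- the $\psi$-class pullback discrepancy under $p$ and the quantum-product/axiom-(iv) input --- match the paper's listed sources of non-canceling terms ($p^*\psi_i\neq\psi_i$ and the instability of $\Mbar_{0,2}(B,0)$); you are perhaps slightly less explicit than the paper about the latter, which arises when forgetting a marked point in the dilaton-leg term collapses one side of a splitting to an unstable $(g,n,\kk)=(0,2,0)$ factor.
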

\begin{proof}[Sketch of proof]
We can compute $D_1 D_2\Omega = D_1 r_{M_2}\Omega$ by applying the derivation $D_1$ to the definition of $r_{M_2}\Omega$, then replacing $D_1\Omega$ in the result with $r_{M_1}\Omega$, and finally expanding $r_{M_1}\Omega$ using its definition. Repeating this procedure for $D_2 D_1\Omega$ and taking the difference, most terms cancel. The non-canceling terms come from several different sources (applying $D_i$ to the coefficients of $M_j$; $M_1$ and $M_2$ not necessarily commuting; $p^*\psi_i \ne \psi_i$; $\Mbar_{0,2}(B,0)$ being unstable) and sum to the claimed matrix action.
\end{proof}

Assuming Conjecture~\ref{Conj_HAE} and applying this result to $\CC^{\pi}$, we have the following corollary:
\begin{corstar}
Let $J$ be the Jacobi Lie algebra of derivations of $\QJac$ generated by $\T_q,D_q,D_\lambda$. Then there exists a function
\[
f:J\to \CR(V,\eta)\otimes\QJac[\Delta^{-1/2}]
\]
such that
\[
D\CC^\pi = r_{f(D)}\CC^\pi
\]
for all $D\in J$.
\end{corstar}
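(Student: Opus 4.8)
The plan is to realize $f$ by identifying $J$ with a Lie subalgebra of the derivations that act on $\CC^{\pi}$ through a matrix action. Assume Conjectures~\ref{Conj_Quasimodularity} and~\ref{Conj_HAE}, so that $\CC^{\pi}$ is a weak $B$-valued field theory on $(H^{\ast}(X,\BQ),\eta,\unit)$ with coefficients in $A:=\QJac[\Delta^{-1/2}]$ and the operators $r_M$ of Section~\ref{app:R-matrix} are available. First I would record, using comments (2) and (3) of Section~\ref{app:R-matrix}, that the three generators of $J$ act through matrix actions: putting
\[
M_{\T_q} = -2(\unit\otimes\unit)z,\qquad M_{D_q} = -m_W z^{-1},\qquad M_{D_\lambda} = -m_\lambda z^{-1},
\]
one has $\T_q\CC^{\pi} = r_{M_{\T_q}}\CC^{\pi}$, $D_q\CC^{\pi} = r_{M_{D_q}}\CC^{\pi}$ and $D_\lambda\CC^{\pi} = r_{M_{D_\lambda}}\CC^{\pi}$. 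Each of these matrices satisfies conditions (a) and (b): $M_{\T_q}$ has positive part $-2(\unit\otimes\unit)z$, which satisfies (a) because $(\unit\otimes\unit)^t=\unit\otimes\unit$, and vanishing principal part, so (b) holds with $v=0$; while $M_{D_q}$ and $M_{D_\lambda}$ have trivial positive part, so (a) is vacuous, and principal part of the required form, with $v=-W$, resp.\ $v=-\lambda$, using $W,\lambda\in H^2(X)\subset V$.

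Next I would introduce the set
\[
S=\bigl\{\, D\in\mathrm{Der}(A)\ :\ \exists\, M\in\CR(V,\eta)\otimes A\ \text{satisfying (a),(b) with}\ D\CC^{\pi} = r_M\CC^{\pi} \,\bigr\}.
\]
It is a $\BQ$-subspace: $M\mapsto r_M\Omega$ is $A$-linear in $M$ (inspect its four terms; the lift $\CE$ may be chosen linearly), condition (a) is linear in $M$, and the quantum multiplication operators $\{\,m_v:v\in V\otimes A\,\}$ form an $A$-submodule of $\End(V)\otimes A$, so condition (b) survives linear combinations. Closure of $S$ under the bracket of derivations is the content of Proposition~\ref{app:commutator}: if $M_1,M_2$ witness $D_1,D_2\in S$, then $[M_1,M_2]+D_1(M_2)-D_2(M_1)$ witnesses $[D_1,D_2]\in S$ (in particular, this matrix again satisfies (a),(b); the a priori $z^{-2}$-term $(m_{v_1}m_{v_2}-m_{v_2}m_{v_1})z^{-2}=m_{v_1\ast v_2-v_2\ast v_1}z^{-2}$ of the commutator vanishes by commutativity of $\ast$). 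Hence $S$ is a Lie subalgebra of $\mathrm{Der}(A)$ containing $\T_q,D_q$ and all $D_\lambda$, so $J\subseteq S$. To produce $f$, fix a $\BQ$-basis of $J$, pick for each basis vector a witnessing matrix, and extend $\BQ$-linearly; by linearity of $r_{(-)}\Omega$ and the subspace structure above, the resulting $f$ satisfies $D\CC^{\pi} = r_{f(D)}\CC^{\pi}$ for all $D\in J$, with every $f(D)$ satisfying (a),(b). As a consistency check with Lemma*~\ref{Lemma_EllHAE}: since $\T_\lambda=-\tfrac12[\T_q,D_\lambda]$ and $\T_q(M_{D_\lambda})=D_\lambda(M_{\T_q})=0$, one obtains $f(\T_\lambda)=-\tfrac12[M_{\T_q},M_{D_\lambda}]$, the degree-zero element of $V\otimes_H V$ representing the $\eta$-skew operator $A(\lambda)$.

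The main obstacle is Proposition~\ref{app:commutator}, given only in sketch. One must carefully expand $D_1\,r_{M_2}\CC^{\pi}-D_2\,r_{M_1}\CC^{\pi}$ --- substituting the definition of $r_{M_i}$, applying $D_j$, and re-substituting $D_j\CC^{\pi}=r_{M_j}\CC^{\pi}$ --- and check that after the bulk cancellations the residual terms --- arising from differentiating the $\ast$-dependent coefficients of the $M_i$, from the noncommutativity of $M_1,M_2$ in $\CR(V,\eta)$, from $p^{\ast}\psi_i\neq\psi_i$ under the forgetful map, and from the instability of $\Mbar_{0,2}(B,0)$ --- assemble exactly into $r_{[M_1,M_2]+D_1(M_2)-D_2(M_1)}\CC^{\pi}$, and that this last matrix satisfies conditions (a) and (b). A smaller point to settle along the way is the independence of $r_M\Omega$ from the chosen lift $\CE$ of $\tfrac{M_+(z)+M_+^t(z')}{z+z'}$, which is used tacitly when matching terms.
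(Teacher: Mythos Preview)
Your proposal is correct and follows the same route as the paper: the corollary is deduced directly from Proposition~\ref{app:commutator} by starting with the three matrix actions recorded in comments~(2),(3) of Section~\ref{app:R-matrix} and propagating to all of $J$ via closure under brackets. One tiny quibble: when you kill the $z^{-2}$-term of the commutator you write $m_{v_1}m_{v_2}-m_{v_2}m_{v_1}=m_{v_1\ast v_2-v_2\ast v_1}$, which tacitly uses associativity of $\ast$ as well as commutativity; both hold for the Gromov--Witten quantum product, so the conclusion stands.
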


It is straightforward to compute the function $f$ above from the initial values, the commutator formulas in \eqref{COMMUTATIONRELATIONS}, and the formula in Proposition~\ref{app:commutator}:
\begin{equation}
\begin{alignedat}{2}
f(\T_q) &= -2(1\otimes 1)z^1 &
f(\T_\lambda) &= (\lambda\otimes 1 - 1 \otimes \lambda) \\
f(D_q) &= -m_W z^{-1} &
f(D_\lambda) &= -m_\lambda z^{-1} \\
f\left(-\frac{1}{2}[\T_q,D_q]\right) &= (W\otimes 1 - 1\otimes W) &
\quad f([\T_\lambda,D_\mu]) &= m_{\pi^*\pi_*(\lambda\mu)}z^{-1}.
\end{alignedat}
\end{equation}

Lemma*s \ref{Lemma_Weight_statement} and \ref{Lemma_EllHAE} can be recovered from the values of $f(-\frac{1}{2}[\T_q,D_q])$ and $f(\T_\lambda)$. The fact that the function $f$ satisfies the Lie algebra cocycle condition
\[
f([A,B]) = [f(A),f(B)] + A(f(B)) - B(f(A))
\]
can be viewed as a check on Conjecture~\ref{Conj_HAE}.

\section{K3 fibrations}
\label{Section_K3fibrations}
\subsection{Definition}
The second cohomology of a non-singular projective K3 surface $S$ is
a rank $22$ lattice with intersection form
\[ H^2(S,\BZ) \cong U \oplus U \oplus U \oplus E_8(-1) \oplus E_8(-1) \]
where $U = \binom{0\ 1}{1\ 0}$ is the hyperbolic lattice.
Consider a primitive embedding
\[ \Lambda \subset H^2(S,\BZ) \]
of signature $(1,r-1)$ and let $v_1, \ldots, v_r \in \Lambda$ be an integral basis.

Let $X$ be a non-singular projective variety with line bundles 
\[ L_1, \ldots, L_r \to X \]
A $\Lambda$-polarized K3 fibration is a flat morphism
\[ \pi : X \to B \]
with connected fibers satisfying the following properties\footnote{We refer to \cite{KMPS} for the definition of a $\Lambda$-polarized K3 surface.}:
\begin{enumerate}
\item[(i)] The smooth fibers
$X_{\xi}, \xi \in B$
of $\pi$ are $\Lambda$-polarized
K3 surfaces via \[ v_i \mapsto L_{i}|_{X_{\xi}}. \]
\item[(ii)] There exists a $\lambda \in \Lambda$ which restricts to a quasi-polarization on all smooth fibers of $\pi$ simultaneously.
\end{enumerate}

Given a curve class $\kk \in H_2(B,\BZ)$ and classes $\gamma_1, \ldots, \gamma_n \in H^{\ast}(X)$
we define the $\pi$-relative Gromov--Witten potential
\[
\CC_{g,\kk}^{\pi}(\gamma_1, \ldots, \gamma_n)
=
\sum_{\pi_{\ast} \beta = \kk} q_1^{L_1 \cdot \beta} \cdots q_r^{L_r \cdot \beta}
\pi_{\ast}\left( \left[ \Mbar_{g,n}(X,\beta) \right]^{\text{vir}} \prod_{i=1}^{n} \ev_i^{\ast}(\gamma_i) \right).
\]
where 
$\pi : \Mbar_{g,n}(X,\beta) \to \Mbar_{g,n}(B, \kk)$
is the morphism induced by $\pi$.
%

\vspace{5pt}
\noindent \textbf{Problem.} Find a ring of
quasi-modular objects $\CR \subset \BQ[[ q_1^{\pm 1}, \ldots, q_r^{\pm 1} ]]$
(depending only on $\Lambda$) such that for all $g, \kk$ and $\gamma_1, \ldots, \gamma_n$ we have
\[ \CC_{g,\kk}^{\pi}(\gamma_1, \ldots, \gamma_n) \in H_{\ast}(\Mbar_{g,n}(B,\kk)) \otimes \CR. \]
\vspace{0pt}

By quasi-modular objects we mean here functions of $q_1, \ldots, q_r$ which have modular properties after adding
a dependence on non-holomorphic parameters. We moreover ask the derivative along the
non-holomorphic parameters to induce a derivation on $\CR$.
We expect the classes $\CC_{g,\kk}$ to be govenered by a holomorphic anomaly equation
taking a shape similar to Conjecture~\ref{Conj_HAE}.
We discuss a basic example in the next Section.

\subsection{An example}
The STU model is a particular non-singular projective
Calabi--Yau threefold $X$ which admits a K3 fibration
\[ \pi : X \to \p^1 \]
polarized by the hyperbolic lattice $U$ via line bundles $L_1, L_2 \to X$.
Every smooth fiber $X_{\xi}$ of $\pi$ ($\xi \in \p^1)$ is an elliptic K3 surface with section.
The line bundles $L_i$ restrict to
\[ L_1|_{X_{\xi}} = F, \quad L_2|_{X_{\xi}} = S + F \]
where $S, F \in H^2(X_{\xi}, \BZ)$ are the section and fiber class respectively.

By \cite[Prop.5]{KMPS} we have the following basic evaluation of the $\pi$-relative
potential of $X$:
\begin{equation} \label{54534}
\int \CC_{0,0}(L_2, L_2, L_2)
\, =\, 
2 \frac{E_4(q_1) E_6(q_1)}{\Delta(q_1)} \cdot \frac{E_4(q_2)}{j(q_1) - j(q_2)}
\end{equation}
where $E_k = 1 + O(q)$ are the Eisenstein series and $j(q) = q^{-1} + O(1)$ is the $j$-function, and the expansion on the right-hand side is taken in the region $|q_1|<|q_2|$.
It is hence plausible for $\CR$ to be the ring
(of Laurent expansions in the region $|q_1| < |q_2|$)
of meromorphic functions of $q_1, q_2$
which are quasi-modular in each variable
and have poles only at $q_1 = q_2$ and $q_i = 0$ for $i \in \{1, 2 \}$.
The modularity in each variable on the right-hand side of
\eqref{54534} is in agreement with the expected holomorphic anomaly equation.

\bigskip

\noindent 
MIT, Department of Mathematics \\
E-mail address: georgo@mit.edu \\[5pt]

\noindent 
MIT, Department of Mathematics \\
E-mail address: apixton@mit.edu \\

\end{document}